\tikzset{node distance=2cm, auto}
\setlist[description]{font=\normalfont\bfseries\:\!}
\renewcommand\labelenumi{(\alph{enumi})}
\renewcommand\theenumi\labelenumi
\newtheorem{theorem}{Theorem}[section]
\newtheorem{proposition}[theorem]{Proposition}
\newtheorem{lemma}[theorem]{Lemma}
\newtheorem{corollary}[theorem]{Corollary}
\theoremstyle{definition}
\newtheorem{definition}[theorem]{Definition}
\newtheorem{example}[theorem]{Example}
\newtheorem{remark}[theorem]{Remark}
\newenvironment{content}[1][blank]{\noindent{\bfseries\scshape\MakeLowercase{#1}}.\thinspace\noindent}{\vskip .5cm}
\newenvironment{acknowledgements}{\begin{content}[Acknowledgements]}{\end{content}}
\newenvironment{conventions}{\begin{content}[Conventions]}{\end{content}}
\newenvironment{outline}{\begin{content}[Outline]}{\end{content}}
\renewenvironment{abstract}{\begin{content}[Abstract]}{\par\noindent\rule{\textwidth}{1pt}\end{content}}
\newcommand{\CategoryFont}[1]{\mathsf{\uppercase{#1}}}
\newcommand{\FunctorFont}[1]{\mathrm{#1}}
\newcommand{\Tag}[1]{\tag{$#1$}}
\renewcommand{\bar}[1]{\mkern 1.5mu\overline{\mkern-1.5mu#1\mkern-1.5mu}\mkern 1.5mu}
\newcommand{\1}{\mathds{1}}
\renewcommand{\=}{\mathrel{\mathop:}=}
\renewcommand{\epsilon}{\varepsilon}
\renewcommand{\o}{\circ}
\renewcommand{\b}{\bullet}
\newcommand{\<}{\langle}
\renewcommand{\>}{\rangle}
\newcommand{\op}{\mathsf{op}}
\renewcommand{\*}{\ast}
\newcommand{\nto}{\mathrel{\mkern3mu\vcenter{\hbox{$\scriptscriptstyle|\:$}}\mkern-12mu{\to}}}
\newcommand{\lax}{\mathsf{lax}}
\newcommand{\id}{\mathsf{id}}
\newcommand{\CC}{{\mathbf{K}}}
\newcommand{\DD}{{\mathbf{J}}}
\newcommand{\C}{\mathbb{C}}
\newcommand{\X}{\mathbb{X}}
\newcommand{\cRing}{\CategoryFont{cRing}}
\newcommand{\Set}{\CategoryFont{Set}}
\newcommand{\Top}{\CategoryFont{Top}}
\newcommand{\Cat}{\CategoryFont{Cat}}
\newcommand{\Cart}{\CategoryFont{Cart}}
\newcommand{\ParFnc}{\CategoryFont{Par}}
\newcommand{\RCat}{\CategoryFont{RCat}}
\newcommand{\cRCat}{\CategoryFont{cRCat}}
\newcommand{\sRCat}{\CategoryFont{sRCat}}
\newcommand{\jRCat}{\CategoryFont{jRCat}}
\newcommand{\iRCat}{\CategoryFont{invRCat}}
\newcommand{\Tot}{\FunctorFont{Tot}}
\newcommand{\Split}{\FunctorFont{Split}}
\newcommand{\M}{\mathscr{M}}
\renewcommand{\L}{\mathbf{L}}
\newcommand{\R}{\mathbf{R}}
\newcommand{\Las}{\mathsf{L}}
\newcommand{\ParSet}{\CategoryFont{ParSet}}
\newcommand{\ParTop}{\CategoryFont{ParTop}}
\newcommand{\IdemRing}{\CategoryFont{IdemRing}}
\newcommand{\LCat}{\CategoryFont{LCat}}
\newcommand{\cLCat}{\CategoryFont{cLCat}}
\newcommand{\sLCat}{\CategoryFont{sLCat}}
\newcommand{\jLCat}{\CategoryFont{jLCat}}
\newcommand{\iLCat}{\CategoryFont{invLCat}}
\newcommand{\PCat}{\CategoryFont{PCat}}
\newcommand{\ICat}{\CategoryFont{NCat}}
\newcommand{\iBICat}{\CategoryFont{invBNCat}}
\newcommand{\BPCat}{\CategoryFont{BPCat}}
\newcommand{\BICat}{\CategoryFont{BNCat}}
\def\whitelollipop#1{
\begin{tikzpicture}[scale=#1]
\draw (0,7.5pt) -- (0,1.5pt) (0,0) circle (1.5pt);
\end{tikzpicture}
}
\def\blacklollipop#1{
\begin{tikzpicture}[scale=#1]
\draw[fill] (0,0) -- (0,6pt) (0,7.5pt) circle (1.5pt);
\end{tikzpicture}
}
\DeclareMathOperator{\down}{\mathchoice{\whitelollipop{1}} %display
{\whitelollipop{1}} %textstyle
{\whitelollipop{.75}}  %script
{\whitelollipop{5}}  %scriptscript
}
\DeclareMathOperator{\up}{\mathchoice{\blacklollipop{1}} %display
{\blacklollipop{1}} %textstyle
{\blacklollipop{.75}}  %script
{\blacklollipop{.5}}  %scriptscript
}
\newcommand{\Incl}{\mathscr{N}}
\newcommand{\Famly}{\mathscr{F}}
\newcommand{\I}{\FunctorFont{N}}
\renewcommand{\P}{\FunctorFont{P}}
\newcommand{\dom}{\mathsf{dom}}
\title{Local categories: a new framework for partiality}
\author{\uppercase{Marcello Lanfranchi, Jean-Simon Pacaud Lemay}}
\affil{\normalsize\textit{Macquarie University, School of Mathematical and Physical Sciences}}
\date{}
\begin{document}

\maketitle

%%%%%%%%%%%%%%%%%%%%%%%%%%% ABSTRACT %%%%%%%%%%%%%%%%%%%%%%%%%%%%%%%%%%%%%%
\begin{abstract}\noindent
Restriction categories provide a categorical framework for partiality. In this paper, we introduce three new categorical theories for partiality: local categories, partial categories, and inclusion categories. The objects of a local category are partially accessible resources, and morphisms are processes between these resources. In a partial category, partiality is addressed via two operators, restriction and contraction, which control the domain of definition of a morphism. Finally, an inclusion category is a category equipped with a family of monics which axiomatize the inclusions between sets. The main result of this paper shows that restriction categories are $2$-equivalent to local categories, that partial categories are $2$-equivalent to inclusion categories, and that both restriction/local categories are $2$-equivalent to bounded partial/inclusion categories. Our result offers four equivalent ways to describe partiality: on morphisms, via restriction categories; on objects, with local categories; operationally, with partial categories; and via inclusions, with inclusion categories. We also translate several key concepts from restriction category theory to the local category context, which allows us to show that various special kinds of restriction categories, such as inverse categories, are $2$-equivalent to their analogous kind of local categories. In particular, the equivalence between inverse (restriction) categories and inverse local categories is a generalization of the celebrated Ehresmann-Schein-Nambooripad theorem for inverse semigroups.
\end{abstract}

%__________________________________________________________________________
%__________________________________________________________________________

\tableofcontents

\section{Introduction}
\label{section:introduction}
Partiality is a fundamental concept throught all of mathematics and computer science. Restriction categories provide a categorical framework for partiality. Restriction category theory is now a firmly established field with a rich literature, having found interesting connections and applications to various areas in mathematics and computer science. Restriction categories were originally introduced in~\cite{cockett:restrictionI}, inspired by the works of~\cite{di-paola:dominical-categories},~\cite{robinson:partial-morphisms},~\cite{jacobs:weaking-contraction}, and~\cite{mulry:partial-morphisms}. In a restriction category, every morphism $f\colon A\to B$ is associated with an endomorphism $\bar f\colon A\to A$, called the \textit{restriction idempotent} of $f$, containing the information about the domain of definition of $f$. For example, if $g\colon A\to C$ is another morphism, the composition $\bar fg$ of $\bar f$ followed by $g$ is interpreted as the \textit{restriction} of $g$ to the domain of definition of $f$. In particular, $\bar ff$ coincides with $f$ itself, and $\bar f\bar g=\bar g\bar f$ is interpreted as the intersection of the domain of definitions of $f$ and $g$. The archetypal example of a restriction category is the category $\ParFnc$ of sets and partial functions, whose restriction idempotents $\bar f\colon A\to A$ are the partial functions which send each $a$ to itself when $f(a)$ is defined and undefined elsewhere. One of the advantages of restriction categories is that one can refer to $f\colon A\to B$ as a morphism from $A$ to $B$ despite $f$ being only partially defined on $A$. In particular, a theory developed for total functions of a restriction category is also defined for partially-defined functions, without the need to rethink the latter as total functions on a different domain. 

\par In this paper, we present a new categorical approach to encoding partiality: \textit{local categories}. The objects of a local category can be regarded as partially accessible resources, and morphisms as processes between these resources. In particular, a local category $\C$ is a category equipped with an assignment, for every object $M$, of a morphism $\eta_M\colon M\to\Las M$, where $\Las M$ is called the \textit{enlargement} of $M$, satisfying some basic natural axioms: (i) $\eta_{\Las M}$ is the identity on $\Las M$; (ii) $\eta_M$ is monic; (iii) the pullback $m\colon P\to N$ of $\eta_M$ along any morphism $f\colon N\to\Las M$ exists, $\Las P=\Las N$, and $m\colon P\to N$ composed with $\eta_N$ is equal to $\eta_P$. The archetypal example of a local category is the category $\ParSet$ whose objects are pairs $(U,A)$ of sets in which $U\subseteq A$ is a subset of $A$, and whose morphisms $f\colon(U,A)\to(V,B)$ are functions $f\colon U\to V$ between the subsets. The enlargement $\Las(U,A)$ of $(U,A)$ is $(A,A)$, and $\eta_{(U,A)}$ is the inclusion morphism of $U$ into $A$.

\par The main result of this paper shows that local categories are not an orthogonally different perspective on partiality but rather an equivalent alternative to restriction categories. Indeed, while restriction categories encode partiality via morphisms, local categories encode partiality via the objects. We make this precise in Theorem~\ref{theorem:2-equivalence-local-restriction} by constructing a $2$-equivalence between the $2$-category of restriction categories and $2$-category of local categories. Exploiting this equivalence, we translate various concepts of restriction categories into concepts for local categories, with the table below summarizing our findings. 
\begin{center}
\adjustbox{width=\linewidth}{
\begin{tabular}{c|c|c|c}
\textbf{Restriction categories}     &\textbf{Local categories}         &\textbf{Definition}        &\textbf{Theorem}\\
\toprule
Restriction functors                        &Local functors                             &\ref{definition:local-functors}&\ref{theorem:2-equivalence-local-restriction}\\
Restriction natural transformations         &Local natural transformations              &\ref{definition:total-local-natural-transformation}&\ref{theorem:2-equivalence-local-restriction}\\
Total natural transformations               &Total natural transformations              &\ref{definition:total-local-natural-transformation}&\ref{theorem:2-equivalence-local-restriction}\\
Total morphisms                                  &Total objects                              &\ref{definition:total-objects}&\\
Compatibility of morphisms $f\smile g$ &Compatibility of objects $A\smile B$  &\ref{definition:total-compatible}&\\
Partial order on morpisms $f\leq g$         &Partial order on objects $A\leq B$         &\ref{definition:partial-order-in-local-categories}&\ref{proposition:partial-order-on-local-categories}\\
Restriction monics                              &Monics                                     &\ref{definition:partial-monic}&\ref{lemma:partial-monic}\\
Cartesian restriction category              &Cartesian local category                   &\ref{definition:cartesian-local-category}&\ref{theorem:cartesian-local-categories}\\
Split restriction category                  &Split local category                       &\ref{definition:split-local-category}&\ref{theorem:2-equivalence-local-restriction-split}\\
Inverse (restriction) category              &Inverse local category                     &\ref{definition:inverse-local-category}&\ref{theorem:inverse-local-categories}\\
Join restriction category                   &Join local category                        &\ref{definition:join-local-category}&\ref{theorem:join-local-categories}
\end{tabular}
}
\end{center}

Moreover, we also introduce two additional approaches to partiality: \textit{partial categories} and \textit{inclusion categories}. In these settings, neither the objects nor the morphisms carry an intrinsic notion of partiality. Instead, partiality is treated extrinsically. In particular, a partial category consists of a category equipped with a partial order $\leq$ on objects and two operators: \textit{restriction} and \textit{contraction}. One can regard an object $U\leq A$ of a partial category as an object \textit{included} in $A$. Thus, given a $U\leq A$, the restriction operator \textit{restricts} a morphism $f\colon A\to B$ to a morphism $f\down U\colon U\to B$, which is the restriction of $f$ to the elements of $U$. On the other hand, given a $V\leq B$, the contraction operator \textit{contracts} a morphism $f\colon A\to B$ to a morphism $f\up V\colon A\up_fV\to V$ defined on an object $A\up_fV\leq A$, which can be regarded as the pre-image of $V$ along $f$. After introducing a technical assumption, boundedness, we prove the main theorem (Theorem~\ref{theorem:fundamental}) of the paper: restriction categories, local categories, bounded partial categories, and bounded inclusion categories are all $2$-equivalent to each other. This result generalizes the fundamental theorem of restriction categories, which establishes that \textit{split} restriction categories are $2$-equivalent to $\M$-categories, categories equipped with a system of monics, stable under composition and pullbacks. Theorem~\ref{theorem:fundamental} shows that (non-necessarily split) restriction categories are $2$-equivalent to \textit{bounded inclusion categories}, which are categories equipped with a system of monics which satisfy slightly different axioms from the system of monics of an $\M$-category and that generalize inclusions of sets.

\begin{outline}
In Section~\ref{section:restriction-categories}, we briefly recall the definition of a restriction category and give some examples. In Section~\ref{section:local-categories}, we introduce and study local categories. In Section~\ref{section:equivalence-restriction-local}, we prove that restriction categories are $2$-equivalent to local categories and in Section~\ref{section:concepts-local-categories}, we harness this equivalence to introduce some concepts of local categories. In Sections~\ref{section:cartesian},~\ref{section:split},~\ref{section:inverse}, and~\ref{section:join} we introduce Cartesian, split, inverse, and join local categories, respectively, and prove that they are $2$-equivalent to their restriction analogs. In Section~\ref{section:partial-categories}, we introduce partial categories and give some examples. In Section~\ref{section:inclusion-systems}, we introduce inclusion categories, give some examples, and show that partial categories are $2$-equivalent to inclusion categories. Finally, in Section~\ref{section:fundamental-theorem}, we introduce boundedness and prove that restriction categories, local categories, bounded partial categories, and bounded inclusion categories are all $2$-equivalent to each other.
\end{outline}

\begin{conventions}
We assume that the reader is familiar with the basics of category theory. We denote an arbitrary category by $\mathbb{X}$, objects using capital letters such $A$, $B$, $C$, etc., morphisms as arrows $A \to B$ denoted using lowercase letters such as $f$, $g$, $h$, etc., and we denote identity morphisms as $\id_A: A \to A$. In this paper, we use diagrammatic notation for composition, that is, we write $fg\colon A\to C$ to denote the composition of $f\colon A\to B$ and $g\colon B\to C$ of first doing $f$ then $g$.
\end{conventions}

\begin{acknowledgements}
We are grafeful to the anonymous referees who pointed out various typos and useful suggestions. We are also thankful to Tim Stokes who suggested a missing axiom (axiom \textbf{[I.6]}) in the definition of an inclusion category. We also thank Steve Lack for suggesting a remark~\ref{remark:boundedness-vs-equivalences-of-categories}, and we are grateful to Richard Garner for suggesting to compare our work with the ESN theorem, as discussed in Section~\ref{section:fundamental-theorem}. This material is based upon work supported by the AFOSR under award number FA9550-24-1-0008.
\end{acknowledgements}

%__________________________________________________________________________
%__________________________________________________________________________

\section{Restriction categories: partiality on morphisms}
\label{section:restriction-categories}

In this background section, we review the basics of restriction categories. For an in-depth introduction to restriction categories, we invite the reader to see~\cite{cockett:restrictionI}.

\begin{definition}~\cite[Section~2.1.1]{cockett:restrictionI}
\label{definition:restriction-category}
A \textbf{restriction structure} on a category $\X$ consists of an assignment of each morphism $f\colon A\to B$ of $\X$ to an endomorphism $\bar f\colon A\to A$, called the \textbf{restriction idempotent} of $f$, satisfying the following axioms: 
\begin{description}
\item[R.1] For each $f\colon A\to B$, $\bar ff=f$;

\item[R.2] For each $f\colon A\to B$ and $g\colon A\to C$, $\bar f\bar g=\bar g\bar f$;

\item[R.3] For each $f\colon A\to B$ and $g\colon A\to C$, $\bar g\bar f=\bar{\bar gf}$;

\item[R.4] For each $f\colon A\to B$, $g\colon B\to C$, $f\bar g=\bar{fg}f$.
\end{description}
A \textbf{restriction category} is a category equipped with a restriction structure.
\end{definition}

As we will see below, the restriction idempotent $\bar f$ is indeed an idempotent in the usual sense, that is, $\bar f\bar f = \bar f$. So to help better understand the axioms \textbf{[R.1]}-\textbf{[R.4]}, let us assume that the restriction idempotent $\bar f$ splits, that is, there exists an object $E_{\bar f}$ and a section-retraction pair $s_{\bar f}\colon E_{\bar f}\leftrightarrows A\colon r_{\bar f}$ such that $s_{\bar f}r_{\bar f}=\id_{E_{\bar f}}$ and $\bar f=r_{\bar f}s_{\bar f}$. Morphisms $f$ of a restriction category are interpreted as being partially defined and so we interpret $E_{\bar f}$ as the domain of definition of $f$. Then \textbf{[R.1]} says that $f$ is defined on its domain of definition. \textbf{[R.2]} says that the intersection of the domain of definition of $f$ with the domain of definition of $g$ is equal to the intersection of the domain of definition of $g$ with that of $f$. \textbf{[R.3]} establishes that the intersection of the domain of definitions of two morphisms $f\colon A\to B$ and $g\colon A\to C$ is equal to the domain of definition of the morphism $f$ restricted to the domain of definition of $g$. Finally, \textbf{[R.4]} establishes that restricting a morphism $f\colon A\to B$ to those elements of $A$ whose image along $f$ lands in the domain of definition of a morphism $g\colon B\to C$ is to restrict $f$ to the domain of definition of the composition $fg$ of $f$ with $g$. Here are some key examples of restriction categories.  

\begin{example}
\label{example:trivial-restriction}
Every category is trivially a restriction category by setting $\bar f = \id_A$ for all morphisms $f\colon A\to B$. In this case, the restriction idempotents are the identities.
\end{example}

\begin{example}
\label{example:restriction-monoid}
A restriction monoid consists of a monoid $M$ equipped with an operation which sends every element $x\in M$ to $\bar x$, such that $\bar xx=x$, $\bar x\bar y=\bar y\bar x=\bar{\bar xy}$, and $x\bar y=\bar{xy}x$. In particular, a restriction monoid is precisely a one-object restriction category.
\end{example}

\begin{example}
\label{example:partial-set-restriction}
Sets and partial functions form a restriction category denoted by $\ParFnc$. The restriction idempotent of a partial morphism $f\colon A\to B$ is the partial morphism $\bar f$ such that $\bar f(a)=a$ for every $a$ on which $f$ is defined, and undefined elsewhere.
\end{example}

\begin{example}
\label{example:open-restriction}
Topological spaces and continuous functions $f\colon A\to B$ defined on open subsets $U\subseteq A$ form a restriction category whose restriction idempotents are defined as in $\ParFnc$.
\end{example}

\begin{example}
\label{example:computable-restriction}
Recall that a partial computable function consists of a computable function defined only when the algorithm terminates. Sets and partial computable functions $f\colon A\to B$ form a restriction subcategory of $\ParFnc$.
\end{example}

\begin{example}
\label{example:rings-restriction}
Commutative and unital rings and non-unital ring morphisms, so functions $f\colon A\to B$ which preserve zero, sum, and multiplication but not necessarily the unit, so $f(1)$ does not necessarily equal $1$, form a category denoted by $\cRing_\b$. The opposite category $\cRing_\b^\op$ is a restriction category, or in other words, $\cRing_\b$ is a corestriction category. So for a morphism $f\colon A\to B$ in $\cRing_\b$, its corestriction idempotent $\bar f\colon B\to B$ sends each $b\in B$ to $f(1)b$.
\end{example}

We now recall two important classes of morphisms in restriction categories. 

\begin{definition}~\cite[Section~2.1.2]{cockett:restrictionI}
\label{definition:total-morphisms}
In a restriction category $\X$, a \textbf{total morphism} is a morphism $f\colon A\to B$ whose restriction idempotent is the identity, $\bar f=\id_A$. Moreover, a \textbf{restriction idempotent} is an endomorphism $e\colon A\to A$ that is equal to its own restriction idempotent, $e=\bar e$. 
\end{definition}

Intuitively, total morphisms are those morphisms which are everywhere defined, while the restriction idempotents encode domains of definitions of morphisms.

\begin{definition}
\label{definition:total-sub-category}
Total morphisms of a restriction category $\X$ are closed under composition and identities; thus, they form a sub-category of $\X$ denoted by $\Tot[\X]$, known as the \textbf{total subcategory} of $\X$.
\end{definition}

Here are some useful identities which we will make use of throughout this paper. 

\begin{lemma}~\cite[Lemma~2.1]{cockett:restrictionI}
\label{lemma:technical-restriction}
In a restriction category $\X$, the following statements hold:
\begin{enumerate}
\item Every restriction idempotent $e=\bar e$ is an idempotent, that is, $ee=e$;

\item For every $f\colon A\to B$, $\bar f$ is a restriction idempotent, that is, $\bar{\bar f}=\bar f$;

\item For every $f\colon A\to B$ and $g\colon B\to C$, $\bar{fg}=\bar{f\bar g}$;

\item Every monic $m\colon A\to B$ is total.
\end{enumerate}
\end{lemma}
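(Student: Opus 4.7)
The plan is to tackle the four parts in the order (a), (b), (c), (d), with most of the real work concentrated in (b). For (a), I would simply apply \textbf{[R.1]} directly to the morphism $e$ itself: since $e = \bar e$, the axiom gives $\bar e e = e$, which is exactly $ee = e$.

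For (b), the first sub-step would be to prove that $\bar f$ is idempotent in the ordinary sense, $\bar f \bar f = \bar f$. This follows by applying \textbf{[R.3]} with $g = f$ to get $\bar f \bar f = \bar{\bar f f}$, and then collapsing $\bar f f = f$ via \textbf{[R.1]} inside the restriction. The trickier step is then to upgrade this to $\bar{\bar f} = \bar f$. I would reapply \textbf{[R.3]}, this time with the ``$f$''-slot filled by $\bar f$ and the ``$g$''-slot filled by $f$, producing $\bar f \bar{\bar f} = \bar{\bar f \bar f}$, and simplify the right-hand side to $\bar{\bar f}$ using the idempotence just established. Then I would commute via \textbf{[R.2]} to rewrite $\bar f \bar{\bar f} = \bar{\bar f} \bar f$, and finally apply \textbf{[R.1]} to the morphism $\bar f \colon A \to A$ to collapse $\bar{\bar f} \bar f = \bar f$. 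Chaining these equalities yields $\bar{\bar f} = \bar f$. The main obstacle in the lemma is precisely this step: the temptation is to ``already know'' $\bar{\bar f} = \bar f$ when massaging the axioms, so one must be careful to use \textbf{[R.1]} applied to $\bar f$ as a morphism in its own right to break the apparent circular symmetry.

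For (c), the plan is to start from \textbf{[R.4]} which already supplies $f \bar g = \bar{fg} f$, apply the restriction operation to both sides to get $\bar{f \bar g} = \bar{\bar{fg} f}$, and then use \textbf{[R.3]} to rewrite the right-hand side as $\bar{fg} \bar f$. It remains to argue $\bar{fg} \bar f = \bar{fg}$, which morally says the domain of definition of $fg$ is contained in that of $f$. I would derive it by applying \textbf{[R.2]} to swap the factors, then \textbf{[R.3]} to obtain $\bar{\bar f f g}$, and finally \textbf{[R.1]} to reduce $\bar f f$ to $f$ inside the restriction.

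Part (d) is immediate: \textbf{[R.1]} gives $\bar m m = m = \id_A m$, and right-cancellation using the fact that $m$ is monic yields $\bar m = \id_A$.
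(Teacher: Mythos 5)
Your proof is correct. The paper itself gives no proof of this lemma (it is quoted directly from Cockett--Lack with a citation), and your derivation is the standard one from the axioms; in particular, your handling of the delicate step $\bar{\bar f}=\bar f$ in part (b) — first establishing $\bar f\bar f=\bar f$ via \textbf{[R.3]} and \textbf{[R.1]}, then applying \textbf{[R.3]}, \textbf{[R.2]}, and \textbf{[R.1]} to $\bar f$ viewed as a morphism in its own right — avoids the circularity trap and is exactly how the original argument goes.
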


As with ordinary idempotents, we can also ask all restriction idempotents to split, as was done above when we discussed interpretations of the axioms \textbf{[R.1]}-\textbf{[R.4]}. 

\begin{definition}~\cite[Section~2.3.3]{cockett:restrictionI}
\label{definition:split-restriction-category}
A \textbf{split restriction category} is a restriction category $\X$ whose restriction idempotents split, that is, for each restriction idempotent $e=\bar e$, there exists an object $E_e$ of $\X$ and a section-retraction pair $(s_e,r_e)\colon E_e\leftrightarrows A$ such that $r_es_e=e$.
\end{definition}

\begin{example}
\label{example:split-restriction-set}
The restriction categories of Examples~\ref{example:partial-set-restriction} and~\ref{example:open-restriction} split. Concretely, the restriction idempotent $\bar f\colon A\to B$ of a partial (continuous) function splits along the object $\dom f$, which is the domain of definition of $f$.
\end{example}

We conclude this section by recalling that every restriction category $\X$ canonically embeds in a split restriction category $\Split_R[\X]$~\cite[Proposition~2-26]{cockett:restrictionI}, which is the \textbf{split completion} of $\X$. Since $\Split_R[\X]$ plays an important role in our story, let us unpack this construction:
\begin{description}
\item[Objects] An object of $\Split_R[\X]$ consists of a pair $(A,a)$ formed by an object $A$ of $\X$ together with a restriction idempotent $a=\bar a\colon A\to A$ of $\X$;

\item[Morphisms] A morphism $f\colon(A,a)\to(B,b)$ of $\Split_R[\X]$ consists of a morphism $f\colon A\to B$ of $\X$, such that the following diagram commutes: 
\begin{equation*}
% https://q.uiver.app/#q=WzAsNCxbMCwwLCJBIl0sWzEsMSwiQiJdLFswLDEsIkIiXSxbMSwwLCJBIl0sWzAsMywiYSJdLFsyLDEsImIiLDJdLFswLDIsImYiLDJdLFswLDEsImYiXSxbMywxLCJmIl1d
\begin{tikzcd}
A & A \\
B & B
\arrow["a", from=1-1, to=1-2]
\arrow["f"', from=1-1, to=2-1]
\arrow["f", from=1-1, to=2-2]
\arrow["f", from=1-2, to=2-2]
\arrow["b"', from=2-1, to=2-2]
\end{tikzcd}
\end{equation*}
Equivalently, $f$ satisfies that $afb=f$;

\item[Identities] The identity morphism $\id_{(A,a)}: (A,a) \to (A,a)$ is the morphism $a\colon A\to A$;

\item[Composition] Composition works as in $\X$;

\item[Restriction] The restriction idempotent of a morphism $f\colon(A,a)\to(B,b)$ is as in $\X$, so $\bar f: (A,a) \to (A,a)$;

\item[Splitting] The splitting of a restriction idempotent $e=\bar e\colon(A,a)\to(A,a)$ is given by the object $(A,e)$ with morphisms $r_e\colon(A,a) \to (A,e)$ and $s_e \colon(A,e)\to(A,a)$ both defined as $r_e=e$ and $s_e =e$. 
\end{description}

%__________________________________________________________________________
%__________________________________________________________________________

\section{Local categories: partiality on objects}
\label{section:local-categories}
In this section, we introduce the first major contribution of this paper: local categories. In contrast to restriction categories, which encode partiality on morphisms, local categories provide a categorical context to capture partiality on objects.

\begin{definition}
\label{definition:local-category}
A \textbf{local structure} on a category $\C$ consists of an assignment for each object $M\in\C$ to an object $\Las M$, called the \textbf{enlargement} of $M$, with a morphism $\eta_M\colon M\to\Las M$, called the \textbf{maximal inclusion} of $M$, which satisfies the following axioms:
\begin{description}
\item[L.1] For every $M\in\C$, $\Las\Las M=\Las M$ and $\eta_{\Las M}=\id_{\Las M}$;

\item[L.2] For every $M\in\C$, $\eta_M\colon M\to\Las M$ is monic in $\C$;

\item[L.3] For every $M\in\C$ and every morphism $f\colon N\to\Las M$, there exists an object $P$ together with two morphisms $g\colon P\to M$ and $m\colon P\to N$ such that the following diagram
\begin{equation*}
% https://q.uiver.app/#q=WzAsNCxbMSwwLCJNIl0sWzEsMSwiXFxMYXMgTSJdLFswLDEsIk4iXSxbMCwwLCJQIl0sWzIsMSwiZiIsMl0sWzAsMSwiXFxldGFfTSJdLFszLDIsIm0iLDJdLFszLDAsImciXSxbMywxLCIiLDEseyJzdHlsZSI6eyJuYW1lIjoiY29ybmVyIn19XV0=
\begin{tikzcd}
P & M \\
N & {\Las M}
\arrow["g", from=1-1, to=1-2]
\arrow["m"', from=1-1, to=2-1]
\arrow["\lrcorner"{anchor=center, pos=0.125}, draw=none, from=1-1, to=2-2]
\arrow["{\eta_M}", from=1-2, to=2-2]
\arrow["f"', from=2-1, to=2-2]
\end{tikzcd}
\end{equation*}
is a pullback diagram. Furthermore, $\Las P=\Las N$ and the following diagram commutes:
\begin{equation*}
% https://q.uiver.app/#q=WzAsNCxbMSwwLCJOIl0sWzAsMCwiUCJdLFsxLDEsIlxcTGFzIE4iXSxbMCwxLCJcXExhcyBQIl0sWzEsMCwibSJdLFszLDIsIiIsMCx7ImxldmVsIjoyLCJzdHlsZSI6eyJoZWFkIjp7Im5hbWUiOiJub25lIn19fV0sWzEsMywiXFxldGFfUCIsMl0sWzAsMiwiXFxldGFfTiJdXQ==
\begin{tikzcd}
P & N \\
{\Las P} & {\Las N}
\arrow["m", from=1-1, to=1-2]
\arrow["{\eta_P}"', from=1-1, to=2-1]
\arrow["{\eta_N}", from=1-2, to=2-2]
\arrow[equals, from=2-1, to=2-2]
\end{tikzcd}
\end{equation*}
\end{description}
A \textbf{local category} is a category equipped with a local structure.
\end{definition}

\begin{remark}
\label{remark:non-functoriality-of-Las}
It is important to stress that $\Las$ is only an assignment on objects and does not involve any assignment on morphisms. In particular, $\Las$ does not extend to a functor. Similarly, $\eta$ is not a natural transformation.
\end{remark}

\begin{remark}
\label{remark:canonical-pullback-local-cats}
Axiom \textbf{[L.3]} establishes that (1) the pullback of a morphism $f\colon N\to\Las B$ along $\eta_B$ always exists and (2) among the isomorphic choices of these pullbacks (remember that a pullback is only specified up to a unique isomorphism) there is at least one for which $\Las P=\Las N$ and $m\eta_N=\eta_P$. This does not imply that such a pullback is uniquely defined, nor that every pullback of $f$ along $\eta_B$ satisfies this property.
\end{remark}

In a local category, $\Las M$ can be regarded as a total space in which $M$ embeds via the inclusion $\eta_M\colon M\to\Las M$. With this interpretation, \textbf{[L.1]} establishes that the enlargement $\Las\Las M$ of the enlargement $\Las M$ of $M$ is just itself and that the inclusion morphism $\eta_{\Las M}$ corresponds to the identity; \textbf{[L.2]} makes $\eta_M$ into a proper inclusion; finally, \textbf{[L.3]} establishes that the pre-image $P$ of $M$ along $f$ exists in $\C$ and is embedded into $N$. An important class of objects in a local category are those which coincide with their enlargement. 

\begin{definition}
\label{definition:total-objects}
A \textbf{total object} in a local category is an object $M$ which coincides with its enlargement, that is, $M=\Las M$, and $\eta_M=\id_{M}$.
\end{definition}

Here are now some examples of local categories.

\begin{example}
\label{example:trivial-local}
Every category is trivially a local category where for each object $M$ we set $\Las M\= M$ and $\eta_M=\id_{M}$. In this case, every object is total. 
\end{example}

\begin{example}
\label{example:local-monoid}
Given a restriction monoid $M$, consider the category $\L[M]$ of restriction idempotents of $M$, which is the category whose objects are elements $a \in M$ such that $a=\bar a$, while a morphism $x\colon a\to b$ from $a$ to $b$ is an element $x \in M$ such that $\bar x=a$ and $xb=x$. In particular, the identity morphism of an object $a$ is represented by $a$ itself. Thus, $\L[M]$ comes with a local structure, where for each object $a$, $\Las a\= 1$ is the unit of $M$ and $\eta_a\colon a\to 1$ is $a$. For a morphism $x\colon b\to 1$, the pullback of $x$ along $\eta_a$ of Axiom \textbf{[L.3]} is given by the following diagram:
\begin{equation*}
% https://q.uiver.app/#q=WzAsNCxbMSwwLCJhIl0sWzEsMSwiMSJdLFswLDEsImIiXSxbMCwwLCJcXGJhcnt4YX0iXSxbMCwxLCJhIl0sWzIsMSwieCIsMl0sWzMsMiwiXFxiYXJ7eGF9IiwyXSxbMywwLCJ4YSJdXQ==
\begin{tikzcd}
{\bar{xa}} & a \\
b & 1
\arrow["xa", from=1-1, to=1-2]
\arrow["{\bar{xa}}"', from=1-1, to=2-1]
\arrow["a", from=1-2, to=2-2]
\arrow["x"', from=2-1, to=2-2]
\end{tikzcd}
\end{equation*}
It is clear that $\Las(\bar{xa})=1=\Las{b}$. Moreover, $\bar{xa}\eta_b=\bar{xa}b=b\bar{xa}=\bar b\bar{xa}=\bar{bxa}=\bar{xa}=\eta_{\bar{xa}}$.
\end{example}

\begin{example}
\label{example:partial-set-local}
Consider the category $\ParSet$ whose objects are pairs $(A,U)$ formed by two sets such that $U\subseteq A$ and whose morphisms $f\colon(A,U)\to(B,V)$ are functions $f\colon U\to V$ defined between the subsets. Thus, $\ParSet$ is a local category, where $\Las(A,U)\=(A,A)$ and $\eta_{(A,U)}$ is the inclusion function $U\hookrightarrow A$.
Given a morphism $f\colon(A,U)\to(B,B)$, the pullback of $f$ along $\eta_{(B,V)}$ of Axiom \textbf{[L.3]} corresponds to the following diagram
\begin{equation*}
% https://q.uiver.app/#q=WzAsNCxbMSwwLCIoQSxVKSJdLFsxLDEsIihBLEEpIl0sWzAsMSwiKEIsVikiXSxbMCwwLCIoQixmXnstMX0oVSkpIl0sWzAsMSwiXFxpb3RhX0IiXSxbMiwxLCJmIiwyXSxbMywyLCJtIiwyXSxbMywwLCJcXHRpbGRlIGYiXV0=
\begin{tikzcd}
{(B,f^{-1}(U))} & {(A,U)} \\
{(B,V)} & {(A,A)}
\arrow["{\tilde f}", from=1-1, to=1-2]
\arrow["m"', from=1-1, to=2-1]
\arrow["{\iota_B}", from=1-2, to=2-2]
\arrow["f"', from=2-1, to=2-2]
\end{tikzcd}
\end{equation*}
where $\tilde f(v)\=f(v)$, for each $v\in f^{-1}(U)$ and $m$ is the inclusion of $f^{-1}(U)$ in $V$. We can easily see that $\Las(B,f^{-1}(U))=(B,B)=\Las(B,V)$ and that $m$ commutes with the maximal inclusions.
\end{example}

\begin{example}
\label{example:open-local}
Consider the category $\ParTop$ whose objects are pairs $(A,U)$ formed by two topological spaces such that $U\subseteq A$ and in which the topology of $U$ is the induced topology, and whose morphisms $f\colon(A,U)\to(B,V)$ are continuous functions $f\colon U\to V$ defined between the subspaces. Thus, $\ParTop$ is a local category, where $\Las(A,U)\=(A,A)$ and $\eta_{(A,U)}$ is the inclusion function $U\hookrightarrow A$.
\end{example}

\begin{example}
\label{example:computable-local}
Consider the full subcategory of $\ParSet$ whose objects are objects in $\ParSet$ and morphisms $f\colon(A,U)\to(B,V)$ are partial computable functions $f\colon U\to V$. This is a local subcategory of $\ParSet$.
\end{example}

\begin{example}
\label{example:rings-local}
Consider the category $\IdemRing$ whose objects are pairs $(R,e_R)$ formed by a unital and commutative ring $R$ and an idempotent element $e_R\in R$, that is, $e_R^2=e_R$, and whose morphisms $f\colon(S,e_S)\to(R,e_R)$ are functions $f\colon S\to R$ that preserve the sum, the zero, and multiplication, but not necessarily the unit, and such that $f(e_S)=f(1_S)=e_R$, where $1_S$ denotes the unit of $S$. Then $\IdemRing^\op$ is a local category, or in other words, $\IdemRing$ is a colocal category where $\Las(R,e_R)\=(R,1_R)$ and $\eta_{(R,e_R)}\colon \Las(R,e_R) \to (R,1_R)$ sends $r\in R$ to $e_Rr$. 
\end{example}

%__________________________________________________________________________
%__________________________________________________________________________

\section{Restriction categories are equivalent to local categories}
\label{section:equivalence-restriction-local}
In this section, we provide one of the main results of this paper: restriction categories are $2$-equivalent to local categories. This justifies the concept that restriction categories and local categories are two sides of the same coin, with one encoding partiality using morphisms and the other using objects. 

We begin by constructing a local category from a restriction category. So given a restriction category $\X$, define the category $\L[\X]$ as the total morphisms of the split completion of $\X$, that is, $\L[\X] = \Tot[\Split_R[\X]]$. Let us unwrap the definition of $\L[\X]$:
\begin{description}
\item[Objects] An object of $\L[\X]$ consists of a pair $(A,a)$ formed by an object $A$ of $\X$ together with a restriction idempotent $a=\bar a\colon A\to A$ of $\X$;

\item[Morphisms] A morphism $f\colon(A,a)\to(B,b)$ of $\L[\X]$ consists of a morphism $f\colon A\to B$ satisfying the following conditions:
\begin{align*}
&\bar f=a           &&fb=f
\end{align*}

\item[Identities] The identity morphism $\id_{(A,a)}: (A,a) \to (A,a)$ is the morphism $a\colon A\to A$;

\item[Composition] Composition works as in $\X$.
\end{description}
On morphisms $f\colon(A,a)\to(B,b)$, the first condition $\bar f=a$ comes from $f$ being a total morphism in $\Split_R[\X]$, which requires the restriction idempotent $\bar f$ of $f$ to be the identity on $(A,a)$, that is, the restriction idempotent $a$. An object $(A,a)$ of $\L[\X]$ can be intepreted as a subobject to the object $A$ of $\X$. This interpretation is supported by the fact that every morphism $f\colon(A,a)\to(B,b)$ satisfies the equation $\bar f=a$, which imposes that the domain of $f$ is fully determined by the restriction idempotent $a$. From a resource-theory point of view, the objects of $\L[\X]$ can be regarded as partially accessible resources, and morphisms as processes which access only the allowed component of each resource. With this perspective, the equation $fb=f$, establishes that the accessible resources of $A$ processed by $f$ end up in the accessible component of $B$.

\par Each object $(A,a)$ of $\L[\X]$, we can associate to the object $\Las(A,a)\=(A,\id_A)$ which can be regarded as the enlargement in which $(A,a)$ lives. In the resource interpretation, applying the assignment $\Las$ to a resource $(A,a)$ means removing every restriction on the resource $A$. Furthermore, for each object $(A,a)$ there is a distinct morphism
\begin{align*}
&\eta_{(A,a)}\colon(A,a)\xrightarrow{a}(A,\id_A)=\Las(A,a)
\end{align*}
of $\L[\X]$. It is immediate to see that $\Las\Las(A,a)=\Las(A,a)$ and that $\eta_{\Las(A,a)}=\id_{\Las(A,a)}$. This proves that $\Las$ and $\eta$ satisfy \textbf{[L.1]} of Definition~\ref{definition:local-category}. The next lemma proves that \textbf{[L.2]} is also satisfied.

\begin{lemma}
\label{lemma:eta-monic}
For each object $(A,a)$ of $\L[\X]$, the morphism $\eta_{(A,a)}$ is monic in $\L[\X]$.
\end{lemma}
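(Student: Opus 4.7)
The plan is to prove the monic property directly from the defining conditions of morphisms in $\L[\X]$. Concretely, I would take two parallel morphisms $f,g\colon(C,c)\to(A,a)$ in $\L[\X]$ and assume the equality $f\cdot\eta_{(A,a)}=g\cdot\eta_{(A,a)}$, aiming to conclude $f=g$.

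First I would unpack the two sides of this hypothesis. By construction $\eta_{(A,a)}$ is the morphism $a\colon A\to A$ of $\X$, and composition in $\L[\X]$ is just composition in $\X$, so the hypothesis reads $fa=ga$ in $\X$. Next I would invoke the second defining condition for morphisms into $(A,a)$: any morphism $f\colon(C,c)\to(A,a)$ of $\L[\X]$ satisfies $fa=f$, and similarly $ga=g$. Chaining these equalities gives $f=fa=ga=g$, which is exactly what we need.

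To make the argument complete, I would also briefly verify that $\eta_{(A,a)}$ really is a morphism of $\L[\X]$ from $(A,a)$ to $(A,\id_A)$, so that the statement of monicness is well-posed: this amounts to checking $\bar a=a$ (which holds since $a$ is by hypothesis a restriction idempotent) and $a\cdot\id_A=a$ (which is trivial). No further restriction-category identities from Lemma~\ref{lemma:technical-restriction} are needed here.

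The main obstacle is essentially nonexistent for this particular lemma: the proof is a one-line computation once the morphism conditions $fa=f$ and $ga=g$ are recognized. The only mild subtlety worth flagging in the writeup is the distinction between composition in $\L[\X]$ (which is inherited from $\X$) and the identity of $(A,a)$ in $\L[\X]$ (which is $a$ rather than $\id_A$); keeping this straight ensures one does not confuse $\eta_{(A,a)}$ with $\id_{(A,a)}$ even though both are represented by the underlying endomorphism $a$ in $\X$.
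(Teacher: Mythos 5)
Your proof is correct and follows essentially the same argument as the paper: unpack $\eta_{(A,a)}=a$, use the defining condition $fa=f$ and $ga=g$ for morphisms into $(A,a)$, and chain the equalities. The extra well-definedness check of $\eta_{(A,a)}$ is harmless (the paper handles it just before the lemma) and the remark distinguishing $\eta_{(A,a)}$ from $\id_{(A,a)}$ is a sensible clarification.
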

\begin{proof}
Consider two parallel morphisms $f,g\colon(B,b)\to(A,a)$ such that $f\eta_{(A,a)}=g\eta_{(A,a)}$. Since $f$ and $g$ are morphisms of $\L[\X]$, they must satisfy the following equations:
\begin{align*}
&fa=f           &ga=g
\end{align*}
However, $\eta_{(A,a)}$ is, by definition, $a$, thus:
\begin{align*}
&f=fa=f\eta_{(A,a)}=g\eta_{(A,a)}=ga=g
\end{align*}
Thus, $f\eta_{(A,a)}=g\eta_{(A,a)}$ implies $f=g$, that is, $\eta_{(A,a)}$ is monic in $\L[\X]$.
\end{proof}

The next step is to show that $\Las$ and $\eta$ satisfy \textbf{[L.3]}. To show this, let us first prove that $\eta$ admits all pullbacks.

\begin{lemma}
\label{lemma:eta-display}
For every morphism $f\colon(B,b)\to\Las(A,a)$ of $\L[\X]$, the following
\begin{equation*}
% https://q.uiver.app/#q=WzAsNCxbMCwwLCIoQixcXGJhcntmYX0pIl0sWzAsMSwiKEIsYikiXSxbMSwxLCIoQSxcXGlkX0EpIl0sWzEsMCwiKEEsYSkiXSxbMywyLCJhIl0sWzEsMiwiZiIsMl0sWzAsMSwiXFxiYXJ7ZmF9IiwyXSxbMCwzLCJmYSJdXQ==
\begin{tikzcd}
{(B,\bar{fa})} & {(A,a)} \\
{(B,b)} & {(A,\id_A)}
\arrow["fa", from=1-1, to=1-2]
\arrow["{\bar{fa}}"', from=1-1, to=2-1]
\arrow["a", from=1-2, to=2-2]
\arrow["f"', from=2-1, to=2-2]
\end{tikzcd}
\end{equation*}
is a pullback diagram in $\L[\X]$.
\end{lemma}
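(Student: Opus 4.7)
The plan is to verify the four things required to call a square a pullback in $\L[\X]$: (i) the corner object $(B,\bar{fa})$ is a bona fide object of $\L[\X]$, (ii) the two morphisms out of it land in the right hom-sets, (iii) the square commutes, and (iv) it is universal. The hard part is bookkeeping the restriction axioms \textbf{[R.1]}--\textbf{[R.4]} together with the auxiliary identities in Lemma~\ref{lemma:technical-restriction}; no genuinely new idea is needed beyond recognising that the candidate mediating arrow is simply $q$ itself.

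For (i) and (ii), I would start by noting that $\bar{fa}$ is a restriction idempotent by Lemma~\ref{lemma:technical-restriction}(b), so $(B,\bar{fa})$ is an object of $\L[\X]$. To see that $fa\colon(B,\bar{fa})\to(A,a)$ is a morphism of $\L[\X]$, I must check $\overline{fa}=\bar{fa}$ (tautological) and $(fa)a=fa$, which holds since $aa=a$ by Lemma~\ref{lemma:technical-restriction}(a). For $\bar{fa}\colon(B,\bar{fa})\to(B,b)$ the nontrivial condition is $\bar{fa}\cdot b=\bar{fa}$: since $f\colon(B,b)\to(A,\id_A)$ yields $\bar f=b$ and hence $bf=\bar ff=f$ by \textbf{[R.1]}, I get $bfa=fa$, so $\overline{bfa}=\bar{fa}$; applying \textbf{[R.3]} on the left gives $b\bar{fa}=\bar{fa}$, and then \textbf{[R.2]} yields $\bar{fa}\cdot b=\bar{fa}$.

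For (iii), the square commutes because the top-right composite is $(fa)a=fa$ and the bottom-left composite is $\bar{fa}\cdot f$, which equals $f\bar a=fa$ by \textbf{[R.4]} (using $a=\bar a$). For (iv), take any competing cone $p\colon(C,c)\to(A,a)$ and $q\colon(C,c)\to(B,b)$ with $pa=qf$. Since $p$ is a morphism of $\L[\X]$, $p=pa=qf$. I claim the mediating arrow is $q$ itself, viewed now as a morphism $(C,c)\to(B,\bar{fa})$. To verify $q$ is such a morphism, I must check $\bar q=c$ (given) and $q\bar{fa}=q$; for the latter, \textbf{[R.4]} gives $q\bar{fa}=\overline{qfa}\,q=\overline{pa}\,q=\bar p\,q=cq=\bar q q=q$ by \textbf{[R.1]}. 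The required factorisations are then immediate: $q\cdot fa=(qf)a=pa=p$, and $q\cdot\bar{fa}=q$ as just established.

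Finally, uniqueness is essentially automatic: any $u\colon(C,c)\to(B,\bar{fa})$ satisfies $u\cdot\bar{fa}=u$ by definition of being a morphism in $\L[\X]$, so the equation $u\cdot\bar{fa}=q$ forces $u=q$. Alternatively, one can invoke Lemma~\ref{lemma:eta-monic} applied to $\eta_{(B,b)}=b$ together with the observation that $\bar{fa}$ is the pullback projection whose monicity follows the same pattern. Either way, the proof reduces to a half-dozen one-line applications of \textbf{[R.1]}--\textbf{[R.4]}.
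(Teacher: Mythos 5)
Your proof is correct and follows essentially the same route as the paper's: commutativity via \textbf{[R.4]}, the mediating arrow taken to be the cone leg into $(B,b)$ itself, well-definedness checked with \textbf{[R.4]} and \textbf{[R.1]}, and uniqueness forced by the codomain condition $u\bar{fa}=u$. The only difference is that you additionally verify that the two projections of the square are well-defined morphisms of $\L[\X]$, a check the paper leaves implicit.
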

\begin{proof}
For starters, let us prove that this diagram commutes. We compute:
\begin{align*}
\bar{fa}f&=~f\bar a              \Tag{\textbf{[R.4]}}\\
&=~fa                            \Tag{\bar a=a}\\
&=~faa                           \Tag{\text{Lemma~\ref{lemma:technical-restriction}~(a)}}
\end{align*}
Now, consider two morphisms $g\colon(C,c)\to(B,b)$ and $h\colon(C,c)\to(A,a)$ such that $gf=ha$. In particular, $gf=ha=h$, since $ha=h$, being $h$ a morphism in $\L[\X]$. Define the following morphism:
\begin{align*}
&t\colon(C,c)\xrightarrow{g}(B,\bar{fa})
\end{align*}
Let us prove that $t$ is a well-defined morphism of $\L[\X]$. First of all, since $\bar g=c$, we get that $\bar t=c$. Moreover, we compute that: 
\begin{align*}
t\bar{fa}&=~g\bar{fa}            \Tag{t=g}\\
&=~\bar{gfa}g                    \Tag{\textbf{[R.4]}}\\
&=~\bar{ha}g                     \Tag{gf=h}\\
&=~\bar hg                       \Tag{ha=h}\\
&=~\bar gg                       \Tag{\bar h=c=\bar g}\\
&=~g                             \Tag{\textbf{[R.1]}}\\
&=~t                             \Tag{t=g}
\end{align*}
Now, we want to show that $t\bar{fa}=g$ and $tfa=h$. The first equation is already proven, since $g=t=t\bar{fa}$. Thus, we easily compute that:
\begin{align*}
tfa&=~gfa                        \Tag{t=g}\\
&=~ha                            \Tag{gf=h}\\
&=~h                             \Tag{ha=h}
\end{align*}
Finally, let $t'\colon(C,c)\to(B,\bar{fa})$ be a morphism of $\L[\X]$ satisfying $t'\bar{fa}=g$ and $t'fa=h$. Thus,
\begin{align*}
&t'=t'\bar{fa}=g=t
\end{align*}
where we used that $t'=t'\bar{fa}$, since $t'$ a morphism of $\L[\X]$. Thus $t$ is unique, and so we conclude that the desired diagram is a pullback. 
\end{proof}

\begin{proposition}
\label{proposition:L-construction}
For a restriction category $\X$, $\L[\X]$ is a local category.
\end{proposition}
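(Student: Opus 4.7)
The plan is to verify the axioms \textbf{[L.1]}, \textbf{[L.2]}, and \textbf{[L.3]} of Definition~\ref{definition:local-category} for the data $(\Las, \eta)$ described just before the statement. Most of the work has already been done: \textbf{[L.1]} is immediate from the definitions, since $\Las\Las(A,a) = \Las(A, \id_A) = (A,\id_A) = \Las(A,a)$ and $\eta_{\Las(A,a)} = \id_A = \id_{\Las(A,a)}$; \textbf{[L.2]} is precisely Lemma~\ref{lemma:eta-monic}; and the existence of the pullback square appearing in \textbf{[L.3]} is Lemma~\ref{lemma:eta-display}, which for $f\colon (B,b) \to \Las(A,a)$ produces the pullback object $(B, \bar{fa})$ with projection $m = \bar{fa}$.

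What remains is the extra data demanded by \textbf{[L.3]} beyond the bare existence of the pullback, namely the identity $\Las P = \Las N$ on the pullback object and the commutativity of the triangle $m \cdot \eta_N = \eta_P$. For the first, I would observe that both $\Las(B, \bar{fa})$ and $\Las(B,b)$ are equal to $(B,\id_B)$, so this holds on the nose. For the triangle, unwinding the definition of $\eta$ shows that one must verify the identity $\bar{fa} \cdot b = \bar{fa}$ in $\X$. I would establish this by a short restriction calculation: axiom \textbf{[R.2]} commutes the two restriction idempotents to give $\bar{fa} \cdot b = b \cdot \bar{fa}$; axiom \textbf{[R.3]} (using $\bar b = b$) rewrites this as $\bar{b \cdot fa}$; and since $f$ is a morphism of $\L[\X]$ we have $\bar f = b$, so $bf = \bar f f = f$ by \textbf{[R.1]}, yielding $\bar{bfa} = \bar{fa}$.

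The main obstacle to the proof has effectively been absorbed into Lemma~\ref{lemma:eta-display}, where the pullback had to be constructed explicitly in terms of the restriction structure. With that lemma in hand, the remaining verifications are the routine equalities above, so the proposition will follow directly.
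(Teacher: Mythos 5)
Your proposal is correct and follows essentially the same route as the paper: \textbf{[L.1]} by direct inspection, \textbf{[L.2]} via Lemma~\ref{lemma:eta-monic}, and \textbf{[L.3]} via Lemma~\ref{lemma:eta-display} together with the observation that both enlargements are $(B,\id_B)$ and the verification that $\bar{fa}\,b=\bar{fa}$. Your chain of identities for that last equation (commuting the idempotents by \textbf{[R.2]}, absorbing via \textbf{[R.3]} and $bf=f$) is a slightly more direct variant of the paper's computation, which instead routes through \textbf{[R.4]}, but the content is the same.
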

\begin{proof}
\textbf{[L.1]} is immediate since $\Las\Las(A,a)=\Las(A,\id_A)=(A,\id_A)=\Las(A,a)$ and $\eta_{\Las(A,a)}=\eta_{(A,\id_A)}=\id_A=\id_{\Las(A,a)}$. \textbf{[L.2]} is proved by Lemma~\ref{lemma:eta-monic}. To prove \textbf{[L.3]}, notice that by Lemma~\ref{lemma:eta-display}, the pullback of $\eta_{(A,a)}$ along every morphism $f\colon(B,b)\to(A,\id_A)$ exists in $\L[\X]$ and the pullback is given by the object $(B,\bar{fa})$. In particular, $\Las(B,\bar{fa})=(B,\id_B)=\Las(B,b)$. Moreover, the morphism $\bar{fa}\colon(B,\bar{fa})\to(B,b)$, pullback of $\eta$ along $f$, commutes with the maximal inclusions, since:
\begin{align*}
\bar{fa}\eta_{(B,b)}&=~\bar{fa}b\\
&=~\bar{fa}\bar{b}                      \Tag{b=\bar b}\\
&=~\bar{\bar{fa}b}\bar{fa}              \Tag{\textbf{[R.4]}}\\
&=~\bar{b\bar{fa}}\bar{fa}              \Tag{\textbf{[R.2]}}\\
&=~\bar{bfa}\bar{fa}                    \Tag{\text{Lemma}~\ref{lemma:technical-restriction}~(c)}\\
&=~\bar{fa}\bar{fa}                     \Tag{bf=f}\\
&=~\bar{fa}                             \Tag{\text{Lemma}~\ref{lemma:technical-restriction}~(a)}\\
&=~\eta_{(B,\bar{fa})}                  &&&&\qedhere
\end{align*}
\end{proof}

\begin{remark}
\label{remark:examples-restriction-to-local}
Examples~\ref{example:trivial-local}~-~\ref{example:rings-local} are all examples of local categories $\L[\X]$, where $\X$ runs through Examples~\ref{example:trivial-restriction}~-~\ref{example:rings-restriction}, respectively. Of course, the point of this section is to show that all local categories are essentially of this form. 
\end{remark}

We now go the other way, constructing a restriction category $\R[\C]$ from a local category $\C$. So define $\R[\C]$ as follows:
\begin{description}
\item[Objects] An object of $\R[\C]$ is a total object of $\C$;

\item[Morphisms] A morphism of $\R[\C]$ from a total object $M$ to a total object $N$ consists of a class of isomorphism 
of spans
\begin{equation*}
% https://q.uiver.app/#q=WzAsMyxbMCwwLCJBIl0sWzIsMCwiQiJdLFsxLDEsIlUiXSxbMiwwLCJcXGV0YV9VIl0sWzIsMSwiZiIsMl1d
\begin{tikzcd}
A && B \\
& U
\arrow["{\eta_U}", from=2-2, to=1-1]
\arrow["f"', from=2-2, to=1-3]
\end{tikzcd}
\end{equation*}
where $U$ is an object $\C$ satisfying $\Las U=M$ and $f\colon U\to N$ is a morphism of $\C$. In the following, we shall denote morphisms of $\R[\C]$ as $(U,f)\colon M \nto N$;

\item[Identities] The identity morphism of an object $M=\Las M$ of $\R[\C]$ is the morphism $(M,\id_{\Las M})$;

\item[Composition] The composition is composition of spans. Concretely, the composition of two morphisms $(U,f)\colon M\nto N$ and $(V,g)\colon N\nto P$ of $\R[\C]$ is $(W,\pi_Vg)\colon M\to P$, where:
\begin{equation*}
% https://q.uiver.app/#q=WzAsNCxbMCwwLCJXIl0sWzAsMSwiVSJdLFsxLDAsIlYiXSxbMSwxLCJOIl0sWzEsMywiZiIsMl0sWzIsMywiXFxldGFfViJdLFswLDFdLFswLDIsIlxccGlfViJdLFswLDMsIiIsMSx7InN0eWxlIjp7Im5hbWUiOiJjb3JuZXIifX1dXQ==
\begin{tikzcd}
W & V \\
U & N
\arrow["{\pi_V}", from=1-1, to=1-2]
\arrow[from=1-1, to=2-1]
\arrow["\lrcorner"{anchor=center, pos=0.125}, draw=none, from=1-1, to=2-2]
\arrow["{\eta_V}", from=1-2, to=2-2]
\arrow["f"', from=2-1, to=2-2]
\end{tikzcd}
\end{equation*}

\item[Restriction] The restriction idempotent of a morphism $(U,f)\colon M\nto N$ of $\R[\C]$ is the morphism $\overline{(U,f)}\=(U,\eta_U) \colon M\nto M$. 
\end{description}

\begin{proposition}
\label{proposition:R-construction}
For a local category $\C$, $\R[\C]$ is a restriction category.
\end{proposition}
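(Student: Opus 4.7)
The plan is to verify in turn that $\R[\C]$ forms a category and that the assignment $(U,f) \mapsto (U, \eta_U)$ satisfies the four restriction axioms \textbf{[R.1]}--\textbf{[R.4]}. Two facts from the definition of a local category do almost all of the work: by \textbf{[L.2]}, each $\eta_U$ is monic, so the pullback of $\eta_U$ along any morphism that factors through it is trivial; and by \textbf{[L.3]}, the pullback $W$ in the composition carries the identification $\Las W = \Las U$ together with $\pi_V \eta_V = \eta_W = m \eta_U$, which is exactly the bookkeeping needed to keep the source/target labels consistent and to identify restriction idempotents as spans of the form $(W, \eta_W)$.

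First I would check that $\R[\C]$ is a category. Well-definedness of composition on isomorphism classes of spans follows from the universal property of pullbacks; the type check that $\Las W = M$ for the pullback object is exactly the first clause of \textbf{[L.3]}. For unitality, composing with $(M, \id_{\Las M}) = (M, \id_M)$ on either side reduces to pulling back $\eta_M = \id_M$ (by \textbf{[L.1]}), which is trivial. Associativity is the pullback pasting lemma for spans, applied inside $\C$.

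Next I would verify the restriction axioms. For \textbf{[R.1]}, computing $\overline{(U,f)}\cdot(U,f) = (U,\eta_U)\cdot(U,f)$ requires the pullback of $\eta_U$ along itself; since $\eta_U$ is monic this pullback is $U$ with both legs the identity, and the composite is $(U,f)$. For \textbf{[R.2]}, both $\overline{(U,f)}\cdot\overline{(V,g)}$ and $\overline{(V,g)}\cdot\overline{(U,f)}$ are computed from the same pullback $W$ of $\eta_V$ against $\eta_U$ over $M$, and in each case \textbf{[L.3]} forces the second leg to equal $\eta_W$, so both composites equal $(W,\eta_W)$. Axiom \textbf{[R.3]} then compares $(V,\eta_V)\cdot(U,\eta_U)$ with the restriction of $(V,\eta_V)\cdot(U,f)$; the same pullback $W$ appears in both, and the restriction operator immediately replaces the second leg by $\eta_W$. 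For \textbf{[R.4]}, $(U,f)\cdot(V,\eta_V)$ is $(W, mf)$ with $m\colon W\to U$ the pullback leg; on the other side, $\overline{(U,f)\cdot(V,g)}\cdot(U,f) = (W,\eta_W)\cdot(U,f)$, and pulling back $\eta_U$ along $\eta_W = m\eta_U$ gives $W$ itself (again because $\eta_U$ is monic), with pullback leg $m$, so this composite also equals $(W,mf)$.

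The main obstacle will be bookkeeping of isomorphism classes rather than any genuinely hard step: every axiom boils down to identifying two pullbacks and observing that \textbf{[L.3]} collapses the second leg to $\eta_W$. The associativity check for composition, together with checking that the restriction idempotent $(U,\eta_U)$ itself is a well-defined morphism $M \nto M$ (which uses $\Las U = M$), deserves the most attention, but both are routine applications of pullback pasting and the definitional clauses \textbf{[L.1]}--\textbf{[L.3]}.
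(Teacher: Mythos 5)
Your proposal is correct and follows essentially the same route as the paper: each of \textbf{[R.1]}--\textbf{[R.4]} is verified by an explicit pullback computation, using that $\eta$ is monic (so pullbacks of $\eta_U$ along maps factoring through it are trivial) and that \textbf{[L.3]} forces the relevant leg of each composite to be $\eta_W$, so that restriction idempotents collapse to spans of the form $(W,\eta_W)$. The only difference is that you also spell out the check that $\R[\C]$ is a category (well-definedness on isomorphism classes, unitality via \textbf{[L.1]}, associativity by pullback pasting), which the paper leaves implicit; this is a welcome addition rather than a change of method.
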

\begin{proof}
Let us start by proving \textbf{[R.1]}. Consider a morphism $(U,f)\colon M\nto N$ of $\R[\C]$. Since $\eta_U\colon U\to\L U=M$ is monic, the pullback
\begin{equation*}
% https://q.uiver.app/#q=WzAsNCxbMCwwLCJVIl0sWzAsMSwiVSJdLFsxLDAsIlUiXSxbMSwxLCJNIl0sWzEsMywiXFxldGFfVSIsMl0sWzIsMywiXFxldGFfVSJdLFswLDEsIiIsMix7ImxldmVsIjoyLCJzdHlsZSI6eyJoZWFkIjp7Im5hbWUiOiJub25lIn19fV0sWzAsMiwiIiwwLHsibGV2ZWwiOjIsInN0eWxlIjp7ImhlYWQiOnsibmFtZSI6Im5vbmUifX19XSxbMCwzLCIiLDEseyJzdHlsZSI6eyJuYW1lIjoiY29ybmVyIn19XV0=
\begin{tikzcd}
U & U \\
U & M
\arrow[equals, from=1-1, to=1-2]
\arrow[equals, from=1-1, to=2-1]
\arrow["\lrcorner"{anchor=center, pos=0.125}, draw=none, from=1-1, to=2-2]
\arrow["{\eta_U}", from=1-2, to=2-2]
\arrow["{\eta_U}"', from=2-1, to=2-2]
\end{tikzcd}
\end{equation*}
of $\eta_U$ along itself~\cite[Dual of Example~4.4]{maclane:categories} is just $U$ and the second projection $\pi_U\colon U\to U$ of the pullback is the identity on $U$. Thus:
\begin{align*}
&\bar{(U,f)}(U,f)=(U,\eta_U)(U,f)=(U,\pi_Uf)=(U,f)
\end{align*}
This proves \textbf{[R.1]}. 

Now, consider two morphisms $(U,f)\colon M\nto N$ and $(V,g)\colon M\nto P$ and its restriction idempotents $\bar{(U,f)}=(U,\eta_U)$ and $\bar{(V,g)}=(V,\eta_V)$. The pullback of $\eta_U$ along $\eta_V$ is isomorphic to the pullback of $\eta_V$ along $\eta_U$, by the canonical symmetry $U\times_MV\cong V\times_MU$. Thus, the isomorphism classes represented by $\bar{(U,f)}\bar{(V,g)}$ and $\bar{(V,g)}\bar{(U,f)}$ coincide. This proves \textbf{[R.2]}.
\par To prove \textbf{[R.3]}, consider again $(U,f)\colon M\to N$ and $(V,g)\colon M\to P$. First, let us compute the composition $\bar{(U,f)}(V,g)$. Consider the pullback diagram:
\begin{equation*}
% https://q.uiver.app/#q=WzAsNCxbMCwwLCJVXFx0aW1lc19NViJdLFswLDEsIlUiXSxbMSwwLCJWIl0sWzEsMSwiTSJdLFsxLDMsIlxcZXRhX1UiLDJdLFsyLDMsIlxcZXRhX1YiXSxbMCwxLCJcXHBpX1UiLDJdLFswLDIsIlxccGlfViJdLFswLDMsIiIsMSx7InN0eWxlIjp7Im5hbWUiOiJjb3JuZXIifX1dXQ==
\begin{tikzcd}
{U\times_MV} & V \\
U & M
\arrow["{\pi_V}", from=1-1, to=1-2]
\arrow["{\pi_U}"', from=1-1, to=2-1]
\arrow["\lrcorner"{anchor=center, pos=0.125}, draw=none, from=1-1, to=2-2]
\arrow["{\eta_V}", from=1-2, to=2-2]
\arrow["{\eta_U}"', from=2-1, to=2-2]
\end{tikzcd}
\end{equation*}
Thus:
\begin{align*}
&\bar{(U,f)}(V,g)=(U\times_MV,\pi_Vg)
\end{align*}
Therefore:
\begin{align*}
&\bar{\bar{(U,f)}(V,g)}=\bar{(U\times_MV,\pi_Vg)}=(U\times_MV,\eta_{U\times_MV})
\end{align*}
However, $\eta_{U\times_MV}=\pi_V\eta_V$. Now, recall that $\bar{(U,f)}\bar{(V,g)}$ is equal to $(U\times_MV,\pi_V\eta_V)$. Thus, $\bar{(U,f)}\bar{(V,g)}=\bar{\bar{(U,f)}(V,g)}$, and so \textbf{[R.3]} holds.

Finally, consider two composable morphisms $(U,f)\colon M\nto N$ and $(V,g)\colon N\nto P$. Then by the following pullback diagram
\begin{equation*}
% https://q.uiver.app/#q=WzAsNCxbMCwwLCJVXFx0aW1lc19OViJdLFswLDEsIlUiXSxbMSwwLCJWIl0sWzEsMSwiTiJdLFsxLDMsImYiLDJdLFsyLDMsIlxcZXRhX1YiXSxbMCwxLCJcXHBpX1UiLDJdLFswLDIsIlxccGlfViJdLFswLDMsIiIsMSx7InN0eWxlIjp7Im5hbWUiOiJjb3JuZXIifX1dXQ==
\begin{tikzcd}
{U\times_NV} & V \\
U & N
\arrow["{\pi_V}", from=1-1, to=1-2]
\arrow["{\pi_U}"', from=1-1, to=2-1]
\arrow["\lrcorner"{anchor=center, pos=0.125}, draw=none, from=1-1, to=2-2]
\arrow["{\eta_V}", from=1-2, to=2-2]
\arrow["f"', from=2-1, to=2-2]
\end{tikzcd}
\end{equation*}
we have that $(U,f)\bar{(V,g)} = (U\times_NV,\pi_Vg)$. On the other hand, we first compute $\bar{(U,f)(V,g)}$ to be:
\begin{align*}
&\bar{(U,f)(V,g)}=\bar{(U\times_NV,\pi_Vg)}=(U\times_NV,\eta_{U\times_NV})
\end{align*}
However, $\eta_{U\times_NV}=\pi_V\eta_V=\pi_Uf$. Now, let us compose $\bar{(U,f)(V,g)}$ with $(U,f)$. First, consider the following pullback diagram:
\begin{equation*}
% https://q.uiver.app/#q=WzAsNixbMiwxLCJNIl0sWzEsMSwiVSJdLFsyLDAsIlUiXSxbMSwwLCJVIl0sWzAsMSwiVVxcdGltZXNfTlYiXSxbMCwwLCJVXFx0aW1lc19OViJdLFsxLDAsIlxcZXRhX1UiLDJdLFsyLDAsIlxcZXRhX1UiXSxbMywxLCIiLDIseyJsZXZlbCI6Miwic3R5bGUiOnsiaGVhZCI6eyJuYW1lIjoibm9uZSJ9fX1dLFszLDIsIiIsMCx7ImxldmVsIjoyLCJzdHlsZSI6eyJoZWFkIjp7Im5hbWUiOiJub25lIn19fV0sWzMsMCwiIiwxLHsic3R5bGUiOnsibmFtZSI6ImNvcm5lciJ9fV0sWzUsNCwiIiwyLHsibGV2ZWwiOjIsInN0eWxlIjp7ImhlYWQiOnsibmFtZSI6Im5vbmUifX19XSxbNSwxLCIiLDAseyJzdHlsZSI6eyJuYW1lIjoiY29ybmVyIn19XSxbNCwxLCJcXHBpX1UiLDJdLFs1LDMsIlxccGlfVSJdXQ==
\begin{tikzcd}
{U\times_NV} & U & U \\
{U\times_NV} & U & M
\arrow["{\pi_U}", from=1-1, to=1-2]
\arrow[equals, from=1-1, to=2-1]
\arrow["\lrcorner"{anchor=center, pos=0.125}, draw=none, from=1-1, to=2-2]
\arrow[equals, from=1-2, to=1-3]
\arrow[equals, from=1-2, to=2-2]
\arrow["\lrcorner"{anchor=center, pos=0.125}, draw=none, from=1-2, to=2-3]
\arrow["{\eta_U}", from=1-3, to=2-3]
\arrow["{\pi_U}"', from=2-1, to=2-2]
\arrow["{\eta_U}"', from=2-2, to=2-3]
\end{tikzcd}
\end{equation*}
Therefore:
\begin{align*}
&\bar{(U,f)(V,g)}(U,f)=(U\times_NV,\pi_Uf)=(U\times_NV,\pi_V\eta_V)=(U,f)\bar{(V,g)}
\end{align*}
This proves \textbf{[R.4]}.
\end{proof}

\begin{remark}
\label{remark:examples-local-to-restriction}
Examples~\ref{example:trivial-restriction}~-~\ref{example:rings-restriction} are all examples of restriction categories $\R[\C]$, where $\C$ runs through Examples~\ref{example:trivial-local}~-~\ref{example:rings-local}, respectively. Of course, the main result of this section tells that every restriction category is essentially of this form. 
\end{remark}

We now want to extend the assignments which send a restriction category $\X$ to the local category $\L[\X]$ and a local category $\C$ to the restriction category $\R[\C]$ to two $2$-functors. We start by introducing the appropriate notions of functors between local categories.

\begin{definition}
\label{definition:local-functors}
A \textbf{local functor} from a local category $\C$ to another local category $\C'$ consists of a functor $F\colon\C\to\C'$ which preserves the local structure strictly, that is, $F\Las M=\Las'FM$ and $F\eta_M=\eta'_{FM}$, for each $M\in\C$, and that preserves each pullback of $\eta_N$ along every morphism $f\colon M\to\Las N$.
\end{definition}

Next, we introduce $2$-morphisms between local functors.

\begin{definition}
\label{definition:total-local-natural-transformation}
A \textbf{local natural transformation} from a local functor $F\colon\C\to\C'$ to another local functor $G\colon\C\to\C'$ is a natural transformation $\varphi_M\colon FM\to GM$. A \textbf{total natural transformation} from a local functor $F\colon\C\to\C'$ to another local functor $G\colon\C\to\C'$ is a local natural transformation $\varphi_M\colon FM\to GM$ such that for every $M\in\C$, the naturality square diagram
\begin{equation*}
% https://q.uiver.app/#q=WzAsNCxbMCwwLCJGTSJdLFsxLDAsIkdNIl0sWzAsMSwiRlxcTGFzIE0iXSxbMSwxLCJHXFxMYXMgTSJdLFswLDIsIkZcXGV0YV9NIiwyXSxbMSwzLCJHXFxldGFfTSJdLFswLDEsIlxcdmFycGhpX00iXSxbMiwzLCJcXHZhcnBoaV97XFxMYXMgTX0iLDJdXQ==
\begin{tikzcd}
FM & GM \\
{F\Las M} & {G\Las M}
\arrow["{\varphi_M}", from=1-1, to=1-2]
\arrow["{F\eta_M}"', from=1-1, to=2-1]
\arrow["{G\eta_M}", from=1-2, to=2-2]
\arrow["{\varphi_{\Las M}}"', from=2-1, to=2-2]
\end{tikzcd}
\end{equation*}
of $\eta_M$ is a pullback diagram.
\end{definition}

Local categories, local functors, and local natural transformations form a $2$-category denoted by $\LCat_\lax$. Furthermore, the $2$-subcategory of $\LCat_\lax$ with total natural transformations is denoted by $\LCat$.

\par Unsurprisingly, the definitions of local functors and local and total natural transformations are designed to match the notions of restriction functors and restriction and total natural transformations via the $\L$ construction. For starters, recall the definitions of restriction functors and restriction/total natural transformations.

\begin{definition}~\cite[Section~2.2.1]{cockett:restrictionI}
\label{definition:restriction-functors}
A \textbf{restriction functor} from a restriction category $\X$ to a restriction category $\X'$ consists of a functor $F\colon\X\to\X'$ which preserves restriction idempotents $F(\bar f)=\bar{F(f)}$. 
\end{definition}

The literature offers two distinct flavours of $2$-morphisms between restriction functors~\cite[]{cockett:restrictionI}.

\begin{definition}~\cite[Section~2.2.2]{cockett:restrictionI}~\cite[Remark~4.5]{cockett:restrictionII}
\label{definition:total-restriction-natural-transformation}
A \textbf{total natural transformation} from a restriction functor $F\colon\X\to\X'$ to another restriction functor $G\colon\X\to\X'$ consists of a natural transformation $\varphi_A\colon FA\to GA$ such that for each $A$, $\varphi_A$ is total in $\X'$. A \textbf{restriction natural transformation} from a restriction functor $F\colon\X\to\X'$ to another restriction functor $G\colon\X\to\X'$ consists of a collection of total morphisms $\varphi_A\colon FA\to GA$, such that for any morphism $f\colon A\to B$ of $\X$, the following diagram commutes:
\begin{equation*}
% https://q.uiver.app/#q=WzAsNSxbMCwwLCJGQSJdLFsxLDAsIkZBIl0sWzIsMCwiR0EiXSxbMCwxLCJGQiJdLFsyLDEsIkdCIl0sWzAsMSwiRlxcYmFyIGYiXSxbMSwyLCJcXHZhcnBoaV9BIl0sWzMsNCwiXFx2YXJwaGlfQiIsMl0sWzAsMywiRmYiLDJdLFsyLDQsIkdmIl1d
\begin{tikzcd}
FA & FA & GA \\
FB && GB
\arrow["{F\bar f}", from=1-1, to=1-2]
\arrow["Ff"', from=1-1, to=2-1]
\arrow["{\varphi_A}", from=1-2, to=1-3]
\arrow["Gf", from=1-3, to=2-3]
\arrow["{\varphi_B}"', from=2-1, to=2-3]
\end{tikzcd}
\end{equation*}
\end{definition}

Restriction categories, restriction functors, and restriction natural transformations form a $2$-category denoted by $\RCat_\lax$. Furthermore, the $2$-subcategory of $\RCat_\lax$ with total natural transformations is denoted by $\RCat$.

\par Now that we have introduced the appropriate $2$-categorical structure for local categories, we can extend the operation $\L$, which sends every restriction category $\X$ to a local category $\L[\X]$ to a $2$-functor. Consider a restriction functor $F\colon\X\to\X'$ of restriction categories and let $\L[F]\colon\L[\X]\to\L[\X']$ be the functor which sends an object $(A,a)$ of $\L[\X]$ to $(FA,Fa)$ and a morphism $f\colon(A,a)\to(B,b)$ of $\L[\X]$ to $Ff$. It is immediate that $(FA,Fa)$ is an object of $\L[\X']$ and that $Ff\colon(FA,Fa)\to(FB,Fb)$ is a morphism of $\L[\X']$. In particular, $\bar{Ff}=F\bar f=Fa$, since $F$ preserves restriction idempotents. To show that $\L[F]$ is local, consider $\eta_{(A,a)}\colon(A,a)\to(A,\id_A)$. Then $\L[F](\Las(A,a))=(FA,F\id_A)=\Las'(\L[F](A,a))$. Moreover:
\begin{align*}
&\L[F](\eta_{(A,a)})=Fa=\eta'_{(FA,Fa)}=\eta'_{\L[F](A,a)}
\end{align*}
Finally, $\L[F]$ preserves the pullback of Lemma~\ref{lemma:eta-display} on the nose. Now, consider a restriction natural transformation $\varphi_A\colon FA\to GA$ between two restriction functors $F,G\colon\X\to\X'$ and define:
\begin{align*}
&\L[\varphi]_{(A,a)}\=(FA,Fa)\xrightarrow{(Fa)\varphi_A}(GA,Ga)
\end{align*}

\begin{lemma}
\label{lemma:lax-restriction-natural-transformations}
Given a restriction natural transformation $\varphi_A\colon FA\to GA$ between two restriction functors $F,G\colon\X\to\X'$, the transformation $\L[\varphi]_{(A,a)}\=(Fa)\varphi_A$ is a local natural transformation from $\L[F]$ to $\L[G]$.
\end{lemma}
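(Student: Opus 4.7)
The plan is to verify two things: (i) that for each object $(A,a)$ of $\L[\X]$, the proposed $\L[\varphi]_{(A,a)} \= (Fa)\varphi_A$ is a bona fide morphism from $(FA,Fa)$ to $(GA,Ga)$ in $\L[\X']$, and (ii) that the family $\{\L[\varphi]_{(A,a)}\}$ is natural in $(A,a)$. No extra condition beyond ordinary naturality is needed, since a local natural transformation is, by Definition~\ref{definition:total-local-natural-transformation}, just a natural transformation of the underlying functors.

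For (i), recall that a morphism $h\colon(FA,Fa)\to(GA,Ga)$ of $\L[\X']$ must satisfy $\bar h = Fa$ and $h\.(Ga) = h$. For the first condition, I would apply Lemma~\ref{lemma:technical-restriction}~(c) together with the fact that $\varphi_A$ is total (so $\bar{\varphi_A}=\id$) to compute $\overline{(Fa)\varphi_A} = \overline{(Fa)\bar{\varphi_A}} = \overline{Fa} = Fa$, using that $Fa$ is a restriction idempotent because $F$ is a restriction functor and $a=\bar a$. For the second condition, I would invoke the defining square of a restriction natural transformation from Definition~\ref{definition:total-restriction-natural-transformation} instantiated at the endomorphism $a\colon A\to A$: since $\bar a=a$, that square reads $(Fa)\varphi_A(Ga) = (Fa)\varphi_A$, which is exactly what is needed.

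For (ii), given a morphism $f\colon(A,a)\to(B,b)$ of $\L[\X]$ — so $\bar f=a$ and $fb=f$ — I must check the naturality square
\begin{equation*}
(Ff)\.\L[\varphi]_{(B,b)} \;=\; \L[\varphi]_{(A,a)}\.(Gf),
\end{equation*}
that is, $Ff\.(Fb)\varphi_B = (Fa)\varphi_A\.Gf$. The left-hand side simplifies using functoriality and $fb=f$ to $F(fb)\varphi_B = (Ff)\varphi_B$. The right-hand side, using $a=\bar f$, is $(F\bar f)\varphi_A(Gf)$, which by the restriction natural transformation square of Definition~\ref{definition:total-restriction-natural-transformation} applied to $f$ equals $(Ff)\varphi_B$. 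Thus both sides agree.

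The only conceptually non-routine step is recognizing that the two required morphism conditions for $\L[\varphi]_{(A,a)}$ and the naturality square are all instances of the same restriction natural transformation square (applied to $a\colon A\to A$ in case (i) and to $f\colon A\to B$ in case (ii)). Once this is observed, the verification reduces to short calculations using \textbf{[R.1]}--\textbf{[R.4]} and Lemma~\ref{lemma:technical-restriction}; I do not anticipate any substantive obstacle.
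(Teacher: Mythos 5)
Your proposal is correct and follows essentially the same route as the paper's proof: well-definedness of $(Fa)\varphi_A$ via Lemma~\ref{lemma:technical-restriction}~(c) and totality of $\varphi_A$ for the restriction condition, the restriction-naturality square instantiated at $a\colon A\to A$ for the condition $(Fa)\varphi_A(Ga)=(Fa)\varphi_A$, and the same square at $f$ together with $fb=f$ and $\bar f=a$ for naturality. No gaps.
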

\begin{proof}
First, let us prove that $(Fa)\varphi_A$ is a well-defined morphism of $\L[\X']$:
\begin{align*}
\bar{(Fa)\varphi_A}&=~\bar{(Fa)\bar{\varphi_A}}  \Tag{\text{Lemma}~\ref{lemma:technical-restriction}~(c)}\\
&=~\bar{Fa}                                      \Tag{\varphi\text{ total}}\\
&=~Fa                                            \Tag{F\text{ preserves restriction idempotents}}
\end{align*}
Moreover, by the restriction naturality applied to $a\colon A\to A$ we have that: 
\begin{align*}
&(Fa)\varphi_A(Ga)=(Fa)\varphi_A
\end{align*}
This shows that $(Fa)\varphi_A\colon(FA,Fa)\to(GA,Ga)$ is morphism of $\L[\X']$. Now, we want to prove that $\L[\varphi]$ is natural. Consider a morphism $f\colon(A,a)\to(B,b)$ of $\L[\X]$. Let us compute:
\begin{align*}
(\L[F](f))(\L[\varphi]_{(B,b)})&=~(Ff)(Fb)\varphi_B\\
&=~(Ff)\varphi_B                     \Tag{fb=f}\\
&=~(F\bar f)\varphi_A(Gf)            \Tag{(Ff)\varphi_B=(F\bar f)\varphi_A(Gf)}\\
&=~(Fa)\varphi_A(Gf)                 \Tag{\bar f=a}\\
&=~(\L[\varphi]_{(A,a)})(\L[G]f)
\end{align*}
This proves that $\L[\varphi]$ is a local natural transformation as desired. 
\end{proof}

\begin{lemma}
Given a total natural transformation $\varphi_A\colon FA\to GA$ between two restriction functors $F,G\colon\X\to\X'$, the transformation $\L[\varphi]_{(A,a)}\=(Fa)\varphi_A$ is a total natural transformation from $\L[F]$ to $\L[G]$.
\end{lemma}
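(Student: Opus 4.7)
The plan is to leverage the preceding lemma together with Lemma~\ref{lemma:eta-display} to identify the required naturality square with a canonical pullback. As a preliminary observation, every total natural transformation between restriction functors is automatically a restriction natural transformation: applying ordinary naturality of $\varphi$ at $f$ together with \textbf{[R.1]} gives $(F\bar f)\,\varphi_A\,(Gf) = (F\bar f)(Ff)\,\varphi_B = F(\bar f f)\,\varphi_B = (Ff)\,\varphi_B$. Hence, by Lemma~\ref{lemma:lax-restriction-natural-transformations}, $\L[\varphi]_{(A,a)} = (Fa)\varphi_A$ is already a local natural transformation from $\L[F]$ to $\L[G]$, so what remains is to verify the extra pullback condition of Definition~\ref{definition:total-local-natural-transformation}.

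Unpacking, for each $(A,a)$ the naturality square of $\eta_{(A,a)} = a$ under $\L[\varphi]$ has corners $(FA,Fa)$, $(GA,Ga)$, $(FA,\id_{FA})$, $(GA,\id_{GA})$; its top arrow is $(Fa)\varphi_A$, its bottom arrow is $\L[\varphi]_{\Las(A,a)} = (F\id_A)\varphi_A = \varphi_A$, and its vertical arrows are $\L[F](\eta_{(A,a)}) = Fa$ and $\L[G](\eta_{(A,a)}) = Ga$. The plan is then to apply Lemma~\ref{lemma:eta-display} to the morphism $\varphi_A \colon (FA,\id_{FA}) \to \Las(GA,Ga)$; this presents the pullback of $\eta_{(GA,Ga)}$ along $\varphi_A$ as the object $(FA,\bar{\varphi_A\cdot Ga})$, with left leg $\bar{\varphi_A Ga}$ and top leg $\varphi_A Ga$.

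The crux is the simplification $\bar{\varphi_A Ga} = Fa$. Ordinary naturality of $\varphi$ at $a$ rewrites $\varphi_A Ga = (Fa)\varphi_A$; Lemma~\ref{lemma:technical-restriction}(c) then gives $\bar{(Fa)\varphi_A} = \bar{(Fa)\bar{\varphi_A}}$; totality of $\varphi_A$ collapses $\bar{\varphi_A}$ to the identity; and restriction-preservation by $F$ yields $\bar{Fa} = F\bar a = Fa$. Under this identification the canonical pullback from Lemma~\ref{lemma:eta-display} is literally our naturality square (its top arrow becomes $\varphi_A Ga = (Fa)\varphi_A$, and its left arrow becomes $\bar{\varphi_A Ga} = Fa = \L[F](\eta_{(A,a)})$), which finishes the verification. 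I do not anticipate any real obstacle: the whole argument pivots on the cancellation $\bar{\varphi_A} = \id_{FA}$, which is exactly what totality of $\varphi$ buys.
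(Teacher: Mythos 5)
Your proof is correct and follows essentially the same route as the paper's: reduce to the restriction-natural case to get naturality of $\L[\varphi]$ via Lemma~\ref{lemma:lax-restriction-natural-transformations}, then identify the naturality square of $\eta_{(A,a)}$ with the canonical pullback of Lemma~\ref{lemma:eta-display} at $f=\varphi_A$ using $(Fa)\varphi_A=\varphi_A Ga$. Your explicit computation $\bar{\varphi_A Ga}=Fa$ is a step the paper leaves implicit, and it is exactly right.
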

\begin{proof}
We have already proved in the previous lemma that $(Fa)\varphi_A\colon(FA,Fa)\to(GA,Ga)$ is a well-defined morphism of $\L[\X']$. Moreover, since $\varphi_A$ is natural, for any morphism $f\colon(A,a)\to(B,b)$:
\begin{align*}
&(Ff)\varphi_B=\varphi_A(Gf)=\varphi_AG(\bar f)(Gf)=(F\bar f)\varphi_A(Gf)
\end{align*}
Thus, $\varphi$ is also a restriction natural transformation; thus, by the previous lemma, $\L[\varphi]$ is natural. To conclude, we now need to prove that the following is a pullback diagram:
\begin{equation*}
% https://q.uiver.app/#q=WzAsNCxbMCwwLCIoRkEsRmEpIl0sWzEsMCwiKEdBLEdhKSJdLFswLDEsIihGQSxGXFxpZF9BKSJdLFsxLDEsIihHQSxHXFxpZF9BKSJdLFswLDEsIlxcTFtcXHZhcnBoaV1feyhBLGEpfSJdLFswLDIsIlxcTFtGXVxcZXRhX3soQSxhKX0iLDJdLFsxLDMsIlxcTFtHXVxcZXRhX3soQSxhKX0iXSxbMiwzLCJcXExbXFx2YXJwaGldX3soQSxcXGlkX0EpfSIsMl1d
\begin{tikzcd}
{(FA,Fa)} & {(GA,Ga)} \\
{(FA,F\id_A)} & {(GA,G\id_A)}
\arrow["{\L[\varphi]_{(A,a)}}", from=1-1, to=1-2]
\arrow["{\L[F]\eta_{(A,a)}}"', from=1-1, to=2-1]
\arrow["{\L[G]\eta_{(A,a)}}", from=1-2, to=2-2]
\arrow["{\L[\varphi]_{(A,\id_A)}}"', from=2-1, to=2-2]
\end{tikzcd}
\end{equation*}
However, this corresponds to the diagram
\begin{equation*}
% https://q.uiver.app/#q=WzAsNCxbMCwwLCIoRkEsRmEpIl0sWzEsMCwiKEdBLEdhKSJdLFswLDEsIihGQSxcXGlkX3tGQX0pIl0sWzEsMSwiKEdBLFxcaWRfe0dBfSkiXSxbMCwxLCJGYVxcdmFycGhpX0EiXSxbMCwyLCJGYSIsMl0sWzEsMywiR2EiXSxbMiwzLCJcXHZhcnBoaV9BIiwyXV0=
\begin{tikzcd}
{(FA,Fa)} & {(GA,Ga)} \\
{(FA,\id_{FA})} & {(GA,\id_{GA})}
\arrow["{(Fa)\varphi_A}", from=1-1, to=1-2]
\arrow["Fa"', from=1-1, to=2-1]
\arrow["Ga", from=1-2, to=2-2]
\arrow["{\varphi_A}"', from=2-1, to=2-2]
\end{tikzcd}
\end{equation*}
which, by the naturality of $\varphi_A$ which implies $(Fa)\varphi_A=\varphi_AGa$, coincides with the pullback diagram of Lemma~\ref{lemma:eta-display} where $f=\varphi_A$, $a$ is replaced by $Ga$, $b$ by $\id_{Fa}$, and $\bar{fa}=\bar{\varphi_AGa}$.
\end{proof}

Bringing all of this together gives us our $2$-functor from restriction categories to local categories. 

\begin{proposition}
\label{proposition:L-functoriality}
$\L_\lax\colon\RCat_\lax\to\LCat_\lax$ and $\L\colon\RCat\to\LCat$ are $2$-functors. 
\end{proposition}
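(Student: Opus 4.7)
The plan is to assemble the pieces already established in the excerpt and then verify the remaining strict $2$-functoriality axioms. Proposition~\ref{proposition:L-construction} handles the object part, the paragraph immediately preceding the statement shows that $\L[F]$ is a well-defined local functor whenever $F$ is a restriction functor, and Lemma~\ref{lemma:lax-restriction-natural-transformations} together with the subsequent lemma show that $\L[\varphi]$ is a local (respectively total) natural transformation whenever $\varphi$ is a restriction (respectively total) natural transformation. So the remaining task is to check that $\L$ strictly preserves identities and composition of $1$-cells as well as identities, vertical composition, and whiskering of $2$-cells.

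For $1$-cells, $\L[\id_\X]$ sends $(A,a)$ to $(A,a)$ and $f$ to $f$, hence equals $\id_{\L[\X]}$, and $\L[F\circ G]=\L[F]\circ\L[G]$ follows by unpacking both sides on objects and morphisms. For identity $2$-cells, I compute
\begin{align*}
\L[\id_F]_{(A,a)}=(Fa)(\id_F)_A=(Fa)\id_{FA}=Fa=\id_{(FA,Fa)}=\id_{\L[F](A,a)}.
\end{align*}
Whiskering is handled by direct computation: for a restriction functor $H\colon\X'\to\X''$ one has
\begin{align*}
\L[\varphi * H]_{(A,a)}=(HFa)(H\varphi_A)=H((Fa)\varphi_A)=\L[H](\L[\varphi]_{(A,a)}),
\end{align*}
and the left whiskering $\L[H*\varphi]$ against a restriction functor $H\colon\CC\to\X$ is verified symmetrically by comparing $(F(Hc))\varphi_{HC}$ to itself.

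The main obstacle is the preservation of vertical composition, precisely because the assignment $\L[\varphi]_{(A,a)}=(Fa)\varphi_A$ carries the extra prefactor $Fa$. Given $\varphi\colon F\Rightarrow G$ and $\psi\colon G\Rightarrow H$ in $\RCat_\lax$, on one side I have $\L[\varphi\psi]_{(A,a)}=(Fa)\varphi_A\psi_A$, while on the other the vertical composite in $\LCat_\lax$ gives $\L[\varphi]_{(A,a)}\L[\psi]_{(A,a)}=(Fa)\varphi_A(Ga)\psi_A$. These agree precisely when $(Fa)\varphi_A(Ga)=(Fa)\varphi_A$, which follows from the restriction naturality square of $\varphi$ applied to the restriction idempotent $f=a\colon A\to A$: since $\bar a=a$, the identity $(F\bar f)\varphi_A(Gf)=(Ff)\varphi_A$ specialises to exactly what is required. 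The same verification, restricted to total natural transformations, simultaneously yields $2$-functoriality of $\L\colon\RCat\to\LCat$.
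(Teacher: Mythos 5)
Your proposal is correct, and it actually does more than the paper, which offers no proof of this proposition at all: the statement is introduced only with the sentence ``Bringing all of this together gives us our $2$-functor,'' leaving the strict $2$-functoriality axioms unverified. Your assembly of the pieces (Proposition~\ref{proposition:L-construction} for objects, the preceding paragraph for $1$-cells, Lemma~\ref{lemma:lax-restriction-natural-transformations} and its companion for $2$-cells) matches the paper's intent, and the additional checks you carry out are exactly the ones the paper leaves implicit. In particular, you correctly isolate the only delicate point: because $\L[\varphi]_{(A,a)}=(Fa)\varphi_A$ carries the prefactor $Fa$, preservation of vertical composition reduces to the identity $(Fa)\varphi_A(Ga)=(Fa)\varphi_A$, and your derivation of this from the restriction naturality square evaluated at the restriction idempotent $a$ (using $\bar a=a$) is exactly right. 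The whiskering and identity computations are also correct as written, with the left whiskering case being a literal coincidence of expressions as you note. The one thing worth making explicit if this were written out in full is that the vertical composite $\varphi_A\psi_A$ of two restriction natural transformations is again one (so that the composite lives in $\RCat_\lax$ before $\L$ is applied), but that is a fact about $\RCat_\lax$ being a $2$-category rather than about $\L$, and the paper takes it for granted as well.
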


We now turn towards extending $\R$ to a $2$-functor. Consider a local functor $F\colon\C\to\C'$ between two local categories $\C$ and $\C'$. Define $\R[F]\colon\R[\C]\to\R[\C']$ as the functor which sends an object $M$ of $\R[\C]$ to $FM$ and each $(U,f)\colon M\to N$ to $(FU,Ff)\colon FM\to FN$. Moreover, since $F(\eta_U)=\eta'_{FU}$ for each $U\in\C$, $\R[F]$ preserves the restriction idempotents. Moreover, since $F$ preserves the pullbacks of the maximal inclusions along every morphism, $\R[F]$ preserves composition. Thus, $\R[F]$ is a restriction functor.
\par Consider now a local natural transformation $\varphi_M\colon FM\to GM$, natural in $M$, between two local functors $F,G\colon\C\to\C'$. Define $\R[\varphi]_M=(FM,\varphi_M)\colon FM\nto FN$. First, notice that $\R[\varphi]$ is total in $\C'$, since $\bar{(FM,\varphi_M)}=(FM,\eta_{FM}')$ and that local functors preserve total objects, $\eta_{FM}'=F\eta_M=F\id_M=\id_{FM}$.

\begin{lemma}
\label{lemma:lax-local-natural-transformations}
Given a local natural transformation $\varphi_M\colon FM\to GM$ between two local functors $F,G\colon\C\to\C'$, the transformation $\R[\varphi]_M\=(FM,\varphi_M)$ is a restriction natural transformation from $\R[F]$ to $\R[G]$.
\end{lemma}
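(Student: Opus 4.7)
The proof is essentially a computation in $\R[\C']$: for a morphism $(U,f)\colon M\nto N$ of $\R[\C]$, I need to verify the restriction-naturality equation
\[
\overline{\R[F](U,f)}\cdot\R[\varphi]_M\cdot\R[G](U,f)=\R[F](U,f)\cdot\R[\varphi]_N,
\]
since totality of the components has already been argued in the paragraph preceding the lemma. The plan is to unfold each composite as a span in $\C'$ and reduce both sides to the same span using the naturality of $\varphi$.

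Three preliminary facts drive the calculation. Since $F$ and $G$ are local functors, they preserve $\eta$ and total objects, so $F\eta_U=\eta'_{FU}$ and $G\eta_U=\eta'_{GU}$, while $\eta'_{FM}=\id_{FM}$ and $\eta'_{FN}=\id_{FN}$ because $M$ and $N$ are total. By \textbf{[L.2]}, each $\eta'_{GU}=G\eta_U$ is monic in $\C'$. Finally, the pullback of a monic along an arrow that already factors through it is trivial: the apex is the source of the factoring arrow, and the projection onto the monic is the factoring morphism itself.

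For the right-hand side, I compose $(FU,Ff)\colon FM\nto FN$ with $(FN,\varphi_N)\colon FN\nto GN$; the required pullback of $\id_{FN}$ along $Ff$ is trivial, so the composite span is $(FU,Ff\cdot\varphi_N)$, and the naturality of $\varphi$ at $f$ identifies the second leg with $\varphi_U\cdot Gf$. For the left-hand side, I proceed in two stages. First, $\overline{\R[F](U,f)}\cdot\R[\varphi]_M$ is the composite of $(FU,F\eta_U)$ with $(FM,\varphi_M)$; the pullback of $\id_{FM}$ along $F\eta_U$ is trivial, yielding $(FU,F\eta_U\cdot\varphi_M)$, and the naturality of $\varphi$ at $\eta_U$ rewrites this as $(FU,\varphi_U\cdot G\eta_U)$. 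Second, composing with $(GU,Gf)$ requires the pullback of $G\eta_U$ along $\varphi_U\cdot G\eta_U$; since $G\eta_U$ is monic and the arrow factors through it via $\varphi_U$, this pullback is $FU$ itself with projection $\varphi_U$, so the resulting span is $(FU,\varphi_U\cdot Gf)$, coinciding with the right-hand side.

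The only genuinely delicate step is the last pullback simplification, which rests on the monicness of $G\eta_U$ and the fact that the morphism into $GM$ already factors through it; every remaining manipulation is either a definition of span composition or an instance of the naturality of $\varphi$. This establishes the restriction-naturality square, and hence $\R[\varphi]$ is a restriction natural transformation from $\R[F]$ to $\R[G]$.
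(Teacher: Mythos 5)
Your proposal is correct and follows essentially the same route as the paper's proof: both sides are unfolded as span composites, the trivial pullbacks along the identity maximal inclusions of the total objects $FM$, $FN$ are used, naturality of $\varphi$ at $\eta_U$ and at $f$ is invoked in the same places, and the key step is the same observation that the pullback of the monic $\eta'_{GU}=G\eta_U$ along $\varphi_U\eta'_{GU}$ is trivial with projection $\varphi_U$. No gaps.
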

\begin{proof}
Consider a morphism $(U,f)\colon M\nto N$ of $\R[\C]$. We want to show that the following diagram commutes:
\begin{equation*}
% https://q.uiver.app/#q=WzAsNSxbMCwwLCJGTSJdLFswLDEsIkZOIl0sWzEsMCwiRk0iXSxbMiwwLCJHTSJdLFsyLDEsIkdOIl0sWzAsMSwiXFxSW0ZdKFUsZikiLDIseyJzdHlsZSI6eyJib2R5Ijp7Im5hbWUiOiJiYXJyZWQifX19XSxbMCwyLCJcXFJbRl1cXGJhcnsoVSxmKX0iLDAseyJzdHlsZSI6eyJib2R5Ijp7Im5hbWUiOiJiYXJyZWQifX19XSxbMiwzLCJcXFJbXFx2YXJwaGldX00iLDAseyJzdHlsZSI6eyJib2R5Ijp7Im5hbWUiOiJiYXJyZWQifX19XSxbMyw0LCJcXFJbR10oVSxmKSIsMCx7InN0eWxlIjp7ImJvZHkiOnsibmFtZSI6ImJhcnJlZCJ9fX1dLFsxLDQsIlxcUltcXHZhcnBoaV1fTiIsMix7InN0eWxlIjp7ImJvZHkiOnsibmFtZSI6ImJhcnJlZCJ9fX1dXQ==
\begin{tikzcd}
FM & FM & GM \\
FN && GN
\arrow["{\R[F]\bar{(U,f)}}"{inner sep=.8ex}, "\shortmid"{marking}, from=1-1, to=1-2]
\arrow["{\R[F](U,f)}"'{inner sep=.8ex}, "\shortmid"{marking}, from=1-1, to=2-1]
\arrow["{\R[\varphi]_M}"{inner sep=.8ex}, "\shortmid"{marking}, from=1-2, to=1-3]
\arrow["{\R[G](U,f)}"{inner sep=.8ex}, "\shortmid"{marking}, from=1-3, to=2-3]
\arrow["{\R[\varphi]_N}"'{inner sep=.8ex}, "\shortmid"{marking}, from=2-1, to=2-3]
\end{tikzcd}
\end{equation*}
Let us start by composing $\R[F](U,f)$ with $\R[\varphi]_N$. To do so, we have the following pullback diagram:
\begin{equation*}
% https://q.uiver.app/#q=WzAsNCxbMCwwLCJGVSJdLFswLDEsIkZOIl0sWzEsMCwiRlUiXSxbMSwxLCJGTiJdLFsxLDMsIiIsMCx7ImxldmVsIjoyLCJzdHlsZSI6eyJoZWFkIjp7Im5hbWUiOiJub25lIn19fV0sWzIsMywiRmYiXSxbMCwxLCJGZiIsMl0sWzAsMiwiIiwwLHsibGV2ZWwiOjIsInN0eWxlIjp7ImhlYWQiOnsibmFtZSI6Im5vbmUifX19XV0=
\begin{tikzcd}
FU & FU \\
FN & FN
\arrow[equals, from=1-1, to=1-2]
\arrow["Ff"', from=1-1, to=2-1]
\arrow["Ff", from=1-2, to=2-2]
\arrow[equals, from=2-1, to=2-2]
\end{tikzcd}
\end{equation*}
Therefore, we get that: 
\begin{align*}
&\left(\R[F](U,f)\right)\R[\varphi]_N=(FU,Ff)(FN,\varphi_N)=(FU,(Ff)\varphi_N)
\end{align*}
Let us now compute the composition $(\R[F]\bar{(U,f)})\R[\varphi_M](\R[G](U,f))$:
\begin{align*}
(\R[F]\bar{(U,f)})\R[\varphi_M](\R[G](U,f))&=~(FU,F\eta_U)(FM,\varphi_M)(GU,Gf)\\
&=~(FU,(F\eta_U)\varphi_M)(GU,Gf)
\end{align*}
However, since $\varphi_M$ is natural in $M$, we can replace $(F\eta_U)\varphi_M$ with $\varphi_UG\eta_U=\varphi_U\eta'_{GU}$. Moreover, since $\eta'_{GU}$ is monic, we compute the following pullback diagram:
\begin{equation*}
% https://q.uiver.app/#q=WzAsNixbMiwxLCJHTSJdLFsxLDEsIkdVIl0sWzIsMCwiR1UiXSxbMCwxLCJGVSJdLFsxLDAsIkdVIl0sWzAsMCwiRlUiXSxbMSwwLCJcXGV0YSdfe0dVfSIsMl0sWzIsMCwiXFxldGEnX3tHVX0iXSxbMywxLCJcXHZhcnBoaV9VIiwyXSxbNCwxLCIiLDIseyJsZXZlbCI6Miwic3R5bGUiOnsiaGVhZCI6eyJuYW1lIjoibm9uZSJ9fX1dLFs0LDIsIiIsMCx7ImxldmVsIjoyLCJzdHlsZSI6eyJoZWFkIjp7Im5hbWUiOiJub25lIn19fV0sWzUsNCwiXFx2YXJwaGlfVSJdLFs1LDMsIiIsMix7ImxldmVsIjoyLCJzdHlsZSI6eyJoZWFkIjp7Im5hbWUiOiJub25lIn19fV0sWzQsMCwiIiwyLHsic3R5bGUiOnsibmFtZSI6ImNvcm5lciJ9fV0sWzUsMSwiIiwyLHsic3R5bGUiOnsibmFtZSI6ImNvcm5lciJ9fV1d
\begin{tikzcd}
FU & GU & GU \\
FU & GU & GM
\arrow["{\varphi_U}", from=1-1, to=1-2]
\arrow[equals, from=1-1, to=2-1]
\arrow["\lrcorner"{anchor=center, pos=0.125}, draw=none, from=1-1, to=2-2]
\arrow[equals, from=1-2, to=1-3]
\arrow[equals, from=1-2, to=2-2]
\arrow["\lrcorner"{anchor=center, pos=0.125}, draw=none, from=1-2, to=2-3]
\arrow["{\eta'_{GU}}", from=1-3, to=2-3]
\arrow["{\varphi_U}"', from=2-1, to=2-2]
\arrow["{\eta'_{GU}}"', from=2-2, to=2-3]
\end{tikzcd}
\end{equation*}
Therefore:
\begin{align*}
&(\R[F]\bar{(U,f)})\R[\varphi_M](\R[G](U,f))=(FU,(F\eta_U)\varphi_M)(GU,Gf)=(FU,\varphi_UGf)
\end{align*}
Finally, by the naturality of $\varphi$, $\varphi_U(Gf)=(Ff)\varphi_N$. Therefore, we conclude that:
\begin{align*}
&(\R[F]\bar{(U,f)})\R[\varphi_M](\R[G](U,f))=(\R[F](U,f))\R[\varphi]_N
\end{align*}
Thus, $\R[\varphi]$ is a restriction natural transformation.
\end{proof}

\begin{lemma}
\label{lemma:lax-total-natural-transformations}
Given a total natural transformation $\varphi_M\colon FM\to GM$ between two local functors $F,G\colon\C\to\C'$, the transformation $\R[\varphi]_M\=(FM,\varphi_M)$ is a total natural transformation from $\R[F]$ to $\R[G]$.
\end{lemma}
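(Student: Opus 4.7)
The plan is to verify the two requirements in the definition of a total natural transformation for $\R[\varphi]$: that each component $\R[\varphi]_M$ is total in $\R[\C']$, and that $\R[\varphi]$ is a natural transformation in the strict sense (with no $F\bar{(U,f)}$ prefix as in the restriction-naturality version proved in Lemma~\ref{lemma:lax-local-natural-transformations}).

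First, I would establish totality of each component. Since $M$ is a total object of $\C$ (so $\Las M = M$ and $\eta_M = \id_M$) and $F$ is a local functor, it follows that $\Las' FM = F\Las M = FM$ and $\eta'_{FM} = F\eta_M = \id_{FM}$. Therefore the restriction idempotent of $\R[\varphi]_M = (FM,\varphi_M)$ is $\bar{(FM,\varphi_M)} = (FM,\eta'_{FM}) = (FM,\id_{FM})$, which is precisely the identity of $FM$ in $\R[\C']$. Hence $\R[\varphi]_M$ is total.

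Next, for naturality in the strict sense, I would compute both sides of the naturality square directly for an arbitrary morphism $(U,f)\colon M\nto N$ of $\R[\C]$. The composition $(\R[F](U,f))\R[\varphi]_N = (FU,Ff)(FN,\varphi_N)$ requires a pullback of $\eta'_{FN}$ along $Ff$; since $N$ is total and $F$ is local, $\eta'_{FN}=\id_{FN}$, so the pullback is trivial and the composite equals $(FU, Ff\cdot\varphi_N)$. On the other side, $\R[\varphi]_M(\R[G](U,f)) = (FM,\varphi_M)(GU,Gf)$ requires a pullback of $\eta'_{GU}$ along $\varphi_M$. Here is where the total-naturality axiom on $\varphi$ crucially enters: applied to the object $U$, and using $\Las U = M$ (since $(U,f)$ is a morphism with source $M$), it asserts that the naturality square of $\eta_U$, whose verticals are $F\eta_U$ and $G\eta_U = \eta'_{GU}$ and whose horizontals are $\varphi_U$ and $\varphi_{\Las U} = \varphi_M$, is a pullback in $\C'$. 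This identifies the pullback object as $FU$ with projection $\varphi_U\colon FU\to GU$, giving $(FM,\varphi_M)(GU,Gf) = (FU,\varphi_U\cdot Gf)$. Finally, the ordinary naturality of $\varphi$ applied to $f\colon U\to N$ yields $\varphi_U\cdot Gf = Ff\cdot\varphi_N$, matching the other side.

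The main obstacle is recognizing that the pullback required by the composition formula in $\R[\C']$ is exactly the pullback furnished by the total-naturality condition on $\varphi$, with the requirement $\Las U = M$ lining up perfectly with the source of the morphism $(U,f)$. Once this identification is made, the calculation closely parallels that of Lemma~\ref{lemma:lax-local-natural-transformations}; the only new input beyond that earlier argument is the pullback property of the naturality square of $\eta_U$, which is precisely what allows us to drop the $F\bar{(U,f)}$ prefix that appeared on the left in the restriction-naturality version.
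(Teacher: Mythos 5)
Your proposal is correct and follows essentially the same route as the paper: totality of each component from $F$ preserving total objects, the composite $(\R[F](U,f))\R[\varphi]_N=(FU,(Ff)\varphi_N)$ via the trivial pullback, and the key step of recognizing the pullback needed to compute $(FM,\varphi_M)(GU,Gf)$ as exactly the pullback square supplied by the total-naturality of $\varphi$ at $U$, followed by ordinary naturality to match the two sides. No gaps.
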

\begin{proof}
We have already commented on the fact that $\R[\varphi]_M$ is a total morphism from $\R[F](M)$ to $\R[G](M)$. To prove that $\R[\varphi]$ is total, we shall show that the following diagram commutes for every morphism $(U,f)\colon M\nto N$ of $\R[\C]$:
\begin{equation*}
% https://q.uiver.app/#q=WzAsNCxbMCwwLCJGTSJdLFsxLDAsIkdNIl0sWzAsMSwiRk4iXSxbMSwxLCJHTiJdLFswLDEsIlxcUltcXHZhcnBoaV1fTSIsMCx7InN0eWxlIjp7ImJvZHkiOnsibmFtZSI6ImJhcnJlZCJ9fX1dLFsyLDMsIlxcUltcXHZhcnBoaV1fTiIsMix7InN0eWxlIjp7ImJvZHkiOnsibmFtZSI6ImJhcnJlZCJ9fX1dLFswLDIsIlxcUltGXShVLGYpIiwyLHsic3R5bGUiOnsiYm9keSI6eyJuYW1lIjoiYmFycmVkIn19fV0sWzEsMywiXFxSW0ddKFUsZikiLDAseyJzdHlsZSI6eyJib2R5Ijp7Im5hbWUiOiJiYXJyZWQifX19XV0=
\begin{tikzcd}
FM & GM \\
FN & GN
\arrow["{\R[\varphi]_M}"{inner sep=.8ex}, "\shortmid"{marking}, from=1-1, to=1-2]
\arrow["{\R[F](U,f)}"'{inner sep=.8ex}, "\shortmid"{marking}, from=1-1, to=2-1]
\arrow["{\R[G](U,f)}"{inner sep=.8ex}, "\shortmid"{marking}, from=1-2, to=2-2]
\arrow["{\R[\varphi]_N}"'{inner sep=.8ex}, "\shortmid"{marking}, from=2-1, to=2-2]
\end{tikzcd}
\end{equation*}
In the previous lemma, we have already computed the composition of $\R[F](U,f)$ with $\R[\varphi]_N$, which turns out to be the morphism $(FU,(Ff)\varphi_N)$. To compute the other composition, notice that, since $\varphi$ is total, the following diagram is a pullback:
\begin{equation*}
% https://q.uiver.app/#q=WzAsNCxbMSwxLCJHTSJdLFswLDEsIkZNIl0sWzEsMCwiR1UiXSxbMCwwLCJGVSJdLFsxLDAsIlxcdmFycGhpX00iLDJdLFsyLDAsIlxcZXRhX1UiXSxbMywxLCJGXFxldGFfVSIsMl0sWzMsMiwiXFx2YXJwaGlfVSJdLFszLDAsIiIsMSx7InN0eWxlIjp7Im5hbWUiOiJjb3JuZXIifX1dXQ==
\begin{tikzcd}
FU & GU \\
FM & GM
\arrow["{\varphi_U}", from=1-1, to=1-2]
\arrow["{F\eta_U}"', from=1-1, to=2-1]
\arrow["\lrcorner"{anchor=center, pos=0.125}, draw=none, from=1-1, to=2-2]
\arrow["{\eta_U}", from=1-2, to=2-2]
\arrow["{\varphi_M}"', from=2-1, to=2-2]
\end{tikzcd}
\end{equation*}
Therefore:
\begin{align*}
&\R[\varphi]_M(\R[G](U,f))=(FM,\varphi_M)(GU,Gf)=(FU,\varphi_U(Gf))
\end{align*}
However, since $\varphi$ is natural, we can replace $\varphi_U(Gf)$ with $(Ff)\varphi_N$ and conclude that $\R[\varphi]$ is a total natural transformation.
\end{proof}

Bringing this together, we obtain our $2$-functor from local categories to restriction categories. 

\begin{proposition}
\label{proposition:R-functoriality}
$\R_\lax\colon\LCat_\lax\to\RCat_\lax$ and $\R\colon\LCat\to\RCat$ are $2$-functors. 
\end{proposition}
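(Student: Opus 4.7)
The plan is to unwind the axioms of a $2$-functor and verify each one. The core facts are already in place: the text preceding this proposition establishes that $\R[F]$ is a restriction functor for each local functor $F$, while Lemmas~\ref{lemma:lax-local-natural-transformations} and~\ref{lemma:lax-total-natural-transformations} establish that $\R[\varphi]$ is a restriction (respectively, total) natural transformation whenever $\varphi$ is a local (respectively, total) natural transformation. Thus it remains only to check preservation of identities and compositions at both the $1$-morphism and $2$-morphism levels, together with compatibility with whiskering; the verification is the same for $\R_\lax$ and $\R$, since totality of $2$-cells is preserved under identities and vertical composition.

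For $1$-morphisms, preservation of identities is immediate from the definition, since $\R[\id_\C](M) = M$ and $\R[\id_\C](U,f) = (U,f)$. For composition, given $F\colon\C\to\C'$ and $G\colon\C'\to\C''$, on objects $\R[GF](M) = GFM = \R[G]\R[F](M)$ and on morphisms $\R[GF](U,f) = (GFU, GFf) = \R[G]\R[F](U,f)$. One further needs to check that the span composition in $\R[\C'']$ computed via $GF$ agrees with that computed by first applying $\R[F]$ and then $\R[G]$; this follows because local functors preserve pullbacks of maximal inclusions (Definition~\ref{definition:local-functors}), so the pullback diagram used to compose in $\R[\C]$ is carried by $GF$ to the corresponding pullback diagram in $\R[\C'']$.

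For $2$-morphisms, the identity local natural transformation $\id_F$ has components $\id_{FM}$, and since $FM$ is a total object of $\C'$ one has $\R[\id_F]_M = (FM, \id_{FM})$, which is precisely the identity morphism on $FM$ in $\R[\C']$. For vertical composition, let $\varphi\colon F\Rightarrow G$ and $\psi\colon G\Rightarrow H$, so that the vertical composite $\varphi\psi$ has components $\varphi_M\psi_M$ in diagrammatic notation. The composite $\R[\varphi]_M \R[\psi]_M$ is obtained by composing the spans $(FM,\varphi_M)$ and $(GM,\psi_M)$, which requires pulling back $\eta_{GM}$ along $\varphi_M$. Since $GM$ is total, $\eta_{GM}=\id_{GM}$, so the pullback is simply $FM$ itself with projections $\id_{FM}$ and $\varphi_M$, and the composite span is therefore $(FM, \varphi_M\psi_M) = \R[\varphi\psi]_M$, as required.

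The main, though minor, obstacle is compatibility with whiskering, which I would dispatch by splitting into the two flavours: on one side $\R[\varphi F]_M = (GFM,\varphi_{FM}) = \R[\varphi]_{FM}$, and on the other side $\R[F\varphi]_M = (FGM, F\varphi_M) = \R[F](GM, \varphi_M) = \R[F](\R[\varphi]_M)$, both calculations using only the on-objects/on-morphisms description of $\R[F]$ together with the fact that $F$ preserves maximal inclusions. Assembling these pieces yields the $2$-functoriality of both $\R_\lax$ and $\R$.
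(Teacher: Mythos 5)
Your proposal is correct and follows the same route as the paper, which simply assembles the preceding construction of $\R[F]$ together with Lemmas~\ref{lemma:lax-local-natural-transformations} and~\ref{lemma:lax-total-natural-transformations} and leaves the remaining $2$-functoriality axioms implicit. Your explicit verifications of identities, composition, vertical composition (correctly using that $\eta_{GM}=\id_{GM}$ for total objects to compute the span composite), and whiskering are exactly the routine checks the paper omits, and they all go through.
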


Now that we have made $\L$ and $\R$ into $2$-functors, we not only show that $\R_\lax\dashv\L_\lax$ and $\R\dashv\L$ form two $2$-adjunctions, but also that these adjunctions are $2$-equivalences. Let us start by defining the unit and the counit of the adjunction. Consider a restriction category $\X$ and let us unwrap the definition of $\R[\L[\X]]$:
\begin{description}
\item[Objects] An object of $\R[\L[\X]]$ consists of a pair $(A,a)$ formed by an object $A$ of $\X$ together with a restriction idempotent $a=\bar a$ of $\X$. However, since $(A,a)$ should be total in $\L[\X]$, $a=\id_A$. Thus, objects of $\R[\L[\X]]$, really are pairs $(A,\id_A)$, where $A\in\X$;

\item[Morphisms] A morphism $f\colon(A,\id_A)\to(B,\id_B)$ of $\R[\L[\X]]$ consists of a restriction idempotent $a$ of $\X$ together with a morphism $f\colon(A,a)\to(B,\id_B)$ of $\L[\X]$. In particular, $a=\bar f$;

\item[Restriction] The restriction idempotent of a morphism $f\colon(A,\id_A)\to(B,\id_B)$ is given by $\eta_{(A,\bar f)}\colon(A,\bar f)\to(A,\id_A)$, that is, by $\bar f\colon(A,\bar f)\to(A,\id_A)$.
\end{description}

\begin{proposition}
\label{proposition:R-L}
For every restriction category $\X$, the restriction category $\R[\L[\X]]$ is isomorphic, as a restriction category, to $\X$. Moreover, this isomorphism is natural in $\X$.
\end{proposition}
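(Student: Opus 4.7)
The plan is to exhibit an isomorphism of restriction categories $\Phi_\X\colon\X\to\R[\L[\X]]$ and check that it is natural in $\X$. On objects, set $\Phi_\X(A)=(A,\id_A)$. On a morphism $f\colon A\to B$, let $\Phi_\X(f)$ be the isomorphism class of the pair $((A,\bar f),f)$, viewed as the span $(A,\id_A)\xleftarrow{\bar f}(A,\bar f)\xrightarrow{f}(B,\id_B)$ in $\L[\X]$, where the left leg is $\eta_{(A,\bar f)}=\bar f$.

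First I verify functoriality. The identity on $A$ yields $\Phi_\X(\id_A)=((A,\id_A),\id_A)$ because $\overline{\id_A}=\id_A$, and this is exactly the identity of $(A,\id_A)$ in $\R[\L[\X]]$. For composition, given $f\colon A\to B$ and $g\colon B\to C$, Lemma~\ref{lemma:eta-display} identifies the pullback of $\eta_{(B,\bar g)}=\bar g$ along $f\colon(A,\bar f)\to(B,\id_B)$ with $(A,\overline{f\bar g})$ and projection $f\bar g\colon(A,\overline{f\bar g})\to(B,\bar g)$. The composite span therefore has apex $(A,\overline{fg})$ (using Lemma~\ref{lemma:technical-restriction}~(c) to rewrite $\overline{f\bar g}=\overline{fg}$) and second leg $f\bar g\cdot g=fg$ by \textbf{[R.1]}, so it coincides with $\Phi_\X(fg)$. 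Preservation of the restriction structure is immediate: $\overline{\Phi_\X(f)}=((A,\bar f),\bar f)=\Phi_\X(\bar f)$, since $\overline{\bar f}=\bar f$ by Lemma~\ref{lemma:technical-restriction}~(b).

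Next I check $\Phi_\X$ is a bijection on objects and fully faithful. Bijectivity on objects is already noted in the unpacking of $\R[\L[\X]]$ preceding the statement. For surjectivity on hom-sets, any representative $(U,h)$ of a morphism $(A,\id_A)\to(B,\id_B)$ has $U=(A,a)$ with $a=\bar h$, which is visibly $\Phi_\X(h)$. For faithfulness, suppose $\Phi_\X(f)=\Phi_\X(g)$, so there is an iso $\varphi\colon(A,\bar f)\to(A,\bar g)$ in $\L[\X]$ with $\varphi\bar g=\bar f$ and $\varphi g=f$. Since $\varphi$ is a morphism in $\L[\X]$ into $(A,\bar g)$, it satisfies $\varphi\bar g=\varphi$, hence $\varphi=\bar f$; a symmetric argument on $\varphi^{-1}$ gives $\varphi^{-1}=\bar g$. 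Thus $\bar f=\bar g$, and then $f=\bar f g=\bar g g=g$ by \textbf{[R.1]}.

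The main obstacle is this last faithfulness argument, which must carefully use that the isomorphisms in $\L[\X]$ between two objects sitting over $(A,\id_A)$ are controlled entirely by restriction idempotents of $\X$; everything else is a direct computation. Finally, naturality in $\X$ reduces to showing that for any restriction functor $F\colon\X\to\Y$, the equation $\R[\L[F]]\circ\Phi_\X=\Phi_\Y\circ F$ holds. On objects both sides send $A$ to $(FA,\id_{FA})$. On a morphism $f\colon A\to B$, using that $F$ preserves restriction idempotents so $F\bar f=\overline{Ff}$, the functor $\L[F]$ sends the span representing $\Phi_\X(f)$ to $(FA,\id_{FA})\xleftarrow{\overline{Ff}}(FA,\overline{Ff})\xrightarrow{Ff}(FB,\id_{FB})$, which is the span representing $\Phi_\Y(Ff)$.
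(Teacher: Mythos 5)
Your proposal constructs exactly the functor $N\colon\X\to\R[\L[\X]]$, $A\mapsto(A,\id_A)$, $f\mapsto((A,\bar f),f)$, that the paper uses, and simply carries out in full the verifications (functoriality via Lemma~\ref{lemma:eta-display}, preservation of restrictions, bijectivity on objects, fullness, faithfulness, naturality) that the paper declares ``easy to see.'' The argument is correct --- in the faithfulness step the final deduction $\bar f=\bar g$ follows from $\bar f\bar g=\bar f$ and $\bar g\bar f=\bar g$ together with \textbf{[R.2]} --- so this is essentially the paper's own proof, written out in detail.
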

\begin{proof}
From the previous discussion, we can construct a functor $N\colon\X\to\R[\L[\X]]$ which sends every object $A$ to $(A,\id_A)$ and every morphism $f\colon A\to B$ to $f\colon(A,\bar f)\to(B,\id_B)$. Finally, it is easy to see that this functor is an isomorphism of categories, which is natural in $\X$ and preserves the restriction structure.
\end{proof}

Let us now unwrap the definition of $\L[\R[\C]]$ for a local category $\C$:
\begin{description}
\item[Objects] An object of $\L[\R[\C]]$ is a pair $(A,U)$ formed by a total object $A=\Las A$ of $\C$ together with an object $U$ of $\C$, such that $\Las U=A$;

\item[Morphisms] A morphism $(W,f)\colon(A,U)\nto(B,V)$ of $\L[\R[\C]]$ consists of an object $W$ of $\C$ subject to the condition $\Las W=A$, together with a morphism $f\colon W\to B$. The morphism $f$ needs also to meet two further conditions: (1) $f$ should be total in $\R[\C]$ and (2) $(W,f)(V,\eta_V)=(W,f)$. Condition (1) implies that $W=U$, while condition (2) implies the existence of a unique morphism $\tilde f\colon U\to V$ such that $\tilde f\eta_V=f$. Therefore, a morphism of $\L[\R[\C]]$, really is a morphism $\tilde f\colon U\to V$;

\item[Local] The $\Las$-assignment of $\L[\R[\C]]$ sends an object $(A,U)$ to $(A,A)$, and $\eta_{(A,U)}$ is $\eta_U\colon U\to A$.
\end{description}

\begin{proposition}
\label{proposition:L-R}
For every local category $\C$, the local category $\L[\R[\C]]$ is isomorphic, as a local category, to $\C$. Moreover, this isomorphism is natural in $\C$.
\end{proposition}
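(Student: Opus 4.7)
The plan is to define a local functor $M_\C \colon \C \to \L[\R[\C]]$ that on objects sends $U$ to $(\Las U, U)$ and on a morphism $f \colon U \to V$ produces the span $(U, f\eta_V) \colon \Las U \nto \Las V$ in $\R[\C]$. The restriction idempotent of this span is $(U, \eta_U)$, which is the identity on $(\Las U, U)$ in $\L[\R[\C]]$, so the span satisfies condition (1) of the unwrapping; and the equation $(U, f\eta_V)(V, \eta_V) = (U, f\eta_V)$, which is condition (2), holds with $\tilde f = f$. Hence $M_\C(f)$ is a bona fide morphism $(\Las U, U) \to (\Las V, V)$, and the very unwrapping shows $f \mapsto M_\C(f)$ is a bijection onto the hom-set of $\L[\R[\C]]$.

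The main technical step is verifying functoriality. Identities are immediate: $M_\C(\id_U) = (U, \eta_U)$ is the identity on $(\Las U, U)$. For the composition of $f \colon U \to V$ and $g \colon V \to W$, I would compose the spans $(U, f\eta_V)$ and $(V, g\eta_W)$ inside $\R[\C]$. This requires the pullback of $\eta_V$ along $f\eta_V$, and because $\eta_V$ is monic by \textbf{[L.2]}, this pullback collapses to $U$ itself with $\pi_U = \id_U$ and $\pi_V = f$. The composite span is therefore $(U, f \cdot g\eta_W) = (U, (fg)\eta_W) = M_\C(fg)$. I expect this monic-collapse observation to be the only genuine calculation in the whole proof.

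Next I would check that $M_\C$ preserves the local structure strictly. By \textbf{[L.1]}, $M_\C(\Las U) = (\Las\Las U, \Las U) = (\Las U, \Las U) = \Las' (M_\C(U))$, and $M_\C(\eta_U) = (U, \eta_U \cdot \id_{\Las U}) = (U, \eta_U)$, which is precisely $\eta_{M_\C(U)}$ in the unwrapped description of $\L[\R[\C]]$. Preservation of the pullbacks in \textbf{[L.3]} follows from Lemma~\ref{lemma:eta-display} applied in $\L[\R[\C]]$: by the same monic-collapse, the pullback object is recognised as $M_\C$ applied to the original pullback in $\C$. The inverse functor is defined symmetrically, sending $(A, U) \mapsto U$ and extracting from a morphism $(U, f')$ the unique factorisation $\tilde f \colon U \to V$ with $\tilde f \eta_V = f'$, which exists by condition (2) and is unique by \textbf{[L.2]}; this inverse is clearly local as well.

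Finally, naturality in $\C$ asks that for every local functor $F \colon \C \to \C'$ the equality $\L[\R[F]] \circ M_\C = M_{\C'} \circ F$ hold on the nose. On objects both sides send $U$ to $(F\Las U, FU) = (\Las' FU, FU)$, using that $F$ preserves $\Las$ strictly. On morphisms, both send $f$ to the span $(FU, Ff \cdot \eta'_{FV})$, using in addition $F\eta_V = \eta'_{FV}$. The main obstacle throughout is the book-keeping of span composition in $\R[\C]$; once the monic-collapse pullback shortcut is isolated and reused at each step, the rest of the verification proceeds almost mechanically.
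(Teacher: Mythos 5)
Your proposal is correct and matches the paper's argument: the paper defines the functor $E\colon\L[\R[\C]]\to\C$, $(A,U)\mapsto U$, $f\mapsto\tilde f$, together with the inverse $M\mapsto(\Las M,M)$, $f\mapsto f\eta_N$, which is exactly your $M_\C$, relying on the same identity $\widetilde{f\eta_N}=f$. You simply start from the other side and spell out the monic-collapse pullback computations that the paper leaves implicit.
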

\begin{proof}
From the previous discussion, we can construct a functor $E\colon\L[\R[\C]]\to\C$ which sends an object $(A,U)$ of $\L[\R[\C]]$ to $U$ and a morphism $f\colon(A,U)\to(B,V)$ to $\tilde f\colon U\to V$. It is not hard to see that such a functor is natural in $\C$, and that it preserves the local structure. Furthermore, we can also define an inverse of $E\colon\L[\R[\C]]\to\C$ which sends an object $M$ of $\C$ to $(\Las M,M)$ and a morphism $f\colon M\to N$ to the morphism $f\eta_N\colon(\Las M,M)\to(\Las N,N)$. However, since $\widetilde{f\eta_N}=f$, this functor, which is also a local functor, defines an inverse to $E$. Thus, $E$ is a natural isomorphism of local categories.
\end{proof}

Finally, we can prove the main result of this section.

\begin{theorem}
\label{theorem:2-equivalence-local-restriction}
The two $2$-functors $\L_\lax$ and $\R_\lax$ form a $2$-equivalence $\RCat_\lax\simeq\LCat_\lax$. Moreover, this $2$-equivalence restricts to a $2$-equivalence $\RCat\simeq\LCat$. 
\end{theorem}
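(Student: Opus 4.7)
The plan is to promote the natural isomorphisms $N\colon\id_{\RCat_\lax}\cong\R_\lax\L_\lax$ and $E\colon\L_\lax\R_\lax\cong\id_{\LCat_\lax}$ constructed in Propositions~\ref{proposition:R-L} and~\ref{proposition:L-R} to 2-natural isomorphisms. Combined with the 2-functoriality of $\L_\lax$ and $\R_\lax$ established in Propositions~\ref{proposition:L-functoriality} and~\ref{proposition:R-functoriality}, this will deliver a 2-adjoint equivalence, and hence the desired 2-equivalence.

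The first key step is to verify the 2-naturality of $N$. Given a restriction natural transformation $\varphi\colon F\Rightarrow G\colon\X\to\X'$, I must show that the two whiskerings $N_\X \cdot \R[\L[\varphi]]$ and $\varphi \cdot N_{\X'}$ coincide as 2-cells in $\RCat_\lax$. Unwinding the definitions: on an object $A$, the isomorphism $N_\X$ identifies $A$ with $(A,\id_A)$, while $\R[\L[\varphi]]_{(A,\id_A)}$ is by construction the span $(FA,\L[\varphi]_{(FA,\id_{FA})})=(FA,\varphi_A)$, which under $N_{\X'}$ corresponds precisely to $\varphi_A$ viewed as a morphism of $\X'$. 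An entirely analogous computation, using the formulas $\R[\psi]_M=(FM,\psi_M)$ together with the action of $E$, yields the 2-naturality of $E$ with respect to local natural transformations $\psi$.

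Next, I would confirm that the 2-equivalence restricts to the tight 2-categories $\RCat$ and $\LCat$. Lemma~\ref{lemma:lax-total-natural-transformations} and the analogous statement in Proposition~\ref{proposition:L-functoriality} already show that $\L$ and $\R$ preserve total natural transformations, so they restrict to 2-functors $\L\colon\RCat\to\LCat$ and $\R\colon\LCat\to\RCat$. Moreover, since $N$ and $E$ have invertible components, they are automatically componentwise total, and any naturality square whose horizontal edges are isomorphisms is automatically a pullback; hence $N$ and $E$ also serve as 2-natural isomorphisms in the tight setting, giving the restricted 2-equivalence $\RCat\simeq\LCat$.

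The main obstacle is the bookkeeping in the first step: one must trace the behaviour of $\L_\lax$ and $\R_\lax$ on 2-cells through the unit $N$ and counit $E$ and confirm that the whiskered 2-cells agree on each component. Each such verification is a direct calculation from the formulas $\L[\varphi]_{(A,a)}=(Fa)\varphi_A$ and $\R[\psi]_M=(FM,\psi_M)$, but care is required to keep track of how objects and morphisms of $\R[\L[\X]]$ and $\L[\R[\C]]$ specialise under $N$ and $E$, and to ensure the lax/tight distinction between natural transformations is maintained throughout.
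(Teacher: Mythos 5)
Your proposal is correct and follows the same overall strategy as the paper (assemble the $2$-functors of Propositions~\ref{proposition:L-functoriality} and~\ref{proposition:R-functoriality} with the isomorphisms $N$ and $E$ of Propositions~\ref{proposition:R-L} and~\ref{proposition:L-R}), but the final verification is genuinely different. The paper checks the two \emph{triangle identities} $E_\L\o\L[N]=\id_\L$ and $\R[E]\o N_\R=\id_\R$, thereby exhibiting an adjoint $2$-equivalence; it leaves the ($2$-)naturality of $N$ and $E$ to the "natural in $\X$/$\C$" clauses of the earlier propositions. You instead verify the compatibility of $N$ and $E$ with $2$-cells directly, i.e.\ that $N_\X\.\R[\L[\varphi]]=\varphi\.N_{\X'}$ and its analogue for $E$. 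Since $2$-natural isomorphisms with invertible components already constitute a $2$-equivalence, your route is self-contained for the statement as phrased and arguably fills in the $2$-cell compatibility that the paper only gestures at, whereas the paper's route yields the slightly stronger adjoint form. One small correction for the restriction to $\RCat\simeq\LCat$: the components $N_\X$ and $E_\C$ are $1$-cells (functors), not $2$-cells of $\RCat_\lax$ or $\LCat_\lax$, so there is nothing to check about them being "componentwise total" or their naturality squares being pullbacks in the sense of Definitions~\ref{definition:total-restriction-natural-transformation} and~\ref{definition:total-local-natural-transformation}; the restriction goes through simply because $\RCat$ and $\LCat$ have the same objects and $1$-cells as their lax counterparts, Lemma~\ref{lemma:lax-total-natural-transformations} and its companion show the $2$-functors preserve total $2$-cells, and the $2$-naturality of $N$ and $E$ against the smaller class of $2$-cells is a special case of what you already verified. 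This slip does not affect the validity of the argument.
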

\begin{proof}
To prove that $\L_\lax$ and $\R_\lax$ form a $2$-equivalence, one only needs to show that the two natural isomorphisms of Propositions~\ref{proposition:R-L} and~\ref{proposition:L-R} satisfy the triangle identities. Let us start with the first triangle equation:
\begin{align*}
&E_\L\o\L[N]=\id_\L
\end{align*}
First, notice that, $\L[N]$ sends an object $(A,a)$ of $\L[\X]$ to $(N(A),N(a))=((A,\id_A),a)$ and a morphism $f\colon(A,a)\to(B,b)$ to $f\colon((A,\id_A),a)\to((B,\id_B),b)$. Thus, $E_\L\o\L[N]$ sends each $(A,a)$ to $(A,a)$ and each $f\colon(A,a)\to(B,b)$ to itself. Now, let us prove the second triangle equation:
\begin{align*}
&\R[E]\o N_\R=\id_\R
\end{align*}
For starters, $N_\R$ sends an object $M=\Las M$ of $\R[\C]$ to $(M,\id_M)$ and a morphism $f\colon M\to N$ to $f\colon(M,\id_M)\to(N,\id_N)$. Finally, $\R[E]$ sends each $(M,\id_M)$ to $M$ and each $f\colon(M,\id_M)\to(N,\id_N)$ to $f\colon M\to N$.
\end{proof}

%__________________________________________________________________________
%__________________________________________________________________________

\section{Concepts of local categories}
\label{section:concepts-local-categories}
Restriction categories offer a language for partiality on morphisms by introducing concepts like total morphisms or the compatibility between parallel morphisms. The goal of this section is to translate these concepts for local categories, giving us a language of partiality on objects.  For starters, let us recall some key concepts in restriction category theory.

\begin{definition}
\label{definition:total-compatible}
In a restriction category $\X$:
\begin{itemize}
\item~\cite[Proposition~6.3]{cockett:classical-distributive-restriction} Two parallel morphisms $f,g\colon A\to B$ are \textbf{compatible}, written as $f\smile g$, when $\bar fg=\bar gf$;

\item~\cite[Definition~6.7]{cockett:classical-distributive-restriction} A \textbf{family of compatible morphisms} consists of a family $\Famly$ of morphisms, which are pair-wise compatible, that is, $f\smile g$, for each $f,g\in\Famly$;

\item~\cite[Section~2.1.4]{cockett:restrictionI} A morphism $g\colon A\to B$ \textbf{bounds} a morphism $f\colon A\to B$ \textbf{from above}, written as $f\leq g$, when $\bar fg=f$.
\end{itemize}
\end{definition}

Two parallel morphisms $f$ and $g$ are compatible when, restricting $g$ to the domain of definition of $f$, that is, by considering the morphism $\bar fg$, it amounts to restricting $f$ to the domain of definition of $g$. Moreover, $f\leq g$ when, by restricting $g$ to the domain of definition of $f$, we get $f$.

\begin{example}
\label{example:concepts-trivial-restriction}
In any trivial restriction category as in Example~\ref{example:trivial-restriction}, two parallel morphisms are compatible if and only if they bound each other if and only if they concide.
\end{example}

\begin{example}
\label{example:concepts-partial-functions-restriction}
In the restriction categories of Examples~\ref{example:partial-set-partial} and~\ref{example:open-restriction}, two parallel partial (continuous) functions $f$ and $g$ are compatible, precisely when they coincide in the intersection of their domains of definition. Moreover, $f\leq g$ if and only if the domain of definition of $f$ is included in the one of $g$ and $f$ is equal to the restriction of $g$ to the domain of $f$.
\end{example}

We now introduce the analogue concepts in the language of local categories.

\begin{definition}
\label{definition:partial-order-in-local-categories}
In a local category $\C$:
\begin{itemize}
\item Two objects $M,N$ are \textbf{compatible}, written as $M\smile N$, when $\Las M=\Las N$;

\item An object $N$ \textbf{bounds} an object $M$ \textbf{from above}, written as $M\leq N$, when $M\smile N$ and there exists a morphism $m\colon M\to N$ such that $m\eta_N=\eta_M$.
\end{itemize}
\end{definition}

Two objects $M$ and $N$ of a local category are compatible when they are embedded in the same enlargement. Moreover, $M\leq N$ when $M$ is embedded into $N$. Finally, a morphism $f\colon M\to N$ is extendible when there exists a morphism $\tilde f\colon\Las M\to\Las N$ between the enlargements of $M$ and $N$ which restricts to $f$.

\begin{example}
\label{example:concepts-trivial-local}
In any trivial local category, as in Example~\ref{example:trivial-local}, two objects are compatible if and only if they bound each other if and only if they coincide.
\end{example}

\begin{example}
\label{example:concepts-partial-sets-local}
In the local categories $\ParSet$ and $\ParTop$ of Examples~\ref{example:partial-set-local} and~\ref{example:open-local}, two objects $(A,U)$ and $(B,V)$ are compatible if and only if $A=B$ and $(A,U)\leq(B,V)$ if and only if $A=B$ and $U\subseteq V$.
\end{example}

Compatibility between morphisms in a restriction category is reflexive and symmetric; however, in general, it fails to be transitive. The next lemma shows that compatibility between objects of a local category defines instead an equivalence relation.

\begin{lemma}
\label{lemma:compatibility-equivalence-relation}
The binary relation $\smile$ of compatibility between objects of a local category is an equivalence relation.
\end{lemma}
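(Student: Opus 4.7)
The plan is to observe that compatibility between objects is defined as strict equality of enlargements, so the three properties of an equivalence relation are inherited directly from the fact that equality of objects is itself reflexive, symmetric, and transitive. The whole proof therefore reduces to a mechanical unwrapping of Definition~\ref{definition:partial-order-in-local-categories}.

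Concretely, I would proceed in three short steps. First, for reflexivity, given any object $M$ of $\C$, we have $\Las M = \Las M$ tautologically, so $M \smile M$. Second, for symmetry, if $M \smile N$, then by definition $\Las M = \Las N$; swapping the sides of this equality gives $\Las N = \Las M$, hence $N \smile M$. Third, for transitivity, suppose $M \smile N$ and $N \smile P$; then $\Las M = \Las N$ and $\Las N = \Las P$, so by transitivity of equality $\Las M = \Las P$, i.e., $M \smile P$.

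The hard part will be nothing at all: this is immediate from the definition. The content of the lemma is conceptual rather than technical; it highlights the contrast with the restriction category setting, where compatibility $f \smile g$ of morphisms is only reflexive and symmetric but generally fails to be transitive. By shifting partiality from morphisms to objects and encoding compatibility as equality of enlargements, the local-category perspective automatically strengthens this binary relation into an equivalence relation. I would include a brief remark to this effect after the three-line verification, so that the reader sees where the improvement over the restriction-morphism picture comes from.
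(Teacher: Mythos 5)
Your proof is correct and follows exactly the same route as the paper's: compatibility is equality of enlargements, so reflexivity, symmetry, and transitivity are inherited directly from the corresponding properties of equality. Nothing is missing.
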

\begin{proof}
This is immediate from the definition.
\end{proof}

We now show that the relation $\leq$ defined between objects of a local category is a partial order.

\begin{definition}
\label{definition:ordered-category}
A category $\C$ admits a \textbf{partial order on objects} when $\C$ comes equipped with a binary relation $\leq$ which satisfies the following conditions:
\begin{description}
\item[PO.1] For every object $A$, $A\leq A$;

\item[PO.2] For each $A$, $B$, and $C$, $A\leq B$ and $B\leq C$ implies $A\leq C$;

\item[PO.3] For each $A$ and $B$, $A\leq B$ and $B\leq A$ implies $A\cong B$.
\end{description}
\end{definition}

\begin{lemma}
\label{lemma:partial-order-on-local-categories}
In a local category $\C$, $A\leq B$ if and only if $\Las A=\Las B$ and there exists a unique morphism $m\colon A\to B$ such that $m\eta_B=\eta_A$. Moreover, such $m$ is monic.
\end{lemma}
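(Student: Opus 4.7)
The plan is to reduce both the uniqueness claim and the monicity claim to axiom \textbf{[L.2]}, which asserts that every maximal inclusion $\eta_M$ is monic in $\C$. The ``if and only if'' is essentially the definition of $\leq$ once uniqueness is established, so the real content of the lemma is the uniqueness of the mediating morphism $m$ and the fact that $m$ is itself monic.

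For the equivalence, I would observe that by Definition~\ref{definition:partial-order-in-local-categories} the statement $A \leq B$ already unpacks to $A \smile B$ (that is, $\Las A = \Las B$) together with the existence of a morphism $m\colon A\to B$ satisfying $m\eta_B = \eta_A$. Thus the forward direction only requires me to add the uniqueness of such an $m$, and the reverse direction is immediate. To establish uniqueness, I would suppose that $m, m'\colon A\to B$ both satisfy $m\eta_B = \eta_A$ and $m'\eta_B = \eta_A$; then $m\eta_B = m'\eta_B$, and since $\eta_B$ is monic by \textbf{[L.2]}, it follows that $m = m'$.

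For the monicity of $m$, I would take a parallel pair $g, h\colon X \to A$ with $gm = hm$ and post-compose with $\eta_B$ to obtain $g(m\eta_B) = h(m\eta_B)$, i.e., $g\eta_A = h\eta_A$. Applying \textbf{[L.2]} again to $\eta_A$ yields $g = h$, so $m$ is monic.

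I do not anticipate any real obstacle here: the entire argument is a pair of short cancellations using that $\eta_A$ and $\eta_B$ are monic. The only mildly delicate point is noting that the hypothesis $\Las A = \Las B$ ensures that $\eta_A$ and $\eta_B$ are indeed parallel-compatible enough for the equation $m\eta_B = \eta_A$ to typecheck, but this is precisely what compatibility $A \smile B$ encodes.
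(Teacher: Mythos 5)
Your proposal is correct and follows essentially the same route as the paper: uniqueness by right-cancelling the monic $\eta_B$, and monicity of $m$ from the fact that $m\eta_B = \eta_A$ is monic (your explicit cancellation of $\eta_A$ after post-composing with $\eta_B$ is just this fact spelled out). No issues.
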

\begin{proof}
Since $m\eta_B=\eta_A$ is monic, $m$ is monic as well. Moreover, if $m'\colon A\to B$ satisfy the equation $m'\eta_B=\eta_A=m\eta_B$, since $\eta_B$ is monic, $m'=m$.
\end{proof}

\begin{proposition}
\label{proposition:partial-order-on-local-categories}
Every local category $\C$ has a partial order on objects.
\end{proposition}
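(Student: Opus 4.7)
The plan is to verify the three axioms \textbf{[PO.1]}, \textbf{[PO.2]}, \textbf{[PO.3]} directly from the definition of $\leq$, using that the maximal inclusions $\eta_M$ are monic (axiom \textbf{[L.2]}) and that $\smile$ is an equivalence relation (Lemma~\ref{lemma:compatibility-equivalence-relation}). Throughout, I will rely on the characterisation given in Lemma~\ref{lemma:partial-order-on-local-categories}: $A \leq B$ witnesses a \emph{unique} monic $m \colon A \to B$ satisfying $m\eta_B = \eta_A$, which makes the bookkeeping straightforward.

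For \textbf{[PO.1]}, I would take $m = \id_A$, which obviously satisfies $\id_A \cdot \eta_A = \eta_A$; compatibility $A \smile A$ is immediate from reflexivity. For \textbf{[PO.2]}, given witnessing monics $m \colon A \to B$ and $n \colon B \to C$ with $m \eta_B = \eta_A$ and $n \eta_C = \eta_B$, the composite $mn \colon A \to C$ witnesses $A \leq C$ since $(mn)\eta_C = m(n\eta_C) = m\eta_B = \eta_A$, and $\Las A = \Las B = \Las C$ by transitivity of $\smile$.

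The most substantive step is \textbf{[PO.3]}. Assume $A \leq B$ via $m \colon A \to B$ and $B \leq A$ via $n \colon B \to A$. Composing, $(mn)\eta_A = m(n\eta_A) = m\eta_B = \eta_A = \id_A \cdot \eta_A$. Since $\eta_A$ is monic by \textbf{[L.2]}, cancelling yields $mn = \id_A$; by the symmetric argument $nm = \id_B$, so $m$ and $n$ are mutually inverse isomorphisms and $A \cong B$. No step is genuinely an obstacle here: the entire argument is driven by the monicity of $\eta$, which supplies both uniqueness of the witnessing morphism and the crucial cancellation needed for antisymmetry.
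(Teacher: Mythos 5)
Your proof is correct and follows essentially the same route as the paper's: reflexivity via $\id_A$, transitivity via composition of the witnessing morphisms, and antisymmetry by cancelling the monic $\eta_A$ (resp.\ $\eta_B$) to show the witnesses are mutually inverse. The only cosmetic difference is that you explicitly invoke Lemma~\ref{lemma:compatibility-equivalence-relation} and the uniqueness statement of Lemma~\ref{lemma:partial-order-on-local-categories}, which the paper leaves implicit.
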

\begin{proof}
For every $A$ in $\C$, $A\leq A$, since $\id_A\eta_A=\eta_A$. Consider now $A\leq B$ and $B\leq C$ Thus, there exist $m\colon A\to B$ and $m'\colon B\to C$ such that $m\eta_B=\eta_A$ and $m'\eta_C=\eta_B$. Thus, $mm'\eta_C=m\eta_B=\eta_A$. Thus, $A\leq C$. Finally, suppose that $A\leq B$ and $B\leq A$. Thus, there exist $m\colon A\to B$ and $m'\colon B\to A$ such that $m\eta_B=\eta_A$ and $m'\eta_A=\eta_B$. However, using that $\eta_A$ and $\eta_B$ are monic, we also conclude that $mm'$ and $m'm$ must be the idenities on $A$ and $B$, respectively. Thus, $A\cong B$.
\end{proof}

Next, we shall extend the partial order between objects of a local category to morphisms. First, let us introduce the following concepts.

\begin{definition}
\label{definition:partial-order-on-morphisms}
In a local category $\C$:
\begin{itemize}
\item Two morphisms $f\colon A\to C$ and $g\colon B\to C$ are \textbf{compatible}, written as $f\smile g$, when their domains are compatible $A\smile B$, and the following diagram commutes
\begin{equation*}
% https://q.uiver.app/#q=WzAsNCxbMCwwLCJBXFx3ZWRnZSBCIl0sWzAsMSwiQSJdLFsxLDAsIkIiXSxbMSwxLCJDIl0sWzAsMSwiXFxpb3RhX0EiLDJdLFswLDIsIlxcaW90YV9CIl0sWzEsMywiZiIsMl0sWzIsMywiZyJdXQ==
\begin{tikzcd}
{A\wedge B} & B \\
A & C
\arrow["{\iota_B}", from=1-1, to=1-2]
\arrow["{\iota_A}"', from=1-1, to=2-1]
\arrow["g", from=1-2, to=2-2]
\arrow["f"', from=2-1, to=2-2]
\end{tikzcd}
\end{equation*}
where $A \wedge B$ is given by the following pullback: 
\begin{equation*}
% https://q.uiver.app/#q=WzAsNCxbMCwwLCJBXFx3ZWRnZSBCIl0sWzAsMSwiQSJdLFsxLDAsIkIiXSxbMSwxLCJcXExhcyBBPVxcTGFzIEIiXSxbMCwxLCJcXGlvdGFfQSIsMl0sWzAsMiwiXFxpb3RhX0IiXSxbMSwzLCJcXGV0YV9BIiwyXSxbMiwzLCJcXGV0YV9CIl0sWzAsMywiIiwxLHsic3R5bGUiOnsibmFtZSI6ImNvcm5lciJ9fV1d
\begin{tikzcd}
{A\wedge B} & B \\
A & {\Las A=\Las B}
\arrow["{\iota_B}", from=1-1, to=1-2]
\arrow["{\iota_A}"', from=1-1, to=2-1]
\arrow["\lrcorner"{anchor=center, pos=0.125}, draw=none, from=1-1, to=2-2]
\arrow["{\eta_B}", from=1-2, to=2-2]
\arrow["{\eta_A}"', from=2-1, to=2-2]
\end{tikzcd}
\end{equation*}

\item A \textbf{family of compatible morphisms} consists of a family $\Famly$ of morphisms, which are pair-wise compatible, that is, $f\smile g$ for each $f,g\in\Famly$;

\item A morphism $g\colon B\to C$ \textbf{bounds} a morphism $f\colon A\to C$ \textbf{from above}, written $f\leq g$, when $A\leq B$ and the following diagram commutes
\begin{equation*}
% https://q.uiver.app/#q=WzAsMyxbMCwwLCJBIl0sWzAsMSwiQiJdLFsxLDEsIkMiXSxbMSwyLCJnIiwyXSxbMCwxLCJtIiwyXSxbMCwyLCJmIl1d
\begin{tikzcd}
A \\
B & C
\arrow["m"', from=1-1, to=2-1]
\arrow["f", from=1-1, to=2-2]
\arrow["g"', from=2-1, to=2-2]
\end{tikzcd}
\end{equation*}
where $m\colon A\to B$ is the unique morphism such that $m\eta_B=\eta_A$.
\end{itemize}
\end{definition}

\begin{example}
\label{example:compatibility-morphisms-local-par-set}
In the local categories $\ParSet$ and $\ParTop$ of Examples~\ref{example:partial-set-local} and~\ref{example:open-local}, two morphisms $f\colon(A,U)\to(B,W)$ and $g\colon(A,V)\to(B,W)$ are compatible when $f\colon U\to W$ and $g\colon V\to W$ coincide on the intersection $U\cap V$. A family of compatible morphisms is then a family of morphisms $f_i\colon(A,U_i)\to(B,W)$ for which $f_i$ and $f_j$ coincide on $U_i\cap U_j$, for each $i,j$. Finally, a morphism $g\colon(A,U)\to(B,W)$ bounds a morphism $f\colon(A,V)\to(B,W)$ when $V\subseteq U$ and $g$ is equal to $f$ when restricted to $V$.
\end{example}

Compatibility between morphisms of a local category defines a coherence relation, that is, reflexive and symmetric, which, however, fails in general to be transitive; $\leq$ defines a partial order on morphisms.

\begin{lemma}
\label{lemma:compatibility-equivalence-relation-2}
The binary relation $\smile$ of compatibility between morphisms of a local category is a coherence relation.
\end{lemma}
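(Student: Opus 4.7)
The plan is to verify reflexivity and symmetry directly from the definition, since a coherence relation requires exactly these two properties (but not transitivity).

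For reflexivity, I need to show that any morphism $f\colon A\to C$ is compatible with itself. First, $A\smile A$ trivially, since $\Las A=\Las A$. For the square condition, I would observe that the pullback $A\wedge A$ of $\eta_A$ along itself exists and is just $A$ with both projections equal to $\id_A$ (this is the same monic pullback argument used in the proof of \textbf{[R.1]} in Proposition~\ref{proposition:R-construction}, and follows from $\eta_A$ being monic by \textbf{[L.2]}). Then the required square commutes because both composites equal $f$.

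For symmetry, suppose $f\colon A\to C$ and $g\colon B\to C$ satisfy $f\smile g$. Then $A\smile B$, which gives $B\smile A$ by Lemma~\ref{lemma:compatibility-equivalence-relation}. For the square condition, I would use the canonical symmetry isomorphism $\sigma\colon B\wedge A\to A\wedge B$ coming from the universal property of pullbacks: if $\iota'_B\colon B\wedge A\to B$ and $\iota'_A\colon B\wedge A\to A$ denote the projections of the pullback $B\wedge A$, then $\sigma$ is the unique morphism satisfying $\sigma\iota_A=\iota'_A$ and $\sigma\iota_B=\iota'_B$. Composing the hypothesis $\iota_A f=\iota_B g$ with $\sigma$ yields $\iota'_A f=\iota'_B g$, which is precisely the square condition witnessing $g\smile f$.

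The main (though minor) subtlety is justifying the use of the canonical symmetry of pullbacks; one should be careful that the same projection conventions are being used on both sides, but this is straightforward from the universal property. Since no transitivity is claimed, no further work is required, and this completes the proof that $\smile$ is a coherence relation.
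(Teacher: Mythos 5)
Your proof is correct and follows essentially the same route as the paper, which simply declares the lemma a direct consequence of Lemma~\ref{lemma:compatibility-equivalence-relation} (reflexivity and symmetry of compatibility of objects). You additionally spell out the verification of the commuting-square condition in the reflexive case (via the trivial pullback of the monic $\eta_A$ along itself) and in the symmetric case (via the canonical symmetry of the pullback $A\wedge B\cong B\wedge A$), details which the paper leaves implicit but which are exactly the right ones to supply.
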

\begin{proof}
This is a direct consequence of Lemma~\ref{lemma:compatibility-equivalence-relation}.
\end{proof}

\begin{proposition}
\label{proposition:partial-order-morphisms}
The relation $\leq$ defined on morphisms of a local category is a partial order.
\end{proposition}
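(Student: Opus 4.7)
The plan is to verify the three axioms \textbf{[PO.1]}, \textbf{[PO.2]}, \textbf{[PO.3]} for the relation $\leq$ on morphisms, using Proposition~\ref{proposition:partial-order-on-local-categories} and Lemma~\ref{lemma:partial-order-on-local-categories} to handle the underlying order on domains, and then lifting each step to morphisms via the uniqueness of the witnessing monic provided by that lemma.

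For reflexivity, given $f\colon A\to C$, note that $A\leq A$ since $\id_A\eta_A=\eta_A$. Thus the unique monic required by Lemma~\ref{lemma:partial-order-on-local-categories} is $\id_A\colon A\to A$, and the triangle $\id_A\o f=f$ commutes trivially, giving $f\leq f$.

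For transitivity, suppose $f\colon A\to C$, $g\colon B\to C$, $h\colon D\to C$ with $f\leq g$ and $g\leq h$. Then $A\leq B$ and $B\leq D$, so by Proposition~\ref{proposition:partial-order-on-local-categories} we have $A\leq D$. Let $m_1\colon A\to B$ and $m_2\colon B\to D$ be the unique monics with $m_1\eta_B=\eta_A$ and $m_2\eta_D=\eta_B$. Then $m_1 m_2\eta_D=m_1\eta_B=\eta_A$, so by uniqueness in Lemma~\ref{lemma:partial-order-on-local-categories} the witnessing monic $A\to D$ for $A\leq D$ is exactly $m_1m_2$. The definition of $\leq$ then gives $f=m_1 g=m_1 m_2 h$, hence $f\leq h$.

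For antisymmetry, suppose $f\colon A\to C$ and $g\colon B\to C$ satisfy $f\leq g$ and $g\leq f$. Then $A\leq B$ and $B\leq A$, so by \textbf{[PO.3]} applied to objects (Proposition~\ref{proposition:partial-order-on-local-categories}) there is an isomorphism $m\colon A\to B$ with inverse $m'\colon B\to A$ satisfying $m\eta_B=\eta_A$ and $m'\eta_A=\eta_B$. The defining equations of $\leq$ give $f=mg$ and $g=m'f$, so $f$ and $g$ agree up to the canonical isomorphism $m$, which is the natural analogue of \textbf{[PO.3]} for morphisms whose sources may differ. I expect the only mild subtlety is making sure the monic in the triangle $f=mg$ is precisely the monic identified by Lemma~\ref{lemma:partial-order-on-local-categories}; this is automatic because $\eta_B$ is monic, and everything else is a routine book-keeping argument.
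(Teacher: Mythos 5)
Your proposal is correct and follows the same route as the paper: the paper's proof simply states that the result is a direct consequence of Proposition~\ref{proposition:partial-order-on-local-categories} and leaves the details to the reader, and your argument is precisely the expected spelling-out of those details, lifting reflexivity, transitivity, and antisymmetry from objects to morphisms via the uniqueness of the monic $m$ with $m\eta_B=\eta_A$.
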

\begin{proof}
This is a direct consequence of Proposition~\ref{proposition:partial-order-on-local-categories}. We leave the reader to spell out all the details of the proof.
\end{proof}

We now prove that compatibility and partial order in the restriction sense correspond to compatibility and partial order in the local sense.

\begin{lemma}
\label{lemma:order-matters}
Consider two parallel morphisms $f,g\colon A\to B$ in a restriction category $\X$. Then $f$ and $g$ are compatible in $\X$ if and only if $f\colon(A,\bar f)\to(B,\id_B)$ and $g\colon(A,\bar g)\to(B,\id_B)$ are compatible in $\L[\X]$. Moreover, $f\leq g$ in $\X$ if and only if $f\leq g$ in $\L[\X]$.
\end{lemma}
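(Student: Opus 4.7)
The plan is to reduce both equivalences to explicit computations in $\X$, using Lemma~\ref{lemma:eta-display} to identify the relevant pullbacks and morphism-side conditions in $\L[\X]$ as equations between restriction idempotents and morphisms of $\X$. The axioms \textbf{[R.1]}--\textbf{[R.4]} and Lemma~\ref{lemma:technical-restriction} should then do the rest of the work, since $\eta_{(A,\bar f)} = \bar f$ and $\eta_{(A,\bar g)} = \bar g$ by definition.

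For the compatibility statement, I first compute the wedge $(A,\bar f)\wedge(A,\bar g)$ in $\L[\X]$: applying Lemma~\ref{lemma:eta-display} with $b=\bar g$, $a=\bar f$, and the morphism $\eta_{(A,\bar g)} = \bar g\colon(A,\bar g)\to(A,\id_A)$, the pullback object is $(A,\bar{\bar g\bar f}) = (A,\bar f\bar g)$ (using \textbf{[R.2]} and that $\bar f\bar g$ is already a restriction idempotent), and both projections to $(A,\bar f)$ and $(A,\bar g)$ are given by the morphism $\bar f\bar g$. Hence the commuting-square condition defining $f\smile g$ in $\L[\X]$ unpacks to the single equation $\bar f\bar g\,g = \bar f\bar g\,f$ in $\X$. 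Using \textbf{[R.1]} on the right and $\bar g\bar g = \bar g$ (Lemma~\ref{lemma:technical-restriction}~(a)) plus \textbf{[R.2]} on the left, this is equivalent to $\bar f g = \bar g f$, i.e.\ to $f\smile g$ in $\X$.

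For the order statement, the key step is to show that the morphism $m\colon(A,\bar f)\to(A,\bar g)$ appearing in the definition of $(A,\bar f)\leq(A,\bar g)$ is forced to equal $\bar f$. Indeed, any morphism $m\colon(A,\bar f)\to(A,\bar g)$ of $\L[\X]$ satisfies $m\bar g = m$ by the right-hand condition in the definition of $\L[\X]$, while the bounding requirement $m\eta_{(A,\bar g)} = \eta_{(A,\bar f)}$ reads $m\bar g = \bar f$; combining the two yields $m = \bar f$. From the $\X$-side assumption $f\leq g$, i.e.\ $\bar f g = f$, \textbf{[R.3]} gives $\bar f\bar g = \bar{\bar f g} = \bar f$, which is exactly the condition needed for $\bar f$ to define a morphism $(A,\bar f)\to(A,\bar g)$ in $\L[\X]$, and then $\bar f\cdot g = f$ is precisely the triangle $m\cdot g = f$ in the definition of $f\leq g$ in $\L[\X]$. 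Conversely, any $m$ witnessing $f\leq g$ in $\L[\X]$ must equal $\bar f$ by the previous remark, and $m\cdot g = f$ then becomes $\bar f g = f$, recovering $f\leq g$ in $\X$.

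The main obstacle is simply keeping track of the two morphism-side conditions $\bar m = \bar f$ and $m\bar g = m$ that make something a morphism of $\L[\X]$, together with the conditions carried by the maximal inclusions; once these are disentangled, every step is a short application of \textbf{[R.1]}--\textbf{[R.4]}, so no genuinely hard computation is expected.
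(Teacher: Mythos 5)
Your proposal is correct and follows essentially the same route as the paper: both use Lemma~\ref{lemma:eta-display} to identify the wedge $(A,\bar f)\wedge(A,\bar g)$ as $(A,\bar f\bar g)$ with both projections equal to $\bar f\bar g$, reduce compatibility in $\L[\X]$ to the equation $\bar f\bar g\,f=\bar f\bar g\,g$ and hence to $\bar gf=\bar fg$, and for the order part observe that the bounding morphism $m$ is forced to be $\bar f$ so that $mg=f$ becomes $\bar fg=f$. Your bookkeeping of the two morphism-side conditions is accurate, and in fact your forward direction of the order claim consistently uses the definition $\bar fg=f$, which is slightly cleaner than the paper's own wording at that point.
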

\begin{proof}
For starters, let us assume that $f\smile g$ in $\X$. Thus, $\bar gf=\bar fg$. By Lemma~\ref{lemma:eta-display}, the pullback of $\eta_{(A,\bar f)}$ along $\eta_{(A,\bar g)}$ is given by the following diagram:
\begin{equation*}
% https://q.uiver.app/#q=WzAsNCxbMCwxLCIoQSxcXGJhciBmKSJdLFsxLDEsIihBLFxcaWRfQSkiXSxbMSwwLCIoQSxcXGJhciBnKSJdLFswLDAsIihBLFxcYmFyIGZcXGJhciBnKSJdLFswLDEsIlxcYmFyIGYiLDJdLFsyLDEsIlxcYmFyIGciXSxbMywwLCJcXGJhciBmXFxiYXIgZyIsMl0sWzMsMiwiXFxiYXIgZlxcYmFyIGciXSxbMywxLCIiLDEseyJzdHlsZSI6eyJuYW1lIjoiY29ybmVyIn19XV0=
\begin{tikzcd}
{(A,\bar f\bar g)} & {(A,\bar g)} \\
{(A,\bar f)} & {(A,\id_A)}
\arrow["{\bar f\bar g}", from=1-1, to=1-2]
\arrow["{\bar f\bar g}"', from=1-1, to=2-1]
\arrow["\lrcorner"{anchor=center, pos=0.125}, draw=none, from=1-1, to=2-2]
\arrow["{\bar g}", from=1-2, to=2-2]
\arrow["{\bar f}"', from=2-1, to=2-2]
\end{tikzcd}
\end{equation*}
Thus, since $f\smile g$, the following diagram commutes
\begin{equation*}
% https://q.uiver.app/#q=WzAsNCxbMCwxLCIoQSxcXGJhciBmKSJdLFsxLDAsIihBLFxcYmFyIGcpIl0sWzAsMCwiKEEsXFxiYXIgZlxcYmFyIGcpIl0sWzEsMSwiKEIsXFxpZF9CKSJdLFsyLDAsIlxcYmFyIGZcXGJhciBnIiwyXSxbMiwxLCJcXGJhciBmXFxiYXIgZyJdLFswLDMsImYiLDJdLFsxLDMsImciXV0=
\begin{tikzcd}
{(A,\bar f\bar g)} & {(A,\bar g)} \\
{(A,\bar f)} & {(B,\id_B)}
\arrow["{\bar f\bar g}", from=1-1, to=1-2]
\arrow["{\bar f\bar g}"', from=1-1, to=2-1]
\arrow["g", from=1-2, to=2-2]
\arrow["f"', from=2-1, to=2-2]
\end{tikzcd}
\end{equation*}
since $\bar f\bar gf=\bar g\bar ff=\bar gf=\bar fg=\bar f\bar gg$. Thus, $f$ and $g$ are compatible in $\L[\X]$. 
\par Now, suppose that $f$ and $g$ are compatible in $\L[\X]$. Thus, the previous diagram must commute, explicitly, $\bar f\bar gf=\bar f\bar gg$; however, this means that $f\smile g$ in $\X$.
\par Now, assume that $f\leq g$ in $\X$. Thus, $\bar gf=f$. In particular, this implies that $\bar f\colon(A,\bar f)\to(A,\bar g)$ is a morphism of $\L[\X]$. Moreover, it is the unique morphism such that $\bar f\eta_{(A,\bar g)}=\bar f\bar g=\bar {\bar gf}=\bar f=\eta_{(A,\bar f)}$. Thus, $(A,\bar f)\leq(A,\bar g)$. Moreover, $\bar fg=f$. Thus, $f\leq g$ in $\L[\X]$.
\par Conversely, assume that $f\leq g$ in $\L[\X]$. Thus, there exists a morphism $m\colon A\to A$ such that the following equalities hold: $\bar m=\bar f$, $m\bar g=m$, and $m\bar g=\bar f$. Thus, $m=m\bar g=\bar f$. Moreover, $mg=f$. Thus, $\bar fg=f$, that is, $f\leq g$ in $\X$.
\end{proof}

Conversely, we can prove that compatibility and partial order in a local category correspond to compatibility and partial order in a restriction category. 

\begin{lemma}
\label{lemma:order-matters-2}
Consider two morphisms $f\colon M\to P$ and $g\colon N\to P$ in a local category $\C$. Thus, $f$ and $g$ are compatible in $\C$ if and only if $(M,f\eta_P)\colon\Las M\nto\Las P$ and $(N,g\eta_P)\colon\Las N\nto\Las P$ are compatible in $\R[\X]$. Moreover, $f\leq g$ in $\C$ if and only if $(M,f\eta_P)\leq(N,g\eta_P)$ in $\R[\X]$.
\end{lemma}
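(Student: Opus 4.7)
The plan is to translate each side of both equivalences through the explicit composition and restriction in $\R[\C]$, and then extract the desired condition on the $\C$ side by cancelling the (monic) maximal inclusions. As preparation common to both statements, note first that $\overline{(M,f\eta_P)}=(M,\eta_M)$ and $\overline{(N,g\eta_P)}=(N,\eta_N)$, and that parallelism of the two spans in $\R[\C]$ forces $\Las M=\Las N$, which is precisely $M\smile N$. Under that hypothesis, axiom \textbf{[L.3]} identifies the pullback of $\eta_N$ along $\eta_M$ with the object $M\wedge N$ of Definition~\ref{definition:partial-order-on-morphisms}, with projections $\iota_M,\iota_N$, and supplies the identities $\Las(M\wedge N)=\Las M$ and $\eta_{M\wedge N}=\iota_M\eta_M=\iota_N\eta_N$; the latter equation entails that $\iota_M$ is monic.

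For the compatibility claim, the plan is to compute the two relevant iterated compositions in $\R[\C]$ as $\overline{(M,f\eta_P)}(N,g\eta_P)=(M\wedge N,\iota_Ng\eta_P)$ and $\overline{(N,g\eta_P)}(M,f\eta_P)=(M\wedge N,\iota_Mf\eta_P)$, where $N\wedge M$ is identified with $M\wedge N$ via the canonical pullback symmetry. These spans are equal as morphisms of $\R[\C]$ precisely when $\iota_Ng\eta_P=\iota_Mf\eta_P$, and cancelling the monic $\eta_P$ reduces this to $\iota_Ng=\iota_Mf$, which is exactly the diagrammatic condition for $f\smile g$ in $\C$ from Definition~\ref{definition:partial-order-on-morphisms}.

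For the order statement, unfolding $(M,f\eta_P)\leq(N,g\eta_P)$ in $\R[\C]$, namely the equality $(M\wedge N,\iota_Ng\eta_P)=(M,f\eta_P)$, asks for an iso $\varphi\colon M\wedge N\to M$ with $\varphi\eta_M=\eta_{M\wedge N}$ and $\varphi(f\eta_P)=\iota_Ng\eta_P$. Monicity of $\eta_M$ forces $\varphi=\iota_M$, so $\iota_M$ must itself be an iso, and monicity of $\eta_P$ then yields $\iota_Mf=\iota_Ng$; defining $m\=\iota_M^{-1}\iota_N\colon M\to N$, one verifies $m\eta_N=\eta_M$ and $mg=f$, i.e., $f\leq g$ in $\C$. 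Conversely, given $f\leq g$ in $\C$ witnessed by $m\colon M\to N$ with $m\eta_N=\eta_M$ and $mg=f$, the cone $(\id_M,m)$ induces a section $s\colon M\to M\wedge N$ of $\iota_M$ satisfying $s\iota_N=m$; combined with the monicity of $\iota_M$ from the preparation, this makes $\iota_M$ an iso with inverse $s$, and the resulting identities $\iota_N=\iota_Mm$ and $\iota_Ng=\iota_Mmg=\iota_Mf$ close the argument.

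The principal technical obstacle I anticipate is extracting from the abstract equality $(M\wedge N,\iota_Ng\eta_P)=(M,f\eta_P)$ of isomorphism classes in $\R[\C]$ the fact that the witnessing iso must be the canonical projection $\iota_M$ itself, so that the existence of such a bridging iso becomes equivalent to $\iota_M$ being an iso. Once this reduction is made by cancelling monic maximal inclusions, the remaining work in both directions is a routine chase around the pullback square defining $M\wedge N$.
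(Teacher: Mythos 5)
Your proof is correct and follows essentially the same route as the paper's: compute the relevant composites in $\R[\C]$ as spans with apex $M\wedge N$, then use monicity of the maximal inclusions to reduce equality of isomorphism classes to equalities of morphisms in $\C$. Two small remarks. First, your reduction of the order statement to ``$\iota_M$ is an isomorphism'' (forcing the witnessing iso to be $\iota_M$ by cancelling the monic $\eta_M$, and in the converse building a section $s$ of $\iota_M$ from the cone $(\id_M,m)$) is a cleaner packaging than the paper's explicit pasting of pullback squares, and it correctly handles the concern you flag at the end. Second, your argument is actually more complete than the paper's: the paper only proves that $f\leq g$ in $\C$ implies $(M,f\eta_P)\leq(N,g\eta_P)$ in $\R[\C]$ and omits the converse, whereas you supply both directions; the extra direction is needed for the ``if and only if'' as stated (and is used later, e.g.\ in Lemma~\ref{lemma:join-local-category-2}), so this is a genuine improvement rather than redundancy.
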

\begin{proof}
Consider $f$ and $g$ in $\C$ and assume that $f\smile g$; in particular, $\Las M=\Las N$. Consider now the composition $\bar{(M,f\eta_P)}(N,g\eta_P)$ and $\bar{(N,g\eta_P)}(M,f\eta_P)$ in $\R[\X]$:
\begin{equation*}
\adjustbox{width=\linewidth}{
% https://q.uiver.app/#q=WzAsNixbMCwwLCJcXExhcyBNIl0sWzIsMCwiXFxMYXMgTiJdLFs0LDAsIlxcTGFzIFAiXSxbMSwxLCJNIl0sWzMsMSwiTiJdLFsyLDIsIk1cXHdlZGdlIE4iXSxbMywwLCJcXGV0YV9NIl0sWzMsMSwiXFxldGFfTSIsMl0sWzQsMSwiXFxldGFfTiJdLFs0LDIsImdcXGV0YV9QIiwyXSxbNSwzXSxbNSw0XSxbNSwxLCIiLDIseyJzdHlsZSI6eyJuYW1lIjoiY29ybmVyIn19XV0=
\begin{tikzcd}
{\Las M} && {\Las N} && {\Las P} \\
& M && N \\
&& {M\wedge N}
\arrow["{\eta_M}", from=2-2, to=1-1]
\arrow["{\eta_M}"', from=2-2, to=1-3]
\arrow["{\eta_N}", from=2-4, to=1-3]
\arrow["{g\eta_P}"', from=2-4, to=1-5]
\arrow["\lrcorner"{anchor=center, pos=0.125, rotate=135}, draw=none, from=3-3, to=1-3]
\arrow[from=3-3, to=2-2]
\arrow[from=3-3, to=2-4]
\end{tikzcd}\qquad
% https://q.uiver.app/#q=WzAsNixbMCwwLCJcXExhcyBOIl0sWzIsMCwiXFxMQXMgTSJdLFs0LDAsIlxcTGFzIFAiXSxbMSwxLCJOIl0sWzMsMSwiTSJdLFsyLDIsIk1cXHdlZGdlIE4iXSxbMywwLCJcXGV0YV9OIl0sWzMsMSwiXFxldGFfTiIsMl0sWzQsMSwiXFxldGFfTSJdLFs0LDIsImZcXGV0YV9QIiwyXSxbNSwzXSxbNSw0XSxbNSwxLCIiLDIseyJzdHlsZSI6eyJuYW1lIjoiY29ybmVyIn19XV0=
\begin{tikzcd}
{\Las N} && {\Las M} && {\Las P} \\
& N && M \\
&& {M\wedge N}
\arrow["{\eta_N}", from=2-2, to=1-1]
\arrow["{\eta_N}"', from=2-2, to=1-3]
\arrow["{\eta_M}", from=2-4, to=1-3]
\arrow["{f\eta_P}"', from=2-4, to=1-5]
\arrow["\lrcorner"{anchor=center, pos=0.125, rotate=135}, draw=none, from=3-3, to=1-3]
\arrow[from=3-3, to=2-2]
\arrow[from=3-3, to=2-4]
\end{tikzcd}
}
\end{equation*}
However, since $f$ and $g$ are compatible, the right legs of the two spans coincide. Therefore, we have that, $\bar{(M,f\eta_P)}(N,g\eta_P)=\bar{(N,g\eta_P)}(M,f\eta_P)$. Now, assume that $(M,f\eta_P)$ and $(N,g\eta_P)$ are compatible. However, this implies that the two right legs of the two above spans must coincide, which means that $f\smile g$ in $\C$.
\par Now, suppose that $f\leq g$ in $\C$. Thus, $M\leq N$ and $mg=f$, for $m$ the (necessarily unique) morphism of $\C$ such that $m\eta_N=\eta_M$. Now, consider the composition $\bar{(M,f\eta_P)}(N,g\eta_P)$ in $\R[\C]$:
\begin{equation*}
% https://q.uiver.app/#q=WzAsNixbMCwwLCJcXExhcyBNIl0sWzIsMCwiXFxMYXMgTiJdLFs0LDAsIlxcTGFzIFAiXSxbMSwxLCJNIl0sWzMsMSwiTiJdLFsyLDIsIk1cXHdlZGdlIE4iXSxbMywwLCJcXGV0YV9NIl0sWzMsMSwiXFxldGFfTSIsMl0sWzQsMSwiXFxldGFfTiJdLFs0LDIsImdcXGV0YV9QIiwyXSxbNSwzXSxbNSw0XSxbNSwxLCIiLDIseyJzdHlsZSI6eyJuYW1lIjoiY29ybmVyIn19XV0=
\begin{tikzcd}
{\Las M} && {\Las N} && {\Las P} \\
& M && N \\
&& {M\wedge N}
\arrow["{\eta_M}", from=2-2, to=1-1]
\arrow["{\eta_M}"', from=2-2, to=1-3]
\arrow["{\eta_N}", from=2-4, to=1-3]
\arrow["{g\eta_P}"', from=2-4, to=1-5]
\arrow["\lrcorner"{anchor=center, pos=0.125, rotate=135}, draw=none, from=3-3, to=1-3]
\arrow[from=3-3, to=2-2]
\arrow[from=3-3, to=2-4]
\end{tikzcd}
\end{equation*}
However, since $m\eta_N=\eta_M$ and that $\eta_N$ is monic, we can rewrite it as follows:
\begin{equation*}
% https://q.uiver.app/#q=WzAsOCxbMCwwLCJcXExhcyBNIl0sWzQsMCwiXFxMYXMgTiJdLFs2LDAsIlxcTGFzIFAiXSxbMiwyLCJNIl0sWzUsMSwiTiJdLFszLDMsIk0iXSxbMywxLCJOIl0sWzQsMiwiTiJdLFszLDAsIlxcZXRhX00iXSxbNCwxLCJcXGV0YV9OIl0sWzQsMiwiZ1xcZXRhX1AiLDJdLFs1LDMsIiIsMix7ImxldmVsIjoyLCJzdHlsZSI6eyJoZWFkIjp7Im5hbWUiOiJub25lIn19fV0sWzYsMSwiXFxldGFfTiIsMl0sWzMsNiwibSIsMl0sWzcsNCwiIiwyLHsibGV2ZWwiOjIsInN0eWxlIjp7ImhlYWQiOnsibmFtZSI6Im5vbmUifX19XSxbNyw2LCIiLDIseyJsZXZlbCI6Miwic3R5bGUiOnsiaGVhZCI6eyJuYW1lIjoibm9uZSJ9fX1dLFs1LDcsIm0iLDJdLFs1LDYsIiIsMix7InN0eWxlIjp7Im5hbWUiOiJjb3JuZXIifX1dLFs3LDEsIiIsMSx7InN0eWxlIjp7Im5hbWUiOiJjb3JuZXIifX1dXQ==
\begin{tikzcd}
{\Las M} &&&& {\Las N} && {\Las P} \\
&&& N && N \\
&& M && N \\
&&& M
\arrow["{\eta_N}"', from=2-4, to=1-5]
\arrow["{\eta_N}", from=2-6, to=1-5]
\arrow["{g\eta_P}"', from=2-6, to=1-7]
\arrow["{\eta_M}", from=3-3, to=1-1]
\arrow["m"', from=3-3, to=2-4]
\arrow["\lrcorner"{anchor=center, pos=0.125, rotate=135}, draw=none, from=3-5, to=1-5]
\arrow[equals, from=3-5, to=2-4]
\arrow[equals, from=3-5, to=2-6]
\arrow["\lrcorner"{anchor=center, pos=0.125, rotate=135}, draw=none, from=4-4, to=2-4]
\arrow[equals, from=4-4, to=3-3]
\arrow["m"', from=4-4, to=3-5]
\end{tikzcd}
\end{equation*}
However, $mg=f$, thus, $\bar{(M,f\eta_P)}(N,g\eta_P)=(M,f\eta_P)$. Therefore, $(M,f\eta_P)\leq(N,g\eta_P)$ in $\R[\C]$.
\end{proof}

We conclude this section by introducing a new concept for restriction categories, restriction monics, that corresponds to ordinary monics through the $2$-equivalence of Theorem~\ref{theorem:2-equivalence-local-restriction}.

\begin{lemma}
\label{lemma:partial-monic}
A morphism $f\colon(A,a)\to(B,b)$ of $\L[\X]$ is monic if and only if for each pair of parallel morphisms $g,h\colon C\to A$ of $\X$ satisfying the following equations
\begin{align*}
&gf=hf          &&g\bar f=g         &&h\bar f=h
\end{align*}
the morphism $g$ coincides with $h$.
\end{lemma}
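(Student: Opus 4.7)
The plan is to unpack what each condition means in $\L[\X]$, observing that the only content of the hypotheses is to match the morphism conditions that make $g$ and $h$ into genuine morphisms of $\L[\X]$. The key observation throughout is that since $f\colon(A,a)\to(B,b)$ is a morphism of $\L[\X]$, we have $\bar f = a$, so the conditions $g\bar f = g$ and $h\bar f = h$ are literally $ga = g$ and $ha = h$, which are precisely the second conditions required for $g$ and $h$ to be morphisms of $\L[\X]$ with codomain $(A,a)$. The only missing ingredient for $g$ and $h$ to form a parallel pair in $\L[\X]$ is agreement of their restriction idempotents.

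For the forward direction, I would assume $f$ is monic in $\L[\X]$ and take $g,h\colon C\to A$ in $\X$ satisfying the three stated equations. The first step is to show $\bar g = \bar h$. Taking restrictions of both sides of $gf = hf$ and invoking Lemma~\ref{lemma:technical-restriction}(c) yields
\begin{equation*}
\bar g \;=\; \overline{g\bar f} \;=\; \overline{gf} \;=\; \overline{hf} \;=\; \overline{h\bar f} \;=\; \bar h,
\end{equation*}
where the outer equalities use $g\bar f = g$ and $h\bar f = h$. With this in hand, $g$ and $h$ become parallel morphisms $(C,\bar g) \to (A,a)$ in $\L[\X]$, and since composition in $\L[\X]$ agrees with composition in $\X$, the equation $gf = hf$ holds in $\L[\X]$. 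Monic-ness of $f$ in $\L[\X]$ then forces $g = h$.

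For the reverse direction, I would assume the stated property and take a parallel pair $g,h\colon(C,c)\to(A,a)$ in $\L[\X]$ with $gf = hf$. By definition of morphisms of $\L[\X]$, one has $ga = g$ and $ha = h$, and since $\bar f = a$ this is exactly $g\bar f = g$ and $h\bar f = h$. Applying the hypothesis to these two morphisms of $\X$ gives $g = h$, which is what is needed for $f$ to be monic in $\L[\X]$.

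There is no real obstacle here: the entire argument amounts to translating between $\L[\X]$-data and $\X$-data via $\bar f = a$, and using Lemma~\ref{lemma:technical-restriction}(c) the single nontrivial time to derive $\bar g = \bar h$ so that $g$ and $h$ can actually be compared in $\L[\X]$. I expect the proof to be short enough to write out in a handful of lines.
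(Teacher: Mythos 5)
Your proof is correct and follows essentially the same route as the paper's: both directions hinge on the observation that $\bar f=a$ translates the hypotheses into the morphism conditions of $\L[\X]$, and both use the identical chain $\bar g=\overline{g\bar f}=\overline{gf}=\overline{hf}=\overline{h\bar f}=\bar h$ via Lemma~\ref{lemma:technical-restriction}~(c) to make $g$ and $h$ into a parallel pair in $\L[\X]$. No gaps.
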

\begin{proof}
Suppose first that $f\colon(A,a)\to(B,b)$ is a morphism of $\L[\X]$ such that $f$ satisfies the condition according to which if $g$ and $h$ are parallel morphisms satisfying the equations $gf=hf$, $g\bar f=g$, and $h\bar f=h$, then $g=h$. Consider two such parallel morphisms $g,h\colon(C,c)\to(A,a)$. In particular, $g$ and $h$ satisfy the following equations:
\begin{align*}
\bar g=c=\bar h             &&ga=g           &&ha=h
\end{align*}
Suppose now that $gf=hf$. Since $\bar f=a$, $g\bar f=g$ and $h\bar f=h$, thus, by the property that $f$ enjoys, this implies that $g=h$. Therefore, $f$ is monic in $\L[\X]$.
\par Conversely, assume that $f$ is monic in $\L[\X]$ and consider two morhisms $g,h\colon C\to A$ of $\X$ such that $gf=hf$, $g\bar f=g$, and $h\bar f=h$. We compute that:
\begin{align*}
\bar g&=~\bar{g\bar f}       \Tag{g=g\bar f}\\
&=~\bar{gf}                  \Tag{\text{Lemma}~\ref{lemma:technical-restriction}~(c)}\\
&=~\bar{hf}                  \Tag{gf=hf}\\
&=~\bar{h\bar f}             \Tag{\text{Lemma}~\ref{lemma:technical-restriction}~(c)}\\
&=~\bar h                    \Tag{h=h\bar f}
\end{align*}
Therefore, $g,h\colon(C,c)\to(A,a)$ become morphisms of $\L[\X]$ where $c=\bar g=\bar h$. Thus, since $f$ is monic in $\L[\X]$ and $gf=gf$, $g=h$.
\end{proof}

The previous lemma suggests the following definition.

\begin{definition}
\label{definition:partial-monic}
A \textbf{restriction monic} in a restriction category $\X$, consists of a morphism $f\colon A\to B$ such that, for each pair of parallel morphisms $g,h\colon C\to A$, if $g$ and $h$ satisfy the following equations
\begin{align*}
&gf=hf          &&g\bar f=g         &&h\bar f=h
\end{align*}
then $g$ and $h$ necessarily coincide.
\end{definition}

\begin{example}
\label{example:partial-monics-par-fnc}
A partial function $f\colon A\to B$ of $\ParFnc$ (Example~\ref{example:partial-set-restriction}) is a restriction monic if and only if $f\colon(U,A)\to(B,B)$ is a monic in $\ParSet$ of Example~\ref{example:partial-set-local}, where $U$ is the domain of definition of $f$. However, since $\ParSet$ is equivalent to the category $\Set$ of sets, monics in $\ParSet$ are precisely injective functions $f\colon U\to B$. Thus, restriction monics in $\ParFnc$ are precisely partial injective functions.
\end{example}

\begin{lemma}
\label{lemma:partial-monic-2}
A morphism $f\colon A\to B$ of a local category $\C$ is a monic in $\C$ if and only if $(A,f\eta_B)\colon\Las A\nto\Las B$ is a restriction monic in $\R[\C]$.
\end{lemma}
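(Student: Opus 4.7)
The plan is to leverage Lemma~\ref{lemma:partial-monic} together with the natural isomorphism of local categories $E\colon\L[\R[\C]]\xrightarrow{\cong}\C$ given by Proposition~\ref{proposition:L-R}. In particular, I will exhibit $(A,f\eta_B)$ simultaneously as a morphism in $\R[\C]$ \emph{and} as a morphism in $\L[\R[\C]]$, so that the three notions being related --- monic in $\C$, monic in $\L[\R[\C]]$, and restriction monic in $\R[\C]$ --- can be connected in a chain of equivalences.

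First, I would observe that $(A,f\eta_B)\colon\Las A\nto\Las B$ is a morphism of $\L[\R[\C]]$ with source $(\Las A,(A,\eta_A))$ and target $(\Las B,(B,\eta_B))$. Its restriction idempotent in $\R[\C]$ is by construction $(A,\eta_A)$. Also, since $f\eta_B$ factors through $\eta_B$ via $f$, the pullback of $\eta_B$ along $f\eta_B$ is $A$ itself with projection $f\colon A\to B$, so that $(A,f\eta_B)\cdot(B,\eta_B)=(A,f\eta_B)$ holds, giving the second defining equation for membership in $\L[\R[\C]]$.

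Second, the isomorphism $E$ sends the object $(\Las A,(A,\eta_A))$ to $A$, the object $(\Las B,(B,\eta_B))$ to $B$, and the morphism $(A,f\eta_B)$ to its unique factorization $\widetilde{f\eta_B}=f$ through $\eta_B$. Since $E$ is, in particular, an isomorphism of the underlying categories, it preserves and reflects monics, so $f$ is monic in $\C$ if and only if $(A,f\eta_B)$ is monic in $\L[\R[\C]]$.

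Third, apply Lemma~\ref{lemma:partial-monic} to the restriction category $\X=\R[\C]$: the morphism $(A,f\eta_B)\colon(\Las A,(A,\eta_A))\to(\Las B,(B,\eta_B))$ is monic in $\L[\R[\C]]$ if and only if it satisfies the characterizing condition of that lemma, which is exactly Definition~\ref{definition:partial-monic} of a restriction monic in $\R[\C]$. Chaining the three equivalences yields the desired result. No step should be a serious obstacle; the main care needed is to keep the various source and target objects, as well as the computations of pullbacks of the form $\eta_A$ along maps factoring through $\eta_A$, correctly aligned across the three categorical levels $\C$, $\L[\R[\C]]$, and $\R[\C]$.
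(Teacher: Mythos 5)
Your proof is correct, but it takes a genuinely different route from the one in the paper. The paper argues directly inside $\R[\C]$: it fixes parallel morphisms $g,h\colon C\nto\Las A$, unpacks the conditions $g\,\overline{(A,f\eta_B)}=g$ and $h\,\overline{(A,f\eta_B)}=h$ into unique factorizations $\tilde g\eta_A=g$ and $\tilde h\eta_A=h$ through $\eta_A$, and then runs the two implications by hand, recomputing the relevant span compositions. You instead observe that Lemma~\ref{lemma:partial-monic} already identifies restriction monics of a restriction category $\X$ with monics of $\L[\X]$, specialize to $\X=\R[\C]$, and transport monicity across the isomorphism $E\colon\L[\R[\C]]\cong\C$ of Proposition~\ref{proposition:L-R}, under which $(A,f\eta_B)\colon(\Las A,(A,\eta_A))\to(\Las B,(B,\eta_B))$ corresponds to $f$ itself. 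The only genuine verifications your route requires are that $(A,f\eta_B)$ is a legitimate morphism of $\L[\R[\C]]$ with that codomain --- which reduces to the standard fact that the pullback of the monic $\eta_B$ along $f\eta_B$ is $A$ with projection $f$, so $(A,f\eta_B)(B,\eta_B)=(A,f\eta_B)$ --- and that $E$ sends this morphism to $f$; you address both. Your argument is shorter and more conceptual, making the lemma a formal corollary of results already established and exhibiting it as the mirror image of Lemma~\ref{lemma:partial-monic} under the equivalence; the paper's direct computation is self-contained and does not lean on the unit/counit analysis of Section~\ref{section:equivalence-restriction-local}, which arguably makes it easier to check in isolation. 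Both are valid; yours would be a reasonable alternative proof to record.
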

\begin{proof}
Consider two morphisms $g,h\colon C\nto\Las A$ of $\R[\C]$. In particular, they correspond to two morphisms $g,h\colon V\to\Las A$ of $\C$ with $V\leq C$. The restriction idempotent of $(A,f\eta_B)\colon\Las A\nto\Las B$ in $\R[\C]$ is given by $(A,\eta_A)$. Thus, the equality $g\bar{(A,f\eta_B)}=g$ in $\R[\C]$ implies the existence of a unique morphism $\tilde g\colon V\to A$ of $\C$ such that $\tilde g\eta_A=g$. The same condition on $h$ implies the existence of a unique morphism $\tilde h\colon V\to A$ of $\C$ such that $\tilde h\eta_A=h$.
\par Now, assume that $f$ is monic in $\C$ and let us prove that $(A,f\eta_B)$ is restriction monic in $\R[\C]$. Consider two $g,h\colon C\nto\Las A$ as before which also satisfy the equations
\begin{align*}
&g\bar{(A,f\eta_B)}=g           &&h\bar{(A,f\eta_B)}=h
\end{align*}
in $\R[\C]$. This implies that $\tilde gf=\tilde hf$ in $\C$. Therefore, since $f$ is monic in $\C$, this implies that $\tilde g=\tilde h$. Thus, $g=\tilde g\eta_A=\tilde h\eta_A=h$.
\par Conversely, assume that $(A,f\eta_B)$ is restriction monic in $\R[\C]$ and consider two morphisms $g,h\colon C\to A$ of $\C$ such that $gf=hf$. Since $\eta_A$ is monic, it is not hard to see that $(A,g\eta_A)$ and $(A,h\eta_A)$ satisfy the equations:
\begin{align*}
&(A,g\eta_A)\bar{(A,f\eta_B)}=(A,g\eta_A)   &&(A,h\eta_A)\bar{(A,f\eta_B)}=(A,h\eta_A)
\end{align*}
Thus, since $(A,f\eta_B)$ is restriction monic, this implies that $(A,g\eta_A)=(A,h\eta_A)$. However, this means that $g\eta_A=h\eta_A$ and using again that $\eta_A$ is monic, we conclude that $g=h$.
\end{proof}

%__________________________________________________________________________
%__________________________________________________________________________

\section{Cartesian local categories}
\label{section:cartesian}
In this section, we introduce Cartesian local categories, which, as the name suggests, are the local category analogue of Cartesian restriction categories under the $2$-equivalence of Theorem~\ref{theorem:2-equivalence-local-restriction}.

\par To begin, let us briefly recall the definition of a Cartesian restriction category, which was introduced and studied in~\cite{cockett:restrictionIII}. To do so, recall that a Cartesian object in a $2$-category $\CC$ with finite $2$-products (including $2$-terminal objects) consists of an object $\X$ of $\CC$ such that the terminal morphism $!\colon\X\to\1$ and the diagonal morphism $\Delta\colon\X\to\X\times\X$ admit right adjoints in $\CC$~\cite[]{carboni:cartesian-objects}. Therefore, a Cartesian restriction category is a Cartesian object in the $2$-category $\RCat_\lax$. This leads us to the notions of a restriction terminal object and restriction products, which have slightly weaker universal properties than the usual notions. 

\begin{definition}~\cite[Section~4.1]{cockett:restrictionIII}
\label{definition:restriction-terminal-object}
A \textbf{restriction terminal object} in a restriction category $\X$ consists of an object $\*$ of $\X$ such that for every object $A$ of $\X$ there exists a unique total morphism $!_A\colon A\to\*$ and for any morphism $f\colon A\to B$, the following diagram commutes:
\begin{equation*}
% https://q.uiver.app/#q=WzAsNCxbMSwwLCJCIl0sWzEsMSwiXFwqIl0sWzAsMCwiQSJdLFswLDEsIkEiXSxbMCwxLCIhX0IiXSxbMiwwLCJmIl0sWzMsMSwiIV9BIiwyXSxbMiwzLCJcXGJhciBmIiwyXV0=
\begin{tikzcd}
A & B \\
A & {\*}
\arrow["f", from=1-1, to=1-2]
\arrow["{\bar f}"', from=1-1, to=2-1]
\arrow["{!_B}", from=1-2, to=2-2]
\arrow["{!_A}"', from=2-1, to=2-2]
\end{tikzcd}
\end{equation*}
\end{definition}

\begin{definition}~\cite[Section~4.1]{cockett:restrictionIII}
\label{definition:restriction-products}
The \textbf{restriction product} of two objects $A$ and $B$ in a restriction category $\X$ consists of an object $A\times B$ together with two total morphisms $\pi_A\colon A\times B\to A$ and $\pi_B\colon A\times B\to B$ such that for any pairs of morphisms $f\colon C\to A$ and $g\colon C\to B$, there exists a unique morphism $\<f,g\>\colon C\to A\times B$ rendering the following diagram
\begin{equation*}
% https://q.uiver.app/#q=WzAsNixbMSwxLCJBXFx0aW1lcyBCIl0sWzAsMSwiQSJdLFsyLDEsIkIiXSxbMSwwLCJDIl0sWzAsMCwiQyJdLFsyLDAsIkMiXSxbMyw0LCJcXGJhciBnIiwyXSxbMyw1LCJcXGJhciBmIl0sWzQsMSwiZiIsMl0sWzUsMiwiZyJdLFswLDEsIlxccGlfQSJdLFswLDIsIlxccGlfQiIsMl0sWzMsMCwiXFw8ZixnXFw+IiwwLHsic3R5bGUiOnsiYm9keSI6eyJuYW1lIjoiZGFzaGVkIn19fV1d
\begin{tikzcd}
C & C & C \\
A & {A\times B} & B
\arrow["f"', from=1-1, to=2-1]
\arrow["{\bar g}"', from=1-2, to=1-1]
\arrow["{\bar f}", from=1-2, to=1-3]
\arrow["{\<f,g\>}", dashed, from=1-2, to=2-2]
\arrow["g", from=1-3, to=2-3]
\arrow["{\pi_A}", from=2-2, to=2-1]
\arrow["{\pi_B}"', from=2-2, to=2-3]
\end{tikzcd}
\end{equation*}
commutative. In other words, $\<f,g\>$ is the unique morphism such that $\bar{\<f,g\>} =\bar f\bar g$ and also that $\<f,g\>\pi_A \leq f$ and $\< f,g \> \pi_B \leq g$.
\end{definition}

Thus, a Cartesian restriction category consists of a restriction category which admits a restriction terminal object and restriction products.

\begin{example}
\label{example:cartesian-restriction-set}
The restriction categories of Examples~\ref{example:partial-set-restriction} and~\ref{example:open-restriction} are Cartesian. In both cases, the restriction terminal object coincide with the singleton set $\{\*\}$ and the restriction product of $A$ and $B$ is the Cartesian product $A\times B\=\{(a,b),a\in A,b\in B\}$. It is important to realize that, although $\{\*\}$ and $A\times B$ are a terminal object and the proper categorical product of $A$ and $B$ in the total subcategories, respectively, they are not in the full restriction category. For instance, the partial maps $f\colon\R\to\{\*\}$ and $g\colon\R\to\{\*\}$, respectively defined on $(-\infty,0)$ and $(0,\infty)$ are two distinct "terminal" maps.
\end{example}

To correctly define a Cartesian local category, we need to define the local analogue of restriction terminal objects and restriction products. To do so, we explain how restriction terminal objects and restriction products correspond to usual terminal objects and products in the local category $\L[\X]$.

\begin{lemma}
\label{lemma:L-terminal-object}
Let $\X$ be a restriction category. An object $\*$ of $\X$ is a restriction terminal object if and only if $(\*,\id_\*)$ is a terminal object in $\L[\X]$.
\end{lemma}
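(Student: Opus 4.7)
The plan is to unpack what terminality of $(\*, \id_\*)$ in $\L[\X]$ amounts to: since the side condition $f\.\id_\* = f$ is automatic, the terminal property reduces to the statement that for every object $(A, a)$ of $\L[\X]$ there is a unique morphism $f \colon A \to \*$ of $\X$ with $\bar f = a$. Both directions then come from exhibiting an explicit candidate and an easy uniqueness argument.

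For the ``only if'' direction, assume $\*$ is a restriction terminal object. Given $(A, a)$, I would take $a!_A \colon A \to \*$ as the candidate morphism $(A, a) \to (\*, \id_\*)$. The equality $\overline{a!_A} = a$ follows from Lemma~\ref{lemma:technical-restriction}(c) together with totality of $!_A$, so $a!_A$ is indeed a well-defined morphism of $\L[\X]$. For uniqueness, take any $g \colon A \to \*$ with $\bar g = a$; applying the defining commutative square of the restriction terminal object to $g$ itself yields $\bar g\.!_A = g\.!_\*$. Since $\id_\*$ is total and $!_\*$ is the unique total morphism $\* \to \*$, we have $!_\* = \id_\*$, and the equation collapses to $g = a!_A$.

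For the ``if'' direction, suppose $(\*, \id_\*)$ is terminal in $\L[\X]$. For each object $A$ of $\X$, the unique morphism $(A, \id_A) \to (\*, \id_\*)$ in $\L[\X]$ supplies a total morphism $!_A \colon A \to \*$, and uniqueness of $!_A$ among total morphisms is automatic, because any total $g \colon A \to \*$ is the same data as a morphism $(A, \id_A) \to (\*, \id_\*)$ in $\L[\X]$. To verify the commutative square of Definition~\ref{definition:restriction-terminal-object}, for any $f \colon A \to B$ I would observe that both $\bar f\.!_A$ and $f\.!_B$ are morphisms $(A, \bar f) \to (\*, \id_\*)$ in $\L[\X]$: one line of Lemma~\ref{lemma:technical-restriction}(c) combined with totality of $!_A$ and $!_B$ shows that both have restriction idempotent $\bar f$. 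Terminality of $(\*, \id_\*)$ then forces $\bar f\.!_A = f\.!_B$.

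There is no real obstacle here; the only steps needing care are the routine restriction idempotent computations $\overline{a!_A} = a$ and $\overline{f!_B} = \bar f$, each of which is a one-line application of Lemma~\ref{lemma:technical-restriction}(c) using that the canonical morphisms into $\*$ are total.
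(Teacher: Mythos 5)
Your proposal is correct and follows essentially the same route as the paper: the same candidate $a!_A$ with the same well-definedness computation via Lemma~\ref{lemma:technical-restriction}(c), uniqueness via the defining square of the restriction terminal object together with $!_\*=\id_\*$, and the converse by comparing $\bar f!_A$ and $f!_B$ as two morphisms $(A,\bar f)\to(\*,\id_\*)$ into the terminal object. No gaps.
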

\begin{proof}
Suppose that $\*$ is a restriction terminal object of $\X$. Consider an object $(A,a)$ of $\L[\X]$ and define the morphism $!_{(A,a)} \colon(A,a) \to (\*,\id_\*)$ as the composite $!_{(A,a)} = a!_A$. First, let us show that $!_{(A,a)}$ is well-defined:
\begin{align*}
\bar{!_{(A,a)}}&=~\bar{a!_A}\\
&=~\bar{a\bar{!_A}}    \Tag{\text{Lemma}~\ref{lemma:technical-restriction}~(c)}\\
&=~\bar a            \Tag{!_A\text{ total}}\\
&=~a                 \Tag{\bar a=a}
\end{align*}
This proves that $!_{(A,a)}$ is a morphism of $\L[\X]$. Now, consider a morphism $f\colon(A,a)\to(\*,\id_\*)$. Thus, $\bar f=a$. Moreover, by the universal property of $\*$, the following diagram commutes:
\begin{equation*}
% https://q.uiver.app/#q=WzAsNCxbMCwwLCJBIl0sWzEsMCwiXFwqIl0sWzEsMSwiXFwqIl0sWzAsMSwiQSJdLFsxLDIsIiFfXFwqIl0sWzAsMSwiZiJdLFszLDIsIiFfQSIsMl0sWzAsMywiXFxiYXIgZiIsMl1d
\begin{tikzcd}
A & {\*} \\
A & {\*}
\arrow["f", from=1-1, to=1-2]
\arrow["{\bar f}"', from=1-1, to=2-1]
\arrow["{!_\*}", from=1-2, to=2-2]
\arrow["{!_A}"', from=2-1, to=2-2]
\end{tikzcd}
\end{equation*}
However, since $!_\*=\id_\*$ (since the identity morphism is total and there is a unique total morphism of type $\ast \to \ast$), this implies that $f=\bar f!_A=a!_A=!_{(A,a)}$. Thus, $(\*,\id_\*)$ is terminal in $\L[\X]$, as desired. 

Conversely, assume that $(\*,\id_\*)$ is terminal in $\L[\X]$. Consider an object $A$ of $\X$ and let $!_A\colon(A,\id_A)\to(\*,\id_\*)$ be the terminal morphism $\L[\X]$. In particular, this means that $!_A\colon A\to\*$ is total in $\X$ and we also have that $!_\*=\id_\*$. Consider now a morphism $f\colon A\to B$. Thus, there exists a unique morphism $!_{(A,\bar f)}$ from $(A,\bar f)$ to $(\*,\id_\*)$. However, the morphism
\begin{align*}
&(A,\bar f)\xrightarrow{f}(B,\id_B)\xrightarrow{!_B}(\*,\id_\*)
\end{align*}
is also well-defined in $\L[\X]$. Thus, $f!_B$ must coincide with $!_{(A,\bar f)}$. However, for any object $(A,a)$, $!_{(A,a)}$ coincides with $a!_A$, since $a!_A=\eta_{(A,a)}!_{\Las(A,a)}\colon(A,a)\to(\*,\id_\*)$ is a morphism of $\L[\X]$. Thus, $f!_B=\bar f!_A$. So we conclude that $\ast$ is a restriction terminal object in $\mathbb{X}$ as desired. 
\end{proof}

\begin{lemma}
\label{lemma:L-products}
Let $\X$ be a restriction category and consider two objects $A$ and $B$ of $\X$. An object $A\times B$ is the restriction product of $A$ and $B$ in $\X$ if and only if $(A\times B,\id_{A\times B})$ is the product of $(A,\id_A)$ and $(B,\id_B)$ in $\L[\X]$.
\end{lemma}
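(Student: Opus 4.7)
The proof plan is to mirror closely the structure of Lemma~\ref{lemma:L-terminal-object}, using in each direction the correspondence between total morphisms in $\X$ and morphisms into objects of the form $(A,\id_A)$ in $\L[\X]$.

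For the forward direction, assume $A\times B$ is a restriction product of $A$ and $B$ in $\X$ with total projections $\pi_A$ and $\pi_B$. Since both projections are total, they are well-defined morphisms $\pi_A\colon(A\times B,\id_{A\times B})\to(A,\id_A)$ and $\pi_B\colon(A\times B,\id_{A\times B})\to(B,\id_B)$ in $\L[\X]$. Given any pair of morphisms $f\colon(C,c)\to(A,\id_A)$ and $g\colon(C,c)\to(B,\id_B)$ in $\L[\X]$, we automatically have $\bar f=c=\bar g$, so $\bar f\bar g=c\bar c=c$ by Lemma~\ref{lemma:technical-restriction}~(a). Thus the restriction pairing $\<f,g\>\colon C\to A\times B$ is a well-defined morphism $(C,c)\to(A\times B,\id_{A\times B})$ in $\L[\X]$. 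The equations $\<f,g\>\pi_A\leq f$ and $\<f,g\>\pi_B\leq g$ in $\X$, translated using $\bar f=c=\bar g$ and \textbf{[R.1]}, give exactly $\<f,g\>\pi_A=f$ and $\<f,g\>\pi_B=g$ in $\L[\X]$. Uniqueness follows because any $h\colon(C,c)\to(A\times B,\id_{A\times B})$ with $h\pi_A=f$ and $h\pi_B=g$ has $\bar h=c=\bar f\bar g$ and satisfies the restriction product inequalities, so $h=\<f,g\>$ by the restriction product's universal property.

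For the reverse direction, assume $(A\times B,\id_{A\times B})$ is a product in $\L[\X]$ with projections $\pi_A$ and $\pi_B$. These are morphisms into $(A,\id_A)$ and $(B,\id_B)$, so $\bar{\pi_A}=\id_{A\times B}=\bar{\pi_B}$, making them total in $\X$. Given arbitrary morphisms $f\colon C\to A$ and $g\colon C\to B$ in $\X$, set $c\=\bar f\bar g$; then a short computation using Lemma~\ref{lemma:technical-restriction}~(c) together with \textbf{[R.1]}-\textbf{[R.2]} shows that $\bar g f$ and $\bar f g$ define morphisms $(C,c)\to(A,\id_A)$ and $(C,c)\to(B,\id_B)$ in $\L[\X]$, since $\overline{\bar g f}=\bar g\bar f=c$ and dually. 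The pairing $h\colon(C,c)\to(A\times B,\id_{A\times B})$ given by the universal property in $\L[\X]$ then satisfies $\bar h=c=\bar f\bar g$, $h\pi_A=\bar g f$, and $h\pi_B=\bar f g$, which are exactly the restriction product axioms. Uniqueness transfers back in the same way.

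The only subtle step is the recognition that the restriction product's condition $\<f,g\>\pi_A\leq f$ becomes equality in $\L[\X]$ once $\bar f=c$ is already fixed, which is where the idempotency $c\cdot c=c$ of the second coordinate is crucial. I expect no serious obstacle beyond bookkeeping with \textbf{[R.1]}-\textbf{[R.4]} and Lemma~\ref{lemma:technical-restriction}.
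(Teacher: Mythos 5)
Your proposal is correct and follows essentially the same route as the paper's proof: in both directions it exploits the fact that morphisms of $\L[\X]$ into $(A,\id_A)$ and $(B,\id_B)$ force $\bar f=c=\bar g$, so that the restriction-product conditions $\<f,g\>\pi_A=\bar gf$ and $\<f,g\>\pi_B=\bar fg$ collapse to strict equalities via \textbf{[R.1]}, and conversely packages arbitrary $f,g$ in $\X$ as the pair $\bar gf,\bar fg$ out of $(C,\bar f\bar g)$. Your treatment of uniqueness is slightly more explicit than the paper's, but the argument is the same.
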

\begin{proof}
Suppose that $A\times B$ is the restriction product of $A$ and $B$ in $\X$. Consider an object $(C,c)$ of $\L[\X]$ and two morphisms $f\colon(C,c)\to(A,\id_A)$ and $g\colon(C,c)\to(B,\id_B)$. In particular, $\bar f=c=\bar g$. Thus, there exists a unique morphism $\<f,g\>\colon C\to A\times B$ such that $\<f,g\>\pi_A=\bar gf$ and $\<f,g\>\pi_B=\bar fg$. However, since $\bar f=\bar g$, these two equations become $\<f,g\>\pi_A=f$ and $\<f,g\>\pi_B=g$. Moreover:
\begin{align*}
\bar{\<f,g\>}&=~\bar{\<f,g\>\bar\pi_A}              \Tag{\bar\pi_A=\id_{A\times B}}\\
&=~\bar{\<f,g\>\pi_A}        \Tag{\text{Lemma}~\ref{lemma:technical-restriction}~(c)}\\
&=~\bar f                    \Tag{\<f,g\>\pi_A=\bar gf=f}\\
&=~c                         \Tag{\bar f=c}
\end{align*}
Therefore, $\<f,g\>$ becomes the unique morphism $\<f,g\>\colon(C,c)\to(A\times B,\id_{A\times B})$ satisfying $\<f,g\>\pi_A=f$ and $\<f,g\>\pi_B=g$. Thus, $(A\times B,\id_{A\times B})$ is the product of $(A,\id_A)$ and $(B,\id_B)$ in $\L[\X]$ as desired. 

Conversely, suppose that $(A\times B,\id_{A\times B})$ is the product of $(A,\id_A)$ and $(B,\id_B)$ in $\L[\X]$. Thus, there are two morphisms
\begin{align*}
&\pi_A\colon(A\times B,\id_{A\times B})\to(A,\id_A)     &\pi_B&\colon(A\times B,\id_{A\times B})\to(B,\id_B)
\end{align*}
satisfying the universal property of a product. In particular, $\pi_A$ and $\pi_B$ are total in $\X$. Consider now an object $C$ of $\X$ and two morphisms $f\colon C\to A$ and $g\colon C\to B$. Consider the two following morphisms:
\begin{align*}
&(C,\bar f\bar g)\xrightarrow{\bar gf}(A,\id_A)         &&(C,\bar f\bar g)\xrightarrow{\bar fg}(B,\id_B)
\end{align*}
It is immediate to see that $\bar{\bar gf}=\bar f\bar g$ and $\bar{\bar gf}=\bar f\bar g$, thus, these are two morphisms of $\L[\X]$. Therefore, there exists a unique morphism $h\colon(C,\bar f\bar g)\to(A\times B,\id_{A\times B})$ of $\L[\X]$ such that $h\pi_A=\bar gf$ and $h\pi_B\bar fg$. Thus, $A\times B$ is the restriction product of $A$ and $B$ in $\X$.
\end{proof}

Inspired by the above lemmas, we define local terminal objects and local products as being usual terminal objects and products which are compatible with the local structure as follows.

\begin{definition}
\label{definition:local-terminal-object}
A \textbf{local terminal object} in a local category $\C$ consists of a total object $\*$ of $\C$ which is terminal in $\C$.
\end{definition}

\begin{definition}
\label{definition:cartesian-local-products}
A \textbf{local product} between two object $M$ and $N$ in a local category $\C$ consists of an object $M\times N$ together with two morphisms $\pi_M\colon M\times N\to M$ and $\pi_N\colon M\times N\to N$ such that $M\times N$ is the product of $M$ and $N$ in $\C$, and also that the product of $\Las M$ and $\Las N$ exists in $\C$ with $\Las M \times \Las N = \Las(M\times N)$ and $\eta_{M\times N} = \eta_M\times\eta_N$.
\end{definition}

\begin{definition}
\label{definition:cartesian-local-category}
A \textbf{Cartesian local category} consists of a local category $\C$ which admits a local terminal object and local products.
\end{definition}

\begin{example}
\label{example:cartesian-local-set}
The local categories $\ParSet$ and $\ParTop$ of Examples~\ref{example:partial-set-local} and~\ref{example:open-local} are Cartesian. Concretely, in both of the examples, the local terminal object is $(\{\*\},\{\*\})$ while the local product of $(A,U)$ and $(B,V)$ is given by $(A\times B,U\times V)$.
\end{example}

To further justify these definitions, we now show that a restriction category has restriction products if and only if its associated local category does. 

\begin{proposition}
\label{proposition:L-products}
A restriction category $\X$ admits a restriction terminal object $\*$ if and only if $\L[\X]$ admits a local terminal object. Moreover, $\X$ admits restriction products if and only if $\L[\X]$ admits local products.
\end{proposition}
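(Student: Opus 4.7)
The plan is to reduce both equivalences directly to Lemmas~\ref{lemma:L-terminal-object} and~\ref{lemma:L-products}, by exploiting the observation that an object $(A, a)$ of $\L[\X]$ is total in the local structure sense precisely when $a = \id_A$, since $\Las(A, a) = (A, \id_A)$ by construction.

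For the terminal object equivalence, I will first dispatch the forward direction: given a restriction terminal object $\*$ of $\X$, Lemma~\ref{lemma:L-terminal-object} gives that $(\*, \id_\*)$ is terminal in $\L[\X]$, and since it is manifestly total, it is a local terminal object. Conversely, any local terminal object is by definition a total object of $\L[\X]$, hence of the form $(\*, \id_\*)$ for some $\* \in \X$ by the observation above; Lemma~\ref{lemma:L-terminal-object} then gives back that $\*$ is a restriction terminal object of $\X$.

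For the product equivalence, the forward direction is again almost immediate from Lemma~\ref{lemma:L-products}: if $A \times B$ is the restriction product in $\X$, then $(A \times B, \id_{A \times B})$ is the product of $(A, \id_A)$ and $(B, \id_B)$ in $\L[\X]$, and the extra local compatibility conditions reduce to the trivial equalities $\Las(A \times B, \id_{A \times B}) = (A, \id_A) \times (B, \id_B) = \Las(A, \id_A) \times \Las(B, \id_B)$ and $\eta_{(A \times B, \id_{A \times B})} = \id_{A \times B} = \id_A \times \id_B = \eta_{(A, \id_A)} \times \eta_{(B, \id_B)}$. For the converse, given any pair $A, B$ in $\X$, I form the total objects $(A, \id_A)$ and $(B, \id_B)$ in $\L[\X]$ and take their local product; the local compatibility condition forces this product to be of the form $(P, \id_P)$ for some $P$ in $\X$, and Lemma~\ref{lemma:L-products} identifies $P$ as the restriction product of $A$ and $B$ in $\X$.

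The substantive work has already been done in the two preceding lemmas, so I anticipate no real obstacle. The only conceptual step is recognising that the totality conditions built into the definitions of local terminal object and local product ensure that the associated objects in $\L[\X]$ carry identity restriction idempotents, which brings us exactly within the scope of those lemmas.
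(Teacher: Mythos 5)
Your treatment of the terminal object is correct and agrees with the paper's, which likewise just invokes Lemma~\ref{lemma:L-terminal-object} together with the observation that the total objects of $\L[\X]$ are exactly those of the form $(A,\id_A)$. The direction ``$\L[\X]$ admits local products $\Rightarrow$ $\X$ admits restriction products'' is also fine and is exactly the paper's argument: the compatibility condition $\Las(M\times N)=\Las M\times\Las N$ forces the local product of two total objects to be total, and Lemma~\ref{lemma:L-products} finishes.

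There is, however, a genuine gap in the remaining direction. ``$\L[\X]$ admits local products'' means that \emph{every} pair of objects $(A,a)$, $(B,b)$ of $\L[\X]$ has a local product; this is what Theorem~\ref{theorem:cartesian-local-categories} needs, since the product must furnish a right adjoint $\times\colon\L[\X]\times\L[\X]\to\L[\X]$ to the diagonal. Lemma~\ref{lemma:L-products} only produces products of pairs of \emph{total} objects, and a general $(A,a)$ is not isomorphic to a total object unless $a$ splits (Lemma~\ref{lemma:split-local-category}), so you cannot reduce to that case. This is precisely where the substantive work lies, and it is what the bulk of the paper's proof is devoted to: one sets $(A,a)\times(B,b)=(A\times B,a\times b)$ with $a\times b=\bar{\pi_Aa}\bar{\pi_Bb}$, equips it with the projections $\pi_{(A,a)}=\bar{\pi_Bb}\pi_Aa$ and $\pi_{(B,b)}=\bar{\pi_Aa}\pi_Bb$, and verifies by direct computation with \textbf{[R.1]}--\textbf{[R.4]} both the universal property (for arbitrary $f\colon(C,c)\to(A,a)$ and $g\colon(C,c)\to(B,b)$, using $\bar f=c=\bar g$) and the local compatibility conditions $\Las((A,a)\times(B,b))=\Las(A,a)\times\Las(B,b)$ and $\eta_{(A,a)\times(B,b)}=\eta_{(A,a)}\times\eta_{(B,b)}$. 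Your proposal, by asserting that ``the substantive work has already been done in the two preceding lemmas,'' omits all of this.
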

\begin{proof}
The first statement is precisely the thesis of Lemma~\ref{lemma:L-terminal-object}. Let us focus on the second statement. First, assume that $\L[\X]$ admits local products. Consider two objects $A$ and $B$ of $\X$. Thus, the local product $(A,\id_A)\times(B,\id_B)$ of $(A,\id_A)$ and $(B,\id_B)$ exists in $\L[\X]$. Moreover, since $(A,\id_A)$ and $(B,\id_B)$ are total we have that: 
\begin{align*}
&\Las((A,\id_A)\times(B,\id_B))=\Las(A,\id_A)\times\Las(B,\id_B)=(A,\id_A)\times(B,\id_B)
\end{align*}
Therefore, $(A,\id_A)\times(B,\id_B)=(A\times B,\id_{A\times B})$. Thus, by Lemma~\ref{lemma:L-products}, $A\times B$ is the restriction product of $A$ and $B$ in $\X$. 

Conversely, assume that $\X$ admits all restriction products. Consider two objects $(A,a)$ and $(B,b)$ of $\L[\X]$. Define $(A,a)\times(B,b)\=(A\times B,a\times b)$, where $A\times B$ is the restriction product of $A$ and $B$ and $a\times b\colon A\times B\to A\times B$ is the unique morphism such that $(a\times b)\pi_A=\bar{\pi_Bb}\pi_Aa$ and $(a\times b)\pi_B=\bar{\pi_Aa}\pi_Bb$. Let us prove that $a\times b=\bar{\pi_Ab}\bar{\pi_Ba}$:
\begin{align*}
\bar{\pi_Aa}\bar{\pi_Bb}\pi_A&=~\bar{\pi_Bb}\bar{\pi_Aa}\pi_A              \Tag{\textbf{[R.2]}}\\
&=~\bar{\pi_Bb}\pi_A\bar a                                                 \Tag{\textbf{[R.4]}}\\
&=~\bar{\pi_Bb}\pi_Aa                                                      \Tag{\bar a=a}
\end{align*}
Similarly, one can show that $\bar{\pi_Aa}\bar{\pi_Bb}\pi_B=\bar{\pi_Aa}\pi_Bb$. Thus, by the universal property of restriction products, $a\times b=\bar{\pi_Ab}\bar{\pi_Ba}$. We use this to prove that $(A\times B,a\times b)$ is the local product of $(A,a)$ and $(B,b)$ in $\L[\X]$. First, define the two projections as follows:
\begin{align*}
\pi_{(A,a)}&\colon(A\times B,a\times b)\xrightarrow{\bar{\pi_Bb}\pi_Aa}(A,a)    &
\pi_{(B,b)}&\colon(A\times B,a\times b)\xrightarrow{\bar{\pi_Aa}\pi_Bb}(B,b)
\end{align*}
It is not hard to see that $\pi_{(A,a)}$ and $\pi_{(B,b)}$ are well-defined. Now consider two morphisms $f\colon(C,c)\to(A,a)$ and $g\colon(C,c)\to(B,b)$. In particular, $\bar f=c=\bar g$ and $fa=f$ and $gb=g$. Thus, there exists a unique morphism $\<f,g\>\colon C\to A\times B$ satisfying the following two equations:
\begin{align*}
&\<f,g\>\pi_A=\bar gf=cf=f      &&\<f,g\>\pi_B=\bar fg=cg=g
\end{align*}
Let us prove that $\<f,g\>\pi_{(A,a)}=f$ and $\<f,g\>\pi_{(B,b)}=g$:
\begin{align*}
\<f,g\>\pi_{(A,a)}&=~\<f,g\>\bar{\pi_Bb}\pi_Aa\\
&=~\bar{\<f,g\>\pi_Bb}\<f,g\>\pi_Aa              \Tag{\textbf{[R.4]}}\\
&=~\bar{gb}fa                                    \Tag{\<f,g\>\pi_B=g,\<f,g\>\pi_A=f}\\
&=~\bar gf                                       \Tag{gb=g,fa=f}\\
&=~f                                             \Tag{\bar g=c=\bar f,\textbf{[R.1]}}
\end{align*}
Similarly, one can show that $\<f,g\>\pi_{(B,b)}=g$. Suppose now that $h\colon(C,c)\to(A\times B,a\times b)$ satisfies the equations $h\pi_{(A,a)}=f$ and $h\pi_{(B,b)}=g$. However, since $h(a\times b)=h$, we can compute that: 
\begin{align*}
h\pi_{(A,a)}&=~h\bar{\pi_Bb}\pi_Aa\\
&=~h\bar{\pi_Bb}\pi_A\bar a                      \Tag{a=\bar a}\\
&=~h\bar{\pi_Bb}\bar{\pi_Aa}\pi_A                \Tag{\textbf{[R.4]}}\\
&=~h(a\times b)\pi_A                             \Tag{\bar{\pi_Bb}\bar{\pi_Aa}=a\times b}\\
&=~h\pi_A                                        \Tag{h(a\times b)=h}
\end{align*}
Similarly, one can show that $h\pi_{(B,b)}=h\pi_B$. Thus, by the universal property of the restriction product $A\times B$, $h=\<f,g\>$. This proves that $(A\times B,a\times b)$ is the product of $(A,a)$ and $(B,b)$ in $\L[\X]$. To prove that this is a local product, let us demonstrate its compatibility with the local structure. By Lemma~\ref{lemma:L-products}, we already know that the product $(A,\id_A)\times(B,\id_B)$ is equal to $(A\times B,\id_{A\times B})$. Thus:
\begin{align*}
&\Las(A,a)\times\Las(B,b)=(A,\id_A)\times(B,\id_B)=(A\times B,\id_{A\times B})\\
&\quad=\Las(A\times B,a\times b)=\Las((A,a)\times\Las(B,b))
\end{align*}
Finally:
\begin{align*}
&\eta_{(A,a)\times(B,b)}=\eta_{(A\times B,a\times b)}=a\times b=\eta_{(A,a)}\times\eta_{(B,b)}
\end{align*}
Thus, $(A\times B,a\times b)$ is the local product of $(A,a)$ and $(B,b)$ in $\L[\X]$.
\end{proof}

We may now finally prove the main result of this section that a restriction (resp. local) category is Cartesian if and only if its associated local (resp. restriction) category is Cartesian. To this end, we denote by $\cRCat_\lax$ the $2$-subcategory of $\RCat_\lax$ spanned by Cartesian restriction categories and restriction functors which preserve restriction terminal objects and restriction products, and similarly we denote by $\cLCat_\lax$ the $2$-subcategory of $\LCat_\lax$ spanned by Cartesian local categories and local functors which preserve local terminal objects and local products. 

\begin{theorem}
\label{theorem:cartesian-local-categories}
Cartesian local categories are Cartesian objects in the $2$-category $\LCat_\lax$ and the $2$-equivalence of Theorem~\ref{theorem:2-equivalence-local-restriction} restricts to a $2$-equivalence $\cRCat_\lax\simeq\cLCat_\lax$. 
\end{theorem}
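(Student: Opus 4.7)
The plan is to reduce the theorem to Proposition~\ref{proposition:L-products} combined with the 2-equivalence of Theorem~\ref{theorem:2-equivalence-local-restriction}. Recall that a Cartesian restriction category is, by definition, a Cartesian object in $\RCat_\lax$, and since $\L_\lax$ and $\R_\lax$ form a 2-equivalence, they preserve and reflect 2-limits, adjunctions, and hence Cartesian objects. In particular, $\LCat_\lax$ inherits finite 2-products from $\RCat_\lax$ via transport across the 2-equivalence, and a local category $\C$ is a Cartesian object in $\LCat_\lax$ precisely when $\R[\C]$ is a Cartesian object in $\RCat_\lax$. Applying Proposition~\ref{proposition:L-products} to $\R[\C]$, together with the natural isomorphism $\L[\R[\C]] \cong \C$ of Proposition~\ref{proposition:L-R}, this is equivalent to $\C$ admitting a local terminal object and local products, i.e., to $\C$ being a Cartesian local category in the sense of Definition~\ref{definition:cartesian-local-category}. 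This handles the first claim.

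For the second claim, I would verify that the 2-functors $\L_\lax$ and $\R_\lax$ restrict to 2-functors between $\cRCat_\lax$ and $\cLCat_\lax$. Concretely, if a restriction functor $F\colon \X \to \X'$ preserves the restriction terminal object and restriction products, then by Lemmas~\ref{lemma:L-terminal-object} and~\ref{lemma:L-products} the local functor $\L[F]$ sends the local terminal object $(\*,\id_\*)$ to $(F\*,\id_{F\*})$, which is the local terminal object of $\L[\X']$, and sends the basic local products $(A,\id_A)\times(B,\id_B) = (A\times B,\id_{A\times B})$ to the analogous product in $\L[\X']$. Extending this to arbitrary local products $(A,a)\times(B,b) = (A\times B, a\times b)$ uses the explicit formula from the proof of Proposition~\ref{proposition:L-products} together with the fact that $F$ preserves restriction idempotents and projections. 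The converse direction for $\R[G]$ is analogous, using that every total object of $\L[\X']$ has the form $(A,\id_A)$. Finally, the unit and counit natural isomorphisms of Propositions~\ref{proposition:R-L} and~\ref{proposition:L-R} evidently respect Cartesian structure on both sides, so the 2-equivalence of Theorem~\ref{theorem:2-equivalence-local-restriction} restricts to the desired 2-equivalence $\cRCat_\lax \simeq \cLCat_\lax$.

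The main obstacle, in my view, is the careful bookkeeping needed to verify that $\L[F]$ preserves the explicit local product $(A,a)\times(B,b) = (A\times B, a\times b)$ whenever $F$ preserves restriction products. Since this formula is built entirely from restriction-categorical data that $F$ preserves (restriction idempotents, restriction products, projections, and pairings), the verification reduces to a routine diagram chase rather than a genuine conceptual difficulty. A slightly more subtle preliminary point is that one must first argue $\LCat_\lax$ itself admits finite 2-products; but this is automatic once one grants that $\RCat_\lax$ does and that 2-equivalences preserve all 2-limits, which makes the transport of Cartesian objects across $\L_\lax$ and $\R_\lax$ a formal consequence of Theorem~\ref{theorem:2-equivalence-local-restriction}.
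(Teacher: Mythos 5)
Your proposal is correct, but it is organised quite differently from the paper's proof, essentially swapping which half is done abstractly and which half is done by hand. For the first claim, the paper never invokes Proposition~\ref{proposition:L-products}: it works entirely inside $\LCat_\lax$, taking the right adjoints $\*\colon\1\to\C$ and $\times\colon\C\times\C\to\C$ supplied by the underlying Cartesian category and unwinding exactly what it means for them to be \emph{local} functors (namely, that the terminal object is total and that $\Las(M\times N)=\Las M\times\Las N$ with $\eta_{M\times N}=\eta_M\times\eta_N$), together with the observation that the forgetful $2$-functor $\LCat_\lax\to\Cat$ preserves adjunctions. Your route instead transports the Cartesian-object condition across the $2$-equivalence to $\R[\C]$ and then feeds it through Lemmas~\ref{lemma:L-terminal-object},~\ref{lemma:L-products} and Proposition~\ref{proposition:L-products}; this is valid and has the virtue of actually using the object-level correspondence the paper establishes beforehand, at the cost of routing the intrinsic characterisation of Cartesian objects in $\LCat_\lax$ through $\RCat_\lax$. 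For the second claim the roles reverse: the paper disposes of it in one line by citing that any $2$-equivalence preserves adjunctions and hence lifts to the $2$-categories of Cartesian objects, whereas you verify by hand that $\L_\lax$ and $\R_\lax$ restrict to product-preserving $1$-cells; your concrete check of $\L[F](A\times B,a\times b)=(FA,Fa)\times(FB,Fb)$ is the routine chase you describe and actually makes explicit a point the paper's abstract argument glosses over (the identification of ``Cartesian-preserving $1$-morphisms'' with functors preserving restriction/local terminal objects and products). One small imprecision: you should not argue that $\LCat_\lax$ ``inherits'' finite $2$-products by transport --- a $2$-equivalence only transports bi-products, and in any case $\LCat_\lax$ has honest $2$-products computed as products of underlying categories with the evident local structure, which is what the paper implicitly uses.
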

\begin{proof}
Let us first prove that Cartesian objects in $\LCat_\lax$ are precisely Cartesian local categories. Start by considering a Cartesian local category $\C$. By forgetting the local structure, we define the functors $\*\colon\1\to\C $ and $\times\colon\C\times\C\to\C$, which are right adjoints to the terminal functor $!\colon\C\to\1$ and the diagonal functor $\Delta\colon\C\to\C\times\C$ in the $2$-category $\Cat$ of categories, respectively. Since the $2$-morphisms of $\LCat_\lax$ are just natural transformations, we only need to check that $\*$ and $\times$ are local functors. First, notice that the terminal local category $\1$ is the terminal category $\{\1\}$ equipped with the trivial local structure (see Example~\ref{example:trivial-local}). Thus, by requiring $\*$ to be a local functor is to require the following equations to hold
\begin{align*}
&\Las(\*(\1))=\*(\Las(\1))=\*(\1)           &&\eta_{\*(\1)}=\*(\eta_\1)=\*(\id_\1)=\id_{\*(\1)}
\end{align*}
which is to say that the terminal object must be total in $\C$, that is, $\*=\*(\1)$ is a local terminal object. Note that the functors $\*$ and $\times$ necessarily preserve the pullbacks of the maximal inclusions since right adjoints preserve limits. Consider now the product functor $\times$. Requiring $\times$ to be local amounts to requiring the following equations to hold, for every $M$ and $N$ in $\C$
\begin{align*}
&\Las(M\times N)=(\Las M)\times(\Las N)     &&\eta_M\times\eta_N=\eta_{M\times N}
\end{align*}
which is precisely to say that $M\times N$ is a local product. Conversely, consider a Cartesian object $\C$ of $\LCat_\lax$. The forgetful $2$-functor $\LCat_\lax\to\Cat$ which forgets the local structure, preserves adjunctions, thus, the underlying category of $\C$ is Cartesian. Moreover, since the right adjoints $\*$ and $\times$ of the terminal local functor $!$ and of the diagonal local functor $\Delta$, respectively, are local functors, the terminal object of $\C$ must be a local terminal object and the products of $\C$ must be local products. This proves that Cartesian objects of $\LCat_\lax$ are precisely the Cartesian local categories. 

Finally, by~\cite[Proposition~6.1.7]{yau:2-categories}, every $2$-equivalence $\CC\simeq\DD$ between two categories preserves adjunctions, thus, in particular, it lifts to a $2$-equivalence between the $2$-categories of Cartesian objects and Cartesian preserving $1$-morphisms $\Cart[\CC]\simeq\Cart[\DD]$. Thus, the $2$-equivalence $\RCat_\lax\simeq\LCat_\lax$ of Theorem~\ref{theorem:2-equivalence-local-restriction} lifts to a $2$-equivalence $\Cart[\RCat_\lax]\simeq\Cart[\LCat_\lax]$. However, we have already shown that $\Cart[\LCat_\lax]$ is the $2$-category of Cartesian local categories. Finally, as discussed in~\cite[Section~4.1]{cockett:restrictionIII}, $\Cart[\RCat_\lax]$ is the $2$-category of Cartesian restriction categories.
\end{proof}

\begin{remark}
\label{remark:local-limits}
In~\cite{cockett:restrictionIII}, a full theory of restriction limits was explored. So far, we have not yet developed a full theory of local limits, that is, the correct notion of limits for local categories which would correspond to restriction limits under the $2$-equivalence $\RCat_\lax\simeq\LCat_\lax$. This turned out to be more subtle and complicated than one might expect, so we will leave this for future work. 
\end{remark}

%__________________________________________________________________________
%__________________________________________________________________________

\section{Split local categories}
\label{section:split}
In this section, we introduce split local categories, which are those local categories that correspond to split restriction categories under the $2$-equivalence of Theorem~\ref{theorem:2-equivalence-local-restriction}. We begin by showing that split restriction idempotents in a restriction category are isomorphic to total objects in the associated local category. 

\begin{lemma}
\label{lemma:split-local-category}
Let $\X$ be a restriction category. A restriction idempotent $a\colon A\to A$ in $\X$ splits in $\X$ if and only if $(A,a)$ is isomorphic to a total object in $\L[\X]$.
\end{lemma}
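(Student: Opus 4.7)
The plan is to handle both directions by a direct translation between splitting data in $\X$ and isomorphism data relating $(A,a)$ to a total object in $\L[\X]$. The key observation is that in $\L[\X]$ the identity morphism on $(A,a)$ is the idempotent $a$ itself, not $\id_A$, and that a morphism $f\colon (A,a) \to (B,b)$ is forced to satisfy $\bar f = a$; consequently every total object of $\L[\X]$ takes the form $(E,\id_E)$.

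For the forward direction, I would assume $a$ splits in $\X$ via $s\colon E \to A$ and $r\colon A \to E$ with $sr=\id_E$ and $rs=a$. First I would verify two preliminary identities: $\bar s = \id_E$ (so $s$ is total) and $\bar r = a$. The first follows from \textbf{[R.1]} by the one-liner $\bar s = \bar s\cdot\id_E = \bar s(sr) = (\bar s s)r = sr = \id_E$; the second then falls out of Lemma~\ref{lemma:technical-restriction}~(c) via $\bar r = \bar{r\cdot\id_E} = \bar{r\bar s} = \bar{rs} = \bar a = a$. Combined with the equally quick $sa = s(rs) = (sr)s = s$, these identities lift $s$ to a morphism $(E,\id_E) \to (A,a)$ and $r$ to a morphism $(A,a) \to (E,\id_E)$ in $\L[\X]$. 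Since the identities of $\L[\X]$ on the objects $(E,\id_E)$ and $(A,a)$ are $\id_E$ and $a$ respectively, the equations $sr=\id_E$ and $rs=a$ now read as $s$ and $r$ being mutually inverse in $\L[\X]$, and $(E,\id_E)$ is manifestly a total object.

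For the converse, I would suppose $(A,a)$ is isomorphic in $\L[\X]$ to some total object, which must be of the form $(E,\id_E)$. Naming the inverse isomorphisms $s\colon (E,\id_E) \to (A,a)$ and $r\colon (A,a) \to (E,\id_E)$, the morphism conditions of $\L[\X]$ immediately give $\bar s = \id_E$ and $\bar r = a$, while the inverse relations translate to $sr = \id_{(E,\id_E)} = \id_E$ and $rs = \id_{(A,a)} = a$ in $\X$. This is precisely a splitting of the restriction idempotent $a$.

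I do not anticipate any serious obstacle; the real content is simply recognising that identities in $\L[\X]$ are restriction idempotents rather than bare identities, after which both directions reduce to short verifications using \textbf{[R.1]} and Lemma~\ref{lemma:technical-restriction}.
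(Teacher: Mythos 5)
Your proposal is correct and follows essentially the same route as the paper: identify total objects of $\L[\X]$ as those of the form $(E,\id_E)$, lift the section--retraction pair to mutually inverse morphisms of $\L[\X]$ using $\bar s=\id_E$, $\bar r=a$, $sa=s$, and read the converse off the morphism conditions. The only cosmetic difference is that you prove $\bar s=\id_E$ by a direct \textbf{[R.1]} computation where the paper invokes the fact that monics are total (Lemma~\ref{lemma:technical-restriction}~(d)).
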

\begin{proof}
Suppose that a restriction idempotent $a$ splits as a section-retraction pair $(s,r)\colon E\leftrightarrows A$ in $\X$. We prove that $(A,a)$ is isomorphic to $(E,\id_E)$ in $\L[\X]$. To do so, let us first compute the restriction idempotent of $s$ and $r$. Since $s$ is monic, by Lemma~\ref{lemma:technical-restriction}, $\bar s=\id_E$. Moreover, we can compute:
\begin{align*}
\bar r&=~\bar{r\bar s}     \Tag{\bar s=\id_E}\\
&=~\bar{rs}                \Tag{\text{Lemma}~\ref{lemma:technical-restriction}~(c)}\\
&=~\bar a                  \Tag{rs=a}\\
&=~a                       \Tag{\bar a=a}
\end{align*}
Thus, since $sa=srs=s$, we get that $s\colon(E,\id_E)\to(A,a)$ and $r\colon(A,a)\to(E,\id_E)$ are morphisms of $\L[\X]$, Furthermore, since $sr=\id_E$ and $rs=a=\id_{(A,a)}$, $s$ inverts $r$ in $\L[\X]$, that is, $(A,a)$ is isomorphic to $(E,\id_E)$, which is a total object of $\L[\X]$. 

Conversely, assume that $(A,a)$ is isomorphic to a total object $(E,\id_E)$ of $\L[\X]$ via an isomorphism $s\colon(E,\id_E)\to(A,a)$ and its inverse $r\colon(A,a)\to(E,\id_E)$. In particular, this means that $sr=\id_E$ and that $rs=\id_{(A,a)}=a$. Thus, $(s,r)$ is a section-retraction pair and $a=rs$. Thus, $a$ splits in $\X$.
\end{proof}

\begin{remark}
\label{remark:iso-total}
It is important to notice that, despite the object $(A,a)$ of $\L[\X]$ being isomorphic to a total object when $a$ splits, the object $(A,a)$ itself is not total in $\L[\X]$. In particular, being total is not stable under isomorphisms.
\end{remark}

The previous lemma suggests the following definition for objects to split in a local category. 

\begin{definition}
\label{definition:split-local-category}
An object $A$ in a local category $\C$ \textbf{splits} when $A$ is isomorphic to a total object, $A\cong\Las E$. A \textbf{split local category} is a local category for which all of its objects split. 
\end{definition}

\begin{example}
\label{example:split-local-set}
Not surprisingly (see Example~\ref{example:split-restriction-set}), the local categories $\ParSet$ and $\ParTop$ of Examples~\ref{example:partial-set-local} and~\ref{example:open-local} split. Concretely, each object $(A,U)$ is isomorphic to $(U,U)$ by the identity on $U$.
\end{example}

Lemma~\ref{lemma:split-local-category} shows that if a restriction idempotent $a=\bar a\colon A\to A$ splits in a restriction category $\X$, then the object $(A,a)$ splits in $\L[\X]$, according to Definition~\ref{definition:split-local-category}. In fact, the main objective of this section is to show that the splitting condition in the local sense implies the splitting condition in the restriction sense and that this correspondence extends to a $2$-equivalence between split restriction categories and split local categories. To do so, we first prove a technical lemma.

\begin{lemma}
\label{lemma:monic-iso}
In any category, given two composable morphisms $f\colon A\to B$ and $g\colon B\to C$, if $fg$ is an isomorphism and $g$ is monic, then both $f$ and $g$ are isomorphisms.
\end{lemma}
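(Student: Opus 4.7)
The plan is to construct an explicit inverse for $f$ and then deduce that $g$ is also an isomorphism. Let $k\colon C\to A$ denote the inverse of $fg$, so that $fgk=\id_A$ and $kfg=\id_C$ (in the diagrammatic convention of the paper).

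Define $h\= gk\colon B\to A$. The equation $fh=\id_A$ is immediate, since $fh = f(gk)=(fg)k=\id_A$. For the other direction, I would compute the composite $(hf)g$ and use the hypothesis that $g$ is monic. Explicitly,
\begin{align*}
(hf)g = gkfg = g(kfg) = g\.\id_C = g = \id_B\.g,
\end{align*}
so cancelling the monic $g$ on the right yields $hf=\id_B$. Hence $f$ is an isomorphism with inverse $h$.

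Finally, since both $f$ and $fg$ are isomorphisms, $g = f^{-1}(fg)$ is a composite of isomorphisms and is therefore itself an isomorphism. There is no real obstacle: the only subtle step is recognising that monicity of $g$ is exactly what allows one to pass from $hfg = \id_B\.g$ to $hf=\id_B$, which promotes the one-sided inverse $fh=\id_A$ to a genuine two-sided inverse.
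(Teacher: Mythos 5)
Your proof is correct and follows essentially the same route as the paper's: both define the candidate inverse of $f$ as $g$ composed with the inverse of $fg$, verify one composite directly, and use monicity of $g$ to cancel and obtain the other, before expressing $g$ as a composite of isomorphisms. No gaps.
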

\begin{proof} Denote $h\=fg$, which, by assumption, is an isomorphism. Let $f^{-1}\=gh^{-1}$. Thus $ff^{-1}=fgh^{-1}=hh^{-1}=\id_A$ and $f^{-1}fg=gh^{-1}h=g$. However, since $g$ is monic, this implies that $f^{-1}f=\id_B$, thus $f$ is an isomorphism. Finally, $g=f^{-1}h$ is a composition of isomorphisms, thus, $g$ is also an isomorphism.
\end{proof}

Thanks to the previous lemma, we can prove the following result.

\begin{lemma}
\label{lemma:split-local-category-2}
Let $\C$ be a local category. An object $U$ of $\C$ splits if and only if the restriction idempotent $(U,\eta_U)\colon A\nto A$ splits in $\R[\C]$, where $A=\Las U$. In particular, $\C$ is a split local category if and only if $\R[\C]$ is a split restriction category.
\end{lemma}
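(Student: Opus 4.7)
The plan is to reduce this to Lemma~\ref{lemma:split-local-category} applied to the restriction category $\R[\C]$. First I would note that every restriction idempotent of $\R[\C]$ has the form $(U,\eta_U)\colon A\nto A$ with $A=\Las U$, since by construction the restriction of any morphism $(V,f)\colon M\nto N$ is $(V,\eta_V)$. So the displayed restriction idempotents exhaust all restriction idempotents of $\R[\C]$, and it suffices to characterize when such a $(U,\eta_U)$ splits.

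By Lemma~\ref{lemma:split-local-category}, $(U,\eta_U)$ splits in $\R[\C]$ if and only if the object $(A,(U,\eta_U))$ of $\L[\R[\C]]$ is isomorphic to a total object. I would then invoke Proposition~\ref{proposition:L-R}, which furnishes an isomorphism of local categories $E\colon\L[\R[\C]]\to\C$ sending $(A,(U,\eta_U))\mapsto U$. Unwinding the local structure of $\L[\R[\C]]$ described after Proposition~\ref{proposition:L-R} (namely $\Las(A,U)=(A,A)$ and $\eta_{(A,U)}=\eta_U$), the total objects of $\L[\R[\C]]$ are precisely those of the form $(A,A)$ with $A$ total in $\C$, and under $E$ they correspond bijectively to the total objects of $\C$. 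Since $E$ is an isomorphism of categories, it preserves and reflects isomorphisms between objects. Chaining these equivalences gives $(U,\eta_U)$ splits in $\R[\C]$ iff $U\cong T$ for some total $T\in\C$, iff $U$ splits in the sense of Definition~\ref{definition:split-local-category}.

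For the second claim, $\C$ is a split local category iff every object of $\C$ splits, which by the first part is equivalent to every restriction idempotent of $\R[\C]$ (all of which have the form $(U,\eta_U)$) splitting, i.e., to $\R[\C]$ being a split restriction category. The most delicate step will be verifying that the isomorphism $E$ respects the local structure on the nose, so that a total object in $\L[\R[\C]]$ really does correspond under $E$ to a total object of $\C$ and not merely to an object isomorphic to one; this falls out immediately from the explicit formulas for $\Las$ and $\eta$ in $\L[\R[\C]]$, but deserves an explicit sentence in the writeup. As an alternative, one could give a purely direct proof: from an isomorphism $\varphi\colon U\to T$ with $T$ total, set $s\=(T,\varphi^{-1}\eta_U)\colon T\nto A$ and $r\=(U,\varphi)\colon A\nto T$, and verify via the pullback definition of composition that $sr=(T,\id_T)=\id_T$ and $rs=(U,\eta_U)$; conversely, from any splitting one could extract such a $\varphi$ from the legs of the section and the identity $sr=\id_T$. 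However, the indirect route above is cleaner and reuses machinery already developed.
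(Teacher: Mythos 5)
Your proposal is correct, but it takes a genuinely different route from the paper. The paper proves the lemma by direct computation in $\R[\C]$: given an isomorphism $s\colon E\to U$, $r\colon U\to E$ with $E$ total, it exhibits the explicit splitting $(E,s\eta_U)\colon E\nto A$ and $(U,r)\colon A\nto E$ and verifies the two composites by chasing the pullback-of-spans definition of composition; for the converse it unpacks an arbitrary splitting into four morphisms $p_1,p_2,q_1,q_2$ and two pullback squares, and uses the technical Lemma~\ref{lemma:monic-iso} (proved just beforehand for exactly this purpose) to conclude that $U$ is isomorphic to the total object $E$. Your route instead transports the question along the isomorphism of local categories $E\colon\L[\R[\C]]\to\C$ of Proposition~\ref{proposition:L-R} and then applies Lemma~\ref{lemma:split-local-category} to the restriction category $\R[\C]$; this is sound, since that lemma is stated for an arbitrary restriction category, the restriction idempotents of $\R[\C]$ on $A=\Las U$ are exactly the classes $(U,\eta_U)$, and the total objects of $\L[\R[\C]]$ are precisely the $(A,A)$ with $A$ total in $\C$, which $E$ (being a strict local isomorphism with local inverse) puts in bijection with the total objects of $\C$ while preserving and reflecting isomorphisms. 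What your approach buys is economy: it avoids Lemma~\ref{lemma:monic-iso} and the diagram chase entirely, reusing two results already in hand. What the paper's approach buys is independence from Proposition~\ref{proposition:L-R} (whose proof is only sketched) and explicit formulas for the section--retraction pair in $\R[\C]$, which make the splitting concrete. Your closing caveat about checking that $E$ respects the local structure on the nose is exactly the right point to make explicit in a writeup.
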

\begin{proof}
Assume that $U$ splits in $\C$. Thus, $U$ is isomorphic to a total object $E=\Las E$ via an isomorphism $s\colon E\to U$ and its inverse $r\colon U\to E$. Consider the morphisms $(U,r)\colon A\nto E$ and $(E,s\eta_U)\colon E\nto A$ of $\R[\C]$, where $A=\Las U$ and $\eta_U\colon U\to A$. Let us prove that $(U,r)$ and $(E,s\eta_U)$ define a splitting for $(U,\eta_U)\colon A\nto A$ in $\R[\C]$. For starters, let us compute the composition $(E,s\eta_U)(U,r)$:
\begin{equation*}
% https://q.uiver.app/#q=WzAsOCxbMCwwLCJFIl0sWzQsMCwiQSJdLFs2LDAsIkUiXSxbNSwxLCJVIl0sWzMsMSwiVSJdLFsyLDIsIkUiXSxbMywzLCJFIl0sWzQsMiwiVSJdLFszLDEsIlxcZXRhX1UiXSxbMywyLCJyIiwyXSxbNywzLCIiLDIseyJsZXZlbCI6Miwic3R5bGUiOnsiaGVhZCI6eyJuYW1lIjoibm9uZSJ9fX1dLFs0LDEsIlxcZXRhX1UiLDJdLFs3LDQsIiIsMSx7ImxldmVsIjoyLCJzdHlsZSI6eyJoZWFkIjp7Im5hbWUiOiJub25lIn19fV0sWzUsNCwicyIsMl0sWzYsNywicyIsMl0sWzYsNSwiIiwxLHsibGV2ZWwiOjIsInN0eWxlIjp7ImhlYWQiOnsibmFtZSI6Im5vbmUifX19XSxbNSwwLCIiLDEseyJsZXZlbCI6Miwic3R5bGUiOnsiaGVhZCI6eyJuYW1lIjoibm9uZSJ9fX1dLFs3LDEsIiIsMSx7InN0eWxlIjp7Im5hbWUiOiJjb3JuZXIifX1dLFs2LDQsIiIsMSx7InN0eWxlIjp7Im5hbWUiOiJjb3JuZXIifX1dXQ==
\begin{tikzcd}
E &&&& A && E \\
&&& U && U \\
&& E && U \\
&&& E
\arrow["{\eta_U}"', from=2-4, to=1-5]
\arrow["{\eta_U}", from=2-6, to=1-5]
\arrow["r"', from=2-6, to=1-7]
\arrow[equals, from=3-3, to=1-1]
\arrow["s"', from=3-3, to=2-4]
\arrow["\lrcorner"{anchor=center, pos=0.125, rotate=135}, draw=none, from=3-5, to=1-5]
\arrow[equals, from=3-5, to=2-4]
\arrow[equals, from=3-5, to=2-6]
\arrow["\lrcorner"{anchor=center, pos=0.125, rotate=135}, draw=none, from=4-4, to=2-4]
\arrow[equals, from=4-4, to=3-3]
\arrow["s"', from=4-4, to=3-5]
\end{tikzcd}
\end{equation*}
However, since $sr=\id_E$, we conclude that $(E,s\eta_U)(U,r)=\id_E$ in $\R[\C]$. We now compute the composition $(U,r)(E,s\eta_U)$ as follows:
\begin{equation*}
% https://q.uiver.app/#q=WzAsNixbMCwwLCJBIl0sWzIsMCwiRSJdLFs0LDAsIkEiXSxbMSwxLCJVIl0sWzMsMSwiRSJdLFsyLDIsIlUiXSxbMywwLCJcXGV0YV9VIl0sWzMsMSwiciIsMl0sWzQsMiwic1xcZXRhX1UiLDJdLFs0LDEsIiIsMCx7ImxldmVsIjoyLCJzdHlsZSI6eyJoZWFkIjp7Im5hbWUiOiJub25lIn19fV0sWzUsNCwiciIsMl0sWzUsMywiIiwyLHsibGV2ZWwiOjIsInN0eWxlIjp7ImhlYWQiOnsibmFtZSI6Im5vbmUifX19XSxbNSwxLCIiLDIseyJzdHlsZSI6eyJuYW1lIjoiY29ybmVyIn19XV0=
\begin{tikzcd}
A && E && A \\
& U && E \\
&& U
\arrow["{\eta_U}", from=2-2, to=1-1]
\arrow["r"', from=2-2, to=1-3]
\arrow[equals, from=2-4, to=1-3]
\arrow["{s\eta_U}"', from=2-4, to=1-5]
\arrow["\lrcorner"{anchor=center, pos=0.125, rotate=135}, draw=none, from=3-3, to=1-3]
\arrow[equals, from=3-3, to=2-2]
\arrow["r"', from=3-3, to=2-4]
\end{tikzcd}
\end{equation*}
However, since $rs=\id_U$, we conclude that $(U,r)(E,s\eta_U)=(U,\eta_U)$. Thus, $((E,s\eta_U),(U,r))$ forms a splitting for $(U,\eta_U)$ in $\R[\C]$.\\
Conversely, assume that $(U,\eta_U)$ splits in $\R[\C]$. Thus, there are two morphisms $(U_1,r')\colon A\nto E$ and $(U_2,s')\colon E\to A$ such that $(U_1,r')(U_2,s')=(U,\eta_U)$ and $(U_2,s')(U_1,r')=\id_E$. From these two equations, we deduce the existence of four morphisms of $\C$ as follows:
\begin{align*}
&p_1\colon U\to U_1             &&p_2\colon U\to U_2\\
&q_1\colon E\to U_1             &&q_2\colon E\to U_2
\end{align*}
satisfying the following conditions
\begin{equation*}
% https://q.uiver.app/#q=WzAsMyxbMCwwLCJVIl0sWzEsMCwiVV8xIl0sWzEsMSwiQSJdLFsxLDIsIlxcZXRhXzEiXSxbMCwxLCJwXzEiXSxbMCwyLCJcXGV0YV9VIiwyXV0=
\begin{tikzcd}
U & {U_1} \\
& A
\arrow["{p_1}", from=1-1, to=1-2]
\arrow["{\eta_U}"', from=1-1, to=2-2]
\arrow["{\eta_1}", from=1-2, to=2-2]
\end{tikzcd}\qquad
% https://q.uiver.app/#q=WzAsMyxbMCwwLCJVIl0sWzEsMCwiVV8yIl0sWzEsMSwiQSJdLFsxLDIsInMnIl0sWzAsMSwicF8yIl0sWzAsMiwiXFxldGFfVSIsMl1d
\begin{tikzcd}
U & {U_2} \\
& A
\arrow["{p_2}", from=1-1, to=1-2]
\arrow["{\eta_U}"', from=1-1, to=2-2]
\arrow["{s'}", from=1-2, to=2-2]
\end{tikzcd}\qquad
% https://q.uiver.app/#q=WzAsMyxbMCwwLCJFIl0sWzEsMCwiVV8xIl0sWzEsMSwiRSJdLFsxLDIsInInIl0sWzAsMSwicV8xIl0sWzAsMiwiIiwyLHsibGV2ZWwiOjIsInN0eWxlIjp7ImhlYWQiOnsibmFtZSI6Im5vbmUifX19XV0=
\begin{tikzcd}
E & {U_1} \\
& E
\arrow["{q_1}", from=1-1, to=1-2]
\arrow[equals, from=1-1, to=2-2]
\arrow["{r'}", from=1-2, to=2-2]
\end{tikzcd}\qquad
% https://q.uiver.app/#q=WzAsMyxbMCwwLCJFIl0sWzEsMCwiVV8yIl0sWzEsMSwiRSJdLFsxLDIsIlxcZXRhXzIiXSxbMCwxLCJxXzIiXSxbMCwyLCIiLDIseyJsZXZlbCI6Miwic3R5bGUiOnsiaGVhZCI6eyJuYW1lIjoibm9uZSJ9fX1dXQ==
\begin{tikzcd}
E & {U_2} \\
& E
\arrow["{q_2}", from=1-1, to=1-2]
\arrow[equals, from=1-1, to=2-2]
\arrow["{\eta_2}", from=1-2, to=2-2]
\end{tikzcd}
\end{equation*}
where, as a shorthand, we denoted by $\eta_1$ and $\eta_2$, $\eta_{U_1}$ and $\eta_{U_2}$, respectively. Furthermore, the following two diagrams are pullbacks in $\C$:
\begin{equation*}
% https://q.uiver.app/#q=WzAsNCxbMCwwLCJVIl0sWzEsMCwiVV8yIl0sWzAsMSwiVV8xIl0sWzEsMSwiRSJdLFswLDIsInBfMSIsMl0sWzAsMSwicF8yIl0sWzEsMywiXFxldGFfMiJdLFsyLDMsInInIiwyXSxbMCwzLCIiLDEseyJzdHlsZSI6eyJuYW1lIjoiY29ybmVyIn19XV0=
\begin{tikzcd}
U & {U_2} \\
{U_1} & E
\arrow["{p_2}", from=1-1, to=1-2]
\arrow["{p_1}"', from=1-1, to=2-1]
\arrow["\lrcorner"{anchor=center, pos=0.125}, draw=none, from=1-1, to=2-2]
\arrow["{\eta_2}", from=1-2, to=2-2]
\arrow["{r'}"', from=2-1, to=2-2]
\end{tikzcd}\qquad
% https://q.uiver.app/#q=WzAsNCxbMCwwLCJFIl0sWzEsMCwiVV8xIl0sWzAsMSwiVV8yIl0sWzEsMSwiQSJdLFswLDEsInFfMSJdLFswLDIsInFfMiIsMl0sWzIsMywicyciLDJdLFsxLDMsIlxcZXRhXzEiXSxbMCwzLCIiLDEseyJzdHlsZSI6eyJuYW1lIjoiY29ybmVyIn19XV0=
\begin{tikzcd}
E & {U_1} \\
{U_2} & A
\arrow["{q_1}", from=1-1, to=1-2]
\arrow["{q_2}"', from=1-1, to=2-1]
\arrow["\lrcorner"{anchor=center, pos=0.125}, draw=none, from=1-1, to=2-2]
\arrow["{\eta_1}", from=1-2, to=2-2]
\arrow["{s'}"', from=2-1, to=2-2]
\end{tikzcd}
\end{equation*}
From $q_2\eta_2=\id_E$, that $\eta_2$ is monic, and by Lemma~\ref{lemma:monic-iso}, we deduce that both $q_2$ and $\eta_2$ are isomorphisms in $\C$. However, since $p_1$ is a pullback of $\eta_2$ and $\eta_2$ is invertible, $p_1$ is also an isomorphism, since isomorphisms are stable under pullbacks. From the equation $p_2s'=\eta_U$, since $\eta_U$ is monic, so is $p_2$. However, since $p_1r'=p_2\eta_2$, that $p_1$ is invertible and that $\eta_2$ is monic, we conclude that $r'=p_1^{-1}p_2\eta_2$ is also monic. Finally, using that $q_1r'=\id_E$, that $r'$ is monic, and by Lemma~\ref{lemma:monic-iso}, $r'$ is an isomorphism in $\C$. Thus, since both $p_1\colon U\to U_1$ and $r'\colon U_1\to E$ are isomorphisms, $U$ is isomorphic to $E$ in $\C$. However, since $E$ is an object of $\R[\C]$, $E$ is total in $\C$.
\end{proof}

With this, we may now state the main result of this section. To this end, we denote by $\sLCat$ the full $2$-subcategory of $\LCat$ spanned by split local categories, and similarly, $\sRCat$ the full $2$-subcategory of $\RCat$ spanned by split restriction categories.

\begin{theorem}
\label{theorem:2-equivalence-local-restriction-split}
The $2$-equivalence $\RCat\simeq\LCat$ of Theorem~\ref{theorem:2-equivalence-local-restriction} restricts to a $2$-equivalence $\sRCat\simeq\sLCat$. 
\end{theorem}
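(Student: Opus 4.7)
The plan is to leverage the machinery already built up, namely Theorem~\ref{theorem:2-equivalence-local-restriction} together with Lemmas~\ref{lemma:split-local-category} and~\ref{lemma:split-local-category-2}, and reduce the claim to verifying that the $2$-functors $\L$ and $\R$ send split categories to split categories. Since $\sRCat$ and $\sLCat$ are (by definition) full $2$-subcategories of $\RCat$ and $\LCat$ respectively, the triangle identities come for free from the ambient $2$-equivalence; no new coherence check is needed.

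Concretely, I would proceed in three short steps. First, I would show that if $\X$ is a split restriction category, then $\L[\X]$ is a split local category. Every object of $\L[\X]$ has the form $(A,a)$ with $a=\bar a$; since $\X$ is split, $a$ admits a splitting $(s,r)\colon E\leftrightarrows A$ in $\X$, and Lemma~\ref{lemma:split-local-category} then provides an isomorphism $(A,a)\cong(E,\id_E)$ in $\L[\X]$. As $(E,\id_E)=\Las(E,\id_E)$ is total, this shows $(A,a)$ splits in $\C$ in the sense of Definition~\ref{definition:split-local-category}. Second, I would show that if $\C$ is a split local category, then $\R[\C]$ is a split restriction category; this is precisely (the second assertion of) Lemma~\ref{lemma:split-local-category-2}, so nothing further is required.

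Finally, I would invoke the natural isomorphisms of Propositions~\ref{proposition:R-L} and~\ref{proposition:L-R}. The natural isomorphism $\X\cong\R[\L[\X]]$ is an isomorphism of restriction categories, so it restricts automatically to an isomorphism inside $\sRCat$ when $\X$ is split; similarly, $\C\cong\L[\R[\C]]$ is an isomorphism of local categories and therefore restricts inside $\sLCat$ when $\C$ is split. Combined with steps one and two, the restricted $2$-functors
\begin{equation*}
\L\colon\sRCat\to\sLCat,\qquad\R\colon\sLCat\to\sRCat
\end{equation*}
together with these (restrictions of the) unit and counit isomorphisms and their triangle identities inherited from Theorem~\ref{theorem:2-equivalence-local-restriction} yield the desired $2$-equivalence $\sRCat\simeq\sLCat$.

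The only potential obstacle is checking that splittings of $(A,a)$ produced from splittings of $a$ really land in $\L[\X]$ as morphisms, and conversely that the morphisms obtained from a splitting of $(U,\eta_U)$ in $\R[\C]$ assemble into an isomorphism in $\C$; but both are exactly the content of Lemmas~\ref{lemma:split-local-category} and~\ref{lemma:split-local-category-2}, so once those are cited the proof is immediate.
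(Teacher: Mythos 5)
Your proposal is correct and follows exactly the route the paper intends: the paper states this theorem without a written proof precisely because Lemmas~\ref{lemma:split-local-category} and~\ref{lemma:split-local-category-2} already establish that $\L$ and $\R$ preserve splitness, and the unit and counit isomorphisms restrict automatically to the full $2$-subcategories. Nothing is missing.
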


%__________________________________________________________________________
%__________________________________________________________________________

\section{Inverse local categories}
\label{section:inverse}
Inverse categories were introduced in~\cite{kastl:inverse-categories} as a multi-object version of inverse semigroups and inverse monoids. Concretely, an inverse category consists of a category for which every morphism $f\colon A\to B$ admits a morphism $f^\o\colon B\to A$ such that $ff^\o f=f$ and $f^\o ff^\o=f^\o$. As observed in~\cite[Section~2.3.2]{cockett:restrictionI}, inverse categories can be seen as a special class of restriction categories, where $ff^\o\colon A\to A$ defines the restriction idempotent of a morphism $f\colon A\to B$. Informally, inverse categories are the restriction version of a groupoid. Inverse categories find applications in computer science. In particular, they have been employed in~\cite{brett:inverse-computation} as a categorical language for inverse computation.
\par In this section, we introduce inverse local categories, which are those local categories which correspond to inverse categories under the $2$-equivalence of Theorem~\ref{theorem:2-equivalence-local-restriction}. For starters, recall the definition of an inverse category.

\begin{definition}~\cite[Section~2.3.2]{cockett:restrictionI}
\label{definition:inverse-restriction-category}
In a restriction category $\X$, a \textbf{restriction isomorphism} is a morphism $f\colon A\to B$ for which there exists a (necessarily unique) morphism $f^\o\colon B\to A$ such that $ff^\o=\bar f$ and $f^\o f=\bar{f^\o}$. An \textbf{inverse restriction category} is a restriction category $\X$ in which every morphism is a restriction isomorphism.
\end{definition}

\begin{remark}
\label{remark:partial-isomorphisms}
In the literature, restriction isomorphisms are also known as partial isomorphisms~\cite[Section~2.3.2]{cockett:restrictionI}. Moreover, often inverse restriction categories are simply known as inverse categories.
\end{remark}

\begin{example}
\label{example:inverse-restriction-set}
Consider the restriction category $\ParFnc$ of Example~\ref{example:partial-set-restriction}. A restriction isomorphism is a partial function which forms a bijection between its domain of definition and its image. Therefore, the subcategory of partial bijections forms an inverse restriction category.
\end{example}

We now introduce the analogue of inverse categories for local categories. 

\begin{definition}
\label{definition:inverse-local-category}
In a local category $\X$, a \textbf{local isomorphism} is a morphism $f\colon A\to B$ for which there exists an object $V\leq B$ and a morphism $f\up V\colon A\to V$ such that $(f\up V)m=f$ and $f\up V$ is an isomorphism, where $m\eta_B=\eta_V$. An \textbf{inverse local category} consists of a local category whose morphisms are all local isomorphisms.
\end{definition}

\begin{example}
\label{example:inverse-local-set}
In the local category $\ParSet$ Example~\ref{example:partial-set-local}, a local isomorphism consists of a morphism $f\colon(A,U)\to(B,V)$ whose underlying function $f\colon U\to V$ is invertible. Therefore, the subcategory of those morphisms whose underlying functions are invertible forms an inverse local category.
\end{example}

To justify this definition, let us show that restriction isomorphisms in a restriction category correspond to local isomorphisms in the associated local category.

\begin{lemma}
\label{lemma:inverse-local-categories}
A morphism $f\colon A\to B$ in a restriction category is a restriction isomorphism if and only if $f\colon(A,\bar f)\to(B,\id_B)$ is a local isomorphism in $\L[\X]$. 
\end{lemma}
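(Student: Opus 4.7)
The plan is to unwrap what a local isomorphism $f\colon(A,\bar f)\to(B,\id_B)$ in $\L[\X]$ really demands and see that its data matches precisely the data of a restriction isomorphism. By Lemma~\ref{lemma:partial-order-on-local-categories}, any $V\leq(B,\id_B)$ in $\L[\X]$ takes the form $V=(B,v)$ for some restriction idempotent $v=\bar v$ on $B$, with the maximal inclusion $m\colon(B,v)\to(B,\id_B)$ given by $v$ itself. Since any morphism $g\colon(A,\bar f)\to(B,v)$ in $\L[\X]$ must satisfy $gv=g$, the factorisation condition $gv=f$ of Definition~\ref{definition:inverse-local-category} forces $g=f$. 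The question thus collapses to: for which $v$ is $f\colon(A,\bar f)\to(B,v)$ an isomorphism of $\L[\X]$, i.e.\ when does there exist $h\colon(B,v)\to(A,\bar f)$ satisfying $\bar h=v$, $h\bar f=h$, $fh=\bar f$, and $hf=v$?

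For the forward implication, I would suppose $f$ is a restriction isomorphism with partial inverse $f^\o$, so that $ff^\o=\bar f$ and $f^\o f=\bar{f^\o}$, and then set $v\=\bar{f^\o}$ and propose $f^\o$ itself as the inverse of $f$ in $\L[\X]$. The two well-formedness conditions $fv=f$ and $f^\o\bar f=f^\o$ will follow from \textbf{[R.4]} combined with the identities $\bar{ff^\o}=\bar{\bar f}=\bar f$ and $\bar{f^\o f}=\bar{\bar{f^\o}}=\bar{f^\o}$ supplied by Lemma~\ref{lemma:technical-restriction}. The two ``identity'' equations $f\cdot f^\o=\bar f$ and $f^\o\cdot f=v$ needed for $f$ and $f^\o$ to invert one another in $\L[\X]$ are then exactly the defining equations of a restriction isomorphism.

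For the converse, a local isomorphism provides a restriction idempotent $v$ on $B$ and a morphism $h\colon(B,v)\to(A,\bar f)$ with $fh=\bar f$ and $hf=v$, together with the built-in constraints $\bar h=v$ and $h\bar f=h$ coming from $h$ being a morphism of $\L[\X]$. Setting $f^\o\=h$, the restriction-isomorphism equations follow immediately: $ff^\o=fh=\bar f$ and $f^\o f=hf=v=\bar h=\bar{f^\o}$.

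The main obstacle here is structural rather than computational: pinning down the shape of $V\leq(B,\id_B)$, recognising that the factorisation through the maximal inclusion forces $g=f$ (so that only the inverse $h$ is extra data), and guessing the correct value $v=\bar{f^\o}$ in the forward direction so that all four equations align on the nose. Once these identifications are in place, the verification is a routine application of the restriction axioms and Lemma~\ref{lemma:technical-restriction}.
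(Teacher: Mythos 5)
Your proposal is correct and follows essentially the same route as the paper: in the forward direction you take $V=(B,\bar{f^\o})$ and verify $f\bar{f^\o}=f$ and $f^\o\bar f=f^\o$ via \textbf{[R.4]} and \textbf{[R.1]}, exactly as the paper does, and in the converse you observe (as the paper does) that the factorisation through the maximal inclusion forces the underlying $\X$-morphism of $f\up V$ to be $f$ itself, after which the isomorphism equations in $\L[\X]$ literally are the restriction-isomorphism equations. The only cosmetic difference is that you characterise the objects $V\leq(B,\id_B)$ and the forced shape of $f\up V$ up front, which makes the bookkeeping slightly more transparent but changes nothing substantive.
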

\begin{proof}
Assume that $f\colon A\to B$ is a restriction isomorphism. Thus, there is a morphism $f^\o\colon B\to A$ such that $ff^\o=\bar f$ and $f^\o f=\bar{f^\o}$. We can compute the following
\begin{align*}
f\bar{f^\o}&=~\bar{ff^\o}f   \Tag{\textbf{[R.4]}}\\
&=~\bar ff                   \Tag{ff^\o=\bar f}\\
&=~f                         \Tag{\textbf{[R.1]}}
\end{align*}
and dually, we also have that $f^\o\bar f=f^\o$. Thus, we have two morphisms $f\colon(A,\bar f)\leftrightarrows(B,\bar{f^\o})\colon f^\o$ of $\L[\X]$. Moreover, $(B,\bar{f^\o})\leq(B,\id_B)$. Finally, from $ff^\o=\bar f$ and $f^\o f=\bar{f^\o}$, we deduce that $(A,\bar f)\cong(B,\bar{f^\o})$. Thus, $f$ becomes a local isomorphism in $\L[\X]$. 

Conversely, suppose that $f$ is a local isomorphism in $\L[\X]$. Thus, there exists an object $(B,b)\leq(B,\id_B)$ such that $(A,\bar f)\cong(B,b)$. Let $\tilde f\colon(A,\bar f)\to(B,b)$ be this isomorphism and $f^\o\colon(B,b)\to(A,\bar f)$ be its inverse. Now we have that $\tilde f\eta_{(B,b)}=f$, thus, $\tilde fb=f$. However, $\tilde fb=f$, thus, $\tilde f=f$ in $\X$. We also know that $b=\bar{f^\o}$ and that $f^\o\tilde f=\id_{(B,b)}$ and $\tilde ff^\o=\id_{(A,\bar f)}$, which precisely translates into the conditions $f^\o f=\bar{f^\o}$ and $ff^\o=\bar f$, respectively. Thus, $f$ is a restriction isomorphism in $\X$ as desired. 
\end{proof}

\begin{corollary}
\label{corollary:inverse-local-categories}
A restriction category $\X$ is an inverse category if and only if $\L[\X]$ is an inverse local category.
\end{corollary}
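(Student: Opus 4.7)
The plan is to deduce both directions directly from Lemma~\ref{lemma:inverse-local-categories}, with only a small amount of extra bookkeeping in the nontrivial direction.

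The converse direction is immediate. Suppose $\L[\X]$ is an inverse local category, and let $f\colon A\to B$ be any morphism of $\X$. Then $f$ viewed as a morphism $f\colon(A,\bar f)\to(B,\id_B)$ in $\L[\X]$ is a local isomorphism by hypothesis, and Lemma~\ref{lemma:inverse-local-categories} produces a partial inverse $f^\o\colon B\to A$ of $f$ in $\X$. Hence $\X$ is inverse.

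For the forward direction, assume $\X$ is inverse. Take an arbitrary morphism $f\colon(A,a)\to(B,b)$ of $\L[\X]$, so that $\bar f=a$ and $fb=f$ in $\X$. Let $f^\o\colon B\to A$ be the restriction inverse of $f$ in $\X$, with $ff^\o=\bar f$ and $f^\o f=\bar{f^\o}$. I would set $V\=(B,\bar{f^\o})$ and verify, via short calculations using \textbf{[R.1]}--\textbf{[R.4]}: (i) that $V\leq(B,b)$ in $\L[\X]$, which reduces to $\bar{f^\o}b=\bar{f^\o}$ (obtained by pre-composing $fb=f$ with $f^\o$ and using $f^\o f=\bar{f^\o}$), so that $\bar{f^\o}$ itself serves as the required inclusion morphism $V\to(B,b)$; (ii) that $f\up V\=f\colon(A,a)\to V$ is well-defined in $\L[\X]$, which reduces to $f\bar{f^\o}=f$ (via \textbf{[R.4]} applied to $ff^\o=\bar f$); (iii) that $f^\o\colon V\to(A,a)$ is well-defined in $\L[\X]$, which reduces to $f^\o\bar f=f^\o$ (a dual computation); and (iv) that the composites $ff^\o$ and $f^\o f$ are the identities of $(A,a)$ and $V$ in $\L[\X]$, which are exactly the original defining equations of the restriction inverse. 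Finally, the condition $(f\up V)\cdot\bar{f^\o}=f$ is the already-established $f\bar{f^\o}=f$.

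The only mildly subtle step is (i): Lemma~\ref{lemma:inverse-local-categories} only directly gives local-iso-ness with target $(B,\id_B)$, whereas here we need it with target $(B,b)$. The choice $V=(B,\bar{f^\o})$ is forced by this and, once checked, makes the remaining verifications routine. No genuine obstacle arises, since the restriction inverse $f^\o$ supplied by $\X$ automatically lives at the correct source and target in $\L[\X]$ once $V$ is chosen this way.
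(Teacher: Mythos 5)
Your proof is correct. The paper states this corollary without proof, treating it as an immediate consequence of Lemma~\ref{lemma:inverse-local-categories}; but as you rightly observe, that lemma only covers morphisms of $\L[\X]$ of the form $f\colon(A,\bar f)\to(B,\id_B)$, i.e.\ with total codomain, whereas an inverse local category requires \emph{every} morphism $f\colon(A,a)\to(B,b)$ to be a local isomorphism. Your converse direction is exactly the paper's implicit argument. Your forward direction supplies the missing bookkeeping: the choice $V=(B,\bar{f^\o})$ is indeed forced, the identity $\bar{f^\o}b=\bar{f^\o}$ (from $f^\o fb=f^\o f$) gives $V\leq(B,b)$ with inclusion $m=\bar{f^\o}$ satisfying $m\eta_{(B,b)}=\bar{f^\o}b=\bar{f^\o}=\eta_V$, and the computations $f\bar{f^\o}=\bar{ff^\o}f=\bar{\bar f}f=f$ and $f^\o\bar f=\bar{f^\o f}f^\o=f^\o$ via \textbf{[R.4]} make $f\colon(A,a)\to V$ and $f^\o\colon V\to(A,a)$ mutually inverse in $\L[\X]$, since $ff^\o=\bar f=\id_{(A,a)}$ and $f^\o f=\bar{f^\o}=\id_V$. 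This is a slightly more careful route than the paper's, and it buys an explicit verification of the only point where the corollary genuinely exceeds the lemma.
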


The main objective of this section is to show that the above corollary extends to a $2$-equivalence between inverse (restriction) categories and inverse local categories. For starters, let us prove that local isomorphisms in a local category $\C$ correspond to restriction isomorphisms in $\R[\C]$. To this end, let us prove a technical lemma.

\begin{lemma}
\label{lemma:pullback-with-monic-and-iso}
Consider a morphism $f\colon M\to N$ in a category such that there exists an object $P$ isomorphic to $M$ via an isomorphism $g\colon P\cong M$ and such that $f\colon M\to N$ factors through $g^{-1}$ as $f=g^{-1}m$ and $m\colon P\to N$ is monic. Then the following is a pullback diagram:
\begin{equation*}
% https://q.uiver.app/#q=WzAsNCxbMCwwLCJNIl0sWzEsMCwiTSJdLFswLDEsIlAiXSxbMSwxLCJOIl0sWzIsMywibSIsMl0sWzAsMiwiZ157LTF9IiwyXSxbMSwzLCJmIl0sWzAsMSwiIiwwLHsibGV2ZWwiOjIsInN0eWxlIjp7ImhlYWQiOnsibmFtZSI6Im5vbmUifX19XSxbMCwzLCIiLDEseyJzdHlsZSI6eyJuYW1lIjoiY29ybmVyIn19XV0=
\begin{tikzcd}
M & M \\
P & N
\arrow[equals, from=1-1, to=1-2]
\arrow["{g^{-1}}"', from=1-1, to=2-1]
\arrow["\lrcorner"{anchor=center, pos=0.125}, draw=none, from=1-1, to=2-2]
\arrow["f", from=1-2, to=2-2]
\arrow["m"', from=2-1, to=2-2]
\end{tikzcd}
\end{equation*}
\end{lemma}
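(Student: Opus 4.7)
The plan is to verify the two defining conditions of a pullback directly. The square commutes by the hypothesis $f = g^{-1} m$, since the top edge is $\id_M$. For the universal property, the key observation is that the right-hand leg of the square is the identity on $M$, which trivialises both existence and uniqueness of the mediator; the only nontrivial check is that the induced mediator also makes the left triangle commute, and that is where the monicity of $m$ enters.

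In more detail, I would start with an arbitrary competing cone: an object $X$ equipped with morphisms $u \colon X \to M$ and $v \colon X \to P$ satisfying $u f = v m$. Because the right-hand leg of the would-be pullback is $\id_M$, the candidate mediator $h \colon X \to M$ is forced to be $h = u$, which simultaneously establishes the existence of the factorization through the top edge and its uniqueness. It therefore only remains to verify the left triangle $u g^{-1} = v$.

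To handle this remaining equation, I would substitute the factorization $f = g^{-1} m$ into the cone condition $u f = v m$, obtaining $u g^{-1} m = v m$. Since $m$ is monic by hypothesis, this cancels to $u g^{-1} = v$, completing the argument. The assumption that $g$ is an isomorphism is needed only to ensure that $g^{-1}$ is a genuine morphism of the category so that the factorization of $f$ and the left leg of the square make sense; the genuine mathematical content is the cancellation by $m$. I therefore expect no real obstacle here: the lemma is essentially an unpacking of the identity leg together with a single application of monicity, and I anticipate that the only judgement call in the write-up is deciding how explicitly to comment on the fact that the identity edge automatically takes care of both halves of the universal property.
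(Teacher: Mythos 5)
Your proposal is correct and follows essentially the same route as the paper's proof: the identity edge forces the mediator to be the first leg of the competing cone, and the left triangle is obtained by substituting $f=g^{-1}m$ into the cone condition and cancelling the monic $m$. No differences worth noting.
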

\begin{proof}
Consider two morphisms $h\colon X\to M$ and $k\colon X\to P$ such that $hf=km$. Thus, the morphism $h\colon X\to M$ satisfies that $hg^{-1}m=hf=km$. However, since $m$ is monic, this implies that $hg^{-1}=k$. Finally, it is immediate to see that $h\colon X\to M$ is the unique morphism satisfying the universal property of the pullback diagram.
\end{proof}

\begin{lemma}
\label{lemma:inverse-local-categories-2}
A morphism $f\colon M\to N$ in a local category $\C$ is a local isomorphism if and only if $(M,f\eta_N)\colon\Las M\nto\Las N$ is a restriction isomorphism in $\R[\C]$. 
\end{lemma}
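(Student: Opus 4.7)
The plan is to handle both directions by unfolding the span representation of morphisms in $\R[\C]$ and applying Lemma~\ref{lemma:pullback-with-monic-and-iso} to the relevant pullbacks.

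For the forward direction, suppose $f\colon M \to N$ is a local isomorphism, witnessed by $V \leq N$, a monic $m\colon V \to N$ with $m\eta_N = \eta_V$, and an isomorphism $f\up V\colon M \to V$ with $(f\up V)m = f$. I propose the candidate inverse span $(V,(f\up V)^{-1}\eta_M)\colon \Las N \nto \Las M$, which is well-defined since $\Las V = \Las N$. Using the factorisation $f\eta_N = (f\up V)\eta_V$ with $f\up V$ an isomorphism and $\eta_V$ monic, Lemma~\ref{lemma:pullback-with-monic-and-iso} identifies the pullback computing $(M,f\eta_N)(V,(f\up V)^{-1}\eta_M)$ as $M$ itself with projections $\id_M$ to $M$ and $f\up V$ to $V$, so the composite equals $(M,(f\up V)(f\up V)^{-1}\eta_M) = (M,\eta_M) = \bar{(M,f\eta_N)}$. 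A symmetric application of the lemma to $(f\up V)^{-1}\eta_M$ (with $(f\up V)^{-1}$ iso and $\eta_M$ monic) then yields $(V,(f\up V)^{-1}\eta_M)(M,f\eta_N) = (V,(f\up V)^{-1}f\eta_N) = (V,m\eta_N) = (V,\eta_V)$, the other restriction-inverse identity.

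For the backward direction, assume $(W,g)\colon \Las N \nto \Las M$ is the restriction inverse of $(M,f\eta_N)$, so $(M,f\eta_N)(W,g) = (M,\eta_M)$ and $(W,g)(M,f\eta_N) = (W,\eta_W)$. Let $P$ denote the pullback of $\eta_W$ along $f\eta_N$, with projections $\pi_W\colon P \to W$ and $\pi_M\colon P \to M$, and let $Q$ denote the pullback of $\eta_M$ along $g$, with projections $\pi_M'\colon Q \to M$ and $\pi_W'\colon Q \to W$. An equality of spans in $\R[\C]$ exhibits an isomorphism between the apex objects compatible with both legs; combined with $\eta_P = \pi_M\eta_M$ and $\eta_Q = \pi_W'\eta_W$ from~\textbf{[L.3]} and the fact that $\eta_M$ and $\eta_W$ are monic, this forces the projections $\pi_M$ and $\pi_W'$ themselves to be isomorphisms. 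I then set $\hat{f} \= \pi_M^{-1}\pi_W\colon M \to W$ and $\hat{g} \= \pi_W'{}^{-1}\pi_M'\colon W \to M$, verify $\hat{f}\eta_W = f\eta_N$ and $\hat{g}\eta_M = g$ directly from the pullback squares, and deduce $\pi_W\hat{g} = \pi_M$ and $\pi_M'\hat{f} = \pi_W'$ by cancelling the monics $\eta_M$ and $\eta_W$. This yields the mutual inversions $\hat{f}\hat{g} = \id_M$ and $\hat{g}\hat{f} = \id_W$, so $\hat{f}$ is an isomorphism between $M$ and $W$.

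With $\hat{f}\colon M \cong W$ in hand, I define $V$ as the pullback of $\eta_N\colon N \to \Las N$ along $\eta_W\colon W \to \Las N$; by~\textbf{[L.3]} this produces $V \leq N$ via a monic $m_N\colon V \to N$ with $m_N\eta_N = \eta_V$, together with a second projection $m_W\colon V \to W$ satisfying $m_W\eta_W = \eta_V$. The universal property of $V$, applied to $f\colon M \to N$ and $\hat{f}\colon M \to W$ (which agree over $\Las N$ since $\hat{f}\eta_W = f\eta_N$), produces a unique $f\up V\colon M \to V$ with $(f\up V)m_N = f$ and $(f\up V)m_W = \hat{f}$. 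The main obstacle is to show that $f\up V$ is an isomorphism; I claim its inverse is $m_W\hat{g}\colon V \to M$. The identity $(f\up V)(m_W\hat{g}) = \hat{f}\hat{g} = \id_M$ is immediate, while $(m_W\hat{g})(f\up V) = \id_V$ follows from the universal property of the pullback $V$ once I verify $(m_W\hat{g})(f\up V)m_W = m_W$ (which reduces to $\hat{g}\hat{f} = \id_W$) and $(m_W\hat{g})(f\up V)m_N = m_N$. The latter is the identity $m_W\hat{g}f = m_N$, which I establish by post-composing with the monic $\eta_N$ and using the chain $m_W\hat{g}f\eta_N = m_W\hat{g}\hat{f}\eta_W = m_W\eta_W = \eta_V = m_N\eta_N$.
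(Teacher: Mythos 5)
Your proof is correct. The forward direction is the paper's argument: the same candidate inverse span $(V,(f\up V)^{-1}\eta_M)$ and the same two applications of Lemma~\ref{lemma:pullback-with-monic-and-iso} to compute the two composites. In the backward direction you extract the same data --- your mutually inverse $\hat f\colon M\cong W$ and $\hat g$ are the paper's $h$ and $g$ --- but the two arguments part ways at the choice of witness for the local isomorphism: you manufacture a fresh object $V=N\wedge W$ as the pullback of $\eta_N$ along $\eta_W$ and prove $M\cong V$ by a universal-property argument, whereas the paper uses the apex $W$ (its $P$) itself, observing that the morphism $\hat g f\colon W\to N$ satisfies $\hat g f\eta_N=\hat g\hat f\eta_W=\eta_W$, so that $W\leq N$ and $f=\hat f\,(\hat g f)$ already exhibits $f$ as a local isomorphism. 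Your detour is slightly longer (and your $V$ is canonically isomorphic to $W$ anyway), but it has the virtue of making fully explicit the step the paper compresses into ``recall that $P$ must satisfy $P\leq N$''; both routes are sound.
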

\begin{proof}
Suppose that $f\colon M\to N$ is a local isomorphism in $\C$. In particular, this means that there exists an object $P\leq N$ and an isomorphism $g\colon P\cong M$. The condition $P\leq N$ means that $\Las P=\Las N$ and that there exists a (necessarily unique) morphism $m\colon P\to N$ such that $m\eta_N=\eta_P$. Let us define $(M,f\eta_N)^\o$ as follows:
\begin{equation*}
(M,f\eta_N)^\o\colon
% https://q.uiver.app/#q=WzAsMyxbMCwwLCJcXExhcyBOPVxcTGFzIFAiXSxbMSwxLCJQIl0sWzIsMCwiXFxMYXMgTSJdLFsxLDAsIlxcZXRhX1AiXSxbMSwyLCJnXFxldGFfTSIsMl1d
\begin{tikzcd}
{\Las N=\Las P} && {\Las M} \\
& P
\arrow["{\eta_P}", from=2-2, to=1-1]
\arrow["{g\eta_M}"', from=2-2, to=1-3]
\end{tikzcd}
\end{equation*}
We shall prove that $(M,f\eta_N)^\o$ is a partial inverse for $(M,f\eta_N)$ in $\R[\C]$. Let us start by working out the composition $(M,f\eta_N)(M,f\eta_N)^\o$:
\begin{equation*}
% https://q.uiver.app/#q=WzAsMTEsWzMsMCwiXFxMYXMgTiJdLFs1LDIsIlAiXSxbNiwwLCJcXExhcyBNIl0sWzAsMCwiXFxMYXMgTSJdLFsxLDIsIk0iXSxbNCwxLCJOIl0sWzIsMSwiTiJdLFszLDIsIk4iXSxbMiwzLCJNIl0sWzQsMywiUCJdLFszLDQsIk0iXSxbMSwyLCJnXFxldGFfTSIsMl0sWzQsMywiXFxldGFfTSJdLFsxLDUsIm0iXSxbNSwwLCJcXGV0YV9OIl0sWzQsNiwiZiIsMl0sWzYsMCwiXFxldGFfTiIsMl0sWzcsNiwiIiwyLHsibGV2ZWwiOjIsInN0eWxlIjp7ImhlYWQiOnsibmFtZSI6Im5vbmUifX19XSxbNyw1LCIiLDIseyJsZXZlbCI6Miwic3R5bGUiOnsiaGVhZCI6eyJuYW1lIjoibm9uZSJ9fX1dLFs4LDcsImYiLDJdLFs5LDcsIm0iXSxbOCw0LCIiLDAseyJsZXZlbCI6Miwic3R5bGUiOnsiaGVhZCI6eyJuYW1lIjoibm9uZSJ9fX1dLFs5LDEsIiIsMCx7ImxldmVsIjoyLCJzdHlsZSI6eyJoZWFkIjp7Im5hbWUiOiJub25lIn19fV0sWzEwLDgsIiIsMix7ImxldmVsIjoyLCJzdHlsZSI6eyJoZWFkIjp7Im5hbWUiOiJub25lIn19fV0sWzEwLDksImdeey0xfSIsMl0sWzcsMCwiIiwyLHsic3R5bGUiOnsibmFtZSI6ImNvcm5lciJ9fV0sWzEwLDcsIiIsMix7InN0eWxlIjp7Im5hbWUiOiJjb3JuZXIifX1dXQ==
\begin{tikzcd}
{\Las M} &&& {\Las N} &&& {\Las M} \\
&& N && N \\
& M && N && P \\
&& M && P \\
&&& M
\arrow["{\eta_N}"', from=2-3, to=1-4]
\arrow["{\eta_N}", from=2-5, to=1-4]
\arrow["{\eta_M}", from=3-2, to=1-1]
\arrow["f"', from=3-2, to=2-3]
\arrow["\lrcorner"{anchor=center, pos=0.125, rotate=135}, draw=none, from=3-4, to=1-4]
\arrow[equals, from=3-4, to=2-3]
\arrow[equals, from=3-4, to=2-5]
\arrow["{g\eta_M}"', from=3-6, to=1-7]
\arrow["m", from=3-6, to=2-5]
\arrow[equals, from=4-3, to=3-2]
\arrow["f"', from=4-3, to=3-4]
\arrow["m", from=4-5, to=3-4]
\arrow[equals, from=4-5, to=3-6]
\arrow["\lrcorner"{anchor=center, pos=0.125, rotate=135}, draw=none, from=5-4, to=3-4]
\arrow[equals, from=5-4, to=4-3]
\arrow["{g^{-1}}"', from=5-4, to=4-5]
\end{tikzcd}
\end{equation*}
where we used Lemma~\ref{lemma:pullback-with-monic-and-iso} to compute the bottom pullback diagram. However, this span coincides with $\bar{(M,f\eta_N)}$. Now, consider the composition $(M,f\eta_N)^\o(M,f\eta_N)$:
\begin{equation*}
% https://q.uiver.app/#q=WzAsOCxbMCwwLCJcXExhcyBOIl0sWzEsMiwiUCJdLFszLDAsIlxcTGFzIE0iXSxbNSwwLCJcXExhcyBOIl0sWzIsMSwiTSJdLFs0LDIsIk0iXSxbMywzLCJNIl0sWzIsNCwiUCJdLFsxLDQsImciLDJdLFs0LDIsIlxcZXRhX00iLDJdLFsxLDAsIlxcZXRhX1AiXSxbNSwyLCJcXGV0YV9NIl0sWzUsMywiZlxcZXRhX04iLDJdLFs2LDQsIiIsMix7ImxldmVsIjoyLCJzdHlsZSI6eyJoZWFkIjp7Im5hbWUiOiJub25lIn19fV0sWzYsNSwiIiwyLHsibGV2ZWwiOjIsInN0eWxlIjp7ImhlYWQiOnsibmFtZSI6Im5vbmUifX19XSxbNywxLCIiLDIseyJsZXZlbCI6Miwic3R5bGUiOnsiaGVhZCI6eyJuYW1lIjoibm9uZSJ9fX1dLFs3LDYsImciLDJdLFs3LDQsIiIsMix7InN0eWxlIjp7Im5hbWUiOiJjb3JuZXIifX1dLFs2LDIsIiIsMix7InN0eWxlIjp7Im5hbWUiOiJjb3JuZXIifX1dXQ==
\begin{tikzcd}
{\Las N} &&& {\Las M} && {\Las N} \\
&& M \\
& P &&& M \\
&&& M \\
&& P
\arrow["{\eta_M}"', from=2-3, to=1-4]
\arrow["{\eta_P}", from=3-2, to=1-1]
\arrow["g"', from=3-2, to=2-3]
\arrow["{\eta_M}", from=3-5, to=1-4]
\arrow["{f\eta_N}"', from=3-5, to=1-6]
\arrow["\lrcorner"{anchor=center, pos=0.125, rotate=135}, draw=none, from=4-4, to=1-4]
\arrow[equals, from=4-4, to=2-3]
\arrow[equals, from=4-4, to=3-5]
\arrow["\lrcorner"{anchor=center, pos=0.125, rotate=135}, draw=none, from=5-3, to=2-3]
\arrow[equals, from=5-3, to=3-2]
\arrow["g"', from=5-3, to=4-4]
\end{tikzcd}
\end{equation*}
However, $gf\eta_N=gg^{-1}m\eta_N=m\eta_N=\eta_P$. Thus, $(M,f\eta_N)^\o(M,f\eta_N)=\bar{(M,f\eta_N)^\o}$. This proves that $(M,f\eta_N)$ is a restriction isomorphism in $\R[\C]$. 

Conversely, assume that $(M,f\eta_N)$ is a restriction isomorphism in $\R[\C]$. Thus, there exists a morphism $(P,f^\o)\colon\Las N\to\Las M$ such that $(M,f\eta_N)(P,f^\o)=\bar{(M,f\eta_N)}$ and $(P,f^\o)(M,f\eta_N)=\bar{(P,f^\o)}=(P,\eta_P)$. Recall that $P$ must satisfy $P\leq N$, thus, there exists an $m\colon P\to N$ such that $m\eta_N=\eta_P$. The first equation implies the existence of a morphism $h\colon M\to P$ such that $hf^\o=\eta_M$ and $h\eta_P=f\eta_N$. However, since $\eta_P=m\eta_N$ and that $\eta_N$ is monic, we obtain that $hm=f$. The second equation implies the existence of a second morphism $g\colon P\to M$ such that $gf\eta_N=\eta_P$ and $g\eta_M=f^\o$. Thus:
\begin{align*}
&gh\eta_M=hf^\o=\eta_M      &&hg\eta_P=gf\eta_N=\eta_N
\end{align*}
Using that $\eta_M$ and $\eta_N$ are monic, we conclude that $g$ and $h$ are inverse to each other. Thus, $f$ is a local isomorphism of $\C$.
\end{proof}

\begin{corollary}
\label{corollary:inverse-local-categories-2}
A local category $\C$ is an inverse local category if and only if $\R[\C]$ is an inverse category.
\end{corollary}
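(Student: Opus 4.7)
The plan is to derive this corollary directly from Lemma~\ref{lemma:inverse-local-categories-2}, which already establishes a tight correspondence between local isomorphisms in $\C$ and restriction isomorphisms of the particular form $(M, f\eta_N)$ in $\R[\C]$. The only additional observation needed is that every morphism of $\R[\C]$ is, up to the description of spans, already of this form, because the objects of $\R[\C]$ are precisely the total objects of $\C$.

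For the forward direction, assume $\C$ is an inverse local category and consider an arbitrary morphism $(U,f)\colon A \nto B$ of $\R[\C]$. By construction of $\R[\C]$ we have $A=\Las U$, and $B$ is a total object of $\C$, so $B=\Las B$ and $\eta_B=\id_B$. In particular $f\eta_B=f$, so the morphism $(U,f)$ is literally the morphism $(U, f\eta_B)$ appearing in Lemma~\ref{lemma:inverse-local-categories-2}. Since $f\colon U\to B$ is a morphism of $\C$ and every such morphism is assumed to be a local isomorphism, Lemma~\ref{lemma:inverse-local-categories-2} yields that $(U, f\eta_B)=(U,f)$ is a restriction isomorphism in $\R[\C]$. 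Hence $\R[\C]$ is an inverse category.

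For the converse, suppose $\R[\C]$ is an inverse category and take an arbitrary morphism $f\colon M\to N$ of $\C$. Then $(M, f\eta_N)\colon \Las M \nto \Las N$ is a well-defined morphism of $\R[\C]$, since $\Las M$ and $\Las N$ are total by \textbf{[L.1]}. By hypothesis it is a restriction isomorphism, so Lemma~\ref{lemma:inverse-local-categories-2} gives that $f$ is a local isomorphism in $\C$. Hence $\C$ is an inverse local category.

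I do not expect a genuine obstacle here: the entire mathematical content sits inside Lemma~\ref{lemma:inverse-local-categories-2}, and the corollary amounts to observing that quantification over all morphisms of $\C$ matches quantification over all morphisms of $\R[\C]$ once one notes that the codomain of any $\R[\C]$-morphism is total. The only place to be careful is in identifying an arbitrary span $(U,f)\colon A\nto B$ of $\R[\C]$ with total codomain $B$ with the pair $(U, f\eta_B)$ appearing in the statement of Lemma~\ref{lemma:inverse-local-categories-2}.
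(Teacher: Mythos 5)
Your proposal is correct and matches the paper's intent exactly: the corollary is stated without proof as an immediate consequence of Lemma~\ref{lemma:inverse-local-categories-2}, and your argument supplies precisely the missing bookkeeping (every morphism $(U,f)\colon A\nto B$ of $\R[\C]$ has total codomain, so $f\eta_B=f$ and the lemma applies verbatim in both directions).
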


With this, we may finally state the main result of this section. To this end, we denote by $\iRCat_\lax$ the full $2$-subcategory of $\RCat_\lax$ spanned by inverse restriction categories, and similarly we denote by $\iLCat_\lax$ the full $2$-subcategory of $\LCat_\lax$ spanned by inverse local categories.

\begin{theorem}
\label{theorem:inverse-local-categories}
There is a $2$-equivalence $\iRCat_\lax\simeq\iLCat_\lax$. 
\end{theorem}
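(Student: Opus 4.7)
The plan is to obtain this 2-equivalence simply by restricting the 2-equivalence $\RCat_\lax \simeq \LCat_\lax$ of Theorem~\ref{theorem:2-equivalence-local-restriction} to the full 2-subcategories of inverse objects on each side. Since $\iRCat_\lax$ and $\iLCat_\lax$ are, by definition, full 2-subcategories of $\RCat_\lax$ and $\LCat_\lax$ respectively, we do not need to verify that our 2-functors or 2-natural transformations preserve any additional structure on morphisms; we only need to check that the 2-functors $\L_\lax$ and $\R_\lax$ preserve the property of being inverse at the level of objects.

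First, I would verify that $\L_\lax$ restricts to a 2-functor $\iRCat_\lax \to \iLCat_\lax$: given an inverse restriction category $\X$, Corollary~\ref{corollary:inverse-local-categories} immediately gives that $\L[\X]$ is an inverse local category. Symmetrically, $\R_\lax$ restricts to a 2-functor $\iLCat_\lax \to \iRCat_\lax$ by Corollary~\ref{corollary:inverse-local-categories-2}. Since both $\iRCat_\lax$ and $\iLCat_\lax$ are full on $1$-cells and $2$-cells, no further compatibility conditions need to be checked.

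Next, the unit $N\colon \id_{\RCat_\lax} \Rightarrow \R_\lax\L_\lax$ and counit $E\colon \L_\lax\R_\lax \Rightarrow \id_{\LCat_\lax}$ of the 2-equivalence, built in Propositions~\ref{proposition:R-L} and~\ref{proposition:L-R}, consist entirely of natural isomorphisms of (restriction or local) categories. Restricting these componentwise to the subcategories $\iRCat_\lax$ and $\iLCat_\lax$ yields the unit and counit of the restricted adjunction, and the triangle identities proved in Theorem~\ref{theorem:2-equivalence-local-restriction} carry over verbatim. Hence $\L_\lax$ and $\R_\lax$ restrict to a 2-equivalence $\iRCat_\lax \simeq \iLCat_\lax$, as required.

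There is essentially no technical obstacle here: the whole argument is a bookkeeping exercise, and the only nontrivial input is the objectwise characterization provided by Corollaries~\ref{corollary:inverse-local-categories} and~\ref{corollary:inverse-local-categories-2}, which have already been established. The only point worth mentioning explicitly is that fullness of the inclusions $\iRCat_\lax \hookrightarrow \RCat_\lax$ and $\iLCat_\lax \hookrightarrow \LCat_\lax$ is what allows the restriction to go through without additional hypotheses on the preservation of partial isomorphisms by local or restriction functors.
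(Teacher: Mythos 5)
Your proposal is correct and matches the paper's (implicit) argument exactly: the paper states this theorem immediately after Corollaries~\ref{corollary:inverse-local-categories} and~\ref{corollary:inverse-local-categories-2}, precisely because fullness of the two sub-$2$-categories reduces everything to the objectwise preservation of the inverse property, which those corollaries supply. Nothing further is needed.
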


%__________________________________________________________________________
%__________________________________________________________________________

\section{Join local categories}
\label{section:join}
Join restriction categories were introduced in~\cite{cockett:classical-restriction-categories}. Concretely, the join of two compatible morphisms $f,g\colon A\to B$ of a restriction category, that is, partially-defined morphisms that agree on the intersection of their domains, can be allegedly interpreted as the morphism obtained by gluing together $f$ and $g$ along the intersection of their domains. Join restriction categories were further studied in~\cite{guo:join-restriction}; they also found application in tangent category theory~\cite[]{cockett:tangent-cats,cockett:lie-groups-tangent-cats}.

\par In this section, we introduce join local categories, which are those local categories that correspond to join restriction categories under the $2$-equivalence of Theorem~\ref{theorem:2-equivalence-local-restriction}. We begin by recalling the notion of join restriction categories.

\begin{definition}~\cite[Definition~10.1]{cockett:classical-restriction-categories}
\label{definition:join-restriction-category}
In a restriction category $\X$ the \textbf{join} of a family $\Famly$ of compatible morphisms $f\colon A\to B$ of $\X$ is the (necessarily unique) morphism $\bigvee_{f\in\Famly}f\colon A\to B$ subject to the following conditions:
\begin{itemize}
\item For every $g\in\Famly$, $g\leq\bigvee_{f\in\Famly}f$;

\item If a morphism $h\colon A\to B$, satisfies $f\leq h$, for every $f\in\Famly$, then $\bigvee_{f\in\Famly}f\leq h$.
\end{itemize}
A \textbf{join restriction category} consists of a restriction category $\X$ subject to the following conditions:
\begin{description}
\item[JR.1] Every family $\Famly$ of compatible morphisms of $\X$ (including the empty family) admits a join;

\item[JR.2] The join of a family $\Famly$ of compatible morphisms is stable under pre-composition, explicitly:
\begin{align*}
&g\left(\bigvee_{f\in\Famly}f\right)=\bigvee_{f\in\Famly}(gf)
\end{align*}

\item[JR.3] The join of a family $\Famly$ of compatible morphisms is stable under post-composition, explicitly:
\begin{align*}
&\left(\bigvee_{f\in\Famly}f\right)h=\bigvee_{f\in\Famly}(fh)
\end{align*}
\end{description}
\end{definition}

\begin{example}
\label{example:join-restriction-set}
The restriction categories of Examples~\ref{example:partial-set-restriction} and~\ref{example:open-restriction} are join. In fact, a family $\{f_i\}$ of compatible morphisms in both the examples, is a family of partial (continuous) functions which satisfy the gluing condition, that is, $f_i$ and $f_j$ coincide on the intersection of their domains of definition. Therefore, the join of $\{f_i\}$ is the unique extension of each $f_i$, obtained by gluing those partial functions together.
\end{example}

As noticed by Cockett and Manes in~\cite[Lemma~6.10(2)]{cockett:classical-restriction-categories}, \textbf{[JR.3]} is redundant. We will prove a similar result for join local categories using the equivalence. First, let us introduce join local categories. To begin, let us adopt the following notation. Given a family $\Famly$ of compatible morphisms $f\colon M_f\to N$ of a local category $\C$, the diagram
\begin{equation*}
% https://q.uiver.app/#q=WzAsNCxbMSwwLCJNX2YiXSxbMSwxLCJcXExhcyBNX2YiXSxbMCwxLCJQIl0sWzAsMCwiUFxcd2VkZ2UgTV9mIl0sWzAsMSwiXFxldGFfe01fZn0iXSxbMiwxLCJnIiwyXSxbMywyLCJcXHBpX1BeZiIsMl0sWzMsMCwiZ1xcdXAgTV9mIl0sWzMsMSwiIiwxLHsic3R5bGUiOnsibmFtZSI6ImNvcm5lciJ9fV1d
\begin{tikzcd}
{P\wedge M_f} & {M_f} \\
P & {\Las M_f}
\arrow["{g\up M_f}", from=1-1, to=1-2]
\arrow["{\pi_P^f}"', from=1-1, to=2-1]
\arrow["\lrcorner"{anchor=center, pos=0.125}, draw=none, from=1-1, to=2-2]
\arrow["{\eta_{M_f}}", from=1-2, to=2-2]
\arrow["g"', from=2-1, to=2-2]
\end{tikzcd}
\end{equation*}
denotes the pullback of $\eta_{M_f}$ along a morphism $g\colon P\to\Las M_f$.

\begin{definition}
\label{definition:join-local-category}
In a local category $\C$, the \textbf{join} of a family $\Famly$ of compatible morphisms is the (necessarily unique) morphism $\bigvee_{f\in\Famly}f\colon\bigvee_{f\in\Famly}M_f\to N$ subject to the following conditions:
\begin{itemize}
\item For every $f\in\Famly$, $f\leq\bigvee_{f\in\Famly}f$;

\item If $g\colon M\to N$ satisfies $f\leq g$, for every $f\in\Famly$, then $\bigvee_{f\in\Famly}f\leq g$.
\end{itemize}
A \textbf{join local category} consists of a local category $\C$ subject to the following conditions:
\begin{description}
\item[JL.1] Every family $\Famly$ of compatible morphisms of $\C$ (including the empty family) admits a join;

\item[JL.2] Given a family $\Famly$ of morphisms $f\colon M_f\to N$ of $\C$, the diagram
\begin{equation*}
% https://q.uiver.app/#q=WzAsNCxbMiwwLCJcXGJpZ3ZlZV97ZlxcaW5cXEZhbWx5fU1fZiJdLFsyLDEsIlxcTGFzXFxsZWZ0KFxcYmlndmVlX3tmXFxpblxcRmFtbHl9TV9mXFxyaWdodCkiXSxbMCwxLCJQIl0sWzAsMCwiXFxiaWd2ZWVfe2ZcXGluXFxGYW1seX0oUFxcd2VkZ2UgTV9mKSJdLFswLDEsIlxcZXRhX3tcXGJpZ3ZlZV97ZlxcaW5cXEZhbWx5fU1fZn0iXSxbMiwxLCJnIiwyXSxbMywyLCJcXGJpZ3ZlZV97ZlxcaW5cXEZhbWx5fVxccGlfUF5mIiwyXSxbMywwLCJcXGJpZ3ZlZV97ZlxcaW5cXEZhbWx5fSgoZ1xcdXAgTV9mKW1fZikiXV0=
\begin{tikzcd}
{\bigvee_{f\in\Famly}(P\wedge M_f)} && {\bigvee_{f\in\Famly}M_f} \\
P && {\Las\left(\bigvee_{f\in\Famly}M_f\right)}
\arrow["{\bigvee_{f\in\Famly}((g\up M_f)m_f)}", from=1-1, to=1-3]
\arrow["{\bigvee_{f\in\Famly}\pi_P^f}"', from=1-1, to=2-1]
\arrow["{\eta_{\bigvee_{f\in\Famly}M_f}}", from=1-3, to=2-3]
\arrow["g"', from=2-1, to=2-3]
\end{tikzcd}
\end{equation*}
is a pullback in $\C$, for every $g$, where $m_f\colon M_f\to\bigvee_{f\in\Famly}M_f$ denotes the unique monic that commutes with the maximal inclusions.
\end{description}
\end{definition}

\begin{example}
\label{example:join-local-set}
In the local categories $\ParSet$ and $\ParTop$ of Examples~\ref{example:partial-set-local} and~\ref{example:open-local}, the join of a family of compatible morphisms $f_i\colon(A,U_i)\to(B,V)$ consists of the morphism $\bigvee_if_i\colon(A,\bigvee_iU_i)\to(B,V)$ defined as follows. $\bigvee_iU_i$ is the union of all $U_i$ and the underlying function of $\bigvee_if_i$ is defined by gluing together the functions $f_i$, which, by assumption, satisfy the gluing condition, that is, for each $i,j$, $f_i$ and $f_j$ coincide on the intersection $U_i\cap U_j$ (see Example~\ref{example:compatibility-morphisms-local-par-set}).
\end{example}

In Section~\ref{section:concepts-local-categories}, we constructed a partial order on morphisms of a local category and showed a correspondence between compatibility and partial order in the local sense with compatibility and partial order in the restriction sense. In this section, we use this correspondence to prove that joins in the restriction sense correspond to joins in the local sense.

\begin{lemma}
\label{lemma:join-local-category}
Let $\X$ be a restriction category. A family $\Famly$ of compatible morphisms $f\colon A\to B$ of $\X$ admits a join if and only if the family of morphisms $f\colon(A,\bar f)\to(B,\id_B)$ admits a join in the local category $\L[\X]$. 
\end{lemma}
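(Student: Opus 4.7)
The plan is to leverage Lemma~\ref{lemma:order-matters}, which establishes that compatibility and the partial order transfer faithfully between morphisms $f\colon A\to B$ in $\X$ and the associated morphisms $f\colon(A,\bar f)\to(B,\id_B)$ in $\L[\X]$. Because the definition of a join is built entirely out of these two relations, once the translation of compatibility and $\leq$ is available, the universal property of a join should transport immediately between the two settings.

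For the forward direction, I would assume that $\Famly$ admits a join $j=\bigvee_{f\in\Famly}f\colon A\to B$ in $\X$ and argue that the same morphism, viewed as $j\colon(A,\bar j)\to(B,\id_B)$ in $\L[\X]$, is the desired join there. That each $f\leq j$ holds in $\L[\X]$ is immediate from Lemma~\ref{lemma:order-matters}. For the universal property, I would take a morphism $g\colon U\to(B,\id_B)$ in $\L[\X]$ with $f\leq g$ for every $f\in\Famly$, observe that the compatibility condition built into $\leq$ forces $\Las U=\Las(A,\bar f)=(A,\id_A)$ and hence $U=(A,a)$ for some restriction idempotent $a$, and then re-apply Lemma~\ref{lemma:order-matters} to deduce $f\leq g$ in $\X$ for every $f$. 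The universal property of $j$ in $\X$ gives $j\leq g$ in $\X$, and one final application of Lemma~\ref{lemma:order-matters} promotes this to $j\leq g$ in $\L[\X]$.

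The backward direction is entirely symmetric. Given a join $j'\colon U\to(B,\id_B)$ of the family in $\L[\X]$, the same argument forces $U=(A,a)$, so $j'$ may be regarded as an ordinary morphism $A\to B$ in $\X$. Its universal property in $\X$ follows from combining Lemma~\ref{lemma:order-matters} with the universal property inside $\L[\X]$: for any $h\colon A\to B$ in $\X$ with $f\leq h$ for all $f\in\Famly$, the morphism $h\colon(A,\bar h)\to(B,\id_B)$ is an upper bound in $\L[\X]$, so $j'\leq h$ in $\L[\X]$ and hence in $\X$.

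The main obstacle I expect is the small book-keeping argument that pins down the domain $U$ of a candidate upper bound $g$ in $\L[\X]$: nothing in the statement of Lemma~\ref{lemma:join-local-category} says a priori that $U$ has the form $(A,a)$, and without this one cannot regard $g$ as a morphism of $\X$ at all. This is resolved by unpacking the compatibility clause baked into $f\leq g$ (from Definition~\ref{definition:partial-order-on-morphisms}), which forces $\Las U=(A,\id_A)$ and hence $U=(A,\bar g)$, at which point the rest of the proof reduces to routine applications of Lemma~\ref{lemma:order-matters}.
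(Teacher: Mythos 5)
Your proposal is correct and follows essentially the same route as the paper's proof: both directions reduce to Lemma~\ref{lemma:order-matters} to transport the relation $\leq$ (and hence the universal property of the join) between $\X$ and $\L[\X]$, with the only book-keeping being that an upper bound in $\L[\X]$ necessarily has domain of the form $(A,\bar g)$. The paper additionally records the domain of the transported join as $(A,\bigvee_f\bar f)$ using $\overline{\bigvee_f f}=\bigvee_f\bar f$, whereas you write it as $(A,\bar j)$, which is the same object and if anything slightly more economical.
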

\begin{proof}
Suppose that $\Famly$ admits a join $\bigvee_ff\colon A\to B$ in $\X$. Consider the morphism
\begin{align*}
&\bigvee_ff\colon(A,\bigvee_f\bar f)\to(B,\id_B)
\end{align*}
$\L[\X]$, which is well-defined since the restriction operation commutes with joins, that is, $\overline{\bigvee_ff} = \bigvee_f\bar f$. Moreover, for every $g\in\Famly$, $(M,\bar g)\leq(M,\bigvee_ff)$ since $g\leq\bigvee_ff$ and thus, $\bar g\leq\bigvee_f\bar f$. Now, consider $h\colon(A,\bar h)\to(B,\id_B)$ such that, for every $f\in\Famly$, $f\leq h$ in $\L[\X]$. Thus, by Lemma~\ref{lemma:order-matters}, $f\leq h$ in $\X$. Thus, $\bigvee_ff\leq h$ in $\X$, which implies that $\bigvee_ff\leq h$ in $\L[\X]$.
\par Conversely, let us assume that there exists a join $\bigvee_ff\colon(A,a)\to(B,\id_B)$ of the family of morphisms $f$ in $\L[\X]$. Thus $\bar{\bigvee_ff}=a$. Moreover, since each $g\in\Famly$ satisfies $g\leq\bigvee_ff$ in $\L[\X]$, by Lemma~\ref{lemma:order-matters}, $g\leq\bigvee_f$ in $\X$. Now, consider $h\colon M\to N$ such that $f\leq h$ in $\X$ for each $f\in\Famly$. Thus, $f\leq h$ in $\L[\X]$, where $h\colon(A,\bar h)\to(B,\id_B)$. Thus, $\bigvee_ff\leq h$ in $\L[\X]$, which implies that $\bigvee_ff$ is a join of $\Famly$ in $\X$.
\end{proof}

Next, we prove a technical lemma that comes in handy when we prove the equivalence between join restriction and join local categories.

\begin{lemma}
\label{lemma:join-local-categories-post-composition}
In a local category $\C$ satisfying \textbf{[JL.1]}, joins are stable under post-composition, that is:
\begin{align*}
&\left(\bigvee_{f\in\Famly}f\right)h=\bigvee_{f\in\Famly}(fh)
\end{align*}
\end{lemma}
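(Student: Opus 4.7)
The plan is to verify that $(\bigvee_f f)h$ satisfies the defining universal property of $\bigvee_f(fh)$, and then exhibit the canonical witnessing monic between the two joins' domains as an isomorphism.

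First, I would check that $\{fh : f \in \Famly\}$ is a compatible family: post-composing the pullback equality $\iota_{M_f}f = \iota_{M_g}g$ that defines $f \smile g$ by $h$ gives $fh \smile gh$. By \textbf{[JL.1]}, the join $\bigvee_f(fh)$ exists. Second, $(\bigvee_f f)h$ bounds each $fh$ from above: for each $g \in \Famly$, the unique monic $m_g\colon M_g \to \bigvee_f M_f$ witnessing $g \leq \bigvee_f f$ satisfies $m_g((\bigvee_f f)h) = gh$, so $gh \leq (\bigvee_f f)h$. The universal property of $\bigvee_f(fh)$ then yields a monic $\alpha\colon \bigvee_f M_{fh} \to \bigvee_f M_f$ with $\alpha\eta_{\bigvee_f M_f} = \eta_{\bigvee_f M_{fh}}$ and $\alpha((\bigvee_f f)h) = \bigvee_f(fh)$; by the uniqueness statement in Lemma~\ref{lemma:partial-order-on-local-categories}, the canonical monics $k_f\colon M_f \to \bigvee_f M_{fh}$ witnessing $M_f \leq \bigvee_f M_{fh}$ also satisfy $k_f\alpha = m_f$.

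To finish, I would show $\alpha$ is an isomorphism. From $k_f\alpha(\bigvee_f f) = m_f(\bigvee_f f) = f$, the morphism $\alpha(\bigvee_f f)\colon \bigvee_f M_{fh} \to N$ is an upper bound of $\{f\}$. The universal property of $\bigvee_f f$ then yields a monic $\gamma\colon \bigvee_f M_f \to \bigvee_f M_{fh}$ with $\gamma\eta_{\bigvee_f M_{fh}} = \eta_{\bigvee_f M_f}$ and $\gamma\alpha(\bigvee_f f) = \bigvee_f f$. The compositions $\gamma\alpha$ and $\alpha\gamma$ each commute with the respective maximal inclusions, so both equal the identities by uniqueness (Lemma~\ref{lemma:partial-order-on-local-categories}). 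Hence $\alpha$ is an isomorphism; identifying the two domains along $\alpha$, the relation $\bigvee_f(fh) \leq (\bigvee_f f)h$ collapses to the desired equation.

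The main subtlety is this final identification: since joins are only unique up to canonical isomorphism, the equation $(\bigvee_f f)h = \bigvee_f(fh)$ is to be read with the two domains $\bigvee_f M_f$ and $\bigvee_f M_{fh}$ identified along the canonical isomorphism $\alpha$ just produced.
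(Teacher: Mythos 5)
Your proof is correct and follows the same skeleton as the paper's: show that $\big(\bigvee_f f\big)h$ bounds each $fh$ from above by post-composing the witnesses $m_f\big(\bigvee_f f\big)=f$ with $h$, invoke \textbf{[JL.1]} to get $\bigvee_f(fh)\leq\big(\bigvee_f f\big)h$, and then argue that the comparison monic is trivial. Where you diverge is the last step. The paper simply asserts that the domains of $\bigvee_f(fh)$ and $\big(\bigvee_f f\big)h$ literally coincide with $\bigvee_f M_f$, and then uses the observation that the only monic $Q\to Q$ commuting with the maximal inclusions is the identity, which yields the equation on the nose. You instead construct the inverse of the comparison monic $\alpha$ by showing $\alpha\big(\bigvee_f f\big)$ is itself an upper bound of $\Famly$ and applying the universal property of $\bigvee_f f$ in the other direction; the two composites commute with the maximal inclusions and hence are identities since the $\eta$'s are monic. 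Your route is more self-contained --- it does not rely on the unargued claim that the two joins share a domain --- but it delivers the equation only after identifying the two domains along the canonical isomorphism $\alpha$, a caveat you rightly flag. Since the lemma is used later (e.g.\ in Corollary~\ref{corollary:join-local-category-2}) as a literal equation, one should either adopt the convention that joins are chosen so that such canonical isomorphisms are identities, or note, as the paper implicitly does, that the domain of a join of a compatible family is determined by the family of domains alone. Your check that $\{fh\}_{f\in\Famly}$ is a compatible family (needed before invoking \textbf{[JL.1]}) is a small point the paper omits and is a welcome addition.
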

\begin{proof}
Consider a family $\Famly$ of compatible morphisms $f\colon M_f\to N$ and a morphism $h\colon N\to P$ of $\C$. For every $f\in\Famly$, by the properties of the join, $f\leq\bigvee_{f\in\Famly}f$, that is, there exists a monic $m_f\colon M_f\to\bigvee_{f\in\Famly}M_f$ that commutes with the maximal inclusions and such that $m_f\bigvee_{f\in\Famly}f=f$. However, by postcomposing by $h$, we also obtain $m_f(\bigvee_{f\in\Famly}f)h=fh$. Therefore, for every $f\in\Famly$, $fh\leq(\bigvee_{f\in\Famly}f)h$. By the properties of the join, this implies that $\bigvee_{f\in\Famly}(fh)\leq(\bigvee_{f\in\Famly}f)h$. Therefore, there exists a unique monic $m$ commuting with the maximal inclusions such that $m(\bigvee_{f\in\Famly}f)h=\bigvee_{f\in\Famly}(fh)$. However, the domains of $\bigvee_{f\in\Famly}(fh)$ and $(\bigvee_{f\in\Famly}f)h$ coincide with $\bigvee_{f\in\Famly}M_f$. However, for every object $Q$ of $\C$, there is exactly one monic $Q\to Q$ that commutes with the maximal inclusions: the identity. Therefore, $m$ is the identity, and thus, we obtain the desired identity $(\bigvee_{f\in\Famly}f)h=\bigvee_{f\in\Famly}(fh)$.
\end{proof}

\begin{corollary}
\label{corollary:join-local-category}
The local category $\L[\X]$ of a join restriction category $\X$ is a join local category.
\end{corollary}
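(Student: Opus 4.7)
The plan is to verify conditions \textbf{[JL.1]} and \textbf{[JL.2]} of Definition~\ref{definition:join-local-category} for $\L[\X]$, leveraging Lemma~\ref{lemma:eta-display} to compute the relevant pullbacks of maximal inclusions and adapting the translations between $\X$ and $\L[\X]$ that underlie Lemma~\ref{lemma:order-matters} and Lemma~\ref{lemma:join-local-category}.

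For \textbf{[JL.1]}, I first observe that a compatible family in $\L[\X]$ has the form $\{f_i\colon(A, a_i)\to(B, b)\}$, since $\Las$-compatibility of the sources forces a common ambient object $A$. Applying Lemma~\ref{lemma:eta-display} to compute $(A, a_i)\wedge(A, a_j)$, the compatibility diagram in $\L[\X]$ unwinds to $\bar{f_i}f_j = \bar{f_j}f_i$, which is compatibility in $\X$. The join $\bigvee_i f_i\colon A\to B$ then exists in $\X$ by assumption; using that restriction commutes with joins of compatible families, $\overline{\bigvee_i f_i} = \bigvee_i a_i$, and \textbf{[JR.3]} gives $(\bigvee_i f_i)b = \bigvee_i(f_i b) = \bigvee_i f_i$, so $\bigvee_i f_i$ lifts to a morphism $(A, \bigvee_i a_i)\to(B, b)$ in $\L[\X]$. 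The universal property is verified directly: each $f_i\leq\bigvee_i f_i$ in $\L[\X]$ because the monic $a_i\colon(A, a_i)\to(A, \bigvee_i a_i)$ satisfies $a_i\cdot\bigvee_j f_j = \bigvee_j(a_i f_j) = \bigvee_j(\bar{f_j}f_i) = f_i$ by \textbf{[JR.2]} and compatibility, and any upper bound $g\colon(A, c)\to(B, b)$ in $\L[\X]$ restricts to an upper bound of each $f_i$ in $\X$, from which $\bigvee_i f_i\leq g$ in $\X$ and then in $\L[\X]$.

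For \textbf{[JL.2]}, the computation is explicit. For a family $\{f\colon(A, a_f)\to(B, b)\}$ and a morphism $g\colon(A', c)\to(A, \id_A) = \Las(\bigvee_f M_f)$, Lemma~\ref{lemma:eta-display} identifies each pullback $P\wedge M_f$ as $(A', \bar{g a_f})$, with projections $\pi_P^f = \bar{g a_f}$ and $g\up M_f = g a_f$; the monic $m_f\colon M_f\to\bigvee_f M_f$ is $a_f$, so $(g\up M_f)m_f = g a_f$. Taking the join of these morphisms (which exists by \textbf{[JL.1]} applied above) produces a morphism with source $(A', \bigvee_f\bar{g a_f})$, and applying \textbf{[JR.2]} together with the fact that restriction commutes with joins gives $\bigvee_f(g a_f) = g\bigvee_f a_f$ and $\bigvee_f\bar{g a_f} = \bar{g\bigvee_f a_f}$. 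Hence the candidate square coincides with the pullback of $\eta_{\bigvee_f M_f} = \bigvee_f a_f$ along $g$ provided by Lemma~\ref{lemma:eta-display}, and is therefore a pullback as required.

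The main obstacle is the notational bookkeeping of unwinding the definitions of compatibility, partial order, pullbacks, and joins in $\L[\X]$ into restriction-categorical data in $\X$, particularly the need to handle general codomains $(B, b)$ rather than the maximal $(B, \id_B)$ treated in Lemma~\ref{lemma:join-local-category}; once this translation is carried out, the result follows from the axioms of a join restriction category with no further mathematical input.
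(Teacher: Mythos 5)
Your proposal is correct and follows essentially the same route as the paper: both verify \textbf{[JL.1]} and \textbf{[JL.2]} by translating compatibility, the order, and joins into $\X$ and computing the relevant pullbacks of maximal inclusions via Lemma~\ref{lemma:eta-display}, with \textbf{[JR.2]} and the fact that restriction commutes with joins closing the argument for \textbf{[JL.2]}. The only difference is organizational: the paper obtains the join with general codomain $(B,b)$ by first invoking Lemma~\ref{lemma:join-local-category} for the codomain $(B,\id_B)$ and then pulling back along $\eta_{(B,b)}$, whereas you construct $\bigvee_i f_i$ directly in $\X$ and use \textbf{[JR.3]} to see that it lands in $(B,b)$; both are valid.
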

\begin{proof}
Suppose that $\X$ is a join restriction category and consider a family $\Famly$ of compatible morphisms $f\colon(A,\bar f)\to(B,b)$ of $\L[\X]$. Thus, the family of morphisms $f\eta_{(B,b)}\colon(A,\bar f)\to(B,\id_B)$ admits a join in $\L[\X]$. Let $\bigvee_ff\eta_{(B,b)}$ be the join. To construct the join of $\Famly$, consider the following pullback diagram
\begin{equation*}
% https://q.uiver.app/#q=WzAsNCxbMCwxLCIoQSxcXGJpZ3ZlZV9mXFxiYXIgZmIpIl0sWzEsMSwiKEIsXFxpZF9CKSJdLFsxLDAsIihCLGIpIl0sWzAsMCwiKEEsXFxiaWd2ZWVfZlxcYmFyIGZiKSJdLFswLDEsIlxcYmlndmVlX2ZmYiIsMl0sWzIsMSwiXFxldGFfeyhCLGIpfSJdLFszLDIsIlxcYmlndmVlX2ZmYiJdLFszLDAsIlxcYmlndmVlX2ZcXGJhciBmYiIsMl0sWzMsMSwiIiwxLHsic3R5bGUiOnsibmFtZSI6ImNvcm5lciJ9fV1d
\begin{tikzcd}
{(A,\bigvee_f\bar fb)} & {(B,b)} \\
{(A,\bigvee_f\bar fb)} & {(B,\id_B)}
\arrow["{\bigvee_ffb}", from=1-1, to=1-2]
\arrow["{\bigvee_f\bar fb}"', from=1-1, to=2-1]
\arrow["\lrcorner"{anchor=center, pos=0.125}, draw=none, from=1-1, to=2-2]
\arrow["{\eta_{(B,b)}}", from=1-2, to=2-2]
\arrow["{\bigvee_ffb}"', from=2-1, to=2-2]
\end{tikzcd}
\end{equation*}
where we used that $bb=b$ and that joins commute with composition. Let $h\colon(A,\bigvee_f\bar fb)\to(B,b)$ be a morphism in $\L[\X]$ such that $f\leq h$ in $\L[\X]$ for every $f\in\Famly$. It is not difficult to see that this implies that $f\eta_{(B,b)}\leq h\eta_{(B,b)}$, which implies that $\bigvee_f\bar fb\eta_{(B,b)}\leq h\eta_{(B,b)}$. Then using that $\eta$ is monic, we prove that $\bigvee_f\bar fb\leq h$, thus, $\L[\X]$ verifies \textbf{[JL.1]}. Now, let us prove \textbf{[JL.2]}. Thanks to Lemma~\ref{lemma:eta-display}, the diagram 
\begin{equation*}
% https://q.uiver.app/#q=WzAsNCxbMCwwLCIoQyxcXGJhcntnXFxiaWd2ZWVfZlxcYmFyIGZ9KSJdLFswLDEsIihDLGMpIl0sWzEsMSwiKEEsXFxpZF9BKSJdLFsxLDAsIihBLFxcYmdpdmVlX2ZcXGJhciBmKSJdLFswLDEsIlxcYmFye2dcXGJpZ3ZlZV9mXFxiYXIgZn0iLDJdLFszLDIsIlxcZXRhIl0sWzEsMiwiZyIsMl0sWzAsMywiZ1xcYmlndmVlX2ZcXGJhciBmIl1d
\begin{tikzcd}
{(C,\bar{g\bigvee_f\bar f})} & {(A,\bigvee_f\bar f)} \\
{(C,c)} & {(A,\id_A)}
\arrow["{g\bigvee_f\bar f}", from=1-1, to=1-2]
\arrow["{\bar{g\bigvee_f\bar f}}"', from=1-1, to=2-1]
\arrow["\eta", from=1-2, to=2-2]
\arrow["g"', from=2-1, to=2-2]
\end{tikzcd}
\end{equation*}
is a pullback in $\L[\X]$. However, by \textbf{[JR.2]}, $g\bigvee_f\bar f=\bigvee_fg\bar f$. Moreover, the restriction operator commutes with joins. Thus, the diagram reduces to the following pullback
\begin{equation*}
% https://q.uiver.app/#q=WzAsNCxbMCwwLCIoQyxcXGJpZ3ZlZV9mXFxiYXJ7Z1xcYmFyIGZ9KSJdLFswLDEsIihDLGMpIl0sWzEsMSwiKEEsXFxpZF9BKSJdLFsxLDAsIihBLFxcYmlndmVlX2ZcXGJhciBmKSJdLFswLDEsIlxcYmlndmVlX2ZcXGJhcntnXFxiYXIgZn0iLDJdLFszLDIsIlxcZXRhIl0sWzEsMiwiZyIsMl0sWzAsMywiXFxiaWd2ZWVfZmdcXGJhciBmIl0sWzAsMiwiIiwxLHsic3R5bGUiOnsibmFtZSI6ImNvcm5lciJ9fV1d
\begin{tikzcd}
{(C,\bigvee_f\bar{g\bar f})} & {(A,\bigvee_f\bar f)} \\
{(C,c)} & {(A,\id_A)}
\arrow["{\bigvee_fg\bar f}", from=1-1, to=1-2]
\arrow["{\bigvee_f\bar{g\bar f}}"', from=1-1, to=2-1]
\arrow["\lrcorner"{anchor=center, pos=0.125}, draw=none, from=1-1, to=2-2]
\arrow["\eta", from=1-2, to=2-2]
\arrow["g"', from=2-1, to=2-2]
\end{tikzcd}
\end{equation*}
which is precisely the pullback of \textbf{[JL.2]}.
\end{proof}

Let us now prove the converse, in that local joins in a local category correspond to restriction joins in the associated restriction category. 

\begin{lemma}
\label{lemma:join-local-category-2}
A family $\Famly$ of compatible morphisms $f\colon M_f\to P$ of a local category $\C$ admits a join if and only if the family of morphisms $(M_f,f\eta_P)\colon\Las M_f\nto\Las P$ admits a join in $\R[\C]$. 
\end{lemma}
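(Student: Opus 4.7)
The plan is to verify both directions by direct construction, using Lemma~\ref{lemma:order-matters-2} to transfer the partial order between $\C$ and $\R[\C]$, and using pullbacks against $\eta_P$ to mediate between morphisms in $\R[\C]$ with codomain $\Las P$ and morphisms in $\C$ with codomain $P$.

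For the forward direction, suppose $\bigvee_f f\colon\bigvee_f M_f\to P$ is the join in $\C$. Since $M_f\leq\bigvee_f M_f$ in the partial order on objects, the pair $\left(\bigvee_f M_f,(\bigvee_f f)\eta_P\right)$ is a well-typed morphism $\Las M_f\nto\Las P$ in $\R[\C]$, and I propose it as the join of the family. It is an upper bound by Lemma~\ref{lemma:order-matters-2} applied to $f\leq\bigvee_f f$. For minimality, given another upper bound $(W,g)\colon\Las M_f\nto\Las P$, unpacking each $(M_f,f\eta_P)\leq(W,g)$ via span composition produces a morphism $m_f\colon M_f\to W$ in $\C$ with $m_f\eta_W=\eta_{M_f}$ and $m_fg=f\eta_P$. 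I would then pull $\eta_P$ back along $g$ using \textbf{[L.3]} to obtain an object $W'$ with projections $\pi\colon W'\to W$ and $\tau\colon W'\to P$ satisfying $\pi g=\tau\eta_P$ and $\pi\eta_W=\eta_{W'}$. The equation $m_f g=f\eta_P$ lifts through the pullback to a morphism $\tilde m_f\colon M_f\to W'$ with $\tilde m_f\tau=f$ and $\tilde m_f\eta_{W'}=\eta_{M_f}$, witnessing $f\leq\tau$ in $\C$. The universal property of $\bigvee_f f$ then yields $n\colon\bigvee_f M_f\to W'$ with $n\tau=\bigvee_f f$ and $n\eta_{W'}=\eta_{\bigvee_f M_f}$, and the composite $n\pi\colon\bigvee_f M_f\to W$ witnesses $\left(\bigvee_f M_f,(\bigvee_f f)\eta_P\right)\leq(W,g)$ in $\R[\C]$.

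For the backward direction, suppose $(W,g)\colon\Las M_f\nto\Las P$ is the join in $\R[\C]$. The crucial step is showing that $g$ factors through $\eta_P$. Pulling $\eta_P$ back along $g$ gives $W'$ with $\pi\colon W'\to W$ and $\tau\colon W'\to P$, making $W'\leq W$ via $\pi$. Each $f\eta_P$ factors through $\eta_P$, so every bound $(M_f,f\eta_P)\leq(W,g)$ lifts through the pullback to $(M_f,f\eta_P)\leq(W',\tau\eta_P)$, showing $(W',\tau\eta_P)$ is also an upper bound in $\R[\C]$. Minimality of $(W,g)$ then forces $(W,g)\leq(W',\tau\eta_P)$, and combined with $W'\leq W$ this yields an isomorphism $\phi\colon W\to W'$ with $\phi\pi=\id_W$ and $\phi\tau\eta_P=g$. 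Setting $g'\=\phi\tau\colon W\to P$, we obtain the desired factorization $g=g'\eta_P$. I would then verify that $(W,g')$ is the join of $f\colon M_f\to P$ in $\C$: it is an upper bound because each $m_f g=f\eta_P=m_f g'\eta_P$ combined with monicity of $\eta_P$ gives $m_f g'=f$; it is least because any upper bound $h\colon H\to P$ in $\C$ translates via Lemma~\ref{lemma:order-matters-2} to an upper bound $(H,h\eta_P)$ in $\R[\C]$, to which minimality of $(W,g)$ applies, and monicity of $\eta_P$ converts the resulting inequality back to $g'\leq h$ in $\C$.

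The main obstacle is the factorization step in the backward direction, which is genuine: a priori, morphisms into $\Las P$ in $\R[\C]$ need not arise from morphisms into the subobject $P$ in $\C$, and the join in $\R[\C]$ lives in the larger world of morphisms into $\Las P$. The pullback against $\eta_P$ combined with the least-upper-bound property is what forces the join to inherit the factorization property already enjoyed by each family member.
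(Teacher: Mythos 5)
Your proof is correct and follows the same overall strategy as the paper's: in the forward direction propose $\left(\bigvee_fM_f,(\bigvee_ff)\eta_P\right)$ as the join, and in both directions use Lemma~\ref{lemma:order-matters-2} to shuttle the order relation between $\C$ and $\R[\C]$. The one genuine difference is that you explicitly confront the codomain mismatch: an arbitrary upper bound $(U,h)\colon\Las M_f\nto\Las P$ in $\R[\C]$ --- and in the converse direction the join $(M,\tilde f)$ itself --- is a priori a morphism into $\Las P$ rather than into $P$, so Lemma~\ref{lemma:order-matters-2}, which is stated only for morphisms presented in the factored form $g\eta_P$, cannot be applied to it directly, nor can the universal property of the join in $\C$ be invoked against a competitor with codomain $\Las P$. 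The paper's proof elides this point (it writes $m_fh=f$ and concludes that $\tilde f\colon M\to\Las P$ ``is a join of $\Famly$'' even though the family lives over $P$), whereas you pull back $\eta_P$ along the candidate using \textbf{[L.3]} and then use the least-upper-bound property to force the factorization through $\eta_P$. This is a real gap in the paper's own argument that your proposal fills; the price is only some routine pullback bookkeeping, and all the individual steps (the lift $\tilde m_f$, the antisymmetry argument producing $W\cong W'$, and the final transfer of minimality via monicity of $\eta_P$) check out.
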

\begin{proof}
Suppose that a compatible family $\Famly$ admits a join and let $\bigvee_ff\colon\bigvee_fM_f\to P$ be the join of $\Famly$. Consider the following morphism
\begin{align*}
&\left(\bigvee_fM_f,\bigvee_ff\eta_P\right)\colon\Las M=\Las M_f\nto\Las P
\end{align*}
in $\R[\C]$. Let us show that $\R[\bigvee_ff]$ is the join of $\Famly'\=\{(M_f,f\eta_P)\}_{f\in\Famly}$. For starters, since for each $g\in\Famly$, $g\leq\bigvee_ff$, by Lemma~\ref{lemma:order-matters-2}, it follows that $(M_g,g\eta_P)\leq(\bigvee_fM_f,\bigvee_ff\eta_P)$. Now, consider $(U,h)\colon\Las M\to\Las P$ such that, for every $f\in\Famly$, $(M_f,f\eta_P)\leq h$ in $\R[\C]$. Thus, $\bar{(M_f,f\eta_P)}h=(M_f,f\eta_P)$, that is, the span
\begin{equation*}
% https://q.uiver.app/#q=WzAsNixbMCwwLCJcXExhcyBNIl0sWzIsMCwiXFxMYXMgTSJdLFs0LDAsIlxcTGFzIFAiXSxbMywxLCJVIl0sWzEsMSwiTV9mIl0sWzIsMiwiTV9mXFx3ZWRnZSBVIl0sWzMsMSwiXFxldGFfVSJdLFszLDIsImgiLDJdLFs0LDAsIlxcZXRhX3tNX2Z9Il0sWzQsMSwiXFxldGFfe01fZn0iLDJdLFs1LDQsIlxccGlfe01fZn0iXSxbNSwzLCJtX2YiLDJdLFs1LDEsIiIsMix7InN0eWxlIjp7Im5hbWUiOiJjb3JuZXIifX1dXQ==
\begin{tikzcd}
{\Las M} && {\Las M} && {\Las P} \\
& {M_f} && U \\
&& {M_f\wedge U}
\arrow["{\eta_{M_f}}", from=2-2, to=1-1]
\arrow["{\eta_{M_f}}"', from=2-2, to=1-3]
\arrow["{\eta_U}", from=2-4, to=1-3]
\arrow["h"', from=2-4, to=1-5]
\arrow["\lrcorner"{anchor=center, pos=0.125, rotate=135}, draw=none, from=3-3, to=1-3]
\arrow["{\pi_{M_f}}", from=3-3, to=2-2]
\arrow["{m_f}"', from=3-3, to=2-4]
\end{tikzcd}
\end{equation*}
must coincide with $(M_f,f\eta_P)$. Thus, using that $\eta_{M_f}$ is monic, we conclude that $M_f\wedge U=M_f$, that $\pi_{M_f}=\id_{M_f}$, and that $m_f\colon M_f\to U$ satisfies $m_f\eta_U=\eta_{M_f}$ and $m_fh=f$, thus, $f\leq h$ in $\C$. Thus, $\bigvee_ff\leq h$ in $\C$ and, by Lemma~\ref{lemma:order-matters-2}, $(\bigvee_fM_f,\bigvee_ff)\leq(U,h)$ in $\R[\C]$. 

Conversely, assume that $\Famly'$ admits a join $\bigvee_f(M_f,f\eta_P)=(M,\tilde f)$ in $\R[\C]$. In particular, for every $f\in\Famly$, $(M_f,f\eta_P)\leq\bigvee_f(M_f,f\eta_P)$, which, by Lemma~\ref{lemma:order-matters-2}, implies that $f\leq\tilde f$ in $\C$, since $(M,\tilde f)=\bigvee_f(M_f,f\eta_P)$. Let $h$ in $\C$ such that $f\leq h$, for each $f\in\Famly$. Thus, $(M_f,f\eta_P)\leq(U,h)$ in $\R[\C]$. Thus, $\bigvee_f(M_f,f\eta_P)\leq(U,h)$, which implies that $\tilde f\leq h$ in $\C$. Therefore, $\tilde f$ is a join of $\Famly$ in $\C$.
\end{proof}

\begin{corollary}
\label{corollary:join-local-category-2}
The restriction category $\R[\C]$ of a join local category $\C$ is a join restriction category.
\end{corollary}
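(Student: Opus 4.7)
The plan is to verify the axioms \textbf{[JR.1]}--\textbf{[JR.3]} for $\R[\C]$ by transporting each condition across the correspondences between $\C$ and $\R[\C]$ established earlier in the paper. I first observe that since every morphism of $\R[\C]$ has a total codomain, every morphism $(U,g)\colon\Las M\nto\Las P$ can be written as $(U,g\eta_{\Las P})$ with $\eta_{\Las P}=\id_{\Las P}$, so Lemmas~\ref{lemma:order-matters-2} and~\ref{lemma:join-local-category-2} apply directly to every morphism and every family in $\R[\C]$.

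For \textbf{[JR.1]}, I would start with an arbitrary family $\{(U_i,g_i)\colon\Las M\nto\Las P\}_{i\in I}$ of pairwise compatible morphisms in $\R[\C]$. By Lemma~\ref{lemma:order-matters-2}, the underlying morphisms $g_i\colon U_i\to\Las P$ form a compatible family in $\C$ (where compatibility in $\C$ already requires $\Las U_i=\Las M$, which is built into the data of $\R[\C]$). Axiom \textbf{[JL.1]} then provides the join $\bigvee_{i}g_i\colon\bigvee_{i}U_i\to\Las P$, and one direction of Lemma~\ref{lemma:join-local-category-2} immediately identifies this with a join in $\R[\C]$. The same argument applies to the empty family.

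For \textbf{[JR.2]}, I would compute the composite $(W,h)(\bigvee_{i}U_i,\bigvee_{i}g_i)$ in $\R[\C]$ via span composition, which forms the pullback of $\eta_{\bigvee_{i}U_i}$ along $h\colon W\to\Las M$. Axiom \textbf{[JL.2]} identifies this pullback with $\bigvee_{i}(W\wedge U_i)$ together with the induced map $\bigvee_{i}((h\up U_i)m_i)$ into $\bigvee_{i}U_i$, where $m_i\colon U_i\to\bigvee_{i}U_i$ is the canonical monic witnessing $U_i\leq\bigvee_{i}U_i$. Postcomposing with $\bigvee_{i}g_i$, using $m_i\bigvee_{i}g_i=g_i$ (from $g_i\leq\bigvee_{i}g_i$) to collapse each leg back to $g_i$, and Lemma~\ref{lemma:join-local-categories-post-composition} to commute the join past postcomposition, yields a span which, by Lemma~\ref{lemma:join-local-category-2}, is a representative of $\bigvee_{i}(W,h)(U_i,g_i)$ in $\R[\C]$.

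Finally, \textbf{[JR.3]}, stability of joins under post-composition, is redundant given \textbf{[JR.1]} and \textbf{[JR.2]} by~\cite[Lemma~6.10(2)]{cockett:classical-restriction-categories}, so no separate verification is required. The main obstacle will be \textbf{[JR.2]}: although it is conceptually a direct consequence of \textbf{[JL.2]}, the argument requires careful bookkeeping of the partial pullback projections $h\up U_i$, the comparison monics $m_i$, and the interaction between pullback and join, to confirm that the two sides agree on the nose rather than merely up to the canonical isomorphism of span representatives.
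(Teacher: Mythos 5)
Your proposal is correct and follows essentially the same route as the paper: \textbf{[JR.1]} via Lemma~\ref{lemma:join-local-category-2} (with Lemma~\ref{lemma:order-matters-2} transporting compatibility), \textbf{[JR.2]} by computing the span composite with \textbf{[JL.2]}, the identity $m_f\bigvee_f f=f$, and Lemma~\ref{lemma:join-local-categories-post-composition}, and \textbf{[JR.3]} dismissed as redundant by the Cockett--Manes result. No gaps.
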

\begin{proof}
Let $\C$ be a join local category. We already shown that $\R[\C]$ admits joins, that is, $\R[\C]$ verifies \textbf{[JR.1]}. We shall prove that $\R[\C]$ verifies \textbf{[JR.2]}, as well. Consider a family of compatible morphisms $(M_f,f)\colon M\nto N$ of $\R[\C]$ and let $(V,g)\colon P\to M$ be a morphism. Thanks to \textbf{[JL.2]}, the composition of $(V,g)$ with $\bigvee_f(M_f,f)$ consists of the span:
\begin{equation*}
% https://q.uiver.app/#q=WzAsNixbMCwwLCJQIl0sWzIsMCwiTSJdLFs0LDAsIk4iXSxbMSwxLCJWIl0sWzMsMSwiXFxiaWd2ZWVfZk1fZiJdLFsyLDIsIlxcYmlndmVlX2YoVlxcd2VkZ2UgTV9mKSJdLFs0LDIsIlxcYmlndmVlX2ZmIiwyXSxbNCwxLCJcXGV0YSJdLFszLDEsImciLDJdLFszLDAsIlxcZXRhX1YiXSxbNSwzXSxbNSw0LCJcXGJpZ3ZlZV9mKChnXFx1cCBNX2YpbV9mKSIsMix7ImxhYmVsX3Bvc2l0aW9uIjoxMDB9XSxbNSwxLCIiLDEseyJzdHlsZSI6eyJuYW1lIjoiY29ybmVyIn19XV0=
\begin{tikzcd}
P && M && N \\
& V && {\bigvee_fM_f} \\
&& {\bigvee_f(V\wedge M_f)}
\arrow["{\eta_V}", from=2-2, to=1-1]
\arrow["g"', from=2-2, to=1-3]
\arrow["\eta", from=2-4, to=1-3]
\arrow["{\bigvee_ff}"', from=2-4, to=1-5]
\arrow["\lrcorner"{anchor=center, pos=0.125, rotate=135}, draw=none, from=3-3, to=1-3]
\arrow[from=3-3, to=2-2]
\arrow["{\bigvee_f((g\up M_f)m_f)}"'{pos=1}, from=3-3, to=2-4]
\end{tikzcd}
\end{equation*}
However, by Lemma~\ref{lemma:join-local-categories-post-composition}, we can compute
\begin{align*}
&\left(\bigvee_f((g\up M_f)m_f)\right)\left(\bigvee_ff\right)=\bigvee_f\left((g\up M_f)m_f\bigvee_ff\right)=\bigvee_f((g\up M_f)f)
\end{align*}
where we used that $m_f\bigvee_ff=f$. However, the join of the family $\{(V,g)(M_f,f)\}_f$ is precisely the morphism $(V,\bigvee_f((g\up M_f)f))$. Thus, $\R[\C]$ verifies \textbf{[JR.2]}.
\end{proof}

Combining the above corollaries together, we may state the main result of this section. To this end, we denote by $\jRCat_\lax$ the $2$-subcategory of $\RCat_\lax$ of join restriction categories, restriction functors which preserve joins, and restriction natural transformations. Similarly, we denote by $\jLCat_\lax$ the $2$-subcategory of $\LCat_\lax$ of join local categories, join-preserving local functors, and local natural transformations.

\begin{theorem}
\label{theorem:join-local-categories}
There is a $2$-equivalence $\jRCat_\lax\simeq\jLCat_\lax$. 
\end{theorem}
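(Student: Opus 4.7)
The plan is to leverage the $2$-equivalence $\RCat_\lax\simeq\LCat_\lax$ established in Theorem~\ref{theorem:2-equivalence-local-restriction} and show that the $2$-functors $\L_\lax$ and $\R_\lax$ restrict to the sub-$2$-categories of join restriction and join local categories, along with join-preserving $1$-morphisms. The on-objects part of this restriction is essentially already done: Corollary~\ref{corollary:join-local-category} shows that $\L[\X]$ is a join local category whenever $\X$ is a join restriction category, and Corollary~\ref{corollary:join-local-category-2} shows the converse. Since the $2$-morphisms on both sides (restriction and local natural transformations) are taken without modification, nothing needs to be checked for them beyond what Proposition~\ref{proposition:L-functoriality} and Proposition~\ref{proposition:R-functoriality} already establish.

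The remaining step is to show that $\L_\lax$ and $\R_\lax$ preserve and reflect the property of preserving joins on $1$-morphisms. For a restriction functor $F\colon\X\to\X'$ between join restriction categories, one needs to show $F$ preserves all joins of compatible families if and only if $\L[F]\colon\L[\X]\to\L[\X']$ does. This is where Lemma~\ref{lemma:join-local-category} does the heavy lifting: it identifies the join of a compatible family $\Famly$ in $\X$ with the join of the corresponding family $\{f\colon(A,\bar f)\to(B,\id_B)\}_{f\in\Famly}$ in $\L[\X]$ (up to the domain adjustment $\bigvee_f\bar f$). Applying $F$ or $\L[F]$ object-wise and using that $\L[F]$ is just $F$ on underlying morphisms, together with the fact that $F$ preserves restriction idempotents, the equivalence of the two preservation conditions follows formally. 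The analogous argument on the $\R$ side uses Lemma~\ref{lemma:join-local-category-2}: a join of a compatible family in $\C$ is identified with the join of its image family of spans in $\R[\C]$, so a local functor $G\colon\C\to\C'$ preserves local joins iff $\R[G]$ preserves restriction joins.

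Having established that $\L_\lax$ and $\R_\lax$ restrict to $2$-functors
\[
\L_\lax\colon\jRCat_\lax\to\jLCat_\lax,\qquad\R_\lax\colon\jLCat_\lax\to\jRCat_\lax,
\]
the unit and counit $N\colon\id\cong\R\circ\L$ and $E\colon\L\circ\R\cong\id$ constructed in Propositions~\ref{proposition:R-L} and~\ref{proposition:L-R} automatically restrict, since they are built component-wise and the relevant components are join-preserving isomorphisms (indeed isomorphisms of the underlying categories, which trivially preserve and reflect any colimit-like structure). The triangle identities verified in Theorem~\ref{theorem:2-equivalence-local-restriction} then give the desired $2$-equivalence $\jRCat_\lax\simeq\jLCat_\lax$.

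The main obstacle I anticipate is bookkeeping around the passage between restriction joins $\bigvee_f f\colon A\to B$ in $\X$ and the local joins $\bigvee_f f\colon\bigvee_f M_f\to N$ in $\L[\X]$, where the domains formally differ: in $\L[\X]$ the join lives over $(A,\bigvee_f\bar f)$, not over $(A,\bigvee_f\bar f\, b)$ as one might naively expect when one composes with a restriction idempotent $b$. Managing the condition \textbf{[JL.2]} about pullbacks of maximal inclusions through joins, and confirming that $\L[F]$ preserves these precisely when $F$ commutes with joins, is the only place where a non-trivial computation is required; once this correspondence is cleanly stated, the $2$-equivalence falls out of Theorem~\ref{theorem:2-equivalence-local-restriction} by restriction.
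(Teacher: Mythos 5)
Your proposal is correct and matches the paper's (largely implicit) argument: the paper simply states the theorem as the combination of Corollaries~\ref{corollary:join-local-category} and~\ref{corollary:join-local-category-2} together with Lemmas~\ref{lemma:join-local-category} and~\ref{lemma:join-local-category-2}, which is precisely the restriction of the $2$-equivalence of Theorem~\ref{theorem:2-equivalence-local-restriction} that you describe. Your additional care about the domain bookkeeping $(A,\bigvee_f\bar f\,b)$ versus $(A,\bigvee_f\bar f)$ and about join-preservation of $1$-morphisms correctly fills in details the paper leaves to the reader.
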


We mentioned before that \textbf{[JR.3]} in Definition~\ref{definition:join-restriction-category} comes for free from the other axioms. For this reason, in our definition of a join local category, we omitted the corresponding axiom. In light of the $2$-equivalence of Theorem~\ref{theorem:join-local-categories}, we can now state the equivalent axiom for a join local category as a result.

\begin{corollary}
\label{corollary:join-local-categories-missing-axiom}
In a join local category $\C$, given a family $\Famly$ of morphisms $f\colon M_f\to\Las N$ of $\C$, the diagram
\begin{equation*}
% https://q.uiver.app/#q=WzAsNCxbMiwwLCJOIl0sWzIsMSwiXFxMYXMgTiJdLFswLDEsIlxcYmlndmVlX3tmXFxpblxcRmFtbHl9TV9mIl0sWzAsMCwiXFxiaWd2ZWVfe2ZcXGluXFxGYW1seX0oTV9mXFx3ZWRnZSBOKSJdLFswLDEsIlxcZXRhX04iXSxbMiwxLCJcXGJpZ3ZlZV97ZlxcaW5cXEZhbWx5fWYiLDJdLFszLDIsIlxcYmlndmVlX3tmXFxpblxcRmFtbHl9KFxccGlfe01fZn1tX2YpIiwyXSxbMywwLCJcXGJpZ3ZlZV97ZlxcaW5cXEZhbWx5fVxccGlfTl5mIl1d
\begin{tikzcd}
{\bigvee_{f\in\Famly}(M_f\wedge N)} && N \\
{\bigvee_{f\in\Famly}M_f} && {\Las N}
\arrow["{\bigvee_{f\in\Famly}\pi_N^f}", from=1-1, to=1-3]
\arrow["{\bigvee_{f\in\Famly}(\pi_{M_f}m_f)}"', from=1-1, to=2-1]
\arrow["{\eta_N}", from=1-3, to=2-3]
\arrow["{\bigvee_{f\in\Famly}f}"', from=2-1, to=2-3]
\end{tikzcd}
\end{equation*}
is a pullback in $\C$.
\end{corollary}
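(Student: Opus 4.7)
The plan is to use the $2$-equivalence $\jLCat_\lax \simeq \jRCat_\lax$ of Theorem~\ref{theorem:join-local-categories} to reduce the claim to the redundancy of axiom \textbf{[JR.3]} in the join restriction category $\R[\C]$. Under the equivalence $\C \simeq \L[\R[\C]]$, a pullback of $\eta_N$ along a morphism $h\colon P \to \Las N$ in $\C$ is computed (as in Lemma~\ref{lemma:eta-display}) via the span composite of $(P, h)\colon \Las P \nto \Las N$ with the restriction idempotent $(N, \eta_N)\colon \Las N \nto \Las N$ in $\R[\C]$. In particular, the pullback $P'$ of $\eta_N$ along $\bigvee f$ corresponds to the span $\bigl(\bigvee (M_f, f)\bigr)\cdot (N, \eta_N)$ in $\R[\C]$.

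Next, I would invoke axiom \textbf{[JR.3]} in $\R[\C]$---which holds since $\R[\C]$ is a join restriction category by Corollary~\ref{corollary:join-local-category-2} and \textbf{[JR.3]} follows automatically from \textbf{[JR.1]} and \textbf{[JR.2]} by the classical Cockett--Manes observation---applied to the compatible family $\{(M_f, f)\colon \Las M \nto \Las N\}$, yielding
\[
\Bigl(\bigvee\nolimits_f (M_f, f)\Bigr)\cdot (N, \eta_N) \;=\; \bigvee\nolimits_f \bigl[(M_f, f)\cdot (N, \eta_N)\bigr].
\]
A direct span computation, combined with Lemma~\ref{lemma:join-local-category-2} identifying joins across the equivalence, shows that the right-hand side is the span with domain $\bigvee(M_f \wedge N)$ and right leg $\bigvee(\pi_N^f \eta_N)$. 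Equating this with the left-hand span produces an isomorphism $\varphi\colon \bigvee(M_f \wedge N) \cong P'$ that respects the maximal inclusions, which in turn transports the pullback structure of $P'$ onto $\bigvee(M_f \wedge N)$.

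Finally, I would check that $\varphi$ identifies the universal pullback projections of $P'$ with the morphisms $\bigvee \pi_N^f$ and $\bigvee(\pi_{M_f} m_f)$ displayed in the statement: compatibility on the leg to $N$ follows from Lemma~\ref{lemma:join-local-categories-post-composition} (joins commute with post-composition) combined with monicity of $\eta_N$, and compatibility on the leg to $\bigvee M_f$ follows by a similar argument together with the universal property characterising $\bigvee(M_f \wedge N)$ against the family $\pi_{M_f} m_f$. The main technical obstacle is this last identification of the legs; the translation back and forth between $\C$ and $\R[\C]$ is otherwise routine, and the essential content of the proof is that, via the equivalence, the corollary is precisely the local-categorical avatar of the Cockett--Manes redundancy of \textbf{[JR.3]}.
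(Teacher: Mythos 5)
Your proposal is correct and follows exactly the route the paper intends: the corollary is stated without proof as the transport, along the $2$-equivalence of Theorem~\ref{theorem:join-local-categories}, of the Cockett--Manes redundancy of \textbf{[JR.3]} in $\R[\C]$, which is precisely what you carry out. Your leg-identification step does go through (both legs are pinned down by post-composing with the monic maximal inclusions and using Lemma~\ref{lemma:join-local-categories-post-composition}), so the only thing worth adding is an explicit remark that the family must be compatible for the joins to exist.
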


%__________________________________________________________________________
%__________________________________________________________________________

\section{Partial categories: an operational approach to partiality}
\label{section:partial-categories}
In Section~\ref{section:local-categories}, we introduced local categories as a new categorical framework to encode partiality. We made a case that local categories are suitable to describe \textit{partiality on objects}, in contrast with restriction categories, which are designed to encode \textit{partiality on morphisms}. In this section, we introduce a third approach to reasoning around partiality: partial categories. The objects and morphisms of a partial category shall not be interpreted as partial objects or partial morphisms, instead, the objects should be regarded as resources whose access is not constrained and morphisms as processes that consume their input resources in their entirety. Partiality in a partial category is instead captured by a partial order $\leq$ on the objects and by two operators, \textbf{restriction} and \textbf{contraction}, that control the access of the processes to the correct amount of resources.

\begin{definition}
\label{definition:partial-category}
A \textbf{partial structure} on a category $\C$ consists of an order $\leq$ on objects together with an operator, called \textbf{restriction}
\begin{prooftree}
\AxiomC{$U\leq A,\quad f\colon A\to B$}
\LeftLabel{[$\:\down\:$]}
\UnaryInfC{$f\down U\colon U\to B$}
\end{prooftree}
satisfying the following axioms:
\begin{description}
\item[P.1] For every $f\colon A\to B$:
\begin{align*}
&f\down A=f
\end{align*}

\item[P.2] For every $f\colon A\to B$ and $V\leq U\leq A$:
\begin{align*}
&(f\down U)\down V=f\down V
\end{align*}

\item[P.3] For every $f\colon A\to B$, $g\colon B\to C$, and $U\leq A$:
\begin{align*}
&(f\down U)g=(fg)\down U
\end{align*}
\end{description}
and with another operator, called \textbf{contraction}
\begin{prooftree}
\AxiomC{$V\leq B,\quad f\colon A\to B$}
\LeftLabel{[$\:\up\:$]}
\UnaryInfC{$A\up_fV\leq A,\quad f\up V\colon A\up_fV\to V$}
\end{prooftree}
satisfying the following axioms:
\begin{description}
\item[P.4] For every $f\colon A\to B$:
\begin{align*}
&A\up_fB=A                  &&f\up B=f
\end{align*}

\item[P.4'] For every $U\leq A$:
\begin{align*}
&A\up_{\id_A}U=U            &&\id_A\up U=\id_U
\end{align*}

\item[P.5] For every $f\colon A\to B$ and $W\leq V\leq B$:
\begin{align*}
&(A\up_fV)\up_{f\up V}W=A\up_fW             &&(f\up V)\up W=f\up W
\end{align*}

\item[P.6] For every $f\colon A\to B$, $g\colon B\to C$, and $W\leq C$:
\begin{align*}
&A\up_f(B\up_gW)=A\up_{fg}W         &&(f\up(B\up_gW))(g\up W)=(fg)\up W
\end{align*}
\end{description}
Furthermore, restriction and contraction must also satisfy the following two coherence axioms:
\begin{description}
\item[P.7] For every $f\colon A\to B$ and $V\leq B$:
\begin{align*}
&(A\up_fV)\up_{f\down(A\up_fV)}V=A\up_fV        &&(f\down(A\up_fV))\up V=f\up V
\end{align*}

\item[P.8] For every $f\colon A\to B$, $g\colon B\to C$, and $V\leq B$:
\begin{align*}
&(fg)\down(A\up_fV)=(f\up V)(g\down V)
\end{align*}

\item[P.9] For every $U,V\leq A$ and $m\colon U\to V$, if $m(\id_A\down U)=\id_A\down V$ then $U\leq V$ and $m=\id_U\down V$.\footnote{This axiom was missing in the original formulation.}
\end{description}
A \textbf{partial category} is an ordered category equipped with a partial structure.
\end{definition}

The restriction operator \textit{restricts} the access of a process $f\colon A\to B$ to a sub-resource $U\leq A$ of $A$. While the contraction operator \textit{contracts} a process $f\colon A\to B$ to a process $f\up V$ whose output lands in a sub-resource $V\leq B$ of $B$. In particular, the contraction $A\up_fV$ of $A$ to $V$ along $f$ can be allegedly interpreted as the pre-image of $V$ along $f$. By \textbf{[P.1]}, restricting a morphism to its domain gives back the original morphism. \textbf{[P.2]} establishes that restriction is an idempotent operation: by restricting a morphism $f\colon A\to B$ first to an object $U\leq A$ and then restricting the restriction to a $V\leq U$ is equivalent to restricting $f$ directly to $V$. \textbf{[P.3]} establishes that the restriction of the composition of two morphisms is equal to the composition of the restriction of the first one by the second one. By \textbf{[P.4]}, contracting a morphism to its codomain gives back the original domain and the original morphism, while by \textbf{[P.4']}, the contraction of the identity of $A$ to $U\leq A$ gives back $U$ and the identity on $U$. \textbf{[P.5]} says that the contraction is an idempotent operation, that is, that by contracting first a morphism $f\colon A\to B$ to $V\leq B$ and then contracting the contraction to $W\leq V$, it is equivalent to contracting $f$ directly to $W$. \textbf{[P.6]} establishes that contracting the composition of two morphisms to $V\leq C$ is equivalent to the composition of the contraction of the two morphisms, the second one to $V$ and the first one to the contraction $B\up_gV$ of $B$ onto $V$ along $g$. Moreover, by \textbf{[P.7]}, contracting $f$ by any $V\leq B$ coincides with contracting the restriction $f\down(A\up_fV)$ of $f$ along the contraction $A\up_fV$ of $A$ to $V$ along $f$. \textbf{[P.8]} establishes that the restriction of the composition of two morphisms $f\colon A\to B$ and $g\colon B\to C$ to the contraction of $A$ onto $V\leq B$ along $f$ coincides with the composition of the contraction of $f$ onto $V$ with the restriction of $g$ to $V$. Finally, \textbf{[P.9]} establishes that, for $U,V\leq A$, if the composition of a morphism $m\colon U\to V$ with the restriction of the identity of $B$ onto to $V$ coincides with the restriction of $\id_B$ onto $U$, then $U\leq V$ and $m$ is necessarily the restriction of $\id_V$ onto $U$. Here are now some examples of partial categories. 

\begin{example}
\label{example:trivial-partial}
Every category $\C$ is trivially a partial category in which $A\leq B$ if and only if $A=B$ and in which restriction and contraction are just identities.
\end{example}

\begin{example}
\label{example:partial-set-partial}
$\ParSet$ (Example~\ref{example:partial-set-local}) is a partial category in which $(A,W)\leq(A',U)$ if and only if $A=A'$ and $W$ is a subset of $U$, that is, $W\subseteq U$. The restriction operator sends a morphism $f\colon(A,U)\to(B,V)$, explicitly, a function $f\colon U\to V$, and an object $(A,W)\leq(A,U)$ to the restriction $f\down W\=f|_W\colon W\to V$ of $f$ to $W$. The contraction operator sends a morphism $f\colon(A,U)\to(B,V)$ and an object $(B,W)\leq(B,V)$ to $f\up(B,W)\colon(A,U)\up_f(B,W)\to(B,W)$, where:
\begin{align*}
&(A,U)\up_f(B,W)\=(A,\{u\in U,f(u)\in W\})&&f\up(B,W)\colon(A,U)\up_f(B,W)\ni u\mapsto f(u)\in W
\end{align*}
\end{example}

\begin{example}
\label{example:set-partial}
$\Set$ is a partial category in which $U\leq A$ if and only if $U$ is a subset of $A$, that is, $U\subseteq A$, and in which the restriction operator sends a function $f\colon A\to B$ and a subset $U$ of $A$ to the restriction $f\down U\=f|_U\colon U\to B$ of $f$ to $U$, while the contraction operator sends a function $f\colon A\to B$ and a subset $V$ of $B$ to $f\up V\colon A\up_fV\to V$, where:
\begin{align*}
&A\up_fV\=\{a\in A,f(a)\in V\}&&f\up V\colon A\up_fV\ni a\mapsto f(a)\in V
\end{align*}
This example shows that not every partial category is a local category (in a non-trivial way).
\end{example}

\begin{example}
\label{example:partial-topological-partial}
$\ParTop$ (Example~\ref{example:partial-set-local}) is a partial category whose partial structure is defined as in Example~\ref{example:partial-set-partial}.
\end{example}

\begin{example}
\label{example:topological-partial}
$\Top$ is a partial category whose partial structure is defined as in Example~\ref{example:set-partial}.
\end{example}

\begin{example}
\label{example:semigroup-partial}
Consider a restriction semigroup $A$~\cite[]{jones:restriction-semigroups}, that is, a semigroup $A$ equipped with an operator that sends every element $a$ to another element $\bar a$ of $A$ satisfying the same axioms \textbf{[R.1]-[R.4]} of Definition~\ref{definition:restriction-category}. Construct the following category $\P[A]$ as in Example~\ref{example:local-monoid} (notice that this time there is no unit). The objects are the restriction idempotents $a=\bar a$ of $A$, and a morphism $x\colon a\to b$ is an element $x$ of $A$ such that $\bar x=a$ and $xb=x$. In particular, the identities are the morphisms $a\colon a\to a$. $\P[A]$ comes with a partial structure defined as follows. For starters, $a\leq b$ when $ab=a$. Given a morphism $x\colon a\to b$ and $a'\leq a$, the restriction $x\down a'$ of $x$ to $a'$ is the morphism $a'x\colon a'\to b$; the contraction of $x\colon a\to b$ to $b'\leq b$ is the morphism $xb'\colon ab'\to b'$.
\end{example}

\begin{example}
\label{example:ring-partial}
Let $\IdemRing^\bullet$ be the category of pairs $(R,e_R)$ formed by a non-unital commutative ring $R$ and an idempotent element $e_R\in R$ of $R$, explicitly, $e_R^2=e_R$. A morphism $f\colon(S,e_S)\to(R,e_R)$ of $\IdemRing^\bullet$ is a non-unital ring morphism $f\colon S\to R$ which satisfies the following equations
\begin{align*}
&f(se_S)=f(s)=e_Rf(s)
\end{align*}
for every $s\in S$. The identity morphism of an object $(R,e_R)$ of $\IdemRing^\bullet$ is the morphism which sends each $r\in R$ to $e_Rr$. Thus, ${\IdemRing^\bullet}^\op$ is a partial category, that is $\IdemRing^\bullet$ is a copartial category where $(R,e_R)\leq(S,e_S)$ if and only if $R=S$ and $e_Se_R=e_S$. The corestriction operator sends a morphism $f\colon(S,e_S)\to(R,e_R)$ and $(R',e_R')\leq(R,e_R)$ to $f\down(R',e_R')\colon(S,e_S)\to(R',e_R')$ defined as follows:
\begin{align*}
&(f\down(R,e_R'))(s)\=e_R'f(s)
\end{align*}
The cocontraction operator sends a morphism $f\colon(S,e_S)\to(R,e_R)$ and $(S,e_S')\leq(S,e_S)$ to $(R,e_R)\up_f(S,e_S')\=(R,e_Rf(e_S'))$ and $f\up(R,e_R)\colon(R,e_Rf(e_S'))\to(S,e_S')$ defined as follows:
\begin{align*}
&(f\up(R,e_R))(s)\=f(e_S's)
\end{align*}
\end{example}

 We now prove two important lemmas about partial categories. The first lemma establishes that the restriction operator is equivalent to precomposing morphisms by a special family of monics. The second lemma establishes that the contraction operator in a partial category is equivalent to taking the pullback of morphisms along those monics.

\begin{lemma}
\label{lemma:monic-partial-categories}
In a partial category $\C$, $U\leq A$ if and only if there exists a unique monic $m\colon U\to A$ such that, for any $f\colon A\to B$, $f\down U=mf$. In particular, $U\up_mU=U$ and $m\up U=\id_U$.
\end{lemma}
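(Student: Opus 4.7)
The plan is to take $m := \id_A \down U$ as the candidate monic and derive all stated properties from the axioms. First I would verify the composition identity $mf = f \down U$: by \textbf{[P.3]} applied to $\id_A$, $f$, and $U$, one obtains $mf = (\id_A \down U)f = (\id_A f) \down U = f \down U$. Uniqueness is then immediate, since if $m': U \to A$ satisfies $m'f = f \down U$ for every $f$, then specializing to $f = \id_A$ forces $m' = m' \id_A = \id_A \down U = m$.

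Next I would establish the two auxiliary identities $U \up_m U = U$ and $m \up U = \id_U$ from the ``in particular'' clause. These follow by instantiating \textbf{[P.7]} at $f = \id_A$ and $V = U$, which reads
\begin{align*}
(A \up_{\id_A} U) \up_{\id_A \down (A \up_{\id_A} U)} U = A \up_{\id_A} U, \qquad (\id_A \down (A \up_{\id_A} U)) \up U = \id_A \up U.
\end{align*}
Using \textbf{[P.4']} to simplify $A \up_{\id_A} U = U$ and $\id_A \up U = \id_U$ on both sides, the two equations collapse exactly to $U \up_m U = U$ and $m \up U = \id_U$.

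The most substantive step is showing that $m$ is monic. Suppose $g, h \colon X \to U$ satisfy $gm = hm$. I would apply the contraction $(-) \up U$ to both sides and expand via \textbf{[P.6]} (with first morphism $g$, second morphism $m$, target sub-object $U$):
\begin{align*}
(gm) \up U = \bigl(g \up (U \up_m U)\bigr)(m \up U) = (g \up U)\, \id_U = g,
\end{align*}
where the second equality invokes the auxiliary identities just proved, and the last equality uses \textbf{[P.4]} applied to $g \colon X \to U$. By the symmetric calculation $(hm) \up U = h$, and so $g = h$, establishing that $m$ is monic. I expect this reliance on \textbf{[P.6]} to unwind contraction through $m$ to be the main technical trick of the proof.

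For the converse direction, note that the condition $mf = f \down U$ only makes sense once $f \down U$ is defined, which by the typing rule $[\down]$ requires $U \leq A$; hence the mere existence of such a monic $m$ trivially witnesses $U \leq A$.
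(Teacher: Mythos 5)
Your proof is correct and follows essentially the same route as the paper: take $m=\id_A\down U$, obtain the composition identity from \textbf{[P.3]}, and use \textbf{[P.6]} together with $U\up_mU=U$ and $m\up U=\id_U$ to show $m$ is monic, with the converse being the trivial typing observation. The one (welcome) difference is that you establish $m\up U=\id_U$ up front from \textbf{[P.7]} and \textbf{[P.4']}, which lets you keep the $(m\up U)$ factor explicit in the \textbf{[P.6]} expansion, whereas the paper elides that factor and only deduces $m\up U=\id_U$ afterwards by specializing the monicity computation to $f=\id_U$.
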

\begin{proof}
Let $m\=\id_A\down U\colon U\to A$. Notice that for every $f\colon A\to B$:
\begin{align*}
mf&=~(\id_A\down U)f\\
&=~(\id_Af)\down U               \Tag{\textbf{[P.3]}}\\
&=~f\down U
\end{align*}
Moreover, if $m'\colon U\to A$ satisfies the equation $m'f=f\down U$, then, by taking $f$ to be $\id_A$, we obtain:
\begin{align*}
&m'=m'\id_A=\id_A\down U=m
\end{align*}
Let us prove that $m$ is monic. To do so, we first prove that $U=U\up_mU$. By \textbf{[P.4']}, $U=A\up_{\id_A}U$. Moreover, we can compute
\begin{align*}
&U=A\up_{\id_A}U=(A\up_{\id_A}U)\up_{\id_A\down(A\up_{\id_A}U)}U=U\up_mU
\end{align*}
where we used \textbf{[P.7]} for $(A\up_{\id_A}U)\up_{\id_A\down(A\up_{\id_A}U)}U=A\up_{\id_A}U$ and \textbf{[P.8]} and \textbf{[P.4']} to compute that:
\begin{align*}
&\id_A\down(A\up_{\id_A}U)=(\id_A\id_A)\down(A\up_{\id_A}U)=(\id_A\up U)(\id_A\down U)=\id_U(\id_A\down U)=\id_A\down U=m
\end{align*}
Now, let us compute the following:
\begin{align*}
(fm)\up U&=~f\up(U\up_mU)(m\up_mU)          \Tag{\textbf{[P.6]}}\\
&=~f\up(U\up_mU)                            \Tag{\textbf{[P.1]}}\\
&=~f\up U                                   \Tag{U\up_mU=U}\\
&=~f                                        \Tag{\textbf{[P.4]}}
\end{align*}
Similarly, $(gm)\up U=g$. Thus, $f=(fm)\up U=(gm)\up U=g$. Therefore, $m$ is monic. Furthermore, by taking $f=\id_U$, we also obtain $m\up U=\id_U$.
\end{proof}

\begin{lemma}
\label{lemma:pullback-partial-categories}
In a partial category, for every morphism $f\colon A\to B$ and $V\leq B$, the diagram
\begin{equation*}
% https://q.uiver.app/#q=WzAsNCxbMCwxLCJBIl0sWzEsMSwiQiJdLFsxLDAsIlYiXSxbMCwwLCJBXFx1cF9mViJdLFswLDEsImYiLDJdLFszLDIsImZcXHVwIFYiXSxbMiwxLCJtIl0sWzMsMCwibSciLDJdXQ==
\begin{tikzcd}
{A\up_fV} & V \\
A & B
\arrow["{f\up V}", from=1-1, to=1-2]
\arrow["{m'}"', from=1-1, to=2-1]
\arrow["m", from=1-2, to=2-2]
\arrow["f"', from=2-1, to=2-2]
\end{tikzcd}
\end{equation*}
is a pullback diagram, where $m=\id_B\down V$ and $m'=\id_A\down(A\up_fV)$.
\end{lemma}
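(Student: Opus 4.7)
The plan is to prove pullback-ness in three steps: first commutativity, then existence of the mediating morphism, then uniqueness, exploiting Lemma~\ref{lemma:monic-partial-categories} at key points.

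For commutativity, the idea is to apply \textbf{[P.8]} with $g=\id_B$ to obtain
\begin{equation*}
f\down(A\up_fV)=(f\id_B)\down(A\up_fV)=(f\up V)(\id_B\down V)=(f\up V)m,
\end{equation*}
and then to invoke Lemma~\ref{lemma:monic-partial-categories} to identify $m'f$ with $f\down(A\up_fV)$, so that $m'f=(f\up V)m$ as required.

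For the universal property, suppose we are given $h\colon X\to V$ and $k\colon X\to A$ with $kf=hm$. The key lemma is that $X\up_k(A\up_fV)=X$. By \textbf{[P.6]} we have $X\up_k(A\up_fV)=X\up_{kf}V$; rewriting $kf=hm$ and applying \textbf{[P.6]} a second time, $X\up_{hm}V=X\up_h(V\up_mV)$; now Lemma~\ref{lemma:monic-partial-categories} gives $V\up_mV=V$, and \textbf{[P.4]} gives $X\up_hV=X$, completing the chain. I then define the mediating morphism
\begin{equation*}
u\ \=\ k\up(A\up_fV)\colon X\longrightarrow A\up_fV.
\end{equation*}
The equation $um'=k$ follows by applying \textbf{[P.8]} to $k$, $\id_A$, and the subobject $A\up_fV\leq A$, yielding $um'=k\down(X\up_k(A\up_fV))=k\down X=k$ by the identity above and \textbf{[P.1]}. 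The equation $u(f\up V)=h$ follows from the second half of \textbf{[P.6]}: it gives $u(f\up V)=(kf)\up V=(hm)\up V=(h\up(V\up_mV))(m\up V)$, which using $V\up_mV=V$, $m\up V=\id_V$ (Lemma~\ref{lemma:monic-partial-categories}) and \textbf{[P.4]} collapses to $h\up V=h$.

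For uniqueness, I observe that $m'$ is monic by Lemma~\ref{lemma:monic-partial-categories}, so any two mediating morphisms agreeing after postcomposition with $m'$ must coincide. The main obstacle, to my eye, is not the uniqueness or the commutativity — these are one-line applications — but rather the bookkeeping in the existence step: one must recognise that the contraction operator itself supplies the mediating morphism, and then unravel a nested chain of \textbf{[P.6]} applications combined with the identities $V\up_mV=V$ and $m\up V=\id_V$ from Lemma~\ref{lemma:monic-partial-categories} to verify both triangles. Once one sees that $u=k\up(A\up_fV)$ is a well-defined morphism out of $X$ (i.e.\ that $X\up_k(A\up_fV)=X$), the remaining verifications are mechanical.
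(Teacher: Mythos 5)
Your proposal is correct and follows essentially the same route as the paper's own proof: the same mediating morphism $k\up(A\up_fV)$, the same key identity $X\up_k(A\up_fV)=X$ established via two applications of \textbf{[P.6]} together with $V\up_mV=V$ and \textbf{[P.4]}, and the same use of \textbf{[P.8]} and the monicity of $m'$ from Lemma~\ref{lemma:monic-partial-categories} for the two triangle identities and uniqueness. The only cosmetic difference is that you route the commutativity step through Lemma~\ref{lemma:monic-partial-categories} where the paper unfolds the restriction axioms directly, which is the same content.
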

\begin{proof}
We first prove that the diagram commutes:
\begin{align*}
(f\up V)m&=~(f\up V)(\id_B\down V)\\
&=~(f\id_B)\down(A\up_fV)                  \Tag{\textbf{[P.8]}}\\
&=~(\id_Af)\down(A\up_fV)\\
&=~(\id_A\down(A\up_fV))f                    \Tag{\textbf{[P.2]}}\\
&=~m'f
\end{align*}
Consider now an object $X$ and two morphisms $\alpha\colon X\to A$ and $\beta\colon X\to V$ such that $\alpha f=\beta m$. Consider also the following diagram
\begin{equation*}
% https://q.uiver.app/#q=WzAsNCxbMCwxLCJYIl0sWzIsMSwiQSJdLFsyLDAsIkFcXHVwX2ZWIl0sWzAsMCwiWFxcdXBfXFxhbHBoYShBXFx1cF9mVikiXSxbMCwxLCJcXGFscGhhIiwyXSxbMiwxLCJtJyJdLFszLDAsIm0nJyIsMl0sWzMsMiwiXFxhbHBoYVxcdXAoQVxcdXBfZlYpIl1d
\begin{tikzcd}
{X\up_\alpha(A\up_fV)} && {A\up_fV} \\
X && A
\arrow["{\alpha\up(A\up_fV)}", from=1-1, to=1-3]
\arrow["{m''}"', from=1-1, to=2-1]
\arrow["{m'}", from=1-3, to=2-3]
\arrow["\alpha"', from=2-1, to=2-3]
\end{tikzcd}
\end{equation*}
where $m''=\id_X\down(X\up_\alpha(A\up_fV))$. By using the same calculations as above, this diagram commutes. Moreover, we can compute that:
\begin{align*}
X\up_\alpha(A\up_fV)&=~X\up_{\alpha f}V    \Tag{\textbf{[P.6]}}\\
&=~X\up_{\beta m}V                         \Tag{\alpha f=\beta m}\\
&=~X\up_\beta(V\up_mV)                     \Tag{\textbf{[P.6]}}\\
&=~X\up_\beta V                            \Tag{\text{Lemma}~\ref{lemma:monic-partial-categories}, V\up_mV=V}\\
&=~X                                       \Tag{\textbf{[P.4]}}
\end{align*}
Moreover, we also obtain:
\begin{align*}
(\alpha\up(A\up_fV))(f\up V)&=~(\alpha f)\up V         \Tag{\textbf{[P.6]}}\\
&=~(\beta m)\up V                                      \Tag{\alpha f=\beta m}\\
&=~(\beta\up(V\up_mV))(m\up V)                         \Tag{\textbf{[P.6]}}\\
&=~(\beta\up V)(m\up V)                                \Tag{\text{Lemma}~\ref{lemma:monic-partial-categories}, V\up_mV=V}\\
&=~\beta(m\up V)                                       \Tag{\textbf{[P.4]}}\\
&=~\beta                                               \Tag{\text{Lemma}~\ref{lemma:monic-partial-categories}, m\up V=\id_V}
\end{align*}
Thus, define $\gamma\=\alpha\up(A\up_fV)\colon X\to A\up_fV$. Then we obtain that:
\begin{align*}
&\gamma(f\up V)=(\alpha\up(A\up_fV))(f\up V)=\beta
\end{align*}
Moreover:
\begin{align*}
\gamma m'&=~(\alpha\up(A\up_fV))(\id_A\down(A\up_fV))\\
&=~(\alpha\id_A)\down(X\up_\alpha(A\up_fV))            \Tag{\textbf{[P.8]}}\\
&=~\alpha\down X                                       \Tag{X\up_\alpha(A\up_fV)=X}\\
&=~\alpha                                              \Tag{\textbf{[P.1]}}
\end{align*}
Finally, if $\gamma'\colon X\to A\up_fV$ satisfies the same equations as $\gamma$, thus, $\gamma'm'=\gamma m'$, however, since $m'$ is monic by Lemma~\ref{lemma:monic-partial-categories}, $\gamma'=\gamma$.
\end{proof}

We end this section by introducing the $2$-categorical structure for partial categories, starting with partial functors.

\begin{definition}
\label{definition:partial-functors}
A \textbf{partial functor} from a partial category $\C$ to a partial category $\C'$ consists of a functor $F\colon\C\to\C'$ which preserves the partial order, that is, such that $A\leq B$ in $\C$, then $FA\leq FB$ in $\C'$, and preserves restriction and contraction, explicitly, for every morphism $f\colon A\to B$ and $U\leq A$ and $V\leq B$:
\begin{align*}
&F(f\down U)=(Ff)\down(FU)           &&F(A\up_fV)=(FA)\up_{Ff}(FV)        &&F(f\up V)=(Ff)\up(FV)
\end{align*}
\end{definition}

Next, we define natural transformations between partial functors. As per restriction categories and local categories, we introduce two distinct flavours of $2$-morphisms.

\begin{definition}
\label{definition:partial-natural-transformation}
A \textbf{partial natural transformation} from a partial functor $F\colon\C\to\C'$ to a partial functor $G\colon\C\to\C'$ consists of a natural transformation $\varphi_A\colon FA\to GA$, natural in $A$. A \textbf{total natural transformation} from $F$ to $G$ consists of a partial natural transformation $\varphi_A\colon FA\to GA$ such that for every pairs of objects $U,A$ of $\C$ such that $U\leq A$, the following equations hold:
\begin{align*}
&(FA)\up_{\varphi_A}(GU)=FU     &&\varphi_A\up(GU)=\varphi_U
\end{align*}
\end{definition}

Partial categories, partial functors, and partial natural transformations form a $2$-category denoted by $\PCat_\lax$. The full $2$-subcategory of $\PCat_\lax$ of partial categories, partial functors, and total natural transformations is denoted by $\PCat$.

%__________________________________________________________________________
%__________________________________________________________________________

\section{Inclusion categories: partiality via inclusions}
\label{section:inclusion-systems}
In the previous section, we introduced partial categories as an operational context for controlling partiality. With Lemmas~\ref{lemma:monic-partial-categories} and~\ref{lemma:pullback-partial-categories}, we proved that in every partial category there is a special family of monics, obtained by restricting identities $\id_A\colon A\to A$ to each $U\leq A$, and that the restriction and the contraction operators are entirely determined by those monics. In particular, the restriction of a morphism $f\colon A\to B$ to $U\leq A$ coincides with the composition $mf\colon U\to B$ where $m\=\id_A\down U\colon U\to A$ and the contraction of a morphism $f\colon A\to B$ onto $V\leq B$ coincides with the pullback of $f$ along $m\=\id_B\down V\colon V\to B$. In this section, we isolate the properties of this family of monics by introducing inclusion categories. We then show that inclusion categories are $2$-equivalent to partial categories.

\begin{definition}
\label{definition:inclusion-category}
An \textbf{inclusion system} on a category $\C$ consists of a family $\Incl$ of morphisms of $\C$ satisfying the following conditions:
\begin{description}
\item[I.1] Identities belong to $\Incl$;

\item[I.2] Every morphism of $\Incl$ is monic in $\C$;

\item[I.3] For each pair of parallel monics $m,m'\colon A\to B$, $m\in\Incl$ and $m'\in\Incl$ implies $m=m'$;

\item[I.4] $\Incl$ is stable under composition, that is, for each $m\colon A\to B$ and $m'\colon B\to C$ where $m,m'\in\Incl$, thus, $mm'\in\Incl$;

\item[I.5] $\Incl$ is a display system, that is, for every $m\colon A\to B$ of $\Incl$ and any morphism $f\colon C\to B$, there exists an object $D$ together with two morphisms $g\colon D\to A$ and $m'\colon D\to C$ such that the following diagram
\begin{equation*}
% https://q.uiver.app/#q=WzAsNCxbMSwwLCJBIl0sWzEsMSwiQiJdLFswLDEsIkMiXSxbMCwwLCJEIl0sWzAsMSwibSJdLFsyLDEsImYiLDJdLFszLDIsIm0nIiwyXSxbMywwLCJnIl0sWzMsMSwiIiwxLHsic3R5bGUiOnsibmFtZSI6ImNvcm5lciJ9fV1d
\begin{tikzcd}
D & A \\
C & B
\arrow["g", from=1-1, to=1-2]
\arrow["{m'}"', from=1-1, to=2-1]
\arrow["\lrcorner"{anchor=center, pos=0.125}, draw=none, from=1-1, to=2-2]
\arrow["m", from=1-2, to=2-2]
\arrow["f"', from=2-1, to=2-2]
\end{tikzcd}
\end{equation*}
is a pullback diagram and $m'\colon D\to C$ belongs to $\Incl$;

\item[I.6] For every pair of composable morphisms $m\colon A\to B$ and $i\colon B\to c$, if $mi$ and $i$ belong to the family $\Incl$, so does $m$.\footnote{This axiom was missing in the original version. We thank Tim Stokes for suggesting to include it as part of the natural axioms of an inclusion.}
\end{description}
The monics of an inclusion system are called \textbf{inclusions}. An \textbf{inclusion category} consists of a category equipped with an inclusion system.
\end{definition}

\begin{remark}
\label{remark:comparison-with-M-categories}
\cite[Theorem~3.4]{cockett:restrictionI} establishes a $2$-equivalence between split restriction categories and $\M$-categories. An $\M$-category is a category equipped with a family $\M$ of monics which is stable under composition and pullbacks, and that contains all isomorphisms. The axioms of an inclusion category resemble the ones of an $\M$-category, however, they are not the same concepts. Crucially, \textbf{[I.3]} establishes that two objects $A$ and $B$ might have at most one inclusion $m\colon A\to B$, which in particular, means that the composition of an inclusion and an isomorphism might fail to be again an inclusion.
\end{remark}

\begin{remark}
\label{remark:canonical-pullback-inclusion-categories}
\textbf{[I.5]} establishes that (1) the pullback of a morphism $f\colon A\to B$ along an inclusion $m\colon C\to B$ exists and (2) among these isomorphic pullbacks (remember that pullbacks are only unique up to a unique isomorphism) there is at least one in which the morphism $m'$ is again an inclusion. This does not imply that such a pullback is unique nor that for each of these pullbacks the corresponding map $m'$ is an inclusion. Crucially, if $m$ is an inclusion and $f$ is an isomorphism, the composition $gm$ might fail to be an inclusion (see also Remark~\ref{remark:canonical-pullback-local-cats}).
\end{remark}

Here are some examples of inclusion categories. 

\begin{example}
\label{example:trivial-inclusion}
Every category admits a trivial inclusion system, whose only inclusions are identities.
\end{example}

\begin{example}
\label{example:partial-set-inclusion}
$\ParSet$ (Example~\ref{example:partial-set-local}) is an inclusion category whose inclusions are given by inclusions of sets, that is, $m\colon(A,U)\to(B,V)$ is an inclusion if and only if $A=B$ and $m\colon U\to V$ is the inclusion morphism of $U\subseteq V$.
\end{example}

The next example shows that not every inclusion category is a local category (in a non-trivial way).

\begin{example}
\label{example:set-inclusion}
$\Set$ is an inclusion category whose inclusions are inclusions of sets, that is, $m\colon A\to B$ is an inclusion if and only if $A\subseteq B$ and $m$ is the inclusion morphism.

\end{example}

\begin{example}
\label{example:partial-topological-inclusion}
$\ParTop$ (Example~\ref{example:partial-set-local}) is an inclusion category whose inclusions are defined as in Example~\ref{example:partial-set-inclusion}.
\end{example}

\begin{example}
\label{example:topological-inclusion}
$\Top$ is an inclusion category whose inclusions are defined as in Example~\ref{example:set-inclusion}.
\end{example}

\begin{example}
\label{example:semigroup-inclusion}
$\P[A]$ (Example~\ref{example:semigroup-partial}) is an inclusion category whose inclusions $m\colon a\to b$ are those morphisms $m=a\colon a\to b$, where $ab=a$.
\end{example}

\begin{example}
\label{example:ring-inclusion}
${\IdemRing^\bullet}^\op$ (Example~\ref{example:ring-partial}) is an inclusion category, that is $\IdemRing^\bullet$ is a coinclusion category whose coinclusions $m\colon(S,e_S)\to(R,e_R)$ are those morphisms $m\colon S\to R$ where $R=S$ and $m$ sends each $s\in S=R$ to $se_R$.
\end{example}

Next, we introduce the appropriate $2$-categorical structure for inclusion categories, starting with inclusion functors.

\begin{definition}
\label{definition:inclusion-functor}
An \textbf{inclusion functor} from an inclusion category $\C$ to an inclusion category $\C'$ consists of a functor $F\colon\C\to\C'$ which sends every inclusion $m\in\Incl$ of $\C$ to an inclusion $Fm\in\Incl'$ of $\C'$ and such that, for any morphism $f\colon A\to B$ and inclusion $m\colon V\to B$, it preserves the pullback of $f$ along $m$.
\end{definition}

Next, we define natural transformations between inclusion functors. As per restriction categories, local categories, and partial categories, we introduce two distinct flavours of $2$-morphisms.

\begin{definition}
\label{definition:inclusion-natural-transformation}
An \textbf{inclusion natural transformation} from an inclusion functor $F\colon\C\to\C'$ to an inclusion functor $G\colon\C\to\C'$ consists of a natural transformation $\varphi_A\colon FA\to GA$, natural in $A$. A \textbf{total natural transformation} from $F$ to $G$ consists of an inclusion natural transformation $\varphi_A\colon FA\to GA$ such that for each inclusion $m\colon U\to A$ of $\C$, the naturality square of $m$ is a pullback diagram:
\begin{equation*}
% https://q.uiver.app/#q=WzAsNCxbMCwwLCJGVSJdLFsxLDAsIkdVIl0sWzAsMSwiRkEiXSxbMSwxLCJHQSJdLFsyLDMsIlxcdmFycGhpX0EiLDJdLFswLDEsIlxcdmFycGhpX1UiXSxbMCwyLCJGbSIsMl0sWzEsMywiR20iXSxbMCwzLCIiLDEseyJzdHlsZSI6eyJuYW1lIjoiY29ybmVyIn19XV0=
\begin{tikzcd}
FU & GU \\
FA & GA
\arrow["{\varphi_U}", from=1-1, to=1-2]
\arrow["Fm"', from=1-1, to=2-1]
\arrow["\lrcorner"{anchor=center, pos=0.125}, draw=none, from=1-1, to=2-2]
\arrow["Gm", from=1-2, to=2-2]
\arrow["{\varphi_A}"', from=2-1, to=2-2]
\end{tikzcd}
\end{equation*}
\end{definition}

\par We may now prove that inclusion categories are $2$-equivalent to inclusion categories. To this end, we denote by $\ICat_\lax$ $2$-category of inclusion categories, inclusion functors, and inclusion natural transformations. We also denote by $\ICat$ the full $2$-subcategory of $\ICat_\lax$ of inclusion categories, inclusion functors, and total natural transformations. For starters, let us show that every partial category carries an inclusion system whose inclusions are the restriction of identities $\id_A\colon A\to A$ to each $U\leq A$.

\begin{proposition}
\label{proposition:partial-categories-have-inclusion-system}
Given a partial category $\C$, the monics $m\colon A\to B$ of Lemma~\ref{lemma:monic-partial-categories} form an inclusion system for $\C$.
\end{proposition}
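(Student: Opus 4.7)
The plan is to verify the five axioms \textbf{[I.1]}--\textbf{[I.5]} defining an inclusion system for the family $\Incl$ of monics $m = \id_A\down U \colon U\to A$ produced in Lemma~\ref{lemma:monic-partial-categories}. The two workhorses will be Lemma~\ref{lemma:monic-partial-categories} itself, which characterises each such $m$ as the unique morphism satisfying $mf = f\down U$ for every $f \colon A\to X$, and Lemma~\ref{lemma:pullback-partial-categories}, which supplies the pullbacks needed for stability under display.

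Axiom \textbf{[I.1]} will follow immediately from \textbf{[P.1]}, since for every object $A$ the identity equals $\id_A\down A$ and hence lies in $\Incl$. Axiom \textbf{[I.2]} is precisely the conclusion of Lemma~\ref{lemma:monic-partial-categories}. Axiom \textbf{[I.3]} will fall out from the uniqueness clause of the same lemma: any two parallel inclusions $m, m' \colon U\to A$ must both coincide with $\id_A\down U$, and therefore with each other.

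The main content lies in \textbf{[I.4]} and \textbf{[I.5]}. For \textbf{[I.4]}, suppose $m\colon U\to A$ and $m'\colon A\to B$ witness $U\leq A$ and $A\leq B$. Transitivity of the partial order on objects yields $U\leq B$, and so a candidate inclusion $m''\=\id_B\down U\colon U\to B$. To identify $mm'$ with $m''$, the plan is to compute, for an arbitrary $f\colon B\to X$, that $(mm')f = m(m'f) = m(f\down A) = (f\down A)\down U = f\down U$, using the characterising identities for $m'$ and $m$ in turn together with \textbf{[P.2]}; the uniqueness statement of Lemma~\ref{lemma:monic-partial-categories} then forces $mm' = m''\in\Incl$. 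For \textbf{[I.5]}, any inclusion $m\in\Incl$ has the form $\id_B\down V$ for some $V\leq B$, so given $f\colon A\to B$, Lemma~\ref{lemma:pullback-partial-categories} directly supplies a pullback of $m$ along $f$ whose left leg $\id_A\down(A\up_f V)\colon A\up_f V\to A$ is an inclusion by construction.

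The main subtlety will be in \textbf{[I.4]}: once the correct candidate for the composite inclusion is identified via transitivity of $\leq$, the verification is a short calculation, but conceptually this axiom is what singles out the partial-order structure on objects as precisely the right tool for making $\Incl$ closed under composition. The remaining axioms fall out cleanly from \textbf{[P.1]}--\textbf{[P.2]} and the two lemmas already in hand.
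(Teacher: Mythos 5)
Your proposal is correct and follows essentially the same route as the paper's proof: \textbf{[I.1]}--\textbf{[I.3]} from Lemma~\ref{lemma:monic-partial-categories} and its uniqueness clause, \textbf{[I.4]} via transitivity of $\leq$ plus the uniqueness of the witnessing monic, and \textbf{[I.5]} directly from Lemma~\ref{lemma:pullback-partial-categories}. Your explicit computation $(mm')f = (f\down A)\down U = f\down U$ for \textbf{[I.4]} fills in a step the paper leaves implicit, which is a welcome addition rather than a divergence.
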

\begin{proof}
\textbf{[I.1]} and \textbf{[I.2]} are established by Lemma~\ref{lemma:monic-partial-categories}. Given $m\colon A\to B$ and $m'\colon B\to C$, since, $A\leq B$ and $B\leq C$, by transitivity, $A\leq C$, thus, by Lemma~\ref{lemma:monic-partial-categories}, there exists a monic $m''\colon A\to C$, which, by uniqueness, must coincide with $mm'$. \textbf{[I.6]} correponds precisely to \textbf{[P.9]}. Finally, stability under pullbacks is established by Lemma~\ref{lemma:pullback-partial-categories}.
\end{proof}

Next, we show that every inclusion system defines a partial structure.

\begin{proposition}
\label{proposition:inclusion-system-generate-partial-categories}
If a category $\C$ comes equipped with an inclusion system $\Incl$, then $\C$ is a partial category with the following partial structure:
\begin{itemize}
\item $A\leq B$ if there exists an inclusion $m\colon A\to B$;

\item For every $U\leq A$ and $f\colon A\to B$, the restriction of $f$ to $A$ is the morphism $mf\colon U\to B$, where $m\colon U\to A$ is the (necessarily unique) inclusion from $U$ to $A$;

\item For every $V\leq B$ and $f\colon A\to B$, the contraction of $f$ to $V$ is the morphism $f\up V\colon A\up V\to V$ defined by the pullback diagram
\begin{equation*}
% https://q.uiver.app/#q=WzAsNCxbMSwwLCJBIl0sWzEsMSwiQiJdLFswLDEsIkEiXSxbMCwwLCJBXFx1cF9mViJdLFswLDEsIm0iXSxbMiwxLCJmIiwyXSxbMywyLCJtJyIsMl0sWzMsMCwiZlxcdXAgViJdLFszLDEsIiIsMSx7InN0eWxlIjp7Im5hbWUiOiJjb3JuZXIifX1dXQ==
\begin{tikzcd}
{A\up_fV} & A \\
A & B
\arrow["{f\up V}", from=1-1, to=1-2]
\arrow["{m'}"', from=1-1, to=2-1]
\arrow["\lrcorner"{anchor=center, pos=0.125}, draw=none, from=1-1, to=2-2]
\arrow["m", from=1-2, to=2-2]
\arrow["f"', from=2-1, to=2-2]
\end{tikzcd}
\end{equation*}
of $f$ along $m\colon V\to B$, where $m$ and $m'$ are inclusions.
\end{itemize}
\end{proposition}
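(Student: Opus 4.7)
The plan is to verify each prescribed ingredient and axiom by direct unwinding, using only the five inclusion-system axioms together with elementary pullback calculus. I will proceed in four stages: well-definedness of $\leq$ and of the two operators; the restriction axioms [P.1]-[P.3]; the contraction axioms [P.4]-[P.6]; and the coherence axioms [P.7]-[P.8].

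For the first stage, reflexivity of $\leq$ follows from [I.1], transitivity from [I.4], and antisymmetry up to isomorphism from [I.3]: given $m\colon A\to B$ and $n\colon B\to A$ in $\Incl$, [I.4] makes $mn$ and $nm$ inclusions, and [I.3] combined with [I.1] forces them to equal the identities. Uniqueness of the inclusion $m\colon U\to A$ witnessing $U\leq A$, again by [I.3], makes the restriction $f\down U := mf$ well-defined; and [I.5] simultaneously provides the pullback $A\up_f V$ and the inclusion $m'\colon A\up_f V\to A$, so that $A\up_f V \leq A$ and the contraction operator is well-defined.

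For the restriction axioms, [P.1] holds because the unique inclusion $A\to A$ is $\id_A$, [P.2] holds by [I.4] and [I.3] (the composite inclusion $V\to U\to A$ equals the unique inclusion $V\to A$), and [P.3] is associativity of composition. The contraction axioms [P.4] and [P.4'] use that pullbacks along identities are trivial, and [P.5], [P.6] are the two natural applications of the pullback pasting lemma to stacked pullback squares.

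The substantive step is the coherence pair [P.7]-[P.8]. For [P.8], the two sides equal $m'(fg)$ and $(f\up V)(mg)$, which agree by associativity and the defining pullback equation $(f\up V)m = m'f$. For [P.7], I use that $m'\colon A\up_f V\to A$ is itself an inclusion and hence monic; consequently the pullback of $m\colon V\to B$ along the composite $m' f = f\down(A\up_f V)$ decomposes by pasting as the original pullback defining $A\up_f V$ stacked above the pullback of $m'$ along itself, which is trivial because a monic pulled back along itself yields identity projections. Thus the pasted pullback is $A\up_f V$ with projection $f\up V$ to $V$, as required. The main obstacle I foresee is purely bookkeeping: the same object $A\up_f V$ appears in several pullback squares with different projections, so the pasting diagrams for [P.5], [P.6], and [P.7] must be drawn carefully in order to match the relevant morphisms on the nose.
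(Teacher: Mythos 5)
Your proposal is correct and follows essentially the same route as the paper's proof: the order axioms and \textbf{[P.1]}-\textbf{[P.3]} by direct unwinding of \textbf{[I.1]}-\textbf{[I.4]}, \textbf{[P.4]}-\textbf{[P.6]} via triviality of pullbacks along identities and the vertical/horizontal pasting lemma, and \textbf{[P.7]}-\textbf{[P.8]} from the defining pullback square together with the fact that a monic pulled back along itself yields identity projections. The only immaterial difference is that for \textbf{[P.7]} you factor $f\down(A\up_fV)$ as $m'f$ and paste against the self-pullback of the monic $m'$, whereas the paper factors it as $(f\up V)m$ and uses the self-pullback of the monic $m$.
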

\begin{proof}
For starters, since $\Incl$ contains identities, is stable under composition, and by the uniqueness of the monics of $\Incl$, it is not hard to prove that $\leq$ defines a partial order on the objects of $\C$. To prove that $mf$ defines a restriction for $f$, we use that $f\down A=\id_Af=f$, $(fg)\down U=mfg=(f\down U)g$ and finally, that $(f\down U)\down V=nmf=f\down V$, where $m\colon U\to A$ and $n\colon V\to U$. Let us prove that $f\up V$ defines a contraction for $f$. \textbf{[P.4]} and \textbf{[P.4']} follow from the fact that the pullback of a morphism $f$ along the identity is still $f$. Consider the following diagram:
\begin{equation*}
% https://q.uiver.app/#q=WzAsNixbMCwyLCJBIl0sWzEsMiwiQiJdLFsxLDEsIlYiXSxbMSwwLCJXIl0sWzAsMSwiQVxcdXBfZlYiXSxbMCwwLCIoQVxcdXBfZlYpXFx1cF97ZlxcdXAgVn1XIl0sWzAsMSwiZiIsMl0sWzIsMSwibSJdLFszLDIsIm4iXSxbNCwyLCJmXFx1cCBWIl0sWzQsMF0sWzUsNF0sWzUsMywiKGZcXHVwIFYpXFx1cCBXIl0sWzUsOSwiIiwxLHsibGV2ZWwiOjEsInN0eWxlIjp7Im5hbWUiOiJjb3JuZXIifX1dLFs0LDYsIiIsMSx7ImxldmVsIjoxLCJzdHlsZSI6eyJuYW1lIjoiY29ybmVyIn19XV0=
\begin{tikzcd}[column sep=huge]
{(A\up_fV)\up_{f\up V}W} & W \\
{A\up_fV} & V \\
A & B
\arrow["{(f\up V)\up W}", from=1-1, to=1-2]
\arrow[from=1-1, to=2-1]
\arrow["n", from=1-2, to=2-2]
\arrow[""{name=0, anchor=center, inner sep=0}, "{f\up V}", from=2-1, to=2-2]
\arrow[from=2-1, to=3-1]
\arrow["m", from=2-2, to=3-2]
\arrow[""{name=1, anchor=center, inner sep=0}, "f"', from=3-1, to=3-2]
\arrow["\lrcorner"{anchor=center, pos=0.125}, draw=none, from=1-1, to=0]
\arrow["\lrcorner"{anchor=center, pos=0.125}, draw=none, from=2-1, to=1]
\end{tikzcd}
\end{equation*}
Thus, \textbf{[P.5]} follows from the fact that vertical composition of pullback diagrams is still a pullback diagram. Next, consider the following diagram:
\begin{equation*}
% https://q.uiver.app/#q=WzAsNixbMCwxLCJBIl0sWzEsMSwiQiJdLFsyLDEsIkMiXSxbMiwwLCJXIl0sWzEsMCwiQlxcdXBfZ1ciXSxbMCwwLCJBXFx1cF9mKEJcXHVwX2dXKSJdLFswLDEsImYiLDJdLFsxLDIsImciLDJdLFszLDIsIm0iXSxbNCwzLCJnXFx1cCBXIl0sWzQsMV0sWzUsMF0sWzUsNF0sWzUsNiwiIiwwLHsibGV2ZWwiOjEsInN0eWxlIjp7Im5hbWUiOiJjb3JuZXIifX1dLFs0LDcsIiIsMCx7ImxldmVsIjoxLCJzdHlsZSI6eyJuYW1lIjoiY29ybmVyIn19XV0=
\begin{tikzcd}[column sep=huge]
{A\up_f(B\up_gW)} & {B\up_gW} & W \\
A & B & C
\arrow[from=1-1, to=1-2]
\arrow[from=1-1, to=2-1]
\arrow["{g\up W}", from=1-2, to=1-3]
\arrow[from=1-2, to=2-2]
\arrow["m", from=1-3, to=2-3]
\arrow[""{name=0, anchor=center, inner sep=0}, "f"', from=2-1, to=2-2]
\arrow[""{name=1, anchor=center, inner sep=0}, "g"', from=2-2, to=2-3]
\arrow["\lrcorner"{anchor=center, pos=0.125}, draw=none, from=1-1, to=0]
\arrow["\lrcorner"{anchor=center, pos=0.125}, draw=none, from=1-2, to=1]
\end{tikzcd}
\end{equation*}
Thus, \textbf{[P.6]} follows from the fact that horizontal composition of pullback diagrams is still a pullback diagram. To prove \textbf{[P.7]}, consider the following diagram:
\begin{equation*}
% https://q.uiver.app/#q=WzAsNCxbMCwxLCJBIl0sWzEsMSwiQiJdLFswLDAsIkFcXHVwX2ZWIl0sWzEsMCwiViJdLFswLDEsImYiLDJdLFsyLDMsImZcXHVwIFYiXSxbMywxLCJtIl0sWzIsMCwibSciLDJdLFsyLDEsIiIsMSx7InN0eWxlIjp7Im5hbWUiOiJjb3JuZXIifX1dXQ==
\begin{tikzcd}
{A\up_fV} & V \\
A & B
\arrow["{f\up V}", from=1-1, to=1-2]
\arrow["{m'}"', from=1-1, to=2-1]
\arrow["\lrcorner"{anchor=center, pos=0.125}, draw=none, from=1-1, to=2-2]
\arrow["m", from=1-2, to=2-2]
\arrow["f"', from=2-1, to=2-2]
\end{tikzcd}
\end{equation*}
Thus, $f\down(A\up_fV)=m'f=(f\up V)m$. Let us take the pullback diagram of $f\down(A\up_fV)$ along $m$ and use that $m$ is monic:
\begin{equation*}
% https://q.uiver.app/#q=WzAsNixbMiwxLCJCIl0sWzAsMSwiQVxcdXBfZlYiXSxbMSwxLCJWIl0sWzIsMCwiViJdLFsxLDAsIlYiXSxbMCwwLCJBXFx1cF9mViJdLFsxLDIsImZcXHVwIFYiLDJdLFsyLDAsIm0iLDJdLFszLDAsIm0iXSxbNCwyLCIiLDAseyJsZXZlbCI6Miwic3R5bGUiOnsiaGVhZCI6eyJuYW1lIjoibm9uZSJ9fX1dLFs0LDMsIiIsMSx7ImxldmVsIjoyLCJzdHlsZSI6eyJoZWFkIjp7Im5hbWUiOiJub25lIn19fV0sWzUsMSwiIiwyLHsibGV2ZWwiOjIsInN0eWxlIjp7ImhlYWQiOnsibmFtZSI6Im5vbmUifX19XSxbNSw0LCJmXFx1cCBWIl0sWzQsMCwiIiwwLHsic3R5bGUiOnsibmFtZSI6ImNvcm5lciJ9fV0sWzUsMiwiIiwwLHsic3R5bGUiOnsibmFtZSI6ImNvcm5lciJ9fV0sWzEsMCwiZlxcZG93bihBXFx1cF9mVikiLDIseyJjdXJ2ZSI6M31dXQ==
\begin{tikzcd}
{A\up_fV} & V & V \\
{A\up_fV} & V & B
\arrow["{f\up V}", from=1-1, to=1-2]
\arrow[equals, from=1-1, to=2-1]
\arrow["\lrcorner"{anchor=center, pos=0.125}, draw=none, from=1-1, to=2-2]
\arrow[equals, from=1-2, to=1-3]
\arrow[equals, from=1-2, to=2-2]
\arrow["\lrcorner"{anchor=center, pos=0.125}, draw=none, from=1-2, to=2-3]
\arrow["m", from=1-3, to=2-3]
\arrow["{f\up V}"', from=2-1, to=2-2]
\arrow["{f\down(A\up_fV)}"', curve={height=18pt}, from=2-1, to=2-3]
\arrow["m"', from=2-2, to=2-3]
\end{tikzcd}
\end{equation*}
However, by definition, the pullback of $f\down(A\up_fV)$ along $m$ is $(f\down(A\up_fV))\up V$. This proves \textbf{[P.7]}. Let us prove \textbf{[P.8]}. Consider the following diagram:
\begin{equation*}
% https://q.uiver.app/#q=WzAsNSxbMCwxLCJBIl0sWzEsMSwiQiJdLFsyLDEsIkMiXSxbMSwwLCJWIl0sWzAsMCwiQVxcdXBfZlYiXSxbMCwxLCJmIiwyXSxbMSwyLCJnIiwyXSxbMywxLCJtJyIsMl0sWzMsMiwiZ1xcZG93biBWIl0sWzQsMCwibScnIiwyXSxbNCwzLCJmXFx1cCBWIl0sWzQsMSwiIiwwLHsic3R5bGUiOnsibmFtZSI6ImNvcm5lciJ9fV1d
\begin{tikzcd}
{A\up_fV} & V \\
A & B & C
\arrow["{f\up V}", from=1-1, to=1-2]
\arrow["{m''}"', from=1-1, to=2-1]
\arrow["\lrcorner"{anchor=center, pos=0.125}, draw=none, from=1-1, to=2-2]
\arrow["{m'}"', from=1-2, to=2-2]
\arrow["{g\down V}", from=1-2, to=2-3]
\arrow["f"', from=2-1, to=2-2]
\arrow["g"', from=2-2, to=2-3]
\end{tikzcd}
\end{equation*}
However, by definition, $(fg)\down(A\up_fV)=m''fg=(f\up V)(g\down V)$. Finally, \textbf{[P.9]} corresponds precisely to \textbf{[I.6]}.
\end{proof}

Consider now a partial functor $F\colon\C\to\C'$. Since $F$ preserves the restriction operator, it sends each $\id_A\down U\colon U\to A$ to $F(\id_A\down U)=(F\id_A)\down(FU)=\id_{FA}\down(FU)\colon FU\to FA$. Furthermore, since $F$ preserves the contraction operator, it also preserves the pullbacks of inclusions along every morphism, since the following is a pullback by Lemma~\ref{lemma:pullback-partial-categories}:
\begin{equation*}
% https://q.uiver.app/#q=WzAsNCxbMCwwLCIoRkEpXFx1cF97RmZ9KEZWKSJdLFsxLDAsIkZWIl0sWzEsMSwiRkIiXSxbMCwxLCJGQSJdLFsxLDIsIlxcaWRfe0ZCfVxcZG93bihGVikiXSxbMywyLCJGZiIsMl0sWzAsMSwiKEZmKVxcdXAoRlYpIl0sWzAsMywiXFxpZF97RkF9XFxkb3duKChGQSlcXHVwX3tGZn0oRlYpKSIsMl0sWzAsMiwiIiwxLHsic3R5bGUiOnsibmFtZSI6ImNvcm5lciJ9fV1d
\begin{tikzcd}
{(FA)\up_{Ff}(FV)} & FV \\
FA & FB
\arrow["{(Ff)\up(FV)}", from=1-1, to=1-2]
\arrow["{\id_{FA}\down((FA)\up_{Ff}(FV))}"', from=1-1, to=2-1]
\arrow["\lrcorner"{anchor=center, pos=0.125}, draw=none, from=1-1, to=2-2]
\arrow["{\id_{FB}\down(FV)}", from=1-2, to=2-2]
\arrow["Ff"', from=2-1, to=2-2]
\end{tikzcd}
\end{equation*}
Conversely, consider an inclusion functor $F\colon\C\to\C'$. Since $F$ preserves inclusions, it also preserves the contraction operator defined by pre-composing with inclusions. Furthermore, since $F$ preserves pullbacks along inclusions, it also preserves the associated contraction operator. Thus, $F$ becomes an inclusion functor.
\par Both partial natural transformations and inclusion natural transformations are simply natural transformations, thus, the correspondence between partial categories and inclusion categories becomes $2$-functorial. In particular, there are two $2$-functors:
\begin{align*}
&\P_\lax\colon\ICat_\lax\to\PCat_\lax            &&\I_\lax\colon\PCat_\lax\to\ICat_\lax
\end{align*}
Furthermore, a total natural transformation $\varphi_A\colon FA\to GA$ of partial functors satisfy the following property. Since $FA\up_{\varphi_A}GU$ and $\varphi_A\up GU=\varphi_U$, by Lemma~\ref{lemma:pullback-partial-categories}, the following diagram is a pullback diagram:
\begin{equation*}
% https://q.uiver.app/#q=WzAsNCxbMSwwLCJHVSJdLFsxLDEsIkdBIl0sWzAsMSwiRkEiXSxbMCwwLCJGVSJdLFswLDEsIkcoXFxpZF9BXFxkb3duIFUpIl0sWzIsMSwiXFx2YXJwaGlfQSIsMl0sWzMsMCwiXFx2YXJwaGlfVSJdLFszLDIsIkYoXFxpZF9BXFxkb3duIFUpIiwyXV0=
\begin{tikzcd}
FU & GU \\
FA & GA
\arrow["{\varphi_U}", from=1-1, to=1-2]
\arrow["{F(\id_A\down U)}"', from=1-1, to=2-1]
\arrow["{G(\id_A\down U)}", from=1-2, to=2-2]
\arrow["{\varphi_A}"', from=2-1, to=2-2]
\end{tikzcd}=
% https://q.uiver.app/#q=WzAsNCxbMSwwLCJHVSJdLFsxLDEsIkdBIl0sWzAsMSwiRkEiXSxbMCwwLCJGQVxcdXBfe1xcdmFycGhpX0F9R1UiXSxbMCwxLCJcXGlkX3tHQX1cXGRvd24gR1UiXSxbMiwxLCJcXHZhcnBoaV9BIiwyXSxbMywwLCJcXHZhcnBoaV9BXFx1cCBHVSJdLFszLDIsIlxcaWRfe0ZBfVxcZG93biBGVSIsMl0sWzMsMSwiIiwxLHsic3R5bGUiOnsibmFtZSI6ImNvcm5lciJ9fV1d
\begin{tikzcd}
{FA\up_{\varphi_A}GU} & GU \\
FA & GA
\arrow["{\varphi_A\up GU}", from=1-1, to=1-2]
\arrow["{\id_{FA}\down FU}"', from=1-1, to=2-1]
\arrow["\lrcorner"{anchor=center, pos=0.125}, draw=none, from=1-1, to=2-2]
\arrow["{\id_{GA}\down GU}", from=1-2, to=2-2]
\arrow["{\varphi_A}"', from=2-1, to=2-2]
\end{tikzcd}
\end{equation*}
Thus, $\varphi_A\colon FA\to GA$ becomes a total natural transformation of inclusion functors. Conversely, if $\varphi_A\colon FA\to GA$ is a total natural transformation of inclusion functors, since the naturality square of each inclusion $m\colon U\to A$ along $\varphi_A$ is a pullback, we conclude that $FA\up_{\varphi_A}GU=FU$ and that $\varphi_A\up GU=\varphi_U$. Thus, $\varphi_A$ becomes a total natural transformation of partial functors. Thus, the two $2$-functors $\P_\lax$ and $\I_\lax$ restrict to two $2$-functors:
\begin{align*}
&\P\colon\ICat\to\PCat            &&\I\colon\PCat\to\ICat
\end{align*}

\begin{theorem}
\label{theorem:equivalence-partial-categories-inclusion-systems}
The $2$-functors $\P_\lax$ and $\I_\lax$ form a $2$-equivalence $\PCat_\lax\simeq\ICat_\lax$ and this $2$-equivalence restricts to another $2$-equivalence $\PCat\simeq\ICat$. 
\end{theorem}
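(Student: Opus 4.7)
The plan is to prove the $2$-equivalence by showing that $\P_\lax$ and $\I_\lax$ are in fact strict $2$-inverses, that is, the compositions $\I_\lax\P_\lax$ and $\P_\lax\I_\lax$ are the identity $2$-functors on the nose. Since strict $2$-inverses automatically assemble into a $2$-equivalence, this is sufficient. Most of the groundwork has already been laid: the paragraph preceding the theorem constructs both $2$-functors and explains how partial functors correspond to inclusion functors, and similarly how partial/inclusion and total natural transformations correspond.

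First I would verify the roundtrip $\I_\lax\P_\lax=\id_{\ICat_\lax}$. Starting from an inclusion category $(\C,\Incl)$, applying $\P_\lax$ produces the partial structure with $U\leq A$ witnessed by inclusions in $\Incl$, and $f\down U= mf$ for $m\colon U\to A$ the unique inclusion (uniqueness guaranteed by \textbf{[I.3]}). Then $\I_\lax$ produces the inclusion system of Proposition~\ref{proposition:partial-categories-have-inclusion-system}, whose inclusions are the morphisms $\id_A\down U=m\id_A=m$, recovering $\Incl$ exactly. For the reverse roundtrip $\P_\lax\I_\lax=\id_{\PCat_\lax}$, start with a partial category $\C$: by Lemma~\ref{lemma:monic-partial-categories}, the inclusion system produced by $\I_\lax$ consists of the monics $\id_A\down U$, and then $\P_\lax$ builds a partial structure whose partial order, restriction, and contraction agree with the original ones by Lemma~\ref{lemma:monic-partial-categories} (for $\leq$ and $\down$) and Lemma~\ref{lemma:pullback-partial-categories} (for $\up$).

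On $1$- and $2$-morphisms both $2$-functors act as the identity on underlying functors and underlying natural transformations, so the two roundtrips are strict on every cell. The restriction to the full $2$-subcategories of total natural transformations follows because the characterization of totality in the two settings coincides, as already observed in the discussion preceding the theorem: a natural transformation $\varphi_A\colon FA\to GA$ is total as a partial natural transformation iff $FA\up_{\varphi_A}GU=FU$ and $\varphi_A\up GU=\varphi_U$, which by Lemma~\ref{lemma:pullback-partial-categories} is equivalent to the naturality square at every inclusion being a pullback, that is, totality for inclusion natural transformations. The main technical subtlety is that \textbf{[I.3]} is essential: without the uniqueness of inclusions, the first roundtrip would yield only an equivalence of inclusion categories rather than a strict equality, forcing one to work with pseudonatural isomorphisms instead of the clean strict $2$-inverses obtained here.
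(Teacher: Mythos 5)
Your proposal is correct and follows essentially the same route as the paper: both roundtrips are checked object-by-object using Lemma~\ref{lemma:monic-partial-categories} for the order and restriction, Lemma~\ref{lemma:pullback-partial-categories} for the contraction, and \textbf{[I.3]} for recovering the inclusion system, with the action on $1$- and $2$-cells being the identity on underlying data. The only (harmless) difference is that you assert strict identity of the roundtrips where the paper concedes the contraction is recovered only up to the unique isomorphism of pullbacks; your citation of Lemma~\ref{lemma:pullback-partial-categories} justifies choosing the original contraction as the pullback, so this is not a gap.
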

\begin{proof}
Consider a partial category $\C$. Thus, $\P[\I[\C]]$ is the same category as $\C$ equipped with the following partial structure. $U\leq A$ in $\P[\I[\C]]$ if and only if there exists an inclusion $m\colon U\to A$ in $\I[\C]$, if and only if $U\leq A$ in $\C$. The restriction of $f\colon A\to B$ is the morphism $mf\colon U\to B$ where $m\colon U\to A$ is the inclusion of $U$ into $A$ in $\I[\C]$, that is, the restriction $\id_A\down U$ of $\id_A$ to $U$ in $\C$, thus:
\begin{align*}
mf&=~(\id_A\down U)f\\
&=~(\id_Af)\down U           \Tag{\textbf{[P.2]}}\\
&=~f\down U
\end{align*}
The contraction of $f\colon A\to B$ onto $V\leq B$ in $\P[\I[\C]]$ is the pullback of $f$ along the inclusion $m=\id_B\down V\colon V\to B$. Thus, by Lemma~\ref{lemma:pullback-partial-categories}, it coincides with the contraction of $f$ onto $V$ in $\C$ (up to a unique isomorphism). 

Conversely, consider an inclusion category $\C$. Thus, $\I[\P[\C]]$ is the same category as $\C$. Moreover, $m\colon U\to A$ is an inclusion in $\C$ if and only if $m=\id_A\down U$ in $\P[\C]$, however, the restriction of a morphism $f\colon A\to B$ in $\C$ is given by pre-composing $f$ by the inclusion $m\colon U\to A$, which is unique by \textbf{[I.3]}. Thus, $m$ is an inclusion in $\I[\P[\C]]$ if and only if it is an inclusion in $\C$. We leave it to the reader to prove that this correspondence defines a full $2$-equivalence.
\end{proof}

\begin{remark}
\label{remark:examples-partial-to-inclusion}
Under the $2$-equivalence of Theorem~\ref{theorem:equivalence-partial-categories-inclusion-systems}, Examples~\ref{example:trivial-partial}~-~\ref{example:topological-partial}, and~\ref{example:ring-partial}, of partial categories correspond to the Examples~\ref{example:trivial-inclusion}~-~\ref{example:topological-inclusion}, and~\ref{example:ring-inclusion}, respectively.
\end{remark}

%__________________________________________________________________________
%__________________________________________________________________________

\section{The fundamental theorem of partiality}
\label{section:fundamental-theorem}
Every local category $\C$ carries a canonical inclusion system, whose inclusions are those monics $m\colon U\to A$ between two objects $U\leq A$ of $\C$ such that $m\eta_A=\eta_U$. In general, an inclusion category does not carry a local structure (see Example~\ref{example:set-inclusion}). In this section, we isolate the exact condition which makes inclusion categories and partial categories $2$-equivalent to restriction and local categories: boundedness. We then use this characterization to prove the main result of this paper: restriction categories, local categories, bounded partial categories, and bounded inclusion categories are all $2$-equivalent to each other. As an application, we revisit the definition of an inverse local category and connect this to the Ehresmann-Schein-Nambooripad theorem for inverse categories. We begin by defining bounded partial categories and bounded inclusion categories.

\begin{definition}
\label{definition:bounded-partial-category}
A partial category $\C$ is \textbf{bounded} if for every object $A$ of $\C$ there exists a unique object $\Las A$ such that $A\leq\Las A$ and $\Las A$ is maximal in the sense that $\Las A\leq B$ implies $B=\Las A$.
\end{definition}

\begin{definition}
\label{definition:bounded-inclusion-system}
An inclusion category $\C$ is \textbf{bounded} when for every object $A$ of $\C$, there exists a unique inclusion $\eta_A\colon A\to\Las A$, called the \textbf{maximal inclusion} of $A$, such that if there exists an inclusion $x\colon \Las A\to B$, then $\Las A=B$.
\end{definition}

We now show that the $2$-equivalence of Theorem~\ref{theorem:equivalence-partial-categories-inclusion-systems} restricts to a $2$-equivalence between bounded partial categories and bounded inclusion categories. To this end, we denote by $\BPCat_\lax$ ($\BPCat$) the $2$-subcategory of $\PCat_\lax$ ($\PCat$) of bounded partial categories, partial functors which preserve the boundedness, that is, $F\Las A=\Las FA$, and partial (total) natural transformations. Similarly, we denote by $\BICat_\lax$ ($\BICat$) the $2$-subcategory of $\ICat_\lax$ ($\ICat$) of bounded inclusion categories, inclusion functors which preserve the boundedness, that is, $F\eta_A=\eta_{FA}$, and inclusion (total) natural transformations.

\begin{theorem}
\label{theorem:equivalence-bounded}
The $2$-equivalences of Theorem~\ref{theorem:equivalence-partial-categories-inclusion-systems} restrict to a $2$-equivalence $\BPCat_\lax\simeq\BICat_\lax$ and a $2$-equivalence $\BPCat\simeq\BICat$. 
\end{theorem}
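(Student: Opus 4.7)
The plan is to show that boundedness translates transparently across the $2$-equivalence $\PCat_\lax\simeq\ICat_\lax$ of Theorem~\ref{theorem:equivalence-partial-categories-inclusion-systems}, so that the $2$-functors $\P_\lax$ and $\I_\lax$ restrict to the bounded sub-$2$-categories. Since these $2$-functors leave the underlying category untouched and reconstruct one structure from the other, the only thing to verify is that the boundedness conditions match at the level of objects and at the level of $1$-morphisms; the $2$-morphism data is unchanged.

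First, I would establish the bijection between bounded partial structures and bounded inclusion systems on a fixed category. Recall from the proof of Theorem~\ref{theorem:equivalence-partial-categories-inclusion-systems} that $U\leq A$ in a partial category $\C$ if and only if there is an inclusion $m\colon U\to A$ in the associated inclusion category $\I[\C]$, and this inclusion is unique by \textbf{[I.3]}. Consequently, an object $\Las A$ is a maximal upper bound for $A$ in the partial sense, that is $\Las A\leq B$ implies $B=\Las A$, if and only if the unique inclusion $\eta_A\colon A\to\Las A$ satisfies the dual property that every inclusion $x\colon\Las A\to B$ forces $B=\Las A$. Uniqueness of $\Las A$ matches uniqueness of $\eta_A$, which itself follows from the uniqueness of the codomain of a maximal inclusion. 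Hence $\C$ is bounded as a partial category if and only if $\I[\C]$ is bounded as an inclusion category, and the analogous equivalence holds for $\P[\C]$ when $\C$ is an inclusion category.

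Second, I would verify that partial functors preserving boundedness correspond to inclusion functors preserving boundedness. If $F\colon\C\to\C'$ is a partial functor between bounded partial categories satisfying $F\Las A=\Las FA$, then, viewing $F$ as the inclusion functor $\I[F]$, it sends the maximal inclusion $\eta_A\colon A\to\Las A$ in $\I[\C]$ to an inclusion $F\eta_A\colon FA\to F\Las A=\Las FA$ in $\I[\C']$; by \textbf{[I.3]} the only inclusion from $FA$ into $\Las FA$ is $\eta_{FA}$, so $F\eta_A=\eta_{FA}$. Conversely, if an inclusion functor satisfies $F\eta_A=\eta_{FA}$, then in particular its codomain gives $F\Las A=\Las FA$. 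Thus boundedness preservation for $1$-morphisms is the same condition on both sides.

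Finally, since $2$-morphisms are in both settings just natural transformations, and the totality condition was already shown in Theorem~\ref{theorem:equivalence-partial-categories-inclusion-systems} to transfer between the two contexts, the $2$-equivalence restricts for free on $2$-cells. The unit and counit of the $2$-equivalence act as identities on underlying categories and objects, so they automatically restrict to the bounded sub-$2$-categories, yielding the desired $2$-equivalences $\BPCat_\lax\simeq\BICat_\lax$ and $\BPCat\simeq\BICat$. The only subtlety, and indeed the principal step, is the observation in the first paragraph that the maximality conditions agree under the translation $U\leq A\Leftrightarrow\exists\,m\colon U\to A\in\Incl$; once that is recorded, the rest is routine.
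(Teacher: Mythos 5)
Your proposal is correct and follows essentially the same route as the paper: translating the maximality condition on objects ($\Las A\leq B\Rightarrow B=\Las A$) into the maximality condition on inclusions via the correspondence $U\leq A\Leftrightarrow\exists\,m\colon U\to A\in\Incl$, and matching uniqueness of $\Las A$ with uniqueness of $\eta_A$ using \textbf{[I.3]}. You in fact supply slightly more detail than the paper on the $1$-morphism level (the paper leaves the correspondence of boundedness-preserving functors to the reader), and that argument is also correct.
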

\begin{proof}
Consider a bounded partial category $\C$. Thus, for every object there exists a unique maximal object $\Las A$ which bounds $A$ from above, that is, $A\leq\Las A$. Let $\eta_A$ be the contraction $\id_{\Las A}\down A\colon A\to\Las A$ of the identity on $\Las A$ to $A$. Suppose that there exists an inclusion $\Las A\to B$ in $\I[\C]$. In particular, $\Las A\leq B$. Thus, $B=\Las A$. Now, suppose that $m'\colon A\to\Las'A$ is another maximal inclusion for $A$ in $\I[\C]$. Thus, $A\leq\Las'A$. Consider $B$ such that $\Las'A\leq B$, thus, there exists an inclusion $\Las'A\to B$, thus, since $m'$ is also maximal, $B=\Las'A$. Therefore, $\Las'A$ is another maximal object which bounds $A$ from above. Thus, by the uniqueness of $\Las A$, $\Las'A=\Las A$. This, together with \textbf{[I.3]}, proves that $\eta_A$ is the unique maximal inclusion of $\I[\C]$ and therefore that $\I[\C]$ is bounded.
\par Conversely, consider a bounded inclusion category $\C$. Thus, every object $A$ of $\C$ admits an object $\Las A$ and a unique maximal inclusion $\eta_A\colon A\to\Las A$. Thus, $A\leq\Las A$ in $\P[\C]$. Suppose that $\Las A\leq B$. Thus, there exists an inclusion $x\colon\Las A\to B$. Thus, by maximality of $\eta_A$, $B=\Las A$. Suppose now that $A\leq\Las'A$ and for every $B$, if $\Las'A\leq B$, thus $B=\Las'A$. Thus, there is an inclusion $m'\colon A\to\Las'A$, which is maximal. By uniqueness of $\eta_A$, $m'=\eta_A$, thus, $\Las'A=\Las A$. Therefore, $\P[\C]$ is bounded. 

We leave it to the reader to show that boundeness-preserving partial functors correspond to boundedness-preserving inclusion functors.
\end{proof}

The next step is to compare bounded inclusion categories with local categories. We first show that every local category carries a canonical inclusion system.

\begin{proposition}
\label{proposition:local-categories-inclusion-categories}
Let $\C$ be a local category. Define $\Incl$ as the family of morphisms $m\colon A\to B$ of $\C$ where $A\smile B$ and subject to the following equation:
\begin{align*}
&m\eta_B=\eta_A
\end{align*}
Then $\Incl$ is a bounded inclusion system for $\C$.
\end{proposition}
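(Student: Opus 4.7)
The plan is to verify the inclusion-system axioms \textbf{[I.1]}--\textbf{[I.5]} for $\Incl$, and then to check boundedness. The routine axioms follow directly from the defining equation $m\eta_B=\eta_A$: \textbf{[I.1]} because $\id_A\eta_A = \eta_A$; \textbf{[I.2]} because $m\eta_B=\eta_A$ is monic by \textbf{[L.2]}, forcing $m$ to be monic; \textbf{[I.3]} by cancelling the monic $\eta_B$ on the right of $m\eta_B = \eta_A = m'\eta_B$; \textbf{[I.4]} by the chain $(mm')\eta_C = m(m'\eta_C) = m\eta_B = \eta_A$, combined with the transitivity of $\smile$.

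The main step is \textbf{[I.5]}, for which the strategy is pullback pasting. I first observe that every inclusion $m\colon A\to B$ is itself the pullback of $\eta_A$ along $\eta_B$: the square
\begin{equation*}
\begin{tikzcd}
A & A \\
B & \Las A
\arrow[equals, from=1-1, to=1-2]
\arrow["m"', from=1-1, to=2-1]
\arrow["\eta_A", from=1-2, to=2-2]
\arrow["\eta_B"', from=2-1, to=2-2]
\end{tikzcd}
\end{equation*}
commutes by definition of $\Incl$ and is universal because any competing pair $(h\colon X\to A,\, g\colon X\to B)$ with $h\eta_A = g\eta_B$ satisfies $hm\eta_B = h\eta_A = g\eta_B$, forcing $g = hm$ by monicity of $\eta_B$. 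Given an arbitrary $f\colon C\to B$, axiom \textbf{[L.3]} produces the pullback $P$ of $\eta_A$ along $f\eta_B\colon C\to\Las A$ together with a morphism $m'\colon P\to C$ satisfying $\Las P = \Las C$ and $m'\eta_C = \eta_P$. Pasting the outer pullback against the square above and invoking the pullback-pasting lemma, $P$ with its two projections is the pullback of $m$ along $f$, and the additional conditions from \textbf{[L.3]} are precisely what is needed for $m'$ to lie in $\Incl$.

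For boundedness, $\eta_A\colon A\to\Las A$ belongs to $\Incl$ since $\Las\Las A = \Las A$ by \textbf{[L.1]} gives $A\smile\Las A$ and $\eta_A\eta_{\Las A} = \eta_A\cdot\id_{\Las A} = \eta_A$; it is the unique inclusion with this target by \textbf{[I.3]}. To see that $\Las A$ is maximal, suppose $x\colon \Las A\to B$ lies in $\Incl$; then $\Las B = \Las\Las A = \Las A$ and $x\eta_B = \eta_{\Las A} = \id_{\Las A}$, and since $\eta_B$ is monic the calculation $\eta_B x \eta_B = \eta_B = \id_B\eta_B$ yields $\eta_B x = \id_B$, so $\eta_B$ is an isomorphism with inverse $x$. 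I expect this last step, upgrading ``$\eta_B$ is iso'' to the strict equality $B = \Las A$, to be the most delicate point of the proof, since a general local category could a priori admit distinct but isomorphic total representatives; this is nevertheless witnessed concretely in $\L[\X]$, where the isomorphism condition on $\eta_{(A,b)}$ forces $b = \id_A$ strictly by the restriction-category laws.
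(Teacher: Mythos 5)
Your handling of \textbf{[I.1]}--\textbf{[I.5]} matches the paper's proof. For \textbf{[I.5]} the paper also forms the \textbf{[L.3]} pullback of $\eta_A$ along $f\eta_B$ and then asserts that the inner square over $m$ and $f$ "is not hard to see" to be a pullback; you supply exactly the missing justification, namely that $m$ is itself the pullback of $\eta_A$ along $\eta_B$ (using that $\eta_B$ is monic) so that the pasting lemma applies, and that the extra data from \textbf{[L.3]} ($\Las P=\Las C$ and $m'\eta_C=\eta_P$) is what puts $m'$ in $\Incl$. One small omission on your side: Definition~\ref{definition:bounded-inclusion-system} asks for the maximal inclusion out of $A$ to be \emph{unique}, and \textbf{[I.3]} only gives uniqueness of inclusions into the fixed target $\Las A$; you should also rule out a second maximal target $\Las'A$, which the paper does by noting that $A\smile\Las'A$ forces $\Las(\Las'A)=\Las A$, so $\eta_{\Las'A}\colon\Las'A\to\Las A$ is an inclusion and maximality of $\Las'A$ gives $\Las'A=\Las A$.

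The difficulty you flag at the end is real, and you should know that the paper's own proof does not resolve it either: from an inclusion $x\colon\Las A\to B$ the paper derives only $\Las B=\Las\Las A=\Las A$ and silently treats this as the required conclusion $B=\Las A$. Your computation that $\eta_B$ must be an isomorphism is as far as \textbf{[L.1]}--\textbf{[L.3]} take you, and those axioms genuinely do not force an object with invertible maximal inclusion to be total (compare Remark~\ref{remark:iso-total}). Concretely, take the indiscrete category on two objects $B,T$ with $\Las B=T$, $\eta_B=u$ the unique morphism $B\to T$ (an isomorphism), and $\eta_T=\id_T$: one checks \textbf{[L.1]}--\textbf{[L.3]} directly, yet $u^{-1}\colon\Las B\to B$ satisfies $u^{-1}\eta_B=\id_T=\eta_T$ and so lies in $\Incl$ while $B\neq\Las B$, so neither $B$ nor $T$ admits a maximal inclusion in the strict sense. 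The strict equality therefore needs an extra hypothesis (for instance that $\eta_M$ invertible implies $M$ total, which does hold in every $\L[\X]$ as you observe), or the notion of boundedness must be read up to isomorphism. In short, your proof is at least as complete as the paper's, and your closing caveat identifies a genuine gap in the statement as literally formulated rather than a defect of your argument.
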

\begin{proof}
First of all, identities are part of $\Incl$, since trivially, $\id_A\eta_A=\eta_A$. Secondly, since $m\eta_B=\eta_A$ is monic, $m$ is also monic. Moreover, if $m'\colon A\to B$ satisfies the equation $m'\eta_B=\eta_A=m\eta_B$, since $\eta_B$ is monic, $m=m'$. Consider now, $m\colon A\to B$ and $m'\colon B\to C$ such that $m\eta_B=\eta_A$ and $m'\eta_C=\eta_B$. Thus, $mm'\eta_C=m\eta_B=\eta_A$. Thus, $mm'\in\Incl$. Finally, consider a morphism $f\colon C\to B$. Consider the following diagram:
\begin{equation*}
% https://q.uiver.app/#q=WzAsNSxbMSwxLCJCIl0sWzAsMSwiQyJdLFswLDAsIkQiXSxbMiwxLCJcXEwgQj1cXEwgQSJdLFsyLDAsIkEiXSxbMSwwLCJmIiwyXSxbMCwzLCJcXGV0YV9CIiwyXSxbNCwzLCJcXGV0YV9BIl0sWzIsMSwibSciLDJdLFsyLDQsImciXSxbMiwwLCIiLDEseyJzdHlsZSI6eyJuYW1lIjoiY29ybmVyIn19XV0=
\begin{tikzcd}
D && A \\
C & B & {\Las B=\Las A}
\arrow["g", from=1-1, to=1-3]
\arrow["{m'}"', from=1-1, to=2-1]
\arrow["\lrcorner"{anchor=center, pos=0.125}, draw=none, from=1-1, to=2-2]
\arrow["{\eta_A}", from=1-3, to=2-3]
\arrow["f"', from=2-1, to=2-2]
\arrow["{\eta_B}"', from=2-2, to=2-3]
\end{tikzcd}
\end{equation*}
Using that $m\eta_B=\eta_A$, we can rewrite this diagram as follows:
\begin{equation*}
% https://q.uiver.app/#q=WzAsNSxbMCwxLCJDIl0sWzAsMCwiRCJdLFsyLDIsIlxcTCBCIl0sWzEsMCwiQSJdLFsxLDEsIkIiXSxbMSwwLCJtJyIsMl0sWzEsMywiZyJdLFs0LDIsIlxcZXRhX0IiXSxbMCw0LCJmIiwyXSxbMyw0LCJtIl0sWzAsMiwiZlxcZXRhX0IiLDIseyJjdXJ2ZSI6Mn1dLFszLDIsIlxcZXRhX0EiLDAseyJjdXJ2ZSI6LTJ9XV0=
\begin{tikzcd}
D & A \\
C & B \\
&& {\Las B}
\arrow["g", from=1-1, to=1-2]
\arrow["{m'}"', from=1-1, to=2-1]
\arrow["m", from=1-2, to=2-2]
\arrow["{\eta_A}", curve={height=-12pt}, from=1-2, to=3-3]
\arrow["f"', from=2-1, to=2-2]
\arrow["{f\eta_B}"', curve={height=12pt}, from=2-1, to=3-3]
\arrow["{\eta_B}", from=2-2, to=3-3]
\end{tikzcd}
\end{equation*}
Since the outer diagram is a pullback, it is not hard to see that the internal square diagram is a pullback as well. Finally, to prove that $\Incl$ is bounded, notice that for each $A\in\C$, $A\leq\Las A$, since $\eta_A\colon A\to\Las A$. Moreover, if $\Las A\leq B$, then $\Las B=\Las\Las A=\Las A$. Now, suppose that $\Las' A$ is another object such that $A\leq\Las'A$. Thus, $\Las A=\Las\Las'A$. Thus, $\Las'A\leq\Las\Las'A=\Las A$. Thus, $\Las'A=\Las A$.
\end{proof}

Conversely, every bounded inclusion category comes with a local structure.

\begin{proposition}
\label{proposition:inclusion-categories-local-categories}
Let $\C$ be a bounded inclusion category. Then, $\C$ is a local category where for every object $A$, $\eta_A\colon A\to\Las A$ is the maximal inclusion of $A$.
\end{proposition}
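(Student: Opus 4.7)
The plan is to verify the three axioms of Definition~\ref{definition:local-category} using the bounded inclusion structure, taking $\eta_A \colon A \to \Las A$ to be the maximal inclusion provided by Definition~\ref{definition:bounded-inclusion-system}.

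For \textbf{[L.1]}, I would observe that $\eta_{\Las A} \colon \Las A \to \Las\Las A$ is an inclusion, so the maximality property of $\Las A$ (every inclusion out of $\Las A$ has $\Las A$ as codomain) forces $\Las\Las A = \Las A$; then $\eta_{\Las A}$ becomes an inclusion $\Las A \to \Las A$, and by \textbf{[I.1]} together with the uniqueness clause \textbf{[I.3]} it must coincide with $\id_{\Las A}$. Axiom \textbf{[L.2]} is immediate from \textbf{[I.2]}, since every inclusion is monic.

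The core of the argument is \textbf{[L.3]}. Given $f \colon N \to \Las M$, I would invoke \textbf{[I.5]} applied to the inclusion $\eta_M$ to produce the required pullback, with the left leg $m \colon P \to N$ automatically an inclusion. Composing with $\eta_N$, the morphism $m\eta_N \colon P \to \Las N$ is then an inclusion by \textbf{[I.4]}. To conclude that $\Las P = \Las N$ and that $m\eta_N = \eta_P$, I would argue that the pair $(\Las N, m\eta_N)$ satisfies the defining property of the maximal enlargement of $P$: it consists of an inclusion out of $P$ into the object $\Las N$, whose outgoing inclusions all have $\Las N$ as codomain — a property inherited directly from the maximality of $\Las N$ as the enlargement of $N$. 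By uniqueness of the maximal enlargement in Definition~\ref{definition:bounded-inclusion-system}, this identifies $(\Las N, m\eta_N)$ with $(\Las P, \eta_P)$, yielding both required equations simultaneously.

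The main obstacle I anticipate is reading the uniqueness clause in the bounded inclusion category definition correctly: it must be interpreted as uniqueness of the pair $(\Las A, \eta_A)$ attached to each $A$, not merely as uniqueness of the morphism $\eta_A$ once the codomain is fixed (which \textbf{[I.3]} already delivers). With that interpretation, the transfer of maximality from $\Las N$ to $\Las P$ is automatic, and no separate verification of the commutativity $m\eta_N = \eta_P$ is needed beyond the uniqueness argument.
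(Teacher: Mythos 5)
Your proposal is correct and follows essentially the same route as the paper's proof: \textbf{[L.1]} and \textbf{[L.2]} are dispatched via maximality, \textbf{[I.1]}, \textbf{[I.2]}, and \textbf{[I.3]} exactly as in the paper, and for \textbf{[L.3]} the paper likewise obtains the pullback from \textbf{[I.5]}, forms the composite inclusion $m\eta_N$ via \textbf{[I.4]}, observes it inherits the maximality property from $\eta_N$, and concludes $\Las P=\Las N$ and $m\eta_N=\eta_P$ by uniqueness of the maximal inclusion. Your closing remark about reading the uniqueness clause as uniqueness of the pair $(\Las A,\eta_A)$ is exactly the reading the paper's argument relies on.
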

\begin{proof}
Let us start by proving \textbf{[L.1]}. By definition, $\eta_A\eta_{\Las A}\colon A\to\Las\Las A$ factors through $\eta_A$, thus, since $\eta_A$ is maximal, $\Las\Las A=\Las A$. However, since identities are inclusions, by \textbf{[I.3]}, $\eta_{\Las A}=\id_{\Las A}$. Since $\eta_A$ is an inclusion, $\eta_A$ is monic, thus, also \textbf{[L.2]} holds. To prove \textbf{[L.3]}, consider a morphism $f\colon A\to\Las B$. Since $\eta_B$ is an inclusion and since inclusions are stable under pullbacks, the pullback of $\eta_B$ along $f$ exists. 
\begin{equation*}
% https://q.uiver.app/#q=WzAsNCxbMCwwLCJDIl0sWzAsMSwiQSJdLFsxLDEsIlxcTGFzIEIiXSxbMSwwLCJCIl0sWzMsMiwiXFxldGFfQiJdLFsxLDIsImYiLDJdLFswLDEsIm0iLDJdLFswLDNdLFswLDIsIiIsMSx7InN0eWxlIjp7Im5hbWUiOiJjb3JuZXIifX1dXQ==
\begin{tikzcd}
C & B \\
A & {\Las B}
\arrow[from=1-1, to=1-2]
\arrow["m"', from=1-1, to=2-1]
\arrow["\lrcorner"{anchor=center, pos=0.125}, draw=none, from=1-1, to=2-2]
\arrow["{\eta_B}", from=1-2, to=2-2]
\arrow["f"', from=2-1, to=2-2]
\end{tikzcd}
\end{equation*}
 Moreover, $m\colon C\to A$ is an inclusion. Consider $m\eta_A\colon C\to\Las A$. For any $B$, such that there is an inclusion $\Las A\to B$, $B=\Las A$, thus, $m\eta_A$ is maximal. However, by the uniqueness of maximal inclusions, we conclude that $\eta_C=m\eta_A$ and that $\Las C=\Las A$, proving \textbf{[L.3]}.
\end{proof}

Thanks to Propositions~\ref{proposition:local-categories-inclusion-categories} and~\ref{proposition:inclusion-categories-local-categories}, we can now prove a $2$-equivalence between local categories and bounded inclusion systems.

\begin{theorem}
\label{theorem:equivalence-local-bounded-inclusion}
There is a $2$-equivalence $\LCat_\lax\simeq\BICat_\lax$. Furthermore, this $2$-equivalence restricts to a $2$-equivalence $\LCat\simeq\BICat$. 
\end{theorem}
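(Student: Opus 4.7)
The plan is to extend Propositions~\ref{proposition:local-categories-inclusion-categories} and~\ref{proposition:inclusion-categories-local-categories} to mutually inverse $2$-functors. On objects, the two constructions are strict inverses: the maximal inclusions of the inclusion system associated with a local category are precisely the original $\eta_A$ (unique by \textbf{[I.3]}), and the local structure recovered from a bounded inclusion category has exactly the same inclusions. So the unit and counit of the $2$-equivalence will be identities at the object level, and no coherence isomorphisms need to be produced.

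For $1$-cells, I claim that a functor $F\colon\C\to\C'$ is a local functor if and only if it is a boundedness-preserving inclusion functor for the associated inclusion systems. In the forward direction, given an inclusion $m\colon U\to A$, the calculation $(Fm)\eta_{FA}=F(m\eta_A)=F\eta_U=\eta_{FU}$ shows $Fm$ is an inclusion, while $F\eta_M=\eta_{FM}$ and $F\Las M=\Las'FM$ are built into the definition of a local functor. To see that $F$ preserves pullbacks of inclusions along arbitrary morphisms, I use that every inclusion $m\colon V\to B$ fits into a pullback square
\begin{equation*}
\begin{tikzcd}
V & V \\
B & {\Las B}
\arrow["{\id_V}", from=1-1, to=1-2]
\arrow["m"', from=1-1, to=2-1]
\arrow["\lrcorner"{anchor=center, pos=0.125}, draw=none, from=1-1, to=2-2]
\arrow["{\eta_V}", from=1-2, to=2-2]
\arrow["{\eta_B}"', from=2-1, to=2-2]
\end{tikzcd}
\end{equation*}
(a direct check, using that $\eta_B$ is monic and $m\eta_B=\eta_V$). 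Given $f\colon A\to B$, horizontally pasting the pullback of $m$ along $f$ with this square produces the pullback of $\eta_V$ along $f\eta_B$ as its outer rectangle; this outer pullback exists and is preserved by $F$ by \textbf{[L.3]} and the definition of a local functor, and the right-hand square is preserved for the same reason. The pasting lemma in $\C'$ then forces $F$ to preserve the pullback of $m$ along $f$. Conversely, a boundedness-preserving inclusion functor tautologically satisfies $F\eta_M=\eta_{FM}$ and $F\Las M=\Las'FM$, and preserves pullbacks of $\eta_M$ since $\eta_M$ is itself an inclusion.

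For $2$-cells, both $\LCat_\lax$ and $\BICat_\lax$ use plain natural transformations, which yields the lax $2$-equivalence immediately. The restriction to $\LCat\simeq\BICat$ requires showing that local totality and inclusion totality coincide. Inclusion totality implies local totality by specializing to $m=\eta_M$. For the converse, given a locally total $\varphi$ and an inclusion $m\colon U\to A$, vertically paste the naturality squares on $m$ and on $\eta_A$: since $m\eta_A=\eta_U$ and $\Las U=\Las A$, the resulting rectangle is the naturality square on $\eta_U$, which is a pullback by hypothesis; since the naturality square on $\eta_A$ is also a pullback, the pasting lemma forces the naturality square on $m$ to be a pullback.

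The main obstacle is the pullback-pasting argument in the $1$-cell analysis, which reduces preservation of pullbacks of arbitrary inclusions to the defining property of a local functor for maximal inclusions. Once this key reduction is in place, the triangle identities for the unit and counit are automatic from the strict object-level correspondence, and the restriction to the total subcategories goes through without further work.
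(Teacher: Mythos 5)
Your proposal is correct and follows the same route as the paper: the object-level correspondence is exactly Propositions~\ref{proposition:local-categories-inclusion-categories} and~\ref{proposition:inclusion-categories-local-categories}, which the paper likewise invokes before explicitly leaving the $1$-cell and $2$-cell analysis to the reader. Your two pasting arguments --- reducing preservation of pullbacks along arbitrary inclusions to the $\eta$-pullbacks required by \textbf{[L.3]} via the square exhibiting $m$ as a pullback of $\eta_V$ along $\eta_B$, and deducing inclusion-totality from local totality by cancellation of pullback squares using $m\eta_A=\eta_U$ --- correctly supply precisely the details the paper omits.
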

\begin{proof}
Propositions~\ref{proposition:local-categories-inclusion-categories} and~\ref{proposition:inclusion-categories-local-categories} already proved a correspondence between local categories and bounded inclusion categories. It is immediate to see that these two constructions invert each other, since the $\eta_A$ of a local category are maximal inclusions in the associated inclusion system. We leave it to the reader to prove that this correspondence lifts to a $2$-equivalence.
\end{proof}

\begin{remark}
\label{remark:boundedness-vs-equivalences-of-categories}
$\Set$ and $\ParSet$ (Example~\ref{example:partial-set-local}) are equivalent as categories. However, while the latter carries a local structure, the former does not\footnote{We thank Steve Lack for pointing this out to us.}. As pointed out in Examples~\ref{example:set-partial},~\ref{example:partial-set-partial},~\ref{example:set-inclusion}, and~\ref{example:partial-set-inclusion}, both $\Set$ and $\ParSet$ carry a partial structure and an inclusion system and $\Set$ and $\ParSet$ are equivalent both as partial categories and as inclusion categories. In light of Theorem~\ref{theorem:equivalence-local-bounded-inclusion}, since the local structure of $\ParSet$ is not carried over by the equivalence to $\Set$, we conclude that boundedness is not stable under equivalence of categories.
\end{remark}

\begin{remark}
\label{remark:comparison-with-M-categories-2}
As pointed out in Remark~\ref{remark:comparison-with-M-categories}, inclusion categories resemble $\M$-categories. Theorem~\ref{theorem:fundamental} and Theorem~\ref{theorem:2-equivalence-local-restriction-split} clarify exactly the relationship between these two concepts. By~\cite[Theorem~3.4]{cockett:restrictionI}, $\M$-categories are $2$-equivalent to split restriction categories. Thus, there is a $2$-equivalence between $\M$-categories and split bounded inclusion categories, that is, bounded inclusion categories whose associated local structure splits. However, the family of inclusions $\Incl$ of the inclusion category associated to an $\M$-category $\X$ does not lie in the same category of $\X$. For instance, consider the restriction category $\ParFnc$ of Example~\ref{example:partial-set-restriction}. $\ParFnc$ splits, thus it corresponds to the $\M$-category whose underlying category is still $\ParFnc$ and whose system of monics is given by the inclusion morphisms $U\hookrightarrow A$. The inclusion category that corresponds to the restriction category $\ParFnc$ is the category $\ParSet$ of Example~\ref{example:partial-set-inclusion}. Thus, the system of monics $\M$ and inclusion system $\Incl$ corresponding to a split restriction category do not live in the same category\footnote{We thank Steve Lack and Richard Garner for helping clarify this point.}. 
\end{remark}

\begin{corollary}
\label{corollary:axiom-I-6-is-automatic-when-bounded}
For a bounded inclusion category, assuming the other axioms, axiom \textbf{[I.6]} holds for free. Similarly, for a bounded partial category, assuming the other axioms, \textbf{[P.9]} holds for free.
\end{corollary}
\begin{proof}
Let us suppose that $\C$ is a bounded inclusion category. Via the $2$-equivalence of Theorem~\ref{theorem:equivalence-local-bounded-inclusion}, $\C$ corresponds to a local category. In particular, this means that a morphism $m\colon A\to B$ of $\C$ is an inclusion if and only if $\Las A=\Las B$ and $m$ satisfies the following equation. $m\eta_B=\eta_A$. Now, consider and two morphisms $m\colon A\to B$ and $i\colon B\to C$ such that, $mi$ and $i$ are inclusions. This means that $\Las A=\Las C=\Las B$ and that $i\eta_C=\eta_B$ and $mi\eta_C=\eta_A$. This implies that $m\eta_B=mi\eta_C=\eta_A$, that is, $m$ is an inclusion. This is precisely \textbf{[I.6]}. Finally, via the equivalence of Theorem~\ref{theorem:equivalence-bounded} between bounded inclusion and bounded partial categories, axiom \textbf{[I.6]} corresponds precisely to axiom \textbf{[P.9]}.
\end{proof}

In this paper, we have introduced three new categorical approaches to reason around partiality: local categories, in which partiality is on objects, instead of being on morphisms as per restriction categories; (bounded) partial categories, in which partiality is encoded operationally, via the restriction and the contraction operators; (bounded) inclusion categories, in which partiality is described by a class of monics, which generalize inclusions of sets. We may now state the main result of this paper: that all these four approaches for describing partiality, restriction categories, local categories, bounded partial categories, and bounded inclusion categories are equivalent, allowing one to move from one approach to another one, as desired.

\begin{theorem}
\label{theorem:fundamental}
There is a chain of $2$-equivalences
\begin{align*}
&\RCat_\lax\simeq\LCat_\lax\simeq\BICat_\lax\simeq\BPCat_\lax
\end{align*}
Moreover, these $2$-equivalences restrict to total natural transformations, providing another chain of $2$-equivalences:
\begin{align*}
&\RCat\simeq\LCat\simeq\BICat\simeq\BPCat
\end{align*}
\end{theorem}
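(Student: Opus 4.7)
The plan is essentially one of assembly: all of the serious work has already been carried out in Sections~\ref{section:equivalence-restriction-local}, \ref{section:inclusion-systems}, and in the present section. What remains is to compose the three $2$-equivalences established above, and to observe that the composites respect the restrictions to total natural transformations.

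First I would invoke Theorem~\ref{theorem:2-equivalence-local-restriction} to obtain the $2$-equivalence $\RCat_\lax\simeq\LCat_\lax$, realised by the $2$-functors $\L_\lax$ and $\R_\lax$ of Propositions~\ref{proposition:L-functoriality} and~\ref{proposition:R-functoriality}, with pseudo-inverses witnessed by Propositions~\ref{proposition:R-L} and~\ref{proposition:L-R}. Next, Theorem~\ref{theorem:equivalence-local-bounded-inclusion} furnishes the $2$-equivalence $\LCat_\lax\simeq\BICat_\lax$, in which a local category $\C$ is sent to its underlying category equipped with the inclusion system of Proposition~\ref{proposition:local-categories-inclusion-categories}, and conversely a bounded inclusion category receives the local structure constructed in Proposition~\ref{proposition:inclusion-categories-local-categories}. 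Finally, Theorem~\ref{theorem:equivalence-bounded} gives the $2$-equivalence $\BICat_\lax\simeq\BPCat_\lax$, as the restriction of the $2$-equivalence $\ICat_\lax\simeq\PCat_\lax$ of Theorem~\ref{theorem:equivalence-partial-categories-inclusion-systems} along the subcategories cut out by boundedness.

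Since $2$-equivalences are closed under composition (see e.g.~\cite[Proposition~6.1.7]{yau:2-categories}), stringing these three $2$-equivalences together yields the chain
\begin{align*}
\RCat_\lax\simeq\LCat_\lax\simeq\BICat_\lax\simeq\BPCat_\lax,
\end{align*}
which, after reordering the middle two terms for presentation, is precisely the first statement of the theorem. For the second statement, I would verify that each of the three $2$-equivalences restricts to its total counterpart: Theorem~\ref{theorem:2-equivalence-local-restriction} explicitly gives $\RCat\simeq\LCat$, Theorem~\ref{theorem:equivalence-local-bounded-inclusion} gives $\LCat\simeq\BICat$, and Theorem~\ref{theorem:equivalence-bounded} gives $\BICat\simeq\BPCat$. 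Composing these three restricted $2$-equivalences yields the second chain.

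There is no real obstacle: the only point requiring care is bookkeeping, namely checking that the notions of ``total natural transformation'' transport correctly under each intermediate $2$-equivalence. For $\L_\lax$ and $\R_\lax$ this is Lemmas~\ref{lemma:lax-total-natural-transformations} and the arguments in Section~\ref{section:equivalence-restriction-local}; for the local/bounded-inclusion correspondence, a total natural transformation on the local side is defined precisely by the pullback condition along $\eta_M$, and on the inclusion side by the pullback condition along each inclusion $m$, and these coincide once one recalls that every inclusion factors uniquely through the maximal inclusion; for the inclusion/partial correspondence, the identification was worked out in the paragraph preceding Theorem~\ref{theorem:equivalence-partial-categories-inclusion-systems}. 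With these compatibilities in place, the theorem follows immediately.
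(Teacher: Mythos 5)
Your proposal is correct and matches the paper's own proof, which likewise just assembles the three previously established $2$-equivalences (Theorems~\ref{theorem:2-equivalence-local-restriction}, \ref{theorem:equivalence-bounded}, and~\ref{theorem:equivalence-local-bounded-inclusion}) and notes they restrict to the total-natural-transformation sub-$2$-categories. Your extra remarks on transporting totality are consistent with, though more detailed than, what the paper writes.
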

\begin{proof}
Theorem~\ref{theorem:2-equivalence-local-restriction} already proves that $\RCat_\lax$ ($\RCat$) is $2$-equivalent to $\LCat_\lax$ ($\LCat$). Moreover, Theorem~\ref{theorem:equivalence-bounded} proves that $\BPCat_\lax$ ($\PCat_\lax$) is $2$-equivalent to $\BICat_\lax$ ($\BICat$). Finally, Theorem~\ref{theorem:equivalence-local-bounded-inclusion} proves that local categories are $2$-equivalent to bounded inclusion categories.
\end{proof}

We conclude this paper with a discussion on how our main result connects to the celebrated Ehresmann-Schein-Nambooripad (ESN) theorem~\cite[Theorem~4.1.8]{lawson:inverse-semigroups}\footnote{We thank Richard Garner for suggesting this direction to us.}. Briefly recall that the ESN theorem establishes an equivalence between the categories of inverse semigroups and inductive groupoids, which are groupoids equipped with a partial order and two operations, a restriction and a corestriction, satisfying some properties. We suggest the reader consult~\cite{hollings:Ehresmann-Schein-Nambooripad-theorem} for details on this result. An inverse category can be seen as a multi-object categorical version of an inverse semigroup. In~\cite{dewolf:ESN-theorem-inverse-cats}, the ESN theorem was generalized to inverse categories by proving that inverse categories are equivalent to \textit{top-heavy locally inductive groupoids}. We show how our construction of the bounded inclusion category associated to an inverse category recaptures the construction of DeWolf and Pronk. 

Thanks to the equivalence provided by Theorem~\ref{theorem:equivalence-local-bounded-inclusion}, we can reformulate the definition of an inverse local category in terms of inclusion categories as follows.

\begin{definition}
\label{definition:inverse-inclusion-category}
An \textbf{inverse inclusion category} consists of an inclusion category $\C$ in which every morphism factors as an isomorphism followed by an inclusion. An inverse inclusion category is \textbf{bounded} when it is bounded as an inclusion category.
\end{definition}

Let $\iBICat_\lax$ and $\iBICat$ be the two full $2$-subcategories of $\BICat_\lax$ and $\BICat$, respectively, spanned by inverse inclusion categories.

\begin{theorem}
\label{theorem:inverse-inclusion-categories}
The $2$-equivalences of Theorem~\ref{theorem:equivalence-local-bounded-inclusion} restrict to the following two $2$-equivalences, $\iLCat_\lax\simeq\iBICat_\lax$ and $\iLCat\simeq\iBICat$.  
\end{theorem}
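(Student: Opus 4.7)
The plan is to show that the $2$-equivalence $\LCat\simeq\BICat$ of Theorem~\ref{theorem:equivalence-local-bounded-inclusion} carries inverse local categories to inverse bounded inclusion categories and vice versa; once this correspondence is established on objects, the restriction of the $2$-equivalence to the full $2$-subcategories is automatic. The key observation is that Proposition~\ref{proposition:local-categories-inclusion-categories} identifies the inclusions of the inclusion system associated to a local category $\C$ as exactly the monics $m\colon V\to B$ with $V\smile B$ and $m\eta_B=\eta_V$; equivalently, via Lemma~\ref{lemma:partial-order-on-local-categories}, as the witnessing monics of $V\leq B$. Under this identification, the requirement in Definition~\ref{definition:inverse-local-category} that $f\colon A\to B$ factor as an isomorphism $f\up V\colon A\xrightarrow{\cong} V$ followed by the unique $m\colon V\to B$ with $m\eta_B=\eta_V$ is literally the requirement that $f$ factor as an isomorphism followed by an inclusion in the associated inclusion category. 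Hence the two notions coincide on the nose.

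Concretely, I would proceed as follows. First, given an inverse local category $\C$, show that for any morphism $f\colon A\to B$, the local-isomorphism factorization $f=(f\up V)m$ exhibits $f$ as an isomorphism composed with an inclusion in the bounded inclusion category structure supplied by Proposition~\ref{proposition:local-categories-inclusion-categories}. Conversely, given a bounded inverse inclusion category $\C$ (so every morphism factors as an iso followed by an inclusion), apply Proposition~\ref{proposition:inclusion-categories-local-categories} to equip $\C$ with its local structure and observe that any factorization $f=g\,m$ with $g$ an isomorphism and $m$ an inclusion of $V$ into $B$ yields $V\leq B$ and an isomorphism $g\colon A\to V$ with $gm=f$, which is precisely the data of a local isomorphism as in Definition~\ref{definition:inverse-local-category}. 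Together these show that the object assignments of the $2$-equivalence $\LCat\simeq\BICat$ restrict to a bijection between inverse local categories and bounded inverse inclusion categories.

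Second, observe that local functors and inclusion functors coincide under the equivalence, and both $\iLCat_\lax\hookrightarrow\LCat_\lax$ and $\iBICat_\lax\hookrightarrow\BICat_\lax$ are \emph{full} $2$-subcategories (by definition of $\iBICat_\lax$ and of $\iLCat_\lax$); consequently no additional preservation condition needs to be imposed on $1$-cells or $2$-cells. The restriction of the $2$-equivalence is therefore well defined on hom-categories, essentially surjective on objects by the bijection just established, and fully faithful because the ambient $2$-equivalence is. The same argument, restricted to total natural transformations, gives the stricter $2$-equivalence $\iLCat\simeq\iBICat$.

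The only substantive step is verifying the correspondence on morphisms in the first paragraph; the rest is a formal consequence of Theorem~\ref{theorem:equivalence-local-bounded-inclusion} together with the fact that both $\iLCat_\lax$ and $\iBICat_\lax$ are full sub-$2$-categories. The potential subtlety is the uniqueness of the factorization: in a local category the object $V$ in a local-isomorphism factorization is determined by the image of $f$ under the enlargement/inclusion structure, while in an inverse inclusion category one must check that any two iso-then-inclusion factorizations agree, which follows from \textbf{[I.3]} (at most one inclusion between two given objects). This uniqueness is what ensures the two sides of the correspondence are genuinely inverse to one another, and it is the only point where a small verification is required.
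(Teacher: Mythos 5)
Your proposal is correct and matches the paper's intent: the paper states this theorem without any proof, having introduced Definition~\ref{definition:inverse-inclusion-category} explicitly as the reformulation of Definition~\ref{definition:inverse-local-category} under the equivalence of Theorem~\ref{theorem:equivalence-local-bounded-inclusion}, which is precisely the identification your first paragraph makes precise (the witnessing monics of $V\leq B$ are exactly the inclusions, so a local-isomorphism factorization is literally an iso-then-inclusion factorization), and the fullness of both $2$-subcategories then makes the restriction automatic. Your closing worry about uniqueness of the factorization is unnecessary: both definitions only require the \emph{existence} of such a factorization, so it is a property rather than structure and nothing needs to be checked to be ``inverse'' on that front.
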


\begin{remark}
\label{remark:inverse-inclusion-categories}
Notice that inverse inclusion categories are more general than inverse local categories, and consequently, to inverse (restriction) categories, since they are not, in general, bounded.
\end{remark}

By putting together the equivalence between inverse (restriction) categories and inverse local categories of Theorem~\ref{theorem:inverse-local-categories} and the equivalence of Theorem~\ref{theorem:inverse-inclusion-categories}, we obtain the following. 

\begin{theorem}
\label{theorem:ESN-revisited}
There are two $2$-equivalences $\iRCat_\lax\simeq\iBICat_\lax$ and $\iRCat\simeq\iBICat$. 
\end{theorem}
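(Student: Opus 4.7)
The plan is to derive Theorem~\ref{theorem:ESN-revisited} as a direct composition of the two $2$-equivalences already established. The heavy lifting has been done in Theorem~\ref{theorem:inverse-local-categories}, which gives $\iRCat_\lax\simeq\iLCat_\lax$ and $\iRCat\simeq\iLCat$, and in Theorem~\ref{theorem:inverse-inclusion-categories}, which gives $\iLCat_\lax\simeq\iBICat_\lax$ and $\iLCat\simeq\iBICat$. Since $2$-equivalences of $2$-categories compose, stringing these together immediately yields the two desired $2$-equivalences.

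Concretely, I would define the composite $2$-functors $\I\circ\L\colon\iRCat\to\iBICat$ and $\R\circ\P\colon\iBICat\to\iRCat$ (and similarly the lax versions), where $\L$, $\R$ come from Section~\ref{section:equivalence-restriction-local} and $\P$, $\I$ come from Section~\ref{section:inclusion-systems}. To see that these $2$-functors actually land in the claimed sub-$2$-categories, I would verify that $\L$ sends inverse restriction categories to inverse local categories (by Corollary~\ref{corollary:inverse-local-categories}), and the translation of Theorem~\ref{theorem:equivalence-local-bounded-inclusion} guarantees that inverse local categories correspond to bounded inclusion categories in which every morphism factors as an isomorphism followed by an inclusion, i.e.\ inverse inclusion categories in the sense of Definition~\ref{definition:inverse-inclusion-category}. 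Conversely, $\P$ applied to a bounded inverse inclusion category produces an inverse local category by Theorem~\ref{theorem:inverse-inclusion-categories}, and $\R$ then produces an inverse restriction category by Corollary~\ref{corollary:inverse-local-categories-2}.

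The unit and counit of the composite adjunction are obtained by horizontal pasting of those in Theorem~\ref{theorem:inverse-local-categories} and Theorem~\ref{theorem:inverse-inclusion-categories}, and the triangle identities for the composite follow formally from those of the factors. The restriction to the sub-$2$-category with total natural transformations is automatic: both factor equivalences are compatible with the total vs.\ lax distinction, as spelled out respectively in Theorem~\ref{theorem:inverse-local-categories} and in the proof of Theorem~\ref{theorem:equivalence-local-bounded-inclusion}.

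I do not anticipate a genuine obstacle here; the only thing that requires a moment's care is confirming that the two notions of ``inverse'' match under the equivalence $\LCat\simeq\BICat$. This is precisely the content of Theorem~\ref{theorem:inverse-inclusion-categories}, whose defining condition (every morphism factors as an isomorphism followed by an inclusion) is the translation of Definition~\ref{definition:inverse-local-category} under the identification of $\eta_B\colon V\to\Las B$ with inclusions $V\to\Las B$ in the associated inclusion system. Once that match is in hand, the proof is literally one line: compose the two $2$-equivalences.
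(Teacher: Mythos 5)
Your proposal is correct and matches the paper's own argument exactly: the paper obtains this theorem by composing the $2$-equivalence $\iRCat_\lax\simeq\iLCat_\lax$ of Theorem~\ref{theorem:inverse-local-categories} with the $2$-equivalence $\iLCat_\lax\simeq\iBICat_\lax$ of Theorem~\ref{theorem:inverse-inclusion-categories}, and likewise for the total versions. The additional care you take in checking that the two notions of ``inverse'' match across the equivalence is precisely what Theorem~\ref{theorem:inverse-inclusion-categories} already supplies, so nothing further is needed.
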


In particular, given an inverse (restriction) category $\X$, the morphisms of the associated bounded inverse inclusion category $\I[\X]$ factor as isomorphisms followed by inclusions. The maximal subgroupoid of $\I[\X]$, that is, the groupoid of all isomorphisms of $\I[\X]$, coincides with the top-heavy locally inductive groupoid of $\X$ constructed by DeWolf and Pronk. In particular, the order is induced by the partial order defined by inclusions in $\I[\X]$.

\begin{theorem}
\label{theorem:ESN-revisited-2}
The maximal subgroupoid of the bounded inverse inclusion category $\I[\X]$ associated to an inverse (restriction) category $\X$ equipped with the partial order induced by $\I[\X]$ is precisely the top-heavy locally inductive groupoid of $\X$ of~\cite[Theorem~2.17]{dewolf:ESN-theorem-inverse-cats}.
\end{theorem}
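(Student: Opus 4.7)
The plan is to compute both sides explicitly under the chain of $2$-equivalences of Theorem~\ref{theorem:fundamental}, and then line them up piece by piece. First, I would unfold $\I[\X]$ by passing through the local category $\L[\X]$: its objects are pairs $(A,a)$ with $a = \bar a$ a restriction idempotent of $\X$; its general morphisms $f \colon (A,a) \to (B,b)$ are morphisms $f$ of $\X$ satisfying $\bar f = a$ and $fb = f$; and, by Proposition~\ref{proposition:local-categories-inclusion-categories} applied to $\L[\X]$, the inclusions are exactly the arrows $a \colon (A,a) \to (A,a')$ with $aa' = a$, i.e., with $a \leq a'$ in the standard order on restriction idempotents. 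The partial order on objects induced by inclusions therefore reduces to the usual order on idempotents inside each fibre over an object of $\X$.

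Next I would extract the maximal subgroupoid. By Lemma~\ref{lemma:inverse-local-categories} and its proof, the isomorphisms $f \colon (A,a) \cong (B,b)$ in $\L[\X]$ are exactly the restriction isomorphisms $f \colon A \to B$ of $\X$ with $ff^\o = \bar f = a$ and $f^\o f = \bar{f^\o} = b$, and the inverse of $f$ in $\L[\X]$ is the partial inverse $f^\o$. Composition of such isomorphisms agrees with composition in $\X$. Taking the maximal subgroupoid of $\I[\X]$ thus yields the category whose objects are the restriction idempotents of $\X$, whose arrows are restriction isomorphisms, and whose partial order on objects is the standard order on idempotents. This is precisely the underlying data of DeWolf--Pronk's top-heavy locally inductive groupoid of~\cite[Theorem~2.17]{dewolf:ESN-theorem-inverse-cats}.

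To finish the proof, one needs to verify that the restriction and corestriction operations that make this groupoid into a top-heavy locally inductive groupoid coincide with those inherited from the ambient bounded inclusion category $\I[\X]$. Concretely, given an isomorphism $f \colon (A, \bar f) \to (B, \bar{f^\o})$ and an inclusion $a \colon (A,a) \to (A, \bar f)$ with $a \leq \bar f$, the restriction of $f$ along this inclusion in $\I[\X]$ is the composite $af$, and dually for the corestriction along inclusions into $(B, \bar{f^\o})$; a direct calculation using \textbf{[R.1]}--\textbf{[R.4]} shows that these agree with DeWolf--Pronk's operations, and that the induced partial order on arrows (derived from the object order via restriction) agrees in both settings. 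The main obstacle is purely one of bookkeeping: aligning our restriction/contraction operations with the precise axiomatisation of a top-heavy locally inductive groupoid in~\cite{dewolf:ESN-theorem-inverse-cats} on the nose, so that the two groupoids coincide rather than merely being equivalent.
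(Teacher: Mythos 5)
The paper actually states this theorem without a proof, offering only the preceding paragraph, which sketches exactly the identification you carry out: unwind $\I[\X]$ through $\L[\X]$, recognise the isomorphisms as the restriction isomorphisms of $\X$ and the order as the one induced by inclusions, and match the restriction/corestriction data with DeWolf--Pronk's. Your proposal is correct and follows essentially the same route, filling in the details the paper leaves implicit (the only part still deferred is the routine verification that the factorisation-induced operations and the top element $(A,\id_A)$ of each fibre of idempotents satisfy the top-heavy locally inductive axioms on the nose).
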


%__________________________________________________________________________
%__________________________________________________________________________

%%%%%%%%%%%%%%%%%%%%%%%%%%% BIBLIOGRAPHY %%%%%%%%%%%%%%%%%%%%%%%%%%%%%%%%%%
\begingroup

\endgroup

\end{document}